\documentclass[12pt]{article}

\usepackage{fullpage}
\usepackage{amssymb}
\usepackage{amsmath}
\usepackage{amsthm}
\usepackage{xypic}
\usepackage{setspace}
\usepackage{array}
\usepackage{url}
\usepackage[colorlinks=true,urlcolor=blue]{hyperref}
\usepackage{pdflscape}

\newtheorem{theorem}{Theorem}[section]
\newtheorem{lemma}[theorem]{Lemma}
\newtheorem{proposition}[theorem]{Proposition}
\newtheorem{corollary}[theorem]{Corollary}

\newcommand{\Z}{\mathbb Z}

\newcommand{\s}{\sigma}
\newcommand{\G}{\Gamma}
\newcommand{\id}{id}

\newcommand{\comment}[1]{}

\newcommand{\ol}[1]{\overline{#1}}
\newcommand{\ul}[1]{\underline{#1}}
\newcommand{\ds}{\displaystyle}
\newcommand{\enant}[1]{{#1}^*}

\DeclareMathOperator{\rank}{rank}


\newcommand{\calK}{\mathcal K}

\newcommand{\calP}{\mathcal P}
\newcommand{\calQ}{\mathcal Q}

\newcommand{\calR}{\mathcal R}

\setlength\belowcaptionskip{5pt}
\newcounter{reminder}

\parskip 3pt

\begin{document}

\title{Tight Chiral Polytopes}

\author{Gabe Cunningham\\
Department of Mathematics\\
University of Massachusetts Boston\\
Boston, Massachusetts \\
and \\
Daniel Pellicer \\
Centro de Ciencias Matem\'aticas\\
Universidad Nacional Aut\'onoma de Mexico (UNAM) \\
Morelia, Mexico
}

\date{ \today }
\maketitle

\begin{abstract}
\vskip.1in
\medskip
\noindent

A chiral polytope with Schl\"{a}fli symbol $\{p_1, \ldots, p_{n-1}\}$ has at least $2p_1 \cdots p_{n-1}$ flags, and it is called
\emph{tight} if the number of flags meets this lower bound. The Schl\"{a}fli symbols of tight chiral polyhedra were classified
in an earlier paper, and another paper proved that there are no tight chiral $n$-polytopes with $n \geq 6$. Here we prove that there
are no tight chiral $5$-polytopes, describe 11 families of tight chiral $4$-polytopes, and show that every tight chiral $4$-polytope 
covers a polytope from one of those families.

\medskip
\noindent
AMS Subject Classification (2010):  52B05 (20B25, 52B15).
\comment{
	20B25: Finite automorphism groups of algebraic, geometric, or combinatorial structures.
	51M20: Polyhedra and polytopes; regular figures, division of spaces (under "Real and Complex Geometry")
	52B05: Combinatorial properties of polyhedra (number of faces etc.)
	52B15: Symmetry properties of polytopes
}

\end{abstract}

\section{Introduction}

	An \emph{abstract $n$-polytope} is a partially-ordered set that satisfies many of the properties of
	the face-lattices of convex $n$-polytopes. The maximal chains (called \emph{flags}) are analogous to the
	simplices in the barycentric subdivision of a convex polytope. Automorphisms are order-preserving bijections and are the combinatorial analogue of symmetries of convex polytopes.

	The group of automorphisms of an abstract polytope acts semiregularly on the set of flags, and if the action is transitive
	(and thus regular), then the polytope is said to be regular. This kind of polytopes are regarded as the most symmetric and have been extensively studied. The automorphism group of a regular polytope
	has a standard generating set, and it is possible to recover the polytope from a group in this form, making
	it possible to study regular polytopes completely in terms of their groups.
	
	An abstract polytope is \emph{chiral} whenever the automorphism group has two orbits on
	the flags such that flags that differ in only one element are in opposite orbits. This is the combinatorial analogue to having all symmetry by rotations but none by reflections. As with regular polytopes,
	the automorphism group of a chiral polytope has a standard form, and we can build a chiral polytope
	out of such a group. The study of chiral polytopes grew out from the study of chiral maps and twisted honeycombs 
	(see \cite{coxeter-moser, twisted-honeycombs}), and
	while chiral $3$-polytopes and chiral $4$-polytopes are nowadays plentiful, constructing chiral $n$-polytopes with
	$n \geq 5$ seems to be much harder. 
	To date, there is no known natural family of chiral $n$-polytopes
	with one polytope for each $n$ (whereas there are many examples of families of regular $n$-polytopes, such as $n$-cubes). 
	There is a construction, described in \cite{high-rank-chiral}, that takes a chiral $n$-polytope as input and produces a chiral
	$(n+1)$-polytope, but the polytopes constructed this way are so large that their individual study is out of reach with the current computational means available.
	
	How can we find small examples of chiral polytopes? One strategy is to specify part of the local structure (such as
	what kind of sub-units the polytope is built from) and then use that local structure to put a lower bound on the number of flags.
	This idea was used in \cite{smallest-regular} to find the smallest regular polytopes of each rank, and in \cite{non-flat} to explore bounds in the size of chiral polytopes. A polytope is called \emph{tight} if its number of
	flags is equal to some lower bound. For example, a chiral polyhedron ($3$-polytope) with $p$-gonal faces and $q$ edges at each
	vertex must have at least $2pq$ flags, and so a tight chiral polyhedron has exactly $2pq$ flags (see \cite{tight-polytopes}).
	
	In \cite{tight-chiral-polyhedra}, the first author determined the pairs $(p, q)$ such that there is a tight chiral
	polyhedron with $p$-gonal faces and $q$ edges at each vertex. Furthermore, the first author showed in
	\cite{non-flat} that there are no tight chiral $n$-polytopes with $n \geq 6$. In this work, we exhibit 11 families of tight chiral $4$-polytopes (see Table~\ref{tab:atomic-4}) and show that every
	tight chiral $4$-polytope covers one of the polytopes in these families. Furthermore, we prove the following theorem.
	
	\begin{theorem}\label{t:NoTight5Poly}
	There are no tight chiral $5$-polytopes.
	\end{theorem}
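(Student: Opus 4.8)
The plan is to assume that a tight chiral $5$-polytope $\mathcal{P}$ exists, with Schl\"{a}fli symbol $\{p_1,p_2,p_3,p_4\}$ and rotation group $\Gamma^+=\langle\sigma_1,\sigma_2,\sigma_3,\sigma_4\rangle$ of order $p_1p_2p_3p_4$, and to force $\mathcal{P}$ to be directly regular. I would begin by assembling the local structure. Because the facets and vertex-figures of a tight polytope are again tight, the facet $\mathcal{K}=\langle\sigma_1,\sigma_2,\sigma_3\rangle$ and the vertex-figure $\mathcal{L}=\langle\sigma_2,\sigma_3,\sigma_4\rangle$ are tight $4$-polytopes of types $\{p_1,p_2,p_3\}$ and $\{p_2,p_3,p_4\}$; iterating, the three medial polyhedral sections $\langle\sigma_1,\sigma_2\rangle$, $\langle\sigma_2,\sigma_3\rangle$, $\langle\sigma_3,\sigma_4\rangle$ are tight polyhedra. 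On the other hand, it is known that in a chiral $n$-polytope every proper section of rank at most $n-2$ is directly regular; for $n=5$ this forces the three polyhedral sections of types $\{p_1,p_2\}$, $\{p_2,p_3\}$, $\{p_3,p_4\}$ to be directly regular. Thus both $\mathcal{K}$ and $\mathcal{L}$ are tight $4$-polytopes all of whose rank-$3$ sections are directly regular, and each is itself either directly regular or chiral.

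I would then split according to whether $\mathcal{K}$ and $\mathcal{L}$ are chiral. If either one is chiral, it is a tight chiral $4$-polytope and therefore, by the classification established in this paper, it covers one of the eleven families of Table~\ref{tab:atomic-4}; this restricts its Schl\"{a}fli symbol to a short explicit list. Intersecting that list with the requirement that the shared middle pair $(p_2,p_3)$, as well as the neighbouring pairs, be the type of a tight directly regular polyhedron reduces the possibilities for $\{p_1,p_2,p_3,p_4\}$ to finitely many candidates. For each surviving candidate I would reconstruct $\Gamma^+$ as the amalgam of $\mathcal{K}$ and $\mathcal{L}$ over $\langle\sigma_2,\sigma_3\rangle$ and test the chiral intersection condition $\Gamma_I\cap\Gamma_J=\Gamma_{I\cap J}$ together with the order count $|\Gamma^+|=p_1p_2p_3p_4$; I expect each candidate to fail one of these, either because the relations collapse the group below the tight order or because the intersection condition is violated.

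The remaining case is that both $\mathcal{K}$ and $\mathcal{L}$ are directly regular, and here I would argue that $\mathcal{P}$ is forced to be directly regular. Writing the regularity criterion in its localized form, the direct regularity of $\mathcal{K}$ says that $\sigma_1\mapsto\sigma_1^{-1}$, $\sigma_2\mapsto\sigma_1^2\sigma_2$, $\sigma_3\mapsto\sigma_3$ extends to an automorphism of $\langle\sigma_1,\sigma_2,\sigma_3\rangle$. I would try to extend this to the map $\mu$ on $\Gamma^+$ given by $\sigma_1\mapsto\sigma_1^{-1}$, $\sigma_2\mapsto\sigma_1^2\sigma_2$, $\sigma_3\mapsto\sigma_3$, $\sigma_4\mapsto\sigma_4$; the uniqueness of the normal form $\sigma_1^{a_1}\sigma_2^{a_2}\sigma_3^{a_3}\sigma_4^{a_4}$ supplied by tightness is what makes $\mu$ a well-defined candidate that I only need to test on the defining relations. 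Every relation internal to $\mathcal{K}$ is respected because $\mu$ restricts to the automorphism above, and the relations $\sigma_3^{p_3}$, $\sigma_4^{p_4}$, $(\sigma_3\sigma_4)^2$, $(\sigma_1\sigma_2\sigma_3\sigma_4)^2$ are fixed by $\mu$ outright (note $\mu(\sigma_1\sigma_2\sigma_3\sigma_4)=\sigma_1^{-1}\cdot\sigma_1^2\sigma_2\cdot\sigma_3\cdot\sigma_4=\sigma_1\sigma_2\sigma_3\sigma_4$).

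The one relation that is not immediate, and which I expect to be the crux of the whole argument, is the vertex-figure relation $(\sigma_2\sigma_3\sigma_4)^2=1$, whose image under $\mu$ is $(\sigma_1^2\sigma_2\sigma_3\sigma_4)^2$: here $\mu$ does not restrict to $\mathcal{L}$, since $\mu(\sigma_2)\notin\mathcal{L}$. Verifying $(\sigma_1^2\sigma_2\sigma_3\sigma_4)^2=1$ is exactly where I would invest the real work, exploiting that $\langle\sigma_2,\sigma_3\rangle$ is a tight directly regular polyhedron and that tightness forces $\sigma_1$ (equivalently $\sigma_1^2$) to commute with the vertex-figure group in a tightly controlled, commutator-bounded way. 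Once this single identity is established, $\mu$ is an automorphism realizing the regularity criterion, so $\mathcal{P}$ is directly regular, contradicting chirality and completing the proof. It is worth noting that this crux relation is precisely the feature that separates rank $5$ from rank $4$: at rank $4$ the analogous identity may genuinely fail, which is what allows the eleven families of tight chiral $4$-polytopes to exist, whereas the extra regular section available at rank $5$ is what forces it to hold.
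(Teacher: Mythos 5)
Your opening reduction contains a fatal error, and it is precisely the point where rank $5$ differs from what you assume. The fact you invoke (Proposition 9 of \cite{chiral}) says only that the $(n-2)$-\emph{faces} of a chiral $n$-polytope are regular, together with the dual statement for the co-faces at edges. It does \emph{not} apply to the medial sections $[F_0,F_{n-1}]$, which for $n=5$ are exactly your rank-$3$ sections of type $\{p_2,p_3\}$. Those sections are the vertex-figures of the facets, and they can perfectly well be chiral; indeed Proposition~\ref{prop:facvfChiral}(b) of this paper states that a tight chiral $5$-polytope must have chiral facets, chiral vertex-figures \emph{and chiral medial sections}. Consequently your case analysis is inverted: the case into which you put all the real work (both $\calK$ and $\calL$ directly regular, extend the map $\mu$) can never occur, since the facets and vertex-figures of a tight chiral $5$-polytope are forced to be chiral; and the case you wave away with ``I expect each candidate to fail'' is the only case there is, and you give no argument for it. It is also not a finite check: the families in Table~\ref{tab:atomic-4} carry unbounded parameters $m^{\alpha}$ and $2^{\beta}$, and the facet $\calK$ is only known to \emph{cover} an atomic polytope, so its own type is not confined to a short list. (Separately, even inside your vacuous case the crux identity $(\s_1^2\s_2\s_3\s_4)^2=\id$ is left unproven, so the proposal would be incomplete even if the reduction were right.)

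The idea you are missing is the one the paper's proof runs on: chirality groups. Since the $3$-faces of $\calP$ are regular, the base facet $\calK$ is a tight chiral $4$-polytope with \emph{regular} facets and chiral vertex-figures. A kernel argument at rank $5$ (Lemmas~\ref{l:2faces} and~\ref{l:5kernels}, after normalizing $q \ge r$ up to duality) shows that $X(\calK)$ lies in $\langle\s_2\rangle$, and Lemma~\ref{l:ChirGroupS2} transfers this to an atomic chiral $4$-polytope $\calK'$ covered by $\calK$, giving $X(\calK')\le\langle\s_2'\rangle$. But $\calK'$ again has regular facets, and the classification of Section~\ref{s:ChiralRegular} shows that such an atomic polytope has $X(\calK')\le\langle\s_3'\rangle$ (Proposition~\ref{p:xp-rc}). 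By the intersection condition $\langle\s_2'\rangle\cap\langle\s_3'\rangle$ is trivial, so $X(\calK')$ would be trivial and $\calK'$ regular, a contradiction. In other words, the obstruction is detected not by amalgamating the facet and vertex-figure over their common section, but by comparing where the chirality group of the facet must sit when seen from rank $5$ against where the rank-$4$ classification says it must sit.
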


\section{Background}

In this section we summarize relevant definitions and results.

	\subsection{Abstract polytopes}
	
Regular abstract polytopes are a combinatorial generalization of the notion of (geometric) polyhedra explored by Petrie, Coxeter Gr\"unbaum and Dress in the $20$th Century (see \cite{CoxeterSkew}, \cite{DressI}, \cite{DressII}, \cite{GrunbaumOld}). In what follows, we recall the basic definitions. For further details see \cite{arp}.

An {\em abstract polytope} $(\calP,\le)$ of rank $n$ is a partially ordered set satisfying the following four axioms.
\begin{itemize}
 \item[(I)] It has a unique minimal element $F_{-1}$ and a unique maximal element $F_n$.
 \item[(II)] All maximal chains have precisely $n+2$ faces, including $F_{-1}$ and $F_n$. This induces a strictly increasing {\em rank function} $\rank : \calP \to \{-1, \dots, n\}$ where $\rank(F_{-1})=-1$ and $\rank(F_n)=n$.
 \item[(III)] {\em Diamond condition}: Given two elements $F$, $G$ with $\rank(G)=\rank(F)+2$ there exist precisely two elements $H_1$ and $H_2$ with $\rank(H_1)=\rank(H_2)=\rank(F)+1$ such that $F \le H_i \le G$ for $i \in \{1,2\}$.
 \item[(IV)] {\em Strong connectivity}: For any pair of incident elements $\{F,G\} \subseteq \calP$ with $\rank(G)-\rank(F) \ge 3$, the incidence graph of the open interval $(F,G)$ is connected. (The {\em incidence graph} of a partially ordered set has the elements as vertices, and two are adjacent if and only if the corresponding elements are incident.)
\end{itemize}

Throughout this paper we will encounter only abstract polytopes and we shall refer to them simply as `polytopes'. Rank $2$ and $3$ polytopes are also called {\em poylgons} and {\em polyhedra}, respectively. For convenience we refer to the polytope $(\calP,\le)$ simply as $\calP$. Two elements $F,G$ of $\calP$ are said to be {\em incident} if either $F \le G$ or $G \le F$.

The elements of $\calP$ are called {\em faces}. Those of rank $i$ are called {\em $i$-faces}. Following the tradition, the $0$- $1$- and $(n-1)$-faces are called {\em vertices}, {\em edges} and {\em facets}, respectively. For $i \in \{1, \dots, n-2\}$ we define the {\em $i$-skeleton} of $\calP$ as the partially ordered set consisting of all the $j$-faces for $j \le i$. If $F_0$ is a vertex and $F_{n-1}$ is a facet we say that the closed interval $[F_0,F_{n-1}]$ is a {\em medial section} of $\calP$.

The closed intervals of a polytope (also called {\em sections}) satisfy the axioms of abstract polytopes. In particular, any medial section of a polytope is a polytope. The section $[F_0,F_n]$, where $F_0$ is a vertex, is called the {\em vertex-figure} at $F_0$. Every face $F$ may be identified with the section $[F_{-1},F]$ and in this way it may be considered as an abstract polytope.

The maximal chains of $\calP$ are called {\em flags}. Due to the diamond condition, for any flag $\Phi$ and any rank $i \in \{0, \dots, n-1\}$ there exists a unique flag $\Phi^i$ that differs from $\Phi$ precisely in the element of rank $i$. The flag $\Phi^i$ is called the {\em $i$-adjacent flag} of $\Phi$. We extend this notation recursively in such a way that if $w$ is a word on the alphabet $\{0,\dots, n-1\}$ and $i \in \{0, \dots, n-1\}$ then $(\Phi^w)^i = \Phi^{wi}$.

The {\em dual} $\calP^\delta$ of a polytope $\calP$ consists of the same elements as $\calP$ with the partial order reversed. In this way, if $F$ is an $i$-face of an $n$-polytope $\calP$ then it is an $(n-i-1)$-face of $\calP^\delta$.

An $n$-polytope is said to be {\em flat} whenever every vertex is incident to every facet. Given $0 \le k<m \le n$ we say that it is $(k,m)$-flat if every $k$-face is incident to every $m$-face.

There is a unique polytope of rank $0$ and a unique polytope of rank $1$. They correspond to the face lattices of a single point and of a line-segment (with its two endpoints). For each integer $k \ge 2$ there is a unique polygon with $k$ vertices, that corresponds to the face-lattice of a convex $k$-gon. There is also a unique \emph{apeirogon} with infinitely many vertices, corresponding to the face-lattice of the tiling of the real line by unit intervals. Therefore the rank $2$ sections of a polytope are all isomorphic to $k$-gons for some $k$ or to apeirogons.

We say that a polytope is {\em equivelar} if, for every $i \in \{1, \dots, n-1\}$, all sections between an $(i-2)$-face and an incident $(i+1)$-face are $p_i$-gons for some numbers $p_i$, regardless of the choice of $(i-2)$-face and $(i+1)$-face. Regular and chiral polytopes defined below are examples of equivelar polytopes. The {\em Schl\"afli type} (or {\em type} for short) of an equivelar polytope is $\{p_1, \dots, p_{n-1}\}$.

We say that an $n$-polytope $\calQ$ is a {\em quotient} of a polytope $\calP$ whenever there exists a rank and adjacency preserving mapping from the faces of $\calP$ to the faces of $\calQ$. (We say that two $i$-faces are \emph{adjacent} if they are incident to a common $(i-1)$-face and $(i+1)$-face.) In such cases we say that $\calP$ {\em covers} $\calQ$.

An {\em automorphism} of $\calP$ is an order preserving bijection of its faces. The automorphism group is denoted by $\Gamma(\calP)$ and acts freely on the set of flags. It follows from the strong connectivity of $\calP$ that all orbits of flags have the same size $|\Gamma(\calP)|$.

	\subsection{Regularity and chirality}

In this subsection we provide a general background on regular and chiral polytopes.

Our main interest in this paper is on chiral polytopes; hence we shall follow the approach given in \cite{chiral} to the study of the automorphism groups of these two classes of objects, and not the one in \cite{arp} for regular polytopes.

We say that an $n$-polytope $\calP$ is {\em regular} whenever $\Gamma(\calP)$ acts transitively on the set of flags, and it is {\em chiral} whenever $\Gamma(\calP)$ induces two orbits on the flags in such a way that adjacent flags belong to distinct orbits. If $\calP$ is regular or chiral we say that it is {\em rotary}.

For every $i \in \{0,\dots, n-1\}$ the automorphism group of a rotary polytope acts transitively on the $i$-faces. As a consequence, rotary polytopes are equivelar.

It is well-known that for every integers $p_1,\dots,p_{n-1} \ge 2$ there is a regular polytope with type $\{p_1,\dots,p_{n-1}\}$ (see \cite[Chapter 3]{arp}. This is not the case for chiral polytopes, as shown by the following lemma.

\begin{lemma}\label{l:NoChiral2}
If the last entry of the type of a polytope $\calP$ is $2$ then $\calP$ is not chiral.
\end{lemma}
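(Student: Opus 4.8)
The plan is to argue by contradiction: suppose $\calP$ is chiral. Since chiral polytopes are in particular rotary and hence equivelar, $\calP$ then has a well-defined Schl\"afli type $\{p_1,\dots,p_{n-1}\}$ with $p_{n-1}=2$, and its automorphism group carries the standard generating set $\sigma_1,\dots,\sigma_{n-1}$ described in \cite{chiral}. Here $\sigma_i$ acts on the base flag as a one-step rotation of the $2$-section $[F_{i-2},F_{i+1}]$, so that $\sigma_i$ has order $p_i$ and the relations $\sigma_i^{p_i}=1$ and $(\sigma_i\sigma_{i+1}\cdots\sigma_j)^2=1$ (for $i<j$) hold. I would show that the hypothesis $p_{n-1}=2$ forces $\Gamma(\calP)$ to be flag-transitive, i.e.\ $\calP$ to be regular, which contradicts chirality and so proves the lemma.

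The key tool is the standard criterion from \cite{chiral} that distinguishes the regular from the chiral case among rotary polytopes: $\calP$ is regular precisely when there is a group automorphism $\tau$ of $\Gamma(\calP)$ realising the ``missing reflection,'' namely one with $\tau(\sigma_1)=\sigma_1^{-1}$, $\tau(\sigma_2)=\sigma_1^2\sigma_2$, and $\tau(\sigma_k)=\sigma_k$ for $k\ge 3$; when no such $\tau$ exists the polytope is chiral. Because this criterion is phrased at the ``bottom'' of the generator list while our hypothesis concerns the top entry $p_{n-1}$, I would first pass to the dual $\calP^\delta$, whose rotation group is generated by $\sigma_i^\delta=\sigma_{n-i}^{-1}$ and whose type is the reverse $\{p_{n-1},\dots,p_1\}$. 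Thus the first entry of the type of $\calP^\delta$ equals $p_{n-1}=2$, so the section $[F_{n-3},F_n]$ of $\calP$ being a digon translates into the relation $(\sigma_1^\delta)^2=1$, whence $(\sigma_1^\delta)^{-1}=\sigma_1^\delta$.

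With this in hand the criterion applied to $\calP^\delta$ becomes trivial: the required $\tau$ must satisfy $\tau(\sigma_1^\delta)=(\sigma_1^\delta)^{-1}=\sigma_1^\delta$, $\tau(\sigma_2^\delta)=(\sigma_1^\delta)^2\sigma_2^\delta=\sigma_2^\delta$, and $\tau(\sigma_k^\delta)=\sigma_k^\delta$ for $k\ge 3$, so $\tau$ fixes every generator. Hence the identity map already witnesses the criterion, $\calP^\delta$ is regular, and therefore so is $\calP$, contradicting chirality. The part needing the most care is the bookkeeping that links the top-entry hypothesis to the bottom-generator criterion, which I handle by dualising (one could instead invoke the symmetric top-end form of the criterion directly), together with the single structural input that $p_{n-1}=2$ really does impose $\sigma_{n-1}^2=1$ because the relevant section is a digon; once that relation is available, the collapse of $\tau$ to the identity is immediate.
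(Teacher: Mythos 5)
Your proof is correct, but it takes a genuinely different route from the paper's. The paper argues combinatorially and does not use group presentations at all: if the last entry of the type is $2$, then each $(n-3)$-face lies in exactly two facets, the diamond condition and the connectivity of the $(n-2)$-skeleton force $\calP$ to have exactly two facets with every lower-rank face incident to both, and then the map that fixes all faces of rank $\le n-2$ and swaps the two facets is an automorphism carrying every flag to its $(n-1)$-adjacent flag --- directly contradicting the defining property of chirality that adjacent flags lie in distinct orbits. Your argument instead assumes chirality, passes to the dual (note the paper's own formula for the dual generators contains a typo, $\s_{n-1-i}^{-1}$; your $\s_i^\delta = \s_{n-i}^{-1}$ is the correct one), and invokes the Schulte--Weiss criterion from \cite{chiral}: regularity of a rotary polytope is equivalent to the existence of a group automorphism $\tau$ with $\tau(\s_1)=\s_1^{-1}$, $\tau(\s_2)=\s_1^2\s_2$, $\tau(\s_k)=\s_k$ for $k \ge 3$. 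Your key observation --- that $(\s_1^\delta)^2 = \id$ makes the prescribed images coincide with the generators themselves, so the identity automorphism witnesses the criterion --- is sound, since any map satisfying those conditions automatically squares to the identity, so no strict involutority can be demanded. What the paper's proof buys is self-containment and slightly greater generality (it exhibits the concrete facet-swapping automorphism and needs no equivelarity or rotation-group machinery); what yours buys is brevity modulo the cited criterion, and it isolates the structural reason for the phenomenon at the group level: a degenerate entry $2$ at either end of the Schl\"afli symbol collapses the enantiomorphic generating set $\{\s_1^{-1},\s_1^2\s_2,\s_3,\dots,\s_{n-1}\}$ onto the original one, so the chirality obstruction vanishes identically.
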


\begin{proof}
If $\calP$ is an $n$-polytope with a $2$ as the last entry of its type then all $(n-3)$-faces belong to precisely two facets. By the diamond condition, also the $(n-2)$-faces belong to two facets. The connectivity of the $(n-2)$-skeleton shows that $\calP$ has precisely two facets and all $i$-faces are incident to them for $i \le n-2$. 

The function that fixes every $i$-face for $i \le n-2$ and interchanges the two $(n-1)$-faces is then an automorphism, and it maps every flag to its $(n-1)$-adjacent. Hence $\calP$ is not chiral.
\end{proof}

Every finite polygon is isomorphic to the face lattice of some convex regular polygon, and hence it is regular. Also the unique infinite $2$-polytope is regular. Hence the rank of a non-regular polytope must be at least $3$. Chiral polytopes exist in ranks $3$ and higher (see \cite{high-rank-chiral}).

All sections of regular polytopes are regular. The facets and vertex-figures of a chiral $n$-polytope may be either regular or chiral; however, the $(n-2)$-faces must be regular (see \cite[Proposition 9]{chiral}). Note that chiral polytopes with chiral facets must have rank at least $4$.

Much of the work on chiral polytopes has been done through a particular presentation of their automorphism groups that we explain next. For another useful presentation see for example \cite{CHOP}.

Given a fixed {\em base flag} $\Phi$ of a rotary $n$-polytope $\calP$ there exist $\s_i \in \Gamma(\calP)$ for $i \in \{1,\dots,n-1\}$ such that $\Phi \s_i = \Phi^{i (i-1)}$. We shall denote the group $\langle \s_1, \dots, \s_{n-1}\rangle$ by $\Gamma^+(\calP)$ and call it the {\em rotation group} of $\calP$. The automorphisms $\s_i$ are called {\em standard generators} of $\Gamma^+(\calP)$. If $\calP$ has type $\{p_1,\dots, p_{n-1}\}$ then the order of $\s_i$ is $p_i$ and therefore $\Gamma^+(\calP)$ is a suitable quotient of the even subgroup $[p_1,\dots,p_{n-1}]^+$ of the Coxeter group $[p_1,\dots,p_{n-1}]$ (see for example \cite[Chapter 3]{arp}).

If $\calP$ is chiral then $\Gamma(\calP) = \Gamma^+(\calP)$.
Whenever $\calP$ is regular, $\Gamma^+(\calP)$ has index at most $2$ in $\calP$; if the index is $2$ we say that $\calP$ is {\em orientably regular}, and it is {\em non-orientably regular} if $\Gamma(\calP)=\Gamma^+(\calP)$. In any of these cases, if $F$ is an $i$-face and $G$ is a $j$-face such that $F \le G$ and their ranks differ in at least $3$ then $\Gamma^+([F,G]) = \langle \s_{i+2}, \dots, \s_{j-1} \rangle$.

For a rotary polytope $\calP$, the standard generators of $\Gamma^+(\calP)$ satisfy
\begin{equation}\label{eq:1stRelation}
(\s_i \dots \s_j)^2 = \id \quad \mbox{for every $1\le i < j \le n-1$,}
\end{equation}
as well as the intersection condition
		\begin{equation} \label{eq:int-cond}
		A_I \cap A_J = A_{I \cup J} \quad \mbox{for every $I, J \subseteq \{0,\dots, n-1\}$},
		\end{equation}
where for $I \subseteq \{0,\dots,n-1\}$ the set $A_I$ denotes the stabilizer in $\Gamma^+(\calP)$ of those faces $F_i$ of the base flag with ranks $i\in I$. If $I = \{1,\dots, n-1\} \setminus \{i,i+1,\dots,j\}$ with $i<j$ then $A_I = \langle \sigma_{i+1}, \dots, \sigma_j \rangle$, which allows us to state the following lemma. For other sets $I$ the generating sets $\mathcal{X}_I$ of these stabilizers are more complicated (see \cite[Section 3]{chiral}).

\begin{lemma}\label{l:IntAi}
Let $\calP$ be a rotary polytope with $\Gamma^+(\calP) = \langle \s_1, \dots, \s_{n-1} \rangle$. If $j \le i+1 \le k$ then
\begin{equation}\label{eq:IntCond2}
\langle \s_1, \dots, \s_i \rangle \cap \langle \s_j,\dots, \s_{k} \rangle = \langle \s_j, \dots, \s_i \rangle.
\end{equation}
\end{lemma}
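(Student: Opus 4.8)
The plan is to rephrase the two subgroups as flag-stabilizers and then let the intersection condition (\ref{eq:int-cond}) do the bookkeeping. Write $A_I$ for the stabilizer in $\Gamma^+(\calP)$ of the base-flag faces whose ranks lie in $I$. The paragraph preceding the lemma records that a subgroup generated by a \emph{consecutive} block of standard generators is exactly such a stabilizer: for $0 \le a < b \le n-1$ one has $\langle \s_{a+1}, \dots, \s_b\rangle = A_{\{0,\dots,n-1\}\setminus\{a,\dots,b\}}$. Taking $(a,b) = (0,i)$ gives $\langle \s_1,\dots,\s_i\rangle = A_{I_1}$ with $I_1 = \{i+1,\dots,n-1\}$, and taking $(a,b) = (j-1,k)$ (legitimate since $j \le i+1 \le k$ forces $j-1 < k$) gives $\langle \s_j,\dots,\s_k\rangle = A_{I_2}$ with $I_2 = \{0,\dots,j-2\}\cup\{k+1,\dots,n-1\}$.

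Now I would apply the intersection condition (\ref{eq:int-cond}) to obtain $A_{I_1}\cap A_{I_2} = A_{I_1\cup I_2}$, reducing the whole statement to a computation of $I_1 \cup I_2$. This is where the two hypotheses enter. Because $i+1 \le k$ we have $\{k+1,\dots,n-1\}\subseteq\{i+1,\dots,n-1\} = I_1$, so the second piece of $I_2$ is absorbed and $I_1\cup I_2 = \{0,\dots,j-2\}\cup\{i+1,\dots,n-1\}$; its complement in $\{0,\dots,n-1\}$ is the consecutive block $\{j-1,\dots,i\}$, which is nonempty precisely because $j \le i+1$. Thus the intersection equals $A_{\{0,\dots,n-1\}\setminus\{j-1,\dots,i\}}$.

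When $j \le i$ this block has at least two elements, so reading the cited identity in the other direction with $(a,b) = (j-1,i)$ gives $A_{\{0,\dots,n-1\}\setminus\{j-1,\dots,i\}} = \langle \s_j,\dots,\s_i\rangle$, which is exactly the desired right-hand side. The remaining case $j = i+1$ is the step I expect to be the only real obstacle: here the target $\langle\s_j,\dots,\s_i\rangle$ is the trivial group and the block degenerates to the single rank $\{i\}$, which lies outside the range of the cited identity. I would treat it directly: any $\gamma \in A_{\{0,\dots,n-1\}\setminus\{i\}}$ fixes every base-flag face except possibly $F_i$, and by the diamond condition $\gamma$ either fixes $F_i$ or interchanges it with the rank-$i$ face of $\Phi^i$; the latter would send $\Phi$ to $\Phi^i$, contradicting that adjacent flags lie in distinct orbits of $\Gamma^+(\calP)$. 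Hence $\gamma$ fixes the whole base flag and is the identity by the free action of $\Gamma(\calP)$, giving $A_{\{0,\dots,n-1\}\setminus\{i\}} = \{\id\} = \langle\s_j,\dots,\s_i\rangle$. (As a sanity check, the reverse inclusion $\langle\s_j,\dots,\s_i\rangle \subseteq \langle\s_1,\dots,\s_i\rangle\cap\langle\s_j,\dots,\s_k\rangle$ is immediate from $1 \le j$ and $i \le k$, so only the containment established above carries content.)
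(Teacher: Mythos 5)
Your proof is correct and follows exactly the route the paper intends: the lemma is stated there without proof, as an immediate consequence of the identification $\langle \s_{a+1},\dots,\s_b\rangle = A_{\{0,\dots,n-1\}\setminus\{a,\dots,b\}}$ together with the intersection condition (\ref{eq:int-cond}), which is precisely the computation you carry out. Your separate treatment of the degenerate case $j=i+1$ --- showing the single-rank stabilizer is trivial via the diamond condition, the fact that adjacent flags lie in distinct $\Gamma^+(\calP)$-orbits, and the freeness of the flag action --- simply fills in a detail the paper leaves implicit.
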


If $\calP$ is chiral we may choose the base flag in one or in the other flag orbit. These two choices produce non-equivalent sets of standard generators $\s_i$, in the sense that the defining relations for $\Gamma^+(\calP)$ will not be the same for the two sets. One may think of these two ways of looking at $\calP$ as a {\em left} and {\em right form} of the same object; we can go from one to the other just by `reflecting' our setting from the base flag into any of its adjacent flags. When doing this, we may take $\{\s_1^{-1},\s_1^2 \s_2, \s_3,\s_4,\dots,\s_{n-1}\}$ as the new set of standard generators for $\Gamma(\calP)$. For a chiral polyhedron, another convenient new set of generators is $\{ \s_1^{-1}, \s_2^{-1} \}$. The \emph{enantiomorph} of a chiral polytope $\calP$ (with an implicit base flag chosen) consists of the same polytope but where we change the base flag to any of its adjacent flags. We denote the enantiomorph of $\calP$ by $\calP^*$.
For more details about these forms see \cite{MonsonChiral}.

We mentioned that the rotation group of a rotary polytope is a group with a generating set satisfying (\ref{eq:1stRelation}) and the intersection condition (\ref{eq:int-cond}). Conversely, a group with a generating set satisfying (\ref{eq:1stRelation}) and a suitable version of (\ref{eq:int-cond}) is the rotation group of an orientable rotary polytope (that is, orientably regular or chiral).

The construction of the polytope from a group $\Gamma = \langle \s_1, \dots, \s_{n-1} \rangle$ is detailed in \cite[Section 5]{chiral}. It defines the $i$-face of the base flag as the subgroup of $\Gamma$ generated by the elements $\mathcal{X}_{\{i\}}$ of $A_{\{i\}}$ mentioned before Lemma \ref{l:IntAi}. The remaining $i$-faces are the cosets of the base $i$-face under the right action of $\Gamma$. It also establishes that two faces are incident if they have non-empty intersection. In particular, the sets of facets may be identified with the right cosets of $\langle \s_1,\dots,\s_{n-2} \rangle$ under $\Gamma$. Note that this construction can be performed even if the group does not satisfy the intersection condition. The output will still have well-defined flags and it is possible to talk about regularity through the action of its automorphism group.

If $\calP$ is non-orientably regular then that construction will produce the orientable double cover of $\calP$. It follows that there is a one-to-one correspondence between orientable rotary polytopes and groups satisfying (\ref{eq:1stRelation}) together with some version of (\ref{eq:int-cond}). For our purposes we find convenient the following version of (\ref{eq:int-cond}) that can be easily deduced from \cite[Lemma 10]{chiral}.

\begin{lemma}\label{l:eqIntCond}
Let $\Gamma = \langle \s_1,\dots,\s_{n-1} \rangle$ be a group where each $\s_i$ is nontrivial and the order of $\s_i \dots \s_j$ 
is $2$, for every $1\le i < j \le n-1$. Then $\Gamma$ satisfies the intersection condition (\ref{eq:int-cond}) if and only if
\begin{equation}\label{eq:eqIntCond}
\langle \s_1, \ldots, \s_i \rangle \cap \langle \s_j, \ldots, \s_{i+1} \rangle = \langle \s_j, \ldots, \s_i \rangle,
\end{equation} 
for every $2 \le j \le i+1 \le n-1$,
where if $j = i+1$ then we interpret the right-hand side as being the trivial group.
\end{lemma}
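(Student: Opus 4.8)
The plan is to prove both implications by passing through the intermediate family of identities
\[
\langle \sigma_1, \dots, \sigma_i \rangle \cap \langle \sigma_j, \dots, \sigma_k \rangle = \langle \sigma_j, \dots, \sigma_i \rangle,
\]
asserted for all indices $j \le i+1 \le k \le n-1$ (with the right-hand side read as trivial when $j = i+1$); call this condition $(\star)$. Condition $(\star)$ is exactly the family of intersections occurring in Lemma~\ref{l:IntAi}. I would first note that $(\star)$ is equivalent to the full intersection condition~(\ref{eq:int-cond}): if~(\ref{eq:int-cond}) holds then, by the correspondence recalled above, $\Gamma$ is the rotation group of an orientable rotary polytope, so Lemma~\ref{l:IntAi} applies and yields $(\star)$; conversely, the fact that controlling only these consecutive-block subgroups forces $A_I \cap A_J = A_{I \cup J}$ for all $I, J$ (whose generating sets $\mathcal{X}_I$ are in general more complicated) is precisely the reduction supplied by \cite[Lemma~10]{chiral}. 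Granting this equivalence, the lemma collapses to showing that~(\ref{eq:eqIntCond}), which is the case $k = i+1$ of $(\star)$, already implies $(\star)$ for every $k \ge i+1$.

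One direction of that remaining claim is immediate, since $(\star)$ specialises to~(\ref{eq:eqIntCond}) by setting $k = i+1$. For the converse I would argue by induction on $k$. The base case $k = i+1$ is~(\ref{eq:eqIntCond}) itself, and the degenerate case $j = 1$ is trivial because then $\langle \sigma_1, \dots, \sigma_i \rangle \subseteq \langle \sigma_1, \dots, \sigma_k \rangle$. For the inductive step, assume $(\star)$ for upper index $k$ and all admissible $i, j$; to obtain it for $k+1$ I would use the containment $\langle \sigma_1, \dots, \sigma_i \rangle \subseteq \langle \sigma_1, \dots, \sigma_k \rangle$ (valid since $i \le k$) to insert the larger group for free,
\[
\langle \sigma_1, \dots, \sigma_i \rangle \cap \langle \sigma_j, \dots, \sigma_{k+1} \rangle = \langle \sigma_1, \dots, \sigma_i \rangle \cap \Bigl( \langle \sigma_1, \dots, \sigma_k \rangle \cap \langle \sigma_j, \dots, \sigma_{k+1} \rangle \Bigr).
\]
Applying~(\ref{eq:eqIntCond}) with $k$ as top index and $k+1$ as the single overhang collapses the inner intersection to $\langle \sigma_j, \dots, \sigma_k \rangle$, and the induction hypothesis then reduces $\langle \sigma_1, \dots, \sigma_i \rangle \cap \langle \sigma_j, \dots, \sigma_k \rangle$ to $\langle \sigma_j, \dots, \sigma_i \rangle$, as wanted. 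A brief check of the index constraints $2 \le j \le k+1 \le n-1$ and $j \le i+1 \le k$ confirms that~(\ref{eq:eqIntCond}) and the hypothesis each apply at the stage used (the case $i+1 = k+1$ being the base case rather than a step).

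The only genuinely substantial step is the equivalence of $(\star)$ with the full condition~(\ref{eq:int-cond})—the passage from consecutive-block subgroups to the arbitrary stabilisers $A_I$—and I would lean on \cite[Lemma~10]{chiral} for it rather than reprove it, which is what is meant by the lemma being ``easily deduced'' from that result. Everything downstream is the elementary subgroup-intersection induction above. The one point I would verify carefully is that the standing hypotheses here (each $\sigma_i$ nontrivial and each $\sigma_i \cdots \sigma_j$ an involution, hence of order exactly $2$) match the hypotheses of \cite[Lemma~10]{chiral} and of the polytope correspondence, so that both citations apply verbatim.
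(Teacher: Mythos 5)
The first thing to note is that the paper contains no proof of this lemma at all: it is introduced only with the remark that it ``can be easily deduced from \cite[Lemma 10]{chiral}''. Measured against that, your proposal takes essentially the same approach --- the one substantive step is delegated to the very same citation --- and everything you make explicit is correct. In particular, your induction on the upper index $k$, showing that (\ref{eq:eqIntCond}) already forces the full interval family $\langle \s_1, \ldots, \s_i \rangle \cap \langle \s_j, \ldots, \s_k \rangle = \langle \s_j, \ldots, \s_i \rangle$ for $j \le i+1 \le k \le n-1$ (the family of Lemma~\ref{l:IntAi}), is sound: inserting $\langle \s_1, \ldots, \s_k \rangle$ into the intersection is legitimate since $i \le k$, the collapse of the inner intersection is a correct instance of (\ref{eq:eqIntCond}) with top index $k$, and the degenerate cases $j = 1$ and $i+1 = k+1$ are handled properly. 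The forward direction via the group--polytope correspondence and Lemma~\ref{l:IntAi} is also fine.

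The caveat you flagged at the end is, however, exactly where the remaining work sits, and you should not expect \cite[Lemma 10]{chiral} to apply verbatim. As that lemma is stated and standardly invoked, it is an inductive criterion rather than a direct ``interval conditions imply (\ref{eq:int-cond})'' statement: it says that if $\langle \s_1, \ldots, \s_{n-2} \rangle$ and $\langle \s_2, \ldots, \s_{n-1} \rangle$ each satisfy the intersection condition and $\langle \s_1, \ldots, \s_{n-2} \rangle \cap \langle \s_2, \ldots, \s_{n-1} \rangle = \langle \s_2, \ldots, \s_{n-2} \rangle$, then $\Gamma$ satisfies it. So your step ``$(\star)$ implies (\ref{eq:int-cond})'' needs an induction on $n$ wrapped around that lemma: the facet subgroup inherits $(\star)$ verbatim; the vertex-figure subgroup inherits it by intersecting both sides of $(\star)$ with $\langle \s_2, \ldots, \s_i \rangle$ and using $j \ge 2$; and the case $i = n-2$, $j = 2$ of (\ref{eq:eqIntCond}) is precisely the required interface hypothesis. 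This completion is of the same elementary flavor as the induction you already wrote, so I would call it a citation-precision gap rather than a conceptual one. Note also that once this rank induction is set up, it can consume (\ref{eq:eqIntCond}) directly, so the detour through the intermediate family $(\star)$ --- while harmless --- becomes optional: $(\star)$ then follows a posteriori from Lemma~\ref{l:IntAi} once the full intersection condition is established.
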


If $\calP$ is orientably regular (resp. chiral) with $\Gamma^+(\calP)= \langle \s_1,\dots,\s_{n-1} \rangle$ then $\calP^\delta$ is also orientably regular (resp. chiral) and, with respect to some flag, the $i$-th standard generator of $\Gamma^+(\calP^\delta)$ is $\s_{n-1-i}^{-1}$, for $i \in \{1,\dots,n-1\}$.

In upcoming sections we will be interested in normal subgroups contained in $\langle \s_i \rangle$ for some $i$. In those situations the following result will prove useful.

\begin{lemma}\label{l:rels1s3}
Let $\calP$ be a rotary $4$-polytope, and let $\Gamma^+(\calP) = \langle \sigma_1, \sigma_2, \sigma_{3} \rangle$.
\begin{enumerate}
\item For every $k$, $\s_3 \s_1^k \s_3^{-1} = \s_2^{-1} \s_1^{-k} \s_2.$
\item If $K$ is a subgroup of $\langle \s_1 \rangle$, then $\sigma_2^{-1} K \sigma_2 = K$ if and only if $\sigma_3^{-1} K \sigma_3 = K$.
\end{enumerate}
\end{lemma}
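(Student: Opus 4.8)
The plan is to establish part~(1) by a direct computation inside $\Gamma^+(\calP)$ using only the relations~(\ref{eq:1stRelation}), which for a rank~$4$ rotary polytope read $(\s_1\s_2)^2 = (\s_2\s_3)^2 = (\s_1\s_2\s_3)^2 = \id$; part~(2) then follows formally from part~(1) together with the fact that a subgroup of a cyclic (indeed of any) group is closed under inverses.

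For part~(1), I would first treat the case $k=1$. Starting from $\s_1\s_2\s_3\s_1\s_2\s_3 = \id$ and multiplying on the left by $(\s_1\s_2)^{-1}$, I get $\s_3\s_1\s_2\s_3 = (\s_1\s_2)^{-1} = \s_2^{-1}\s_1^{-1}$, where the last equality uses $(\s_1\s_2)^2 = \id$. Next I would invoke $(\s_2\s_3)^2 = \id$ in the form $\s_2\s_3 = \s_3^{-1}\s_2^{-1}$ to rewrite the left-hand side as $\s_3\s_1\s_3^{-1}\s_2^{-1}$, so that $\s_3\s_1\s_3^{-1}\s_2^{-1} = \s_2^{-1}\s_1^{-1}$; multiplying on the right by $\s_2$ yields $\s_3\s_1\s_3^{-1} = \s_2^{-1}\s_1^{-1}\s_2$. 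Raising this identity to the $k$-th power (the right-hand side telescopes, since $\s_2\s_2^{-1}$ cancels between factors) gives $\s_3\s_1^k\s_3^{-1} = \s_2^{-1}\s_1^{-k}\s_2$ for every $k$, as required.

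For part~(2), let $K \le \langle \s_1\rangle$. Every element of $K$ is a power $\s_1^k$, and part~(1) shows $\s_3\s_1^k\s_3^{-1} = \s_2^{-1}\s_1^{-k}\s_2$. Since $K$ is a subgroup, $\s_1^{-k}$ ranges over $K$ as $\s_1^k$ does, i.e.\ $K^{-1} = K$; hence conjugation by $\s_3$ carries $K$ onto $\s_2^{-1} K \s_2$, giving the set identity $\s_3 K \s_3^{-1} = \s_2^{-1} K \s_2$. I would then record the elementary observation that a subgroup is normalized by an element $g$ exactly when it is normalized by $g^{-1}$, so that $\s_3^{-1} K \s_3 = K$ is equivalent to $\s_3 K \s_3^{-1} = K$. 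Combining these, $\s_2^{-1} K \s_2 = K$ holds if and only if $\s_3 K \s_3^{-1} = K$ holds if and only if $\s_3^{-1} K \s_3 = K$, which is the claim.

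I do not expect a serious obstacle here: the only substantive step is the bookkeeping in part~(1), where one must apply the three defining relations in the correct order, and the rest is formal. The single point to watch is the inversion $k \mapsto -k$ produced by part~(1); this is exactly what the condition $K = K^{-1}$ absorbs, which is why the two normalization conditions in part~(2) remain equivalent despite that sign change.
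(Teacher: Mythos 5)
Your proposal is correct and follows essentially the same route as the paper: both derive the single conjugation identity $\sigma_3 \sigma_1 \sigma_3^{-1} = \sigma_2^{-1}\sigma_1^{-1}\sigma_2$ from the three relations $(\sigma_1\sigma_2)^2 = (\sigma_2\sigma_3)^2 = (\sigma_1\sigma_2\sigma_3)^2 = \id$ and then raise it to the $k$-th power, the paper phrasing this via the intermediate form $\sigma_3\sigma_1 = \sigma_1\sigma_2^2\sigma_3$. For part (2) your use of $K = K^{-1}$ is a minor (and equally valid) variant of the paper's appeal to $K = \langle \sigma_1^k \rangle$; otherwise the arguments coincide.
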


\begin{proof}
	We start with
	\begin{equation}\label{eq:magicalRelation} \s_3 \s_1 = (\s_1 \s_2)^2 \s_3 \s_1 (\s_2 \s_3)^2 = \s_1 \s_2 (\s_1 \s_2 \s_3)^2 \s_2 \s_3 = \s_1 \s_2^2 \s_3. \end{equation}
	It follows that
	\[ \s_3 \s_1^k = (\s_1 \s_2^2)^k \s_3. \]
	Then
	\[ \s_3 \s_1^k \s_3^{-1} = (\s_1 \s_2^2)^k = (\s_2^{-1} \s_1^{-1} \s_2)^k = \s_2^{-1} \s_1^{-k} \s_2. \]
	That proves part (a). Part (b) follows since $K = \langle \sigma_1^k \rangle$ for some $k$.
\end{proof}

\subsection{Covers and quotients}


If $\calP$ and $\calQ$ are orientable rotary $n$-polytopes such that $\calP$ covers $\calQ$ then there exists $N \triangleleft \Gamma^+(\calP)$ such that $\calQ \cong \calP/N$. In other words, the faces of $\calQ$ can be taken as the orbits of faces of $\calP$ under the action of $N$, and two of them are incident whenever an element in the orbit of one face is incident to some element in the orbit of the other face. 

Conversely, given $N \triangleleft \Gamma^+(\calP)$, the quotient $\calP/N$ is a polytope if and only if $\Gamma^+(\calP/N)$ satisfies (\ref{eq:1stRelation}) and the intersection condition (\ref{eq:eqIntCond}) with respect to the generators $\{\s_i N\}_{i \in \{1,\dots, n-1\}}$.

Whenever $\calP$ is chiral there exists a normal subgroup $X(\calP)$ of $\Gamma(\calP)$ satisfying that $\calP/X(\calP)$ is a regular structure (in the sense that all flags belong to the same orbit under $\Gamma(\calP/X(\calP)$), and that if $N \triangleleft \Gamma(\calP)$ is such that $\calP/N$ is a regular structure then $N \ge X(\calP)$. The group $X(\calP)$ is called the {\em chirality group} of $\calP$. Note that $\calP$ is regular if and only if $X(\calP)$ is trivial.

Elsewhere the chirality group has been introduced in other terms (see for example \cite{chirality-maps}, \cite{chiral-mix} and \cite{mix-ch}), but for our purposes the universal property of the chirality group mentioned here is more convenient.

The {\em mix} of two polytopes $\calP$ and $\calQ$ with base flags $\Phi_{\calP}$ and $\Phi_{\calQ}$, respectively, is the smallest structure $\calP \lozenge \calQ$ (which itself may or may not be a polytope) with well-defined ranks and adjacencies that covers simultaneously $\calP$ and $\calQ$, while mapping the base flag of $\calP \lozenge \calQ$ to $\Phi_{\calP}$ and $\Phi_{\calQ}$, respectively. As noted in \cite[Section 3]{k-orbit}, the choice of base flags may be relevant when performing the mix of two chiral polytopes. This is often taken into account by choosing a base flag from which to construct the standard generators of the automorphism group.

If $\calP$ and $\calQ$ are orientable rotary polytopes with $\Gamma^+(\calP) = \langle \s_1, \dots, \s_{n-1} \rangle$ and $\Gamma^+(\calQ) = \langle \s_1', \dots, \s_{n-1}' \rangle$ then $\Gamma^+(\calP \lozenge \calQ) = \langle \tau_1, \dots, \tau_{n-1} \rangle \le \Gamma^+(\calP) \times \Gamma^+(\calQ)$, where $\tau_i = (\sigma_i,\sigma_i')$. For convenience we also denote $\Gamma^+(\calP \lozenge \calQ)$ by $\Gamma^+(\calP) \lozenge \Gamma^+(\calQ)$.

The mix of two orientably regular polytopes is orientably regular. However, the mix of an orientable rotary polytope with a chiral polytope may be either orientably regular or chiral.

The next lemma relates the notions of quotient and mix of orientable rotary polytopes.

		\begin{lemma}\label{l:mixquotient}
		Let $\calP$ be an orientable rotary polytope with base flag $\Phi_0$ and let $K, N$ be normal subgroups of $\Gamma^+(\calP)$. Then
		\[\calP/(K\cap N) \cong (\calP / K) \lozenge (\calP / N),\]
		where the base flags of $\calP/K$ and $\calP/N$ are taken as $\Phi_0 \cdot K$ and $\Phi_0 \cdot N$, respectively.
		\end{lemma}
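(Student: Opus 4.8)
The plan is to prove the isomorphism $\calP/(K \cap N) \cong (\calP/K) \lozenge (\calP/N)$ at the level of rotation groups, using the universal-mapping description of the mix as a subgroup of a direct product. Write $\Gamma = \Gamma^+(\calP) = \langle \s_1, \dots, \s_{n-1} \rangle$. By the description of quotients recalled in this subsection, $\Gamma^+(\calP/K) = \langle \s_1 K, \dots, \s_{n-1} K \rangle \cong \Gamma/K$ with standard generators $\s_i K$, and similarly $\Gamma^+(\calP/N) \cong \Gamma/N$ with generators $\s_i N$; these identifications respect the chosen base flags $\Phi_0 \cdot K$ and $\Phi_0 \cdot N$. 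By the definition of the mix of two orientable rotary polytopes, $\Gamma^+((\calP/K) \lozenge (\calP/N)) = \langle \tau_1, \dots, \tau_{n-1} \rangle$ where $\tau_i = (\s_i K, \s_i N) \in (\Gamma/K) \times (\Gamma/N)$.

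First I would identify this mix group with $\Gamma/(K \cap N)$. Consider the natural diagonal homomorphism $\varphi \colon \Gamma \to (\Gamma/K) \times (\Gamma/N)$ given by $\varphi(g) = (gK, gN)$. Its image is exactly $\langle \tau_1, \dots, \tau_{n-1} \rangle$, since $\varphi(\s_i) = \tau_i$ and the $\s_i$ generate $\Gamma$. The kernel of $\varphi$ is $\{g : gK = K \text{ and } gN = N\} = K \cap N$. Hence by the first isomorphism theorem
\[
\Gamma/(K \cap N) \cong \operatorname{im}\varphi = \Gamma^+\big((\calP/K) \lozenge (\calP/N)\big),
\]
and under this isomorphism the generator $\s_i (K \cap N)$ corresponds to $\tau_i = (\s_i K, \s_i N)$. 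Thus $\Gamma^+(\calP/(K \cap N))$ and $\Gamma^+((\calP/K) \lozenge (\calP/N))$ are the same group with matching standard generators.

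Next I would promote this group isomorphism to an isomorphism of the rotary structures themselves. Since both sides are built from groups satisfying relation (\ref{eq:1stRelation}) via the construction recalled in Section~2.2 (faces as cosets of the subgroups generated by the appropriate $\mathcal{X}_{\{i\}}$, with incidence given by nonempty intersection), and since an isomorphism of rotation groups carrying $\s_i(K \cap N)$ to $\tau_i$ carries each face-defining subgroup to its counterpart and preserves the base flag, the two resulting structures are isomorphic. Here I must be slightly careful: a quotient $\calP/N$ need not itself be a polytope unless the intersection condition holds for $\{\s_i N\}$, and likewise a mix need not be a polytope. But the lemma as stated only claims an isomorphism of the underlying structures (with well-defined ranks and adjacencies), so the argument goes through regardless of whether the intersection condition (\ref{eq:eqIntCond}) is satisfied, because the coset construction produces a well-defined flagged structure from any such group.

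The main obstacle is the bookkeeping around base flags. The mix is defined only up to the choice of base flags of its factors, and the statement fixes these as $\Phi_0 \cdot K$ and $\Phi_0 \cdot N$; I must check that $\varphi$ sends the base flag of $\calP/(K \cap N)$ (namely $\Phi_0 \cdot (K \cap N)$) to the pair $(\Phi_0 \cdot K, \Phi_0 \cdot N)$ compatibly with the standard generators, so that the identification $\varphi(\s_i) = \tau_i$ is the correct one and no enantiomorphic mismatch is introduced. Once the generators are matched as above this is automatic, but it is the point where the chirality-sensitive choice of base flag noted earlier in the discussion of the mix genuinely matters, and it is worth stating explicitly.
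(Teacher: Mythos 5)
Your proof is correct and follows essentially the same route as the paper's: both arguments reduce to the observation that the natural map $\Gamma^+(\calP) \to (\Gamma^+(\calP)/K) \times (\Gamma^+(\calP)/N)$ sending $\s_i$ to $(\s_i K, \s_i N)$ has image the mix group and kernel exactly $K \cap N$. The paper packages this as a covering of structures followed by a trivial-kernel check on the induced epimorphism, while you invoke the first isomorphism theorem on the diagonal map and then rebuild the structures from the matched generators; the mathematical content is the same.
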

		
\begin{proof}
The polytope $\calP/(K\cap N)$ covers $\calP/K$ mapping a face $F \cdot (K \cap N)$ to the face $F \cdot K$. Similarly, it covers $\calP/N$. Hence $\calP/(K\cap N)$ covers the mix $(\calP / K) \lozenge (\calP / N)$.

Let $\Gamma^+(\calP)=\langle \s_1, \dots, \s_{n-1} \rangle$. Then there is a group epimorphism from $\Gamma^+(\calP/(K \cap N))$ to $\Gamma^+((\calP / K) \lozenge (\calP / N))$ mapping $\s_i \cdot (K \cap N)$ to $(\s_i \cdot K, \s_i \cdot N)$ for $i \in \{1,\dots, n-1\}$. This epimorphism sends the element $\s_{i_1} \cdots \s_{i_k} \cdot (K \cap N)$ to $(\s_{i_1} \cdots \s_{i_k} \cdot K, \s_{i_1} \cdots \s_{i_k} \cdot N)$. The latter is trivial if and only if $\s_{i_1} \cdots \s_{i_k} \in K \cap N$. Since the kernel of the epimorphism is trivial, the isomorphism holds.
\end{proof}

Given a chiral polytope $\calP$ there exists a smallest regular structure $\calR$ with well-defined ranks and adjacencies of flags that covers $\calP$ (even if this structure is not a polytope itself), in the sense that every regular polytope that covers $\calP$ also covers $\calR$. We shall call this structure the {\em smallest regular cover} of $\calP$.

Sometimes the smallest regular cover of $\calP$ is a polytope itself; for example, when the facets or the vertex-figures are regular (see \cite[Corollary 7.5]{mixing-and-monodromy}). If the smallest regular cover of $\calP$ is a polytope then it is elsewhere also called the {\em minimal regular cover} of $\calP$; otherwise, $\calP$ may have multiple polytopal regular covers that are minimal in the partial order given by the covering relation.

The smallest regular cover $\calR$ of a chiral polytope $\calP$ is the regular structure constructed (in the sense of \cite{chiral}) from the group $\Gamma(\calP) \lozenge \Gamma(\calP^*)$, where $\calP^*$ is the enantiomorph of $\calP$ (see \cite[Section 7]{mixing-and-monodromy}). We may assume that if $\Gamma(\calP) = \langle \s_1, \dots, \s_{n-1} \rangle$ then $\Gamma^+(\calR) = \langle (\s_1,\s_1^{-1}), (\s_2,\s_1^2 \s_2), (\s_3,\s_3), \ldots, (\s_{n-1},\s_{n-1})\rangle$.

We next relate the chirality group of a chiral polytope with its smallest regular cover. This is a direct consequence of \cite[Remark 7.3]{mixing-and-monodromy}.

\begin{lemma}\label{l:ChirGruIsKer}
Let $\calP$ be a chiral polytope and $\calR$ its smallest regular cover. Then $X(\calP)$ is isomorphic to the kernel of the quotient from $\Gamma^+(\calR)$ to $\Gamma(\calP)$. 
\end{lemma}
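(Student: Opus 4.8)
The plan is to realize every group in sight as a quotient of the universal rotation group $W^+ = [p_1,\dots,p_{n-1}]^+$, where $\{p_1,\dots,p_{n-1}\}$ is the type of $\calP$. Let $\phi\colon W^+\to\Gamma(\calP)$ be the epimorphism sending the generators $x_i$ of $W^+$ to $\s_i$, and set $M=\ker\phi$, so $\Gamma(\calP)=W^+/M$. The change of standard generators recorded in Section~2, namely $x_1\mapsto x_1^{-1}$, $x_2\mapsto x_1^2x_2$, and $x_i\mapsto x_i$ for $i\ge 3$, defines an automorphism $\alpha$ of $W^+$: one checks directly that it preserves the relations (\ref{eq:1stRelation}) together with the orders of the generators, and that $\alpha^2=\id$. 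Since $\phi\circ\alpha$ sends $x_i$ to the enantiomorphic generators, $\Gamma(\calP^*)=W^+/M^*$ with $M^*=\ker(\phi\circ\alpha)=\alpha(M)$.

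First I would pin down $\Gamma^+(\calR)$ and the relevant kernel. By the description recalled just before the lemma, $\Gamma^+(\calR)$ is the image of the map $(\phi,\phi\circ\alpha)\colon W^+\to\Gamma(\calP)\times\Gamma(\calP^*)$, $x_i\mapsto(\s_i,\s_i^*)$, whose kernel is $M\cap M^*$; thus $\Gamma^+(\calR)\cong W^+/(M\cap M^*)$ (consistent with Lemma~\ref{l:mixquotient}). Under this identification the quotient $\pi_1\colon\Gamma^+(\calR)\to\Gamma(\calP)$ appearing in the statement is the first-coordinate projection, that is, the natural map $W^+/(M\cap M^*)\to W^+/M$, so its kernel is $K=M/(M\cap M^*)$.

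Next I would compute $X(\calP)$ from its universal property. A normal subgroup $N\triangleleft\Gamma(\calP)$ corresponds to a normal subgroup $\tilde N\triangleleft W^+$ with $\tilde N\supseteq M$, and $\calP/N\cong W^+/\tilde N$. Such a structure is regular precisely when the enantiomorphism descends to it, i.e.\ when $\alpha(\tilde N)=\tilde N$. The smallest $\alpha$-invariant normal subgroup of $W^+$ containing $M$ is $MM^*=M\,\alpha(M)$: it is normal, it is $\alpha$-invariant because $\alpha(MM^*)=M^*M=MM^*$, and any $\alpha$-invariant normal subgroup containing $M$ must also contain $\alpha(M)=M^*$ and hence $MM^*$. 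Therefore $X(\calP)=MM^*/M$.

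It remains to compare the two subgroups. By the second isomorphism theorem, $X(\calP)=MM^*/M\cong M^*/(M\cap M^*)$, whereas $K=M/(M\cap M^*)$. Because $\alpha$ is an automorphism of $W^+$ with $\alpha(M)=M^*$ and $\alpha(M\cap M^*)=M^*\cap M=M\cap M^*$, it induces an isomorphism $M/(M\cap M^*)\xrightarrow{\sim}M^*/(M\cap M^*)$; hence $K\cong X(\calP)$, as required. The step I expect to require the most care is the criterion ``$\calP/N$ is regular if and only if $\tilde N$ is $\alpha$-invariant,'' which must be justified for the possibly non-polytopal regular structures permitted in the definition of $X(\calP)$: regularity of an orientable rotary structure amounts to the existence of the order-two automorphism of its rotation group that realizes the reflection, and this automorphism is exactly the descent of $\alpha$. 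This is where the defining (universal) properties of both $\calR$ and $X(\calP)$ enter, and it is in essence the content of the cited \cite[Remark~7.3]{mixing-and-monodromy}.
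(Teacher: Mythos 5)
Your proof is correct, but note that the paper itself gives no argument for this lemma: it is stated as a direct consequence of \cite[Remark 7.3]{mixing-and-monodromy}, so your write-up is a genuine reconstruction rather than a parallel of an in-paper proof. What you do --- realize $\Gamma(\calP) = W^+/M$ and $\Gamma(\calP^*) = W^+/M^*$ with $M^* = \alpha(M)$ for the involution $\alpha$ of $W^+$ (incidentally, $\alpha$ is just conjugation by the reflection $\rho_0$ in the full Coxeter group $[p_1,\dots,p_{n-1}]$, which is a cleaner way to see that it is an automorphism than verifying relations), identify $\Gamma^+(\calR)$ with $W^+/(M \cap M^*)$ so that the covering kernel becomes $M/(M \cap M^*)$, pin down $X(\calP) = MM^*/M$ via its universal property, and finish with the second isomorphism theorem together with the isomorphism $M/(M\cap M^*) \cong M^*/(M\cap M^*)$ induced by $\alpha$ --- is precisely the content that the cited remark packages, made explicit. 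The one step you rightly flag as delicate, that $\calP/N$ is regular if and only if the preimage $\tilde N$ is $\alpha$-invariant, is the standard criterion that an orientable rotary structure built from a group $\langle \s_1, \ldots, \s_{n-1}\rangle$ is regular exactly when $\s_1 \mapsto \s_1^{-1}$, $\s_2 \mapsto \s_1^2 \s_2$, $\s_i \mapsto \s_i$ ($i \geq 3$) extends to a group automorphism; its usual proof uses only the flag action, so it applies verbatim to the non-polytopal structures allowed in the definition of $X(\calP)$. Just be sure to invoke that criterion as an independent fact rather than through the same Remark 7.3, otherwise the argument becomes circular and gains nothing over the paper's citation. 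In short: your route buys a self-contained proof at the cost of the universal-group formalism; the paper's route buys brevity by outsourcing exactly the step you isolated.
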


The following result relates the smallest regular covers of chiral polytopes with that of one of its facets.

\begin{lemma}\label{l:mrcFacets}
Let $\calP$ be a chiral polytope with chiral facets isomorphic to $\calQ$. Then the facets of the smallest regular cover of $\calP$ are isomorphic to the smallest regular cover of $\calQ$.
\end{lemma}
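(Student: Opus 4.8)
The plan is to read the rotation group of a facet of $\calR$ directly off the explicit presentation of $\Gamma^+(\calR)$ recorded above, and to recognize it as the rotation group of the smallest regular cover of $\calQ$. Write $\Gamma(\calP) = \langle \s_1, \dots, \s_{n-1} \rangle$, so that $\Gamma^+(\calR) = \langle \tau_1, \dots, \tau_{n-1} \rangle$ with $\tau_1 = (\s_1, \s_1^{-1})$, $\tau_2 = (\s_2, \s_1^2 \s_2)$, and $\tau_i = (\s_i, \s_i)$ for $3 \le i \le n-1$. Since $\calR$ is regular, its facets are mutually isomorphic, so it suffices to treat the base facet, whose rotation group is the subgroup $\langle \tau_1, \dots, \tau_{n-2} \rangle$ generated by all the standard generators except the last one.

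Next I would identify the two factors into which this subgroup projects. The base facet of $\calP$ is $\calQ$, and its rotation group is $\langle \s_1, \dots, \s_{n-2} \rangle$ with standard generators $\s_1, \dots, \s_{n-2}$; hence the first projection of $\langle \tau_1, \dots, \tau_{n-2} \rangle$ is $\Gamma^+(\calQ)$. For the second projection I would argue that the base facet of $\calP^*$ is exactly $\calQ^*$: forming the enantiomorph replaces the base flag of $\calP$ by its $0$-adjacent, and since $\s_1$ and $\s_2$ (the only generators altered by the reflection) do not involve the facet $F_{n-1}$, this leaves the base facet fixed while replacing the base flag inside it by its $0$-adjacent, which is precisely the operation defining $\calQ^*$. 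Consequently the second projection of $\langle \tau_1, \dots, \tau_{n-2} \rangle$ is $\langle \s_1^{-1}, \s_1^2 \s_2, \s_3, \dots, \s_{n-2} \rangle = \Gamma^+(\calQ^*)$, using that the standard generators of $\calQ^*$ are obtained from those of $\calQ$ by the same reflection $\s_1 \mapsto \s_1^{-1}$, $\s_2 \mapsto \s_1^2 \s_2$, $\s_i \mapsto \s_i$ that produces the generators of $\calP^*$ from those of $\calP$.

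I would then apply the description of the smallest regular cover a second time, now to $\calQ$. Since $\Gamma^+(\calQ) = \langle \s_1, \dots, \s_{n-2} \rangle$, its smallest regular cover $\calR_\calQ$ has $\Gamma^+(\calR_\calQ) = \Gamma^+(\calQ) \lozenge \Gamma^+(\calQ^*) = \langle (\s_1, \s_1^{-1}), (\s_2, \s_1^2 \s_2), (\s_3, \s_3), \dots, (\s_{n-2}, \s_{n-2}) \rangle$, and term by term these generators are exactly $\tau_1, \dots, \tau_{n-2}$. Thus the base facet of $\calR$ and the cover $\calR_\calQ$ are built, in the sense of \cite{chiral}, from the same group with the same ordered set of standard generators, so they are isomorphic as regular structures; since all facets of $\calR$ are isomorphic, this proves the lemma.

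The step I expect to be the crux is the identification of the second projection with $\Gamma^+(\calQ^*)$, because it amounts to checking that passing to a facet and forming the enantiomorph commute, with compatible choices of base flags on the two factors of the mix. This works precisely because the reflection alters only $\s_1$ and $\s_2$, both of which survive in the facet (here $n \ge 4$, so $n-2 \ge 2$), while the remaining facet generators $\s_3, \dots, \s_{n-2}$ are left unchanged; I would take care to record that the base flags of $\calQ$ and $\calQ^*$ used in the mix are those inherited from $\calP$ and $\calP^*$, so that the mix genuinely yields $\calR_\calQ$ rather than some other structure obtained from an inconsistent pairing of base flags.
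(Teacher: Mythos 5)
Your proposal is correct and follows essentially the same route as the paper's proof: both read off $\Gamma^+(\calR_\calP) = \Gamma(\calP) \lozenge \Gamma(\calP^*)$ with its explicit generators, identify the rotation group of the base facet as the subgroup $\langle \tau_1, \dots, \tau_{n-2} \rangle$, and recognize this, generator by generator, as $\Gamma^+(\calQ) \lozenge \Gamma^+(\calQ^*) = \Gamma^+(\calR_\calQ)$. The only difference is that you spell out the step the paper leaves implicit, namely that passing to the base facet commutes with forming the enantiomorph (so the second factor really is $\Gamma^+(\calQ^*)$ with the inherited base flag), which is a worthwhile clarification but not a different argument.
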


\begin{proof}
Since the facets of $\calP$ are chiral, $\calP$ has rank $n \ge 4$.

Let $\Gamma(\calP)= \langle \sigma_1, \dots, \sigma_{n-1} \rangle$, let $\calR_{\calP}$ be the smallest regular cover of $\calP$, and let $\calR_{\calQ}$ be the smallest regular cover of $\calQ$. Then $\Gamma^+(\calR_{\calP})=\langle \sigma_1, \dots, \sigma_{n-1} \rangle \lozenge \langle \sigma_1^{-1}, \sigma_1^2 \sigma_2, \sigma_3, \dots, \sigma_{n-1}\rangle$ and $\Gamma^+(\calR_{\calQ}) = \langle \sigma_1, \dots, \sigma_{n-2} \rangle \lozenge \langle \sigma_1^{-1}, \sigma_1^2 \sigma_2, \sigma_3, \dots, \sigma_{n-2}\rangle$. Since the orientation preserving automorphism group of the facet of $\calR_P$ is $\Gamma^+(\calR_{\calQ})$, the lemma holds.
\end{proof}

We conclude this section with a result that relates the chirality group of a chiral polytope $\calP$ with that of its facets.

		\begin{lemma}\label{l:chirGroupFacet}
		Let $\calP$ be a chiral polytope with chiral facets isomorphic to $\calQ$. Then $X(\calQ) \le X(\calP)$.
		\end{lemma}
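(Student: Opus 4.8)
The goal is to show $X(\calQ) \le X(\calP)$ when $\calP$ is a chiral polytope whose facets are chiral and isomorphic to $\calQ$. The natural route is through the descriptions of the chirality group and the smallest regular cover furnished by Lemmas~\ref{l:ChirGruIsKer}, \ref{l:mrcFacets}, and the mix formula for rotation groups. Let me set up notation following those lemmas: write $\Gamma^+(\calP) = \langle \s_1, \dots, \s_{n-1}\rangle$ and let $\calR_{\calP}$, $\calR_{\calQ}$ be the smallest regular covers of $\calP$ and $\calQ$. By the displayed formulas in the proof of Lemma~\ref{l:mrcFacets}, $\Gamma^+(\calR_{\calP}) = \langle \s_1,\dots,\s_{n-1}\rangle \mix \langle \s_1^{-1}, \s_1^2\s_2, \s_3,\dots,\s_{n-1}\rangle$ and $\Gamma^+(\calR_{\calQ})$ is the analogous mix truncated at index $n-2$, which by Lemma~\ref{l:mrcFacets} is exactly the rotation group of the facet of $\calR_{\calP}$, namely $\langle \tau_1,\dots,\tau_{n-2}\rangle$ where $\tau_i = (\s_i, \s_i^{\pm})$ are the standard generators of $\Gamma^+(\calR_{\calP})$.

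The plan is as follows. First I would invoke Lemma~\ref{l:ChirGruIsKer} twice: $X(\calP)$ is isomorphic to the kernel $K_{\calP}$ of the natural epimorphism $\pi_{\calP} \colon \Gamma^+(\calR_{\calP}) \to \Gamma(\calP)$ sending $\tau_i = (\s_i,\s_i') \mapsto \s_i$, and likewise $X(\calQ) \cong K_{\calQ} = \ker\bigl(\pi_{\calQ}\colon \Gamma^+(\calR_{\calQ}) \to \Gamma(\calQ)\bigr)$. Since $\Gamma^+(\calR_{\calQ}) = \langle \tau_1,\dots,\tau_{n-2}\rangle$ sits inside $\Gamma^+(\calR_{\calP}) = \langle \tau_1,\dots,\tau_{n-1}\rangle$ as the subgroup generated by the first $n-2$ standard generators, and the facet projection $\Gamma^+(\calP) \supseteq \Gamma^+(\calQ) = \langle \s_1,\dots,\s_{n-2}\rangle$ behaves compatibly, the restriction of $\pi_{\calP}$ to $\Gamma^+(\calR_{\calQ})$ agrees with $\pi_{\calQ}$. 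The second step is then to observe that an element lies in $K_{\calQ}$ iff it lies in $\langle \tau_1,\dots,\tau_{n-2}\rangle$ and is killed by $\pi_{\calP}$, so $K_{\calQ} = K_{\calP} \cap \Gamma^+(\calR_{\calQ}) \subseteq K_{\calP}$. Transporting back through the isomorphisms of Lemma~\ref{l:ChirGruIsKer} yields $X(\calQ) \le X(\calP)$.

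Concretely, I would make the identification explicit by describing kernels in the mix. An element of $K_{\calP}$ is of the form $(g, h) \in \Gamma^+(\calP) \times \Gamma^+(\calP^*)$ lying in the mix with $g = 1$; equivalently it is $(1, h)$ with $h$ in the image of the chirality data. Restricting to $\Gamma^+(\calR_{\calQ})$ means requiring, in addition, that the element be a word in $\tau_1, \dots, \tau_{n-2}$ only, i.e. that both coordinates be words in $\s_1,\dots,\s_{n-2}$ and $\s_1^{-1}, \s_1^2\s_2, \s_3, \dots, \s_{n-2}$ respectively. This is precisely the defining condition for membership in $K_{\calQ}$, so the set containment $K_{\calQ} \subseteq K_{\calP}$ is immediate once the generators are matched up; the only thing to verify is that the embedding $\Gamma^+(\calR_{\calQ}) \hookrightarrow \Gamma^+(\calR_{\calP})$ commutes with the two quotient maps, which is a diagram-chase on generators.

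The main obstacle I anticipate is justifying the claim that $X(\calQ)$, defined abstractly via its universal property inside $\Gamma(\calQ)$, really is carried isomorphically to $K_{\calP} \cap \Gamma^+(\calR_{\calQ})$ rather than to something merely isogenous. In other words, the delicate point is checking that the restriction of $\pi_{\calP}$ to the facet subgroup $\Gamma^+(\calR_{\calQ})$ coincides with the cover map $\pi_{\calQ}$ for $\calQ$ itself and not for some other quotient of $\Gamma^+(\calR_{\calQ})$; this hinges on Lemma~\ref{l:mrcFacets} guaranteeing that the facet of $\calR_{\calP}$ is genuinely $\calR_{\calQ}$ (with matching standard generators and matching base flag), after which the isomorphisms supplied by Lemma~\ref{l:ChirGruIsKer} make the inclusion of chirality groups functorial under passage to facets. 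Once that compatibility is nailed down, the containment is a formal consequence.
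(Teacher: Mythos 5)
Your proof is correct and takes essentially the same route as the paper's: both use Lemma~\ref{l:mrcFacets} to identify the facets of $\calR_{\calP}$ with $\calR_{\calQ}$, then Lemma~\ref{l:ChirGruIsKer} to realize $X(\calQ)$ and $X(\calP)$ as the kernels of the natural coverings $\eta_{\calQ}$ and $\eta_{\calP}$, and conclude from the containment of kernels. You actually supply more detail than the paper, whose proof simply asserts $\ker\eta_{\calQ} \subseteq \ker\eta_{\calP}$; your generator-matching argument inside the mix (identifying $\G^+(\calR_{\calQ})$ with $\langle \tau_1,\dots,\tau_{n-2}\rangle$ and checking that $\eta_{\calP}$ restricts to $\eta_{\calQ}$) is precisely the justification the paper leaves implicit.
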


		\begin{proof}
		Let $\calR_{\calP}$ be the smallest regular cover of $\calP$, and let $\calR_{\calQ}$ be the smallest regular
		cover of $\calQ$. Then, by Lemma \ref{l:mrcFacets}, the facets of $\calR_{\calP}$ are isomorphic to $\calR_{\calQ}$.
		By Lemma \ref{l:ChirGruIsKer}, $X(\calQ)$ is the kernel of the natural covering $\eta_{\calQ}$ from $\G^+(\calR_{\calQ})$ to $\G^+(\calQ)$,
		whereas $X(\calP)$ is the kernel of the natural covering $\eta_{\calP}$ from $\G^+(\calR_{\calP})$ to $\G^+(\calP)$.
		Since the kernel of $\eta_{\calQ}$ is contained in the kernel of $\eta_{\calP}$,
		the result follows.
		\end{proof}

	\subsection{Tight polytopes}

		A polytope of type $\{p_1, p_2, \ldots, p_{n-1}\}$ has at least $2p_1 p_2 \cdots p_{n-1}$ flags,
		and if it has exactly that many flags, we say it is \emph{tight} \cite[Prop. 3.3]{tight-polytopes}.

The first mention of the property of tightness occured in \cite{smallest-regular}, while searching for the smallest regular polytopes of each rank. There it was proven that for $n \ge 4$, the regular $n$-polytopes with fewest flags are always tight. Their study was extended in \cite{tight-polytopes} to equivelar polytopes that may not be regular. In particular, it was proven there that an equivelar polytope is tight if and only if every section of rank $3$ is flat. It follows that every section of a tight polytope is itself tight. The following lemma is a natural consequence of this fact.

\begin{lemma}\label{l:flatflat}
Let $\calP$ and $\calQ$ be tight rotary polytopes with types $\{p,q\}$ and $\{q,r\}$, respectively. Suppose that $\Gamma(\calP)= [p,q]^+ /N_1$ and $\Gamma(\calQ) = [q,r]^+ /N_2$ where $N_1 \triangleleft [p,q]^+$ and $N_2 \triangleleft [q,r]^+$ are subgroups induced by the sets of relations $R_1$ and $R_2$, respectively. Then a rotary $4$-polytope with facets isomorphic to $\calP$ and vertex-figures isomorphic to $\calQ$ exists if and only if the group $[p,q,r]^+/ N_3$ has order $pqr$ and satisfies the intersection condition (\ref{eq:eqIntCond}), where $N_3$ is the subgroup induced by the relations in $R_1$ in the first two generators and the relations $R_2$ in the last two generators. Moreover, such a $4$-polytope must be unique.
\end{lemma}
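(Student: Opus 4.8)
The plan is to realize the sought $4$-polytope as the one determined by the group $G_0 := [p,q,r]^+/N_3 = \langle \sigma_1, \sigma_2, \sigma_3\rangle$ and to reduce both implications to a count of $|G_0|$; throughout I read $\Gamma(\calP)$ and $\Gamma(\calQ)$ as the rotation groups $[p,q]^+/N_1$ and $[q,r]^+/N_2$. The single most important step, which I would prove first and then use in both directions, is the \emph{unconditional} bound $|G_0| \le pqr$.

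To obtain this bound I would first translate tightness into group theory. A tight polyhedron is flat, and flatness of $\calP$ is equivalent to the factorization $\Gamma^+(\calP) = \langle\sigma_1\rangle\langle\sigma_2\rangle$, the product being a bijective set product by the triviality of $\langle\sigma_1\rangle\cap\langle\sigma_2\rangle$ from Lemma \ref{l:IntAi}; thus every element of $\Gamma^+(\calP)$ is $\sigma_1^a\sigma_2^b$, and likewise every element of $\Gamma^+(\calQ)$ is $\sigma_2^c\sigma_3^d$. In $G_0$ the relations $R_1$ hold among $\sigma_1,\sigma_2$ and $R_2$ among $\sigma_2,\sigma_3$, so $H := \langle\sigma_1,\sigma_2\rangle$ and $K := \langle\sigma_2,\sigma_3\rangle$ are epimorphic images of $\Gamma^+(\calP)$ and $\Gamma^+(\calQ)$; pushing the two factorizations through these epimorphisms gives $H = \langle\sigma_1\rangle\langle\sigma_2\rangle$ and $K = \langle\sigma_2\rangle\langle\sigma_3\rangle$ as sets. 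I would then prove that $T := \langle\sigma_1\rangle\langle\sigma_2\rangle\langle\sigma_3\rangle$ is closed under left multiplication by each $\sigma_i$. Closure under $\sigma_1$ and $\sigma_2$ is immediate from $H = \langle\sigma_1\rangle\langle\sigma_2\rangle$. For $\sigma_3$ I would use the relation $\sigma_3\sigma_1^a = (\sigma_1\sigma_2^2)^a\sigma_3$, which is derived inside the proof of Lemma \ref{l:rels1s3} using only (\ref{eq:1stRelation}) and therefore holds in $G_0$: this lets me rewrite $\sigma_3\sigma_1^a\sigma_2^b = (\sigma_1\sigma_2^2)^a(\sigma_3\sigma_2^b)$, and then use $(\sigma_1\sigma_2^2)^a \in H$ together with $\sigma_3\sigma_2^b \in K$ to renormalize the result back into $\langle\sigma_1\rangle\langle\sigma_2\rangle\langle\sigma_3\rangle$. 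Since $\id \in T$ and the $\sigma_i$ generate $G_0$, closure forces $G_0 = T$, and hence $|G_0| \le pqr$.

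For the backward implication, assume $|G_0| = pqr$ and that $G_0$ satisfies (\ref{eq:eqIntCond}). From $G_0 = H\langle\sigma_3\rangle$ with $|H| \le pq$ and $\mathrm{ord}(\sigma_3) \le r$, the chain $pqr = |G_0| \le |H|\cdot\mathrm{ord}(\sigma_3) \le pqr$ forces $|H| = pq$, $\mathrm{ord}(\sigma_3) = r$, and $H \cap \langle\sigma_3\rangle$ trivial; the symmetric computation with $G_0 = \langle\sigma_1\rangle K$ gives $|K| = qr$. Therefore the epimorphisms $\Gamma^+(\calP) \to H$ and $\Gamma^+(\calQ) \to K$ are isomorphisms, so the facet and vertex-figure subgroups of $G_0$ are exactly $\Gamma^+(\calP)$ and $\Gamma^+(\calQ)$. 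Since $G_0$ also satisfies (\ref{eq:1stRelation}) by construction, the correspondence between such groups and orientable rotary polytopes makes $G_0$ the rotation group of a rotary $4$-polytope of type $\{p,q,r\}$ with facets $\cong \calP$ and vertex-figures $\cong \calQ$.

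Conversely, suppose a rotary $4$-polytope $\calR$ with facets $\cong\calP$ and vertex-figures $\cong\calQ$ exists. Its only rank-$3$ sections are its facets and vertex-figures, which are tight and hence flat, so $\calR$ is tight by the criterion of \cite{tight-polytopes} and $|\Gamma^+(\calR)| = pqr$. Because the facet and vertex-figure subgroups of $\calR$ are $\Gamma^+(\calP)$ and $\Gamma^+(\calQ)$, the relations $R_1$ and $R_2$ hold in $\Gamma^+(\calR)$, giving an epimorphism $G_0 \twoheadrightarrow \Gamma^+(\calR)$; with $|\Gamma^+(\calR)| = pqr \ge |G_0|$ and the bound $|G_0| \le pqr$, this map is an isomorphism, so $|G_0| = pqr$ and $G_0$ inherits (\ref{eq:eqIntCond}) from $\calR$. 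Finally, any rotary $4$-polytope with the prescribed facets and vertex-figures has rotation group an epimorphic image of $G_0$ of order $pqr = |G_0|$, hence isomorphic to $G_0$ compatibly with the generators, and since a rotary polytope is recovered from its rotation group with its standard generators (the construction of \cite{chiral}), the polytope is unique. I expect the closure computation for $T$---moving $\sigma_3$ past an element of $H$ and renormalizing into $\langle\sigma_1\rangle\langle\sigma_2\rangle\langle\sigma_3\rangle$---to be the only delicate point; the rest is bookkeeping with subgroup orders and the universal property of $G_0$.
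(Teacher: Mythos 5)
Your proof is correct, but there is nothing in the paper to compare it against: the paper states Lemma~\ref{l:flatflat} with no proof at all, presenting it as ``a natural consequence'' of the criterion from \cite{tight-polytopes} that an equivelar polytope is tight if and only if all of its rank-$3$ sections are flat. Your argument supplies exactly what that remark leaves out. Its real technical content is the unconditional bound $|G_0| \le pqr$ for $G_0 = [p,q,r]^+/N_3$: you push the tight factorizations $\Gamma^+(\calP) = \langle\s_1\rangle\langle\s_2\rangle$ and $\Gamma^+(\calQ) = \langle\s_2\rangle\langle\s_3\rangle$ from Proposition~\ref{prop:tight-group-factoring} into $G_0$ through the obvious epimorphisms onto $\langle\s_1,\s_2\rangle$ and $\langle\s_2,\s_3\rangle$, and then verify that the set $\langle\s_1\rangle\langle\s_2\rangle\langle\s_3\rangle$ is closed under left multiplication by the generators; the only nontrivial case is $\s_3$, handled by $\s_3\s_1^a = (\s_1\s_2^2)^a\s_3$, which, as you note, follows from (\ref{eq:1stRelation}) alone (it is the computation (\ref{eq:magicalRelation})) and therefore holds in $G_0$ and not just in rotation groups of polytopes. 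That bound is what forces the natural epimorphism from $G_0$ onto the rotation group of any $4$-polytope $\calR$ with the prescribed facets and vertex-figures to be an isomorphism, which is what drives both the ``only if'' direction and the uniqueness claim; and your forward direction (tight facets and vertex-figures are flat, so $\calR$ is tight and $|\Gamma^+(\calR)| = pqr$) is precisely the route the paper's remark gestures at. The backward direction, via order counting in $G_0 = \langle\s_1,\s_2\rangle\langle\s_3\rangle = \langle\s_1\rangle\langle\s_2,\s_3\rangle$ and the group-to-polytope correspondence of \cite{chiral}, is also sound.

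One caveat, which is an imprecision inherited from the paper's own statement rather than a gap you introduced: when $\calP$ or $\calQ$ is chiral, ``facets isomorphic to $\calP$'' does not by itself determine which enantiomorphic system of standard generators the facet subgroup of $\Gamma^+(\calR)$ carries, since $\calP$ and $\enant{\calP}$ are isomorphic as abstract polytopes. Your forward direction tacitly assumes that the isomorphisms respect base flags, so that the standard generators of the facet subgroup satisfy $R_1$ itself rather than its mirrored form (and similarly for $R_2$); under the literal unoriented reading one would have to allow $N_3$ to be replaced by the kernel built from $R_1$ together with the enantiomorphic form of $R_2$, and uniqueness would hold only up to that choice. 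Since the paper applies the lemma in the oriented sense (see Lemma~\ref{lem:perm-rep}, where the amalgamation is taken with respect to fixed generating sets), your reading is the intended one, but the convention deserves to be stated explicitly.
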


Tight regular and chiral polyhedra were studied more deeply in \cite{tight2}, \cite{tight-chiral-polyhedra} and \cite{tight3}. We summarize relevant results on these polyhedra in Section \ref{s:polyhedra}. Some results on regular polytopes of higher ranks can be found in \cite{tight2}.

 The next proposition summarizes Corollary 3.4 and Theorem 3.5 of \cite{non-flat}.

\begin{proposition}\label{prop:facvfChiral}
 \begin{itemize}
  \item[(a)] If $\calP$ is a tight chiral $4$-polytope then it has chiral facets or chiral vertex-figures (or both).
  \item[(b)] If $\calP$ is a tight chiral $5$-polytope then it has chiral facets, vertex-figures, and medial sections.
  \item[(c)] There are no tight chiral $n$-polytopes for $n \ge 6$.
 \end{itemize}
\end{proposition}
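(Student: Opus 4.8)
The plan is to reduce regularity to a single automorphism criterion and then to track, rank by rank, the one location where chirality can persist. An orientable rotary polytope with $\Gamma^+(\calP)=\langle\s_1,\dots,\s_{n-1}\rangle$ is regular exactly when the map
\[ \rho\colon \s_1\mapsto \s_1^{-1},\quad \s_2\mapsto \s_1^2\s_2,\quad \s_i\mapsto\s_i\ (i\ge 3) \]
extends to an automorphism of $\Gamma^+(\calP)$, since its images are precisely the standard generators built from an adjacent base flag (so the two forms coincide iff $\calP$ is regular). I would first note that $\rho$ automatically respects every relation in (\ref{eq:1stRelation}) and every order relation $\s_i^{p_i}=\id$: the enantiomorphic generators are themselves standard generators of a rotary polytope of the same type $\{p_1,\dots,p_{n-1}\}$, so $(\s_1^2\s_2)^{p_2}=\id$ and $(\s_1^2\s_2\cdots\s_j)^2=\id$ hold, while $\rho$ fixes each product $\s_1\s_2\cdots\s_j$ and each $\s_i$ with $i\ge 3$. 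Thus the only relations $\rho$ might violate are the extra relations that cut $\Gamma^+(\calP)$ down from $[p_1,\dots,p_{n-1}]^+$, and here tightness is decisive: since $\calP$ is tight iff every rank-$3$ section is flat, these extra relations can be taken supported on consecutive pairs $\langle\s_i,\s_{i+1}\rangle$, one per flat section $[F_{i-1},F_{i+2}]$. Establishing this localization, by iterating the amalgamation of Lemma~\ref{l:flatflat}, is the main technical point; granting it, everything else is bookkeeping about which pairs $\rho$ respects.

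For part (a), suppose a tight chiral $4$-polytope had both regular facets $\calK$ and regular vertex-figures $\calL$. By Lemma~\ref{l:flatflat} there is a unique rotary $4$-polytope with these facets and vertex-figures. But the defining relations of $\calK$ and $\calL$ are reflection-symmetric relations in the full Coxeter groups $[p_1,p_2]$ and $[p_2,p_3]$, so the amalgam is a quotient of $[p_1,p_2,p_3]$ by reflection-invariant relations and is therefore regular; by uniqueness $\calP$ equals it and is regular, contradicting chirality. Hence a tight chiral $4$-polytope has chiral facets or chiral vertex-figures.

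For parts (b) and (c) I would use that the $(n-2)$-faces of a chiral polytope are regular. The $(n-2)$-face $[F_{-1},F_{n-2}]$ has rotation group $\langle\s_1,\dots,\s_{n-3}\rangle$, and its regularity says precisely that $\rho$ restricts to an automorphism of this subgroup. Consider the extra-relation pairs $\langle\s_i,\s_{i+1}\rangle$ for $1\le i\le n-2$: if $i\ge 3$ then $\rho$ fixes both generators and so preserves that relation, while if $i\le n-4$ the pair lies in the regular $(n-2)$-face, where $\rho$ is an automorphism and again preserves the relation. When $n\ge 6$, every $i\in\{1,\dots,n-2\}$ satisfies $i\le n-4$ or $i\ge 3$, so $\rho$ preserves all relations, extends to $\Gamma^+(\calP)$, and forces $\calP$ to be regular; this proves (c).

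For part (b), with $n=5$ the same dichotomy leaves exactly one uncovered pair, the medial pair $\langle\s_2,\s_3\rangle$ coming from the medial section $\calM=[F_0,F_4]$. Were $\calM$ regular, the facet $\calK=\langle\s_1,\s_2,\s_3\rangle$ would have regular facets (the regular $3$-faces $\langle\s_1,\s_2\rangle$) and regular vertex-figure $\calM$, hence be regular by part (a); then $\rho$ would restrict to an automorphism of $\langle\s_1,\s_2,\s_3\rangle$, cover the missing pair, and force $\calP$ regular. So $\calM$ is chiral, and since $\calM$ is a section of both the facet $\calK$ and the vertex-figure $\calL=[F_0,F_5]$, neither of these rotary polytopes can be regular; thus both are chiral, giving all three required chiral sections. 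The step I expect to be hardest is the localization of the extra relations to consecutive pairs asserted in the first paragraph, since it is exactly what makes $\rho$'s action checkable pair-by-pair; once that is secured, the regularity of the $(n-2)$-faces together with the single counting dichotomy $i\le n-4$ versus $i\ge 3$ yields all three parts.
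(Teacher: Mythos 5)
There is no internal proof in the paper to compare against: the proposition is imported wholesale from \cite{non-flat} (Corollary 3.4 and Theorem 3.5), so your argument stands or falls on its own. Your architecture is coherent and the numerology is exactly right: regularity of an orientable rotary polytope is equivalent to the map $\rho$ extending to a group automorphism, tightness should localize the defining relations to consecutive pairs $\langle \s_i,\s_{i+1}\rangle$, $\rho$ fixes the pairs with $i \ge 3$, the regular $(n-2)$-face handles the pairs with $i \le n-4$, and the dichotomy fails to cover everything only when $n \le 5$, leaving precisely the medial pair in rank $5$. But two steps are genuinely missing, and one of them is the crux of the whole statement. The serious gap is in part (a), on which your part (b) also leans. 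You claim that since the facet and vertex-figure relations are reflection-invariant in $[p_1,p_2]$ and $[p_2,p_3]$, ``the amalgam is a quotient of $[p_1,p_2,p_3]$ by reflection-invariant relations and is therefore regular.'' This is a non sequitur. What regularity requires is that the normal closure $N_3$ of $R_1 \cup R_2$ in $[p_1,p_2,p_3]^+$ be invariant under the global map $\rho$, and $\rho$ sends $\s_2 \mapsto \s_1^2\s_2$: a vertex-figure relation $w(\s_2,\s_3)$ goes to $w(\s_1^2\s_2,\s_3)$, a word that leaves $\langle \s_2,\s_3\rangle$ entirely, and whether it lies in $N_3$ is precisely the content of part (a), not something conferred by reflection-invariance of $R_2$ inside $[p_2,p_3]$. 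The step can be repaired: in $[p_1,p_2,p_3]^+$ one has $\rho(\s_2) = \s_1\s_2^{-1}\s_1^{-1}$ (from $(\s_1\s_2)^2 = \id$) and $\rho(\s_3) = \s_1(\s_2^2\s_3)\s_1^{-1}$ (from (\ref{eq:magicalRelation})), hence $\rho\bigl(w(\s_2,\s_3)\bigr) = \s_1\, w(\s_2^{-1},\s_2^2\s_3)\, \s_1^{-1}$, and $w(\s_2^{-1},\s_2^2\s_3) \in N_2$ because $\s_2 \mapsto \s_2^{-1}$, $\s_3 \mapsto \s_2^2\s_3$ is the vertex-figure's \emph{own} reflection map, which preserves $N_2$ by regularity of $\calL$. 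That conjugation identity is the entire point of the proof, and nothing equivalent to it appears in your write-up; without it, parts (a) and (b) are unproven.

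The second gap is the localization itself, which you rightly flag as the main technical point but never establish. At rank $4$ it does follow from the uniqueness assertion of Lemma~\ref{l:flatflat}, but that lemma is stated only for rank $4$, and ``iterating'' it to conclude that the group of a tight rotary $5$- or $n$-polytope is presented over $[p_1,\dots,p_{n-1}]^+$ by relations supported on consecutive pairs requires a rank-$n$ amalgamation lemma: in effect, a proof that the amalgam of the (inductively localized) facet and vertex-figure presentations is itself tight, say by a rewriting argument using $\s_{i+2}\s_i = \s_i\s_{i+1}^2\s_{i+2}$ and the commutation of $\s_i$ with $\s_j$ for $j \ge i+3$. This appears neither in the paper nor in your proposal, and parts (b) and (c) rest entirely on it. Both gaps look fillable, so the route is viable, but as written the proof is incomplete at exactly its two load-bearing points.
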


		Since we shall work with the automorphism groups of chiral polytopes in place of the polytopes themselves, it
		is useful to have a characterization of tightness that is entirely group-theoretic.

		\begin{proposition} \label{prop:tight-group-factoring}
		Suppose that $\calP$ is an orientable rotary $n$-polytope of type $\{p_1, \ldots, p_{n-1}\}$, 
		with $\G^+(\calP) = \langle \s_1, \ldots, \s_{n-1} \rangle$. Then the following are equivalent:
		\begin{enumerate}
		\item $\calP$ is tight.
		\item $|\G^+(\calP)| = p_1 \cdots p_{n-1}$.
		\item $\G^+(\calP) = \langle \s_1 \rangle \cdots \langle \s_{n-1} \rangle$.
		\end{enumerate}
		\end{proposition}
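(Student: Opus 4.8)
The plan is to prove the chain of equivalences $(1) \Leftrightarrow (2) \Leftrightarrow (3)$, where the equivalence $(1) \Leftrightarrow (2)$ follows almost immediately from the definitions, leaving the genuine work in establishing $(2) \Leftrightarrow (3)$.

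First I would dispose of $(1) \Leftrightarrow (2)$. Since $\calP$ is chiral or orientably regular, its automorphism group acts on flags with either one or two orbits, each of size $|\G^+(\calP)|$; thus the total number of flags is $2|\G^+(\calP)|$ in the chiral case and $|\G^+(\calP)|$ or $2|\G^+(\calP)|$ in the regular case (orientable versus not). In every orientable rotary situation the number of flags equals $2|\G^+(\calP)|$ (recall $\G^+$ has index $2$ in the full flag-transitive group for orientably regular polytopes, and equals the full group acting with two orbits for chiral ones). Comparing with the defining tightness bound of $2p_1 \cdots p_{n-1}$ flags, tightness is exactly the statement that $2|\G^+(\calP)| = 2p_1 \cdots p_{n-1}$, i.e. $|\G^+(\calP)| = p_1 \cdots p_{n-1}$. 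This gives $(1) \Leftrightarrow (2)$ cleanly.

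The core of the argument is $(2) \Leftrightarrow (3)$. For the direction $(3) \Rightarrow (2)$, I would observe that the product set $\langle \s_1 \rangle \cdots \langle \s_{n-1} \rangle$ always has cardinality at most $p_1 \cdots p_{n-1}$, since each factor $\langle \s_i \rangle$ has order $p_i$ (the order of $\s_i$ equals the Schl\"afli entry $p_i$). If this product set is all of $\G^+(\calP)$, then $|\G^+(\calP)| \le p_1 \cdots p_{n-1}$; combined with the universal lower bound $|\G^+(\calP)| \ge p_1 \cdots p_{n-1}$ coming from the flag count, we get equality, which is $(2)$. The reverse direction $(2) \Rightarrow (3)$ is where the real content lies. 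The product set is contained in the group, so I must show that under the order hypothesis the containment forces the product to fill out the whole group. The natural approach is to show that the product set $S = \langle \s_1 \rangle \cdots \langle \s_{n-1} \rangle$ has exactly $p_1 \cdots p_{n-1}$ elements (equivalently, that the product map is a bijection onto its image), because then $|S| = |\G^+(\calP)|$ with $S \subseteq \G^+(\calP)$ forces $S = \G^+(\calP)$.

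The main obstacle is therefore establishing that the factorization is \emph{irredundant}, i.e. that distinct tuples $(\s_1^{a_1}, \dots, \s_{n-1}^{a_{n-1}})$ with $0 \le a_i < p_i$ yield distinct products. The key tool is the intersection condition in the form of Lemma \ref{l:IntAi}: the relation $\langle \s_1, \dots, \s_i \rangle \cap \langle \s_j, \dots, \s_k \rangle = \langle \s_j, \dots, \s_i \rangle$ controls exactly the kinds of overlaps that could cause two different products to collapse. I would argue by induction on $n$, writing $\G^+(\calP) = \langle \s_1, \dots, \s_{n-2} \rangle \cdot \langle \s_{n-1} \rangle$ and using $\langle \s_1, \dots, \s_{n-2} \rangle \cap \langle \s_{n-1} \rangle = \{\id\}$ (a special case of the intersection condition, since these index ranges are disjoint) to see that the last factor contributes a clean coset decomposition of size $p_{n-1}$, so that $|S| = p_{n-1} \cdot |\langle \s_1 \rangle \cdots \langle \s_{n-2} \rangle|$; the inductive hypothesis applied to the facet subgroup $\langle \s_1, \dots, \s_{n-2} \rangle$ then finishes the count. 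The subtlety to watch is that the inductive hypothesis must be applied not to a polytope but to the subgroup generated by the first $n-2$ standard generators, using that this subgroup inherits the intersection condition and that $|\langle \s_1, \dots, \s_{n-2}\rangle|$ is forced to equal $p_1 \cdots p_{n-2}$ precisely because the full product decomposes as a product of $p_{n-1}$ cosets each a copy of $\langle \s_1 \rangle \cdots \langle \s_{n-2}\rangle$.
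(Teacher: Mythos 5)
Your proof is correct and follows essentially the same route as the paper's: both reduce (a)$\Leftrightarrow$(b) to the fact that an orientable rotary polytope has $2|\G^+(\calP)|$ flags, and both establish (b)$\Leftrightarrow$(c) by using the intersection condition to show that the product set $\langle\s_1\rangle\cdots\langle\s_{n-1}\rangle$ has exactly $p_1\cdots p_{n-1}$ elements. The only cosmetic difference is that you peel off the factor $\langle\s_{n-1}\rangle$ on the right and induct on $n$, while the paper runs the identical count from the left via the sets $S_i = \langle\s_i\rangle\cdots\langle\s_{n-1}\rangle$.
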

		
		\begin{proof}
		The equivalence of (a) and (b) follows from the fact that $|\G^+(\calP)|$ is equal to half the number of flags.
		
		Next we show that (b) and (c) are equivalent. For each $1 \leq i \leq n-1$, let
		\[ S_i = \langle \s_i \rangle \cdots \langle \s_{n-1} \rangle. \]
		Then $|S_{n-1}| = p_{n-1}$, and for $i < n-1$,
		\[ S_i = \langle \s_i \rangle S_{i+1}. \]
		Therefore,
		\[ |S_i| = \frac{|\langle \s_i \rangle| \cdot |S_{i+1}|}{|\langle \s_i \rangle \cap S_{i+1}|}, \]
		and since $\G^+(\calP)$ satisfies the intersection condition (\ref{eq:eqIntCond}), the intersection on bottom
		is trivial, and so
		\[ |S_i| = p_i \cdot |S_{i+1}|. \]
		It follows that $|S_1| = p_1 \cdots p_{n-1}$. This shows that (c) implies (b).

Conversely,
		if $|\G^+(\calP)| = p_1 \cdots p_{n-1}$, then $\G^+(\calP)$ has the same order as its subset $S_1$,
		which implies that $\G^+(\calP) = S_1$.
		\end{proof}

		Note that (b) and (c) are equivalent only in the presence of the intersection condition.

		In light of Proposition~\ref{prop:tight-group-factoring}, we will say that the group $\G = \langle \s_1, \ldots, \s_{n-1} \rangle$
		is tight provided that $\G = \langle \s_1 \rangle \cdots \langle \s_{n-1} \rangle$. Then $\G$ is the rotation
		group of a tight orientable rotary polytope if and only if $\G$ is tight and it
		satisfies the intersection condition (\ref{eq:eqIntCond}). The following result is immediate:
		
		\begin{proposition} \label{prop:tight-gp-quo}
		If $\G$ is tight, then any quotient of $\G$ is tight. If $\calP$ is a tight orientable rotary polytope then any quotient of $\calP$ is tight.
		\end{proposition}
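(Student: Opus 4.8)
The plan is to prove the group statement first by a one-line factorization argument, and then to obtain the polytope statement by combining it with Proposition~\ref{prop:tight-group-factoring} and with the fact that a quotient polytope automatically inherits the intersection condition. The whole point is that group-tightness is the purely multiplicative property $\G = \langle \s_1 \rangle \cdots \langle \s_{n-1} \rangle$, and such a factorization is obviously pushed forward by any surjective homomorphism.

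For the group statement, I would take $\G = \langle \s_1, \ldots, \s_{n-1} \rangle$ tight, so that $\G = \langle \s_1 \rangle \cdots \langle \s_{n-1} \rangle$, and let $N \triangleleft \G$ be arbitrary. The quotient $\G/N$ is generated by the cosets $\s_i N$. Given any element $gN$ of $\G/N$, tightness of $\G$ lets me write $g = \s_1^{a_1} \cdots \s_{n-1}^{a_{n-1}}$ for suitable exponents $a_i$; applying the canonical projection gives $gN = (\s_1 N)^{a_1} \cdots (\s_{n-1} N)^{a_{n-1}}$. Hence $\G/N = \langle \s_1 N \rangle \cdots \langle \s_{n-1} N \rangle$, which is precisely tightness of $\G/N$ with respect to the images of the standard generators.

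For the polytope statement, let $\calP$ be a tight orientable rotary polytope and let $\calQ$ be a quotient of $\calP$. By the discussion opening the subsection on covers and quotients, there is $N \triangleleft \G^+(\calP)$ with $\calQ \cong \calP/N$ and $\G^+(\calQ) = \G^+(\calP)/N$, generated by the cosets $\s_i N$. Since $\calP$ is tight, Proposition~\ref{prop:tight-group-factoring} tells me $\G^+(\calP)$ is tight as a group, so by the first part $\G^+(\calQ)$ is tight as well. Because $\calQ$ is itself a polytope, $\G^+(\calQ)$ also satisfies the intersection condition~(\ref{eq:eqIntCond}); hence, by the characterization recorded just after Proposition~\ref{prop:tight-group-factoring}, $\G^+(\calQ)$ is the rotation group of a tight orientable rotary polytope, i.e. $\calQ$ is tight.

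I do not expect a genuine obstacle here. The only point demanding care is conceptual rather than computational: one must keep the two notions of tightness distinct—group-tightness is just the factorization $\G = \langle \s_1 \rangle \cdots \langle \s_{n-1} \rangle$, whereas polytope-tightness additionally requires the intersection condition—and then observe that the intersection condition is free in the polytope case since a quotient of a polytope is again a polytope. In particular, note that one need not track how the individual orders of the $\s_i$ may drop in passing to the quotient, because the factorization property makes no reference to the entries of the Schl\"afli type.
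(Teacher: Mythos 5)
Your proof is correct and is exactly the argument the paper has in mind: the paper states this result as ``immediate,'' meaning precisely that a factorization $\G = \langle \s_1 \rangle \cdots \langle \s_{n-1} \rangle$ is pushed forward by any quotient map, and that for the polytope statement one applies Proposition~\ref{prop:tight-group-factoring} to the quotient polytope, whose rotation group satisfies the intersection condition automatically. Your care in separating group-tightness from polytope-tightness is exactly the right reading of the paper's definitions, so there is nothing to add.
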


Proposition \ref{prop:tight-gp-quo} imposes a restriction on the quotients of tight orientable rotary polytopes. The contrapositive of the next proposition imposes another restriction to quotients of tight orientably regular polytopes, namely that tight regular polytopes do not have chiral quotients.
	
\begin{proposition}\label{prop:RegularsNotCoverChirals}
If $\calP$ is a tight orientable rotary $n$-polytope that covers a chiral $n$-polytope then $\calP$ itself is chiral.
\end{proposition}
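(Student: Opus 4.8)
The plan is to prove the contrapositive: a tight orientable rotary polytope that is \emph{not} chiral---hence orientably regular---cannot cover any chiral polytope. So I would suppose, for a contradiction, that $\calP$ is tight and orientably regular, with $\G^+(\calP) = \langle \s_1, \ldots, \s_{n-1}\rangle$, and that $\calP$ covers a chiral $n$-polytope $\calQ$. The strategy is to interpose the smallest regular cover $\calR_{\calQ}$ of $\calQ$ between $\calP$ and $\calQ$, and then to compare group orders: tightness passes from $\calP$ down to $\calR_{\calQ}$, forcing $\G^+(\calR_{\calQ})$ to be small, while the nontriviality of the chirality group of $\calQ$ forces it to be strictly larger than $\G^+(\calQ)$.

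First I would record the consequences of tightness. Since $\calP$ covers $\calQ$, the polytope $\calQ$ is a quotient of $\calP$, so by Proposition~\ref{prop:tight-gp-quo} it is again tight; writing $\{q_1,\ldots,q_{n-1}\}$ for the type of $\calQ$, Proposition~\ref{prop:tight-group-factoring} gives $|\G^+(\calQ)| = q_1 \cdots q_{n-1}$. Next I would bring in $\calR_{\calQ}$. Because $\calP$ is orientably regular (hence a regular polytope) and covers the chiral polytope $\calQ$, the universal property of the smallest regular cover forces $\calP$ to cover $\calR_{\calQ}$ as well; in group terms, $\G^+(\calR_{\calQ})$ is a quotient of $\G^+(\calP)$ with the standard generators mapping to the standard generators. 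Applying the group form of Proposition~\ref{prop:tight-gp-quo} once more, $\G^+(\calR_{\calQ})$ is a tight group. From the explicit generators $(\s_1,\s_1^{-1})$, $(\s_2,\s_1^2\s_2)$, $(\s_3,\s_3),\dots$ of $\G^+(\calR_{\calQ})$ one checks that $\calR_{\calQ}$ has the same type $\{q_1,\ldots,q_{n-1}\}$ as $\calQ$ (the order of each mixed generator is the least common multiple of two equal integers, since $\calQ$ and its enantiomorph share a type). Tightness of the group then yields $|\G^+(\calR_{\calQ})| \le q_1 \cdots q_{n-1}$.

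The contradiction now comes from the chirality group. By Lemma~\ref{l:ChirGruIsKer}, $X(\calQ)$ is the kernel of the covering $\G^+(\calR_{\calQ}) \to \G^+(\calQ)$, so $|\G^+(\calR_{\calQ})| = |X(\calQ)| \cdot |\G^+(\calQ)| = |X(\calQ)| \cdot q_1\cdots q_{n-1}$. Since $\calQ$ is chiral, $X(\calQ)$ is nontrivial, whence $|\G^+(\calR_{\calQ})| > q_1 \cdots q_{n-1}$, contradicting the bound of the previous paragraph. Hence no such chiral $\calQ$ exists, which proves the contrapositive and the proposition.

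I expect the main obstacle to be the two bookkeeping points that make the two order estimates genuinely comparable. First, one must confirm that $\calP$ covering $\calQ$ really does produce a generator-preserving quotient map $\G^+(\calP)\to\G^+(\calR_{\calQ})$ to which the group form of Proposition~\ref{prop:tight-gp-quo} applies; this is exactly where the regularity of $\calP$ is used, via the universal property of the smallest regular cover, and it works even though $\calR_{\calQ}$ need not itself be a polytope, because the tightness statement we invoke is purely group-theoretic. Second, one must verify that $\calR_{\calQ}$ shares the type of $\calQ$, so that tightness of $\G^+(\calR_{\calQ})$ translates into the sharp bound $|\G^+(\calR_{\calQ})|\le q_1\cdots q_{n-1}$; once these are in hand, the rest is a direct comparison of group orders.
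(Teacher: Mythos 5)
Your proof is correct, but it takes a genuinely different route from the paper's. The paper argues by induction on the rank: Proposition~\ref{prop:facvfChiral}(c) reduces everything to $n \in \{3,4,5\}$, the base case $n=3$ is cited from the earlier classification of tight chiral polyhedra, and the inductive step uses Proposition~\ref{prop:facvfChiral}(a),(b) to find chiral facets or vertex-figures in the chiral quotient $\calQ$, which (being quotients of the corresponding sections of $\calP$) force those sections of $\calP$ to be chiral, hence $\calP$ itself. Your argument instead is uniform in the rank and needs no induction or external base case: you play the tightness bound $|\G^+(\calR_{\calQ})| \le q_1\cdots q_{n-1}$ (from Proposition~\ref{prop:tight-gp-quo} applied to the generator-preserving quotient $\G^+(\calP) \to \G^+(\calR_{\calQ})$) against the chirality-group identity $|\G^+(\calR_{\calQ})| = |X(\calQ)|\cdot|\G(\calQ)|$ from Lemma~\ref{l:ChirGruIsKer}, and nontriviality of $X(\calQ)$ gives the contradiction. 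The two bookkeeping points you flag are real but both resolve: the needed epimorphism onto $\G^+(\calR_{\calQ})$ can even be constructed directly, bypassing the universal property --- compose the quotient $\G^+(\calP)\to\G(\calQ)$ with the automorphism of $\G^+(\calP)$ induced by a reflection (this exists precisely because $\calP$ is orientably regular) to hit the enantiomorphic generators, and the diagonal of these two maps lands exactly on the mixed generators $(\s_i,\s_i')$; and the type computation is immediate since $\calQ$ and $\calQ^*$ share a type, so each mixed generator has order $q_i$. What the paper's proof buys is brevity, at the cost of leaning on rank-specific structural results imported from \cite{non-flat} and \cite{tight-chiral-polyhedra}; what yours buys is a single rank-independent mechanism, self-contained relative to the mixing machinery already developed in Section~2, and a cleaner conceptual explanation: a tight regular polytope cannot cover a chiral one because the smallest regular cover of the chiral quotient would have to be a tight structure of the same type yet with strictly larger group.
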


\begin{proof}
Let $\calQ$ be a chiral quotient of $\calP$. We proceed by induction over $n$. By Proposition \ref{prop:facvfChiral} (c), it is only necessary to show the statement for $n \in \{3,4,5\}$.

The case when $n=3$ was proven in \cite[Prop. 2.5]{tight-chiral-polyhedra}. If $n \in {4,5}$, then by Proposition \ref{prop:facvfChiral} either the facets or the vertex-figures of $\calQ$ are chiral $(n-1)$-polytopes. Since the facets and vertex-figures of $\calQ$ are quotients of the facets and vertex-figures of $\calP$, the inductive hypothesis implies that the facets or vertex-figures of $\calP$ must be chiral. Hence $\calP$ is chiral.
\end{proof}

Propositions \ref{prop:tight-gp-quo} and \ref{prop:RegularsNotCoverChirals} have the following consequence. When taking polytopal quotients of a tight chiral polytope $\calP$ by normal subgroups of $\Gamma^+(\calP)$, we obtain tight orientably regular or chiral polytopes, and if $\calP$ is orientably regular then the quotients are tight and regular. This suggests to try to find successive proper quotients of tight chiral polytopes until we obtain tight regular polytopes. As we shall see, this is always possible. Proposition \ref{prop:NonTrivialCore} gives a condition for such quotients to exist. Other conditions will be given in Sections \ref{s:ChiralChiral} and \ref{s:ChiralRegular}.

The chiral polytopes we will be interested in typically have a cyclic chirality group, generated by a power of some $\s_i$.
The following result describes circumstances where this property is preserved when taking quotients.

\begin{lemma}\label{l:ChirGroupS2}
Let $\calP$ be a tight chiral polytope with $\Gamma(\calP) = \langle \sigma_1, \ldots, \s_n \rangle$ and $\calQ$ a chiral quotient of $\calP$ with $\Gamma(\calQ) = \langle \sigma_1', \ldots, \sigma_n' \rangle$. If $X(\calP) \le \langle \s_2 \rangle$ then $X(\calQ) \le \langle \s_2' \rangle$.
\end{lemma}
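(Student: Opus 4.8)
The plan is to combine the explicit description of the chirality group as the set of ``enantiomorphic evaluations of relators'' with the unique factorization enjoyed by tight groups. Write $\calQ = \calP/N$ for a normal subgroup $N \triangleleft \G(\calP)$, so that $\sigma_i' = \sigma_i N$. Recall from Lemma~\ref{l:ChirGruIsKer} together with the displayed form of $\G^+(\calR)$ that the chirality group of a chiral polytope is realized inside its own rotation group as
\[ X(\calQ) = \{\, w(\enant{\sigma_1'}, \ldots, \enant{\sigma_n'}) : w \text{ is a word with } w(\sigma_1', \ldots, \sigma_n') = \id \,\}, \]
where $\enant{\sigma_1'} = (\sigma_1')^{-1}$, $\enant{\sigma_2'} = (\sigma_1')^2 \sigma_2'$, and $\enant{\sigma_i'} = \sigma_i'$ for $i \geq 3$ are the standard generators of the enantiomorph $\calQ^*$; the analogous description holds for $X(\calP)$ in terms of the $\sigma_i$ and $\enant{\sigma_i}$. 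Thus it suffices to show that for every word $w$ that is a relator of $\G(\calQ)$ we have $w(\enant{\sigma_1'}, \ldots, \enant{\sigma_n'}) \in \langle \sigma_2' \rangle$.

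First I would lift such a relator to $\calP$. Since $w(\sigma_1', \ldots, \sigma_n') = \id$, the element $g := w(\sigma_1, \ldots, \sigma_n)$ lies in $N$. Because $\calP$ is tight, Proposition~\ref{prop:tight-group-factoring} lets me rewrite $g$ uniquely in the canonical form $g = \sigma_1^{a_1} \sigma_2^{a_2} \cdots \sigma_n^{a_n}$. The key observation is that $w(\enant{\sigma_1}, \ldots, \enant{\sigma_n})$ depends on the word $w$ only up to multiplication by $X(\calP)$: if $w$ and the canonical word $w_0 = \sigma_1^{a_1} \cdots \sigma_n^{a_n}$ both evaluate to $g$ on the $\sigma_i$, then $w w_0^{-1}$ is a relator of $\G(\calP)$, so its enantiomorphic evaluation lies in $X(\calP)$, giving $w(\enant{\sigma_1}, \ldots, \enant{\sigma_n}) \in w_0(\enant{\sigma_1}, \ldots, \enant{\sigma_n}) \, X(\calP)$. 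Reducing modulo $N$ and using the hypothesis $X(\calP) \le \langle \sigma_2 \rangle$, this ambiguity contributes only a factor in $\langle \sigma_2' \rangle$, so it remains to analyze the image in $\G(\calQ)$ of the canonical evaluation $w_0(\enant{\sigma_1}, \ldots, \enant{\sigma_n}) = \sigma_1^{-a_1}(\sigma_1^2 \sigma_2)^{a_2} \sigma_3^{a_3} \cdots \sigma_n^{a_n}$.

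Finally I would kill the canonical part using tightness of $\calQ$, which holds by Proposition~\ref{prop:tight-gp-quo}. Since $g \in N$, its image satisfies $(\sigma_1')^{a_1} (\sigma_2')^{a_2} \cdots (\sigma_n')^{a_n} = \id$ in $\G(\calQ)$, and unique factorization in the tight group $\G(\calQ)$ then forces $(\sigma_i')^{a_i} = \id$ for every $i$. Consequently the image of $w_0(\enant{\sigma_1}, \ldots, \enant{\sigma_n})$ collapses to $\big((\sigma_1')^2 \sigma_2'\big)^{a_2} = (\enant{\sigma_2'})^{a_2}$, the $i = 1$ and $i \geq 3$ factors being trivial. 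The main obstacle is to see that this last term also vanishes: because $\calQ$ and its enantiomorph $\calQ^*$ share the same Schl\"afli type and standard generators have order equal to the corresponding type entry, the enantiomorphic generator $\enant{\sigma_2'} = (\sigma_1')^2 \sigma_2'$ has the same order as $\sigma_2'$; since $(\sigma_2')^{a_2} = \id$, that common order divides $a_2$, whence $(\enant{\sigma_2'})^{a_2} = \id$. Combining this with the previous paragraph, the image of $w(\enant{\sigma_1}, \ldots, \enant{\sigma_n})$ in $\G(\calQ)$, namely $w(\enant{\sigma_1'}, \ldots, \enant{\sigma_n'})$, lies in $\langle \sigma_2' \rangle$; as $w$ was an arbitrary relator of $\G(\calQ)$, this proves $X(\calQ) \le \langle \sigma_2' \rangle$.
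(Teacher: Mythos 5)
Your proof is correct, but it takes a genuinely different route from the paper's. The paper argues entirely at the level of the universal property of the chirality group: writing $\calQ = \calP/K$, it forms $\calR = \calP/(KX(\calP))$, observes that $\calR$ is a quotient of the regular structure $\calP/X(\calP)$ and hence regular (invoking Propositions~\ref{prop:tight-gp-quo} and \ref{prop:RegularsNotCoverChirals}), and concludes by minimality that $X(\calQ) \le KX(\calP)/K \le \langle \s_2' \rangle$. You instead work with the concrete realization of the chirality group as the set of enantiomorphic evaluations of relators, and you replace the paper's regularity-propagation step by two elementary facts: unique factorization in tight groups (the multiplication map from $\langle \s_1' \rangle \times \cdots \times \langle \s_{n-1}' \rangle$ to $\G(\calQ)$ is a bijection, since by Proposition~\ref{prop:tight-group-factoring} it is a surjection between finite sets of equal cardinality), and the equality of the orders of $\s_2'$ and $\enant{\s_2'} = (\s_1')^2\s_2'$, which you justify via the type of the enantiomorph but which also follows in one line from $(\s_1'\s_2')^2 = \id$, giving $(\s_1')^2\s_2' = \s_1'(\s_2')^{-1}(\s_1')^{-1}$, a conjugate of $(\s_2')^{-1}$. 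It is worth noting that both arguments in fact establish the same intermediate statement, namely $X(\calQ) \le KX(\calP)/K$, the image of $X(\calP)$ in $\G(\calQ)$. What your approach buys is independence from Proposition~\ref{prop:RegularsNotCoverChirals} and from any appeal to regularity of quotient structures; it is purely a computation with words and tightness. What it costs is the identification $X(\calQ) = \{\, w(\enant{\s_1'}, \ldots, \enant{\s_{n-1}'}) : w(\s_1', \ldots, \s_{n-1}') = \id \,\}$ as an \emph{equality of subgroups} of $\G(\calQ)$: Lemma~\ref{l:ChirGruIsKer} as stated only gives an abstract isomorphism with the kernel of $\G^+(\calR) \to \G(\calQ)$, and you use both containments (the direction ``evaluations of relators of $\G(\calP)$ lie in $X(\calP)$'' in your third paragraph, and ``$X(\calQ)$ consists of evaluations of relators of $\G(\calQ)$'' in your first). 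To make the write-up airtight you should add the short, standard verification: the evaluation set is the image of $\ker\pi$ under the surjection $\pi^*$ sending free generators to the $\enant{\s_i}$, hence a normal subgroup; its quotient admits the enantiomorphic map and is therefore a regular structure; and any normal subgroup with regular quotient must contain it. With that two-line supplement included, your argument is complete.
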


\begin{proof}
Let $K \triangleleft \Gamma(\calP)$ such that $\calQ = \calP/K$, and let $\calR = \calP/K X(P)$. Then $\calR$ is a quotient of
$\calP / X(\calP)$, and since the latter is regular, Propositions~\ref{prop:tight-gp-quo} and \ref{prop:RegularsNotCoverChirals}
imply that $\calR$ is regular as well. Now, $\calR$ is the quotient of $\calQ$ by $KX(P)/K$, and since $\calR$ is regular, that
implies that $X(\calQ)$ is contained in $KX(\calP)/K$, which is the image of $X(\calP)$ in $\G(\calQ)$, and thus contained
in $\langle \s_2' \rangle$.
\end{proof}

Next, we describe useful structural properties of the normal subgroups of the rotation group of tight orientable rotary polytopes.

		\begin{lemma}\label{l:subgroups}
		Let $\calP$ be a tight orientable rotary $n$-polytope with $\Gamma^+(\calP) = \langle \sigma_1, \dots, \sigma_{n-1} \rangle$ and let 
		$K \triangleleft \Gamma^+(\calP)$ such that $\calP/K$ is a tight orientable rotary $n$-polytope. 
		Then there exist non-negative integers $\alpha_1, \dots, \alpha_{n-1}$ such that
		\[K = \langle \sigma_1^{\alpha_1} \rangle \langle \sigma_2^{\alpha_2} \rangle \cdots \langle \sigma_{n-1}^{\alpha_{n-1}} \rangle.\]
		Moreover, $\calP/K$ has type $\{\alpha_1,\dots,\alpha_{n-1}\}$.
		\end{lemma}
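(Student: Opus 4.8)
The plan is to deduce everything from the tightness of both $\calP$ and $\calP/K$ via Proposition~\ref{prop:tight-group-factoring}, reducing the statement to a counting argument grounded in a unique-factorization property of $\Gamma^+(\calP)$. First I would fix notation: let $\{p_1,\dots,p_{n-1}\}$ be the type of $\calP$, so that $p_i$ is the order of $\sigma_i$, and let $\alpha_i$ be the order of $\sigma_i K$ in $\Gamma^+(\calP/K) = \Gamma^+(\calP)/K = \langle \sigma_1 K,\dots,\sigma_{n-1}K\rangle$. Since $\sigma_i^{p_i} = \id$ forces $(\sigma_i K)^{p_i} = K$, we have $\alpha_i \mid p_i$, and because the $\sigma_i K$ are the standard generators of $\calP/K$, the collection $\{\alpha_1,\dots,\alpha_{n-1}\}$ is exactly the type of $\calP/K$; this already settles the ``moreover'' clause. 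The key structural observation is that $\sigma_i^k \in K$ if and only if $(\sigma_i K)^k = K$, i.e.\ if and only if $\alpha_i \mid k$, so that $\langle \sigma_i \rangle \cap K = \langle \sigma_i^{\alpha_i} \rangle$, a cyclic group of order $p_i/\alpha_i$; in particular $\sigma_i^{\alpha_i} \in K$.

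Next I would record the unique-factorization property. Because $\calP$ is tight, Proposition~\ref{prop:tight-group-factoring} gives $\Gamma^+(\calP) = \langle \sigma_1 \rangle \cdots \langle \sigma_{n-1} \rangle$, so every element can be written as $\sigma_1^{a_1}\cdots\sigma_{n-1}^{a_{n-1}}$ with $0 \le a_i < p_i$. There are exactly $p_1\cdots p_{n-1} = |\Gamma^+(\calP)|$ such exponent-tuples, and they cover the whole group, so the map sending a tuple to the corresponding product is a bijection; that is, each element has a \emph{unique} such factorization.

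With these in hand, the claim $K = \langle \sigma_1^{\alpha_1}\rangle \cdots \langle \sigma_{n-1}^{\alpha_{n-1}}\rangle =: L$ follows by a double inclusion through cardinality. Since each $\sigma_i^{\alpha_i}$ lies in the subgroup $K$, every product defining an element of $L$ lies in $K$, so $L \subseteq K$. Conversely, every element of $L$ has the form $\sigma_1^{\alpha_1 b_1}\cdots\sigma_{n-1}^{\alpha_{n-1}b_{n-1}}$ with $0 \le b_i < p_i/\alpha_i$, and by the uniqueness just established these are pairwise distinct, whence $|L| = \prod_i (p_i/\alpha_i)$. On the other hand, tightness of $\calP/K$ gives $|\Gamma^+(\calP/K)| = \prod_i \alpha_i$, so \[ |K| = \frac{|\Gamma^+(\calP)|}{|\Gamma^+(\calP/K)|} = \frac{\prod_i p_i}{\prod_i \alpha_i} = \prod_i \frac{p_i}{\alpha_i} = |L|. \] Combined with $L \subseteq K$, this forces $L = K$, as desired.

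The step I expect to be the main (if mild) obstacle is the bookkeeping in the cardinality count for $L$: a priori $L$ is only a product of subgroups and need not obviously be a subgroup, so verifying $|L| = \prod_i (p_i/\alpha_i)$ cannot rely on any commutation of the generators but must instead be extracted from the global bijection between exponent-tuples and group elements in $\Gamma^+(\calP)$. Once that is in place, everything else is a direct application of Proposition~\ref{prop:tight-group-factoring} and Lagrange's theorem.
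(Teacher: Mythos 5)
Your proof is correct, but it closes the hard inclusion by a different mechanism than the paper. Both arguments define $\alpha_i$ identically (the least positive exponent with $\sigma_i^{\alpha_i} \in K$, i.e.\ the order of $\sigma_i K$), both get $L = \langle \sigma_1^{\alpha_1}\rangle \cdots \langle \sigma_{n-1}^{\alpha_{n-1}}\rangle \subseteq K$ for free, and both rest on the factorization $\Gamma^+(\calP) = \langle \sigma_1 \rangle \cdots \langle \sigma_{n-1} \rangle$ of Proposition~\ref{prop:tight-group-factoring}. For the reverse inclusion, the paper argues elementwise: writing $\gamma \in K$ as $\sigma_1^{\beta_1} \cdots \sigma_{n-1}^{\beta_{n-1}}$, it invokes the intersection condition (\ref{eq:IntCond2}) in the quotient group $\Gamma^+(\calP)/K$ to force every prefix $\sigma_1^{\beta_1}\cdots\sigma_i^{\beta_i}$ into $K$, hence each $\sigma_i^{\beta_i} \in K$, hence $\alpha_i \mid \beta_i$ and $\gamma \in L$; the order of $K$ and the type of $\calP/K$ then come out as corollaries. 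You instead identify the type of $\calP/K$ first (directly from the orders of the generators $\sigma_i K$) and then compare cardinalities: unique factorization --- which you correctly extract from tightness of $\calP$ via the bijection between exponent tuples and group elements, precisely the point where commutativity must not be assumed --- gives $|L| = \prod_i (p_i/\alpha_i)$, while tightness of $\calP/K$ gives $|K| = |\Gamma^+(\calP)|/|\Gamma^+(\calP/K)| = \prod_i(p_i/\alpha_i)$, so $L = K$. In effect, the paper uses the hypothesis that $\calP/K$ is a \emph{polytope} (its intersection condition), whereas you use the hypothesis that it is \emph{tight} (its flag count); both are available from the statement, so both routes are legitimate. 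The paper's version yields the slightly finer structural fact that every element of $K$ splits into generator powers each lying in $K$ without needing to know $|K|$ in advance, while yours is more elementary in that it never explicitly touches the intersection condition of the quotient, only counting.
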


		\begin{proof}
		For $1 \leq i \leq n-1$, let $\alpha_i$ be the smallest positive integer such that $\s_i^{\alpha_i} \in K$,
		and let $H = \langle \s_1^{\alpha_1} \rangle \cdots \langle \s_{n-1}^{a_{n-1}} \rangle$. Then clearly
		$H \subseteq K$. To show the reverse inclusion, let $\gamma \in K$. By Proposition~\ref{prop:tight-group-factoring},
		we may write $\gamma$ as $\s_1^{\beta_1} \cdots \s_{n-1}^{\beta_{n-1}}$ for some exponents $\beta_i$. 
		Since $\gamma \in K$, we have that for every $i$, 
		\[ K \s_1^{\beta_1} \cdots \s_i^{\beta_i} = K (\s_{i+1}^{\beta_{i+1}} \cdots \s_{n-1}^{\beta_{n-1}})^{-1}. \]
		Then, writing $\ol{\s_i}$ for the image of $\s_i$ in $\G^+(\calP)/K$, we get that
		\[ \ol{\s_1^{\beta_1} \cdots \s_i^{\beta_i}} = \ol{(\s_{i+1}^{\beta_{i+1}} \cdots \s_{n-1}^{\beta_{n-1}})^{-1}}. \]
		Since $\G^+(\calP)/K$ is the rotation group of a rotary polytope,
		Equation~(\ref{eq:IntCond2}) implies that $\ol{\s_1^{\beta_1} \cdots \s_i^{\beta_i}} = 1$, which means that
		$\s_1^{\beta_1} \cdots \s_i^{\beta_i} \in K$ for every $i$. In particular, $\s_1^{\beta_1} \in K$,
		from which it follows that $\s_2^{\beta_2} \in K$ (since $\s_1^{\beta_1} \s_2^{\beta_2} \in K$),
		and continuing in this way it follows that each $\s_i^{\beta_i} \in K$. By our choice of exponents $\alpha_i$, that means
		that each $\beta_i$ is divisible by $\alpha_i$, and so $\gamma \in H$.
		
		The type of $\calP/K$ follows from Proposition \ref{prop:tight-gp-quo} and the fact that $K$ has order $p_1 \cdots p_{n-1} / \alpha_1 \cdots \alpha_{n-1}$.
		\end{proof}

		\begin{proposition} \label{prop:vfig-quo}
		Suppose that $\calP$ is a tight orientable rotary $n$-polytope with $\G^+(\calP) = \langle \s_1, \ldots, \s_{n-1} 
		\rangle$, and let $N = \langle \s_1^{a_1} \rangle \langle \s_2^{a_2} \rangle \cdots \langle \s_{n-1}^{a_{n-1}} \rangle$ be a normal subgroup of $\G^+(\calP)$. If $N$ does not contain any generator $\s_i$, then $\G^+(\calP) / N$ 
		is the rotation group of a tight orientable rotary polytope.
		\end{proposition}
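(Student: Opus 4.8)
The plan is to verify that $\bar\G := \G^+(\calP)/N$, with generators $\bar\s_i := \s_i N$, satisfies the hypotheses under which a group is the rotation group of a tight orientable rotary polytope: each $\bar\s_i$ is nontrivial, each $\bar\s_i \cdots \bar\s_j$ has order exactly $2$, and the intersection condition (\ref{eq:eqIntCond}) holds. Tightness will then be automatic from Proposition~\ref{prop:tight-gp-quo}, and Lemma~\ref{l:eqIntCond} will upgrade (\ref{eq:eqIntCond}) to the full intersection condition. Several of these are immediate: the relations (\ref{eq:1stRelation}) are inherited by any quotient, so $(\bar\s_i\cdots\bar\s_j)^2 = \id$; each $\bar\s_i$ is nontrivial precisely because the hypothesis says $\s_i \notin N$; and $\bar\G = \langle\bar\s_1\rangle\cdots\langle\bar\s_{n-1}\rangle$ by Proposition~\ref{prop:tight-gp-quo}. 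The real content is the intersection condition and the fact that the involutions $\bar\s_i\cdots\bar\s_j$ do not collapse to the identity.

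The key tool I would develop is a unique normal form in $\G^+(\calP)$ and its survival in the quotient. Since $\calP$ is tight, Proposition~\ref{prop:tight-group-factoring} gives $\G^+(\calP) = \langle\s_1\rangle\cdots\langle\s_{n-1}\rangle$ and $|\G^+(\calP)| = p_1\cdots p_{n-1}$; comparing cardinalities, the multiplication map $\langle\s_1\rangle\times\cdots\times\langle\s_{n-1}\rangle \to \G^+(\calP)$ is a bijection, so every element has a unique expression $\s_1^{\beta_1}\cdots\s_{n-1}^{\beta_{n-1}}$ with $0 \le \beta_i < p_i$. Furthermore, each block subgroup $\langle\s_k,\dots,\s_l\rangle$ is the rotation group of a section of $\calP$ (via the formula $\Gamma^+([F,G]) = \langle\s_{i+2},\dots,\s_{j-1}\rangle$), hence is itself tight because sections of tight polytopes are tight; by the same argument it coincides with the set of normal forms supported on $\{k,\dots,l\}$, i.e.\ those with $\beta_m = 0$ for $m\notin\{k,\dots,l\}$.

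With this picture, $N$ becomes transparent. Replacing each $a_i$ by $\gcd(a_i,p_i)$ (which leaves $\langle\s_i^{a_i}\rangle$, and hence $N$, unchanged) I may assume $a_i \mid p_i$. Then $N$ is exactly the set of normal forms $\s_1^{\beta_1}\cdots\s_{n-1}^{\beta_{n-1}}$ in which every exponent $\beta_i$ is divisible by $a_i$; in particular $|N| = \prod_i (p_i/a_i)$ and therefore $|\bar\G| = |\G^+(\calP)|/|N| = a_1\cdots a_{n-1}$. Because $\s_i^m \in N$ iff $a_i \mid m$, the order of $\bar\s_i$ is $a_i$, with $a_i \ge 2$ by the hypothesis $\s_i \notin N$. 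Now $\bar\G = \langle\bar\s_1\rangle\cdots\langle\bar\s_{n-1}\rangle$ has order $a_1\cdots a_{n-1} = \prod_i |\bar\s_i|$, so the multiplication map for $\bar\G$ is again a bijection and $\bar\G$ inherits a unique normal form, with $\langle\bar\s_k,\dots,\bar\s_l\rangle$ equal to the normal forms of $\bar\G$ supported on $\{k,\dots,l\}$.

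From here the remaining claims are bookkeeping on supports. Intersecting the subgroups attached to two index blocks yields the subgroup attached to the intersection of the blocks, so $\langle\bar\s_1,\dots,\bar\s_i\rangle\cap\langle\bar\s_j,\dots,\bar\s_{i+1}\rangle = \langle\bar\s_j,\dots,\bar\s_i\rangle$, which is exactly (\ref{eq:eqIntCond}). The same description prevents the involutions from collapsing: if $\bar\s_i\cdots\bar\s_j = \id$ then $\bar\s_i\cdots\bar\s_{j-1} = \bar\s_j^{-1}$ would lie in $\langle\bar\s_1,\dots,\bar\s_{j-1}\rangle\cap\langle\bar\s_j\rangle$, which is trivial by the block-intersection just established, forcing $\bar\s_j = \id$ and contradicting $a_j \ge 2$; hence each $\bar\s_i\cdots\bar\s_j$ has order exactly $2$. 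Lemma~\ref{l:eqIntCond} then converts (\ref{eq:eqIntCond}) into the intersection condition (\ref{eq:int-cond}), so $\bar\G$ is the rotation group of a tight orientable rotary polytope. I expect the main obstacle to be showing that the unique normal form survives in the quotient — equivalently, pinning down $|\bar\G| = a_1\cdots a_{n-1}$ — since the two polytopal conditions follow almost formally once $\bar\G$ is known to factor uniquely; the delicate input is the identification of $N$ with the divisible-exponent normal forms, which rests on uniqueness of the normal form in $\G^+(\calP)$ together with the block description of $\langle\s_k,\dots,\s_l\rangle$.
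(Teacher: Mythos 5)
Your proof is correct, but it takes a genuinely different route from the paper's. The paper never computes $|\G^+(\calP)/N|$ and never sets up normal forms; it verifies (\ref{eq:eqIntCond}) for the quotient directly by a lifting argument: given $\ol{\varphi_1} = \ol{\varphi_2}$ in $\langle \ol{\s_1}, \ldots, \ol{\s_i} \rangle \cap \langle \ol{\s_j}, \ldots, \ol{\s_{i+1}} \rangle$, it writes $\varphi_1 = \gamma \varphi_2$ with $\gamma \in N$, splits $\gamma = \gamma_1 \gamma_2$ with $\gamma_1 \in N \cap \langle \s_1, \ldots, \s_{j-1} \rangle$ and $\gamma_2 \in N \cap \langle \s_j, \ldots, \s_{n-1} \rangle$ (possible precisely because $N$ is defined as the product $\langle \s_1^{a_1} \rangle \cdots \langle \s_{n-1}^{a_{n-1}} \rangle$), deduces from Lemma~\ref{l:IntAi} that $\gamma_1^{-1} \varphi_1 \in \langle \s_j, \ldots, \s_i \rangle$, and projects back down, quoting Proposition~\ref{prop:tight-gp-quo} for tightness. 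You instead build a complete combinatorial model of both groups: uniqueness of the factorization $\s_1^{\beta_1} \cdots \s_{n-1}^{\beta_{n-1}}$ by counting, identification of $N$ with the normal forms having $a_i \mid \beta_i$, the computation $|\G^+(\calP)/N| = a_1 \cdots a_{n-1}$, and intersection-by-support. The trade-offs: the paper's argument is shorter and needs only Lemma~\ref{l:IntAi} plus the product form of $N$; in particular it avoids the block factorization $\langle \s_k, \ldots, \s_l \rangle = \langle \s_k \rangle \cdots \langle \s_l \rangle$, which you obtain from tightness of sections --- legitimate, but an extra ingredient, and one that silently uses the (true, but unstated in the paper) fact that sections of orientable rotary polytopes are again orientable rotary. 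Your version yields more: the exact orders $a_i$ of the quotient generators and hence the type of the quotient polytope, information the paper derives separately in Lemma~\ref{l:subgroups}, and --- a genuine point in your favor --- an explicit verification that each $\ol{\s_i} \cdots \ol{\s_j}$ has order exactly $2$ rather than collapsing to the identity, which is a stated hypothesis of Lemma~\ref{l:eqIntCond} that the paper's proof invokes without checking.
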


		\begin{proof}
		Let $\G^+(\calP) / N = \langle \ol{\s_1}, \ldots, \ol{\s_{n-1}} \rangle$. 
		Since no generator $\s_i$ is in $N$, it follows that each $\ol{\s_i}$ has order at least 2. Then to prove that
		$\G^+(\calP) / N$ is the rotation group of an orientable rotary polytope, by Lemma \ref{l:eqIntCond} it suffices to show that
		\[ \langle \ol{\s_1}, \ldots, \ol{\s_i} \rangle \cap \langle \ol{\s_j}, \ldots, \ol{\s_{i+1}} \rangle = \langle \ol{\s_j}, \ldots, \ol{\s_i} \rangle, \]
for all $i$ and $j$ such that $2 \leq j \leq i+1 \leq n-1$. (In fact, it suffices to show
that the subgroup on the left is included in the subgroup on the right, since the reverse inclusion is obvious.) Tightness will then follow 
		from Proposition~\ref{prop:tight-gp-quo}.

		Consider an element of $\G^+(\calP) / N$ that lies in 
		\[ \langle \ol{\s_1}, \ldots, \ol{\s_i} \rangle \cap \langle \ol{\s_j}, \ldots, \ol{\s_{i+1}} \rangle. \]
		We may write this element as $\ol{\varphi_1} = \ol{\varphi_2}$, where
		\[ \varphi_1 \in \langle \s_1, \ldots, \s_i \rangle \]
		and
		\[ \varphi_2 \in \langle \s_j, \ldots, \s_{i+1} \rangle. \]
		Then $\varphi_1 = \gamma \varphi_2$ for some $\gamma \in N$.
		Since $\calP$ is tight, Proposition~\ref{prop:tight-group-factoring}(c) says that we may write $\gamma = \s_1^{b_1} \cdots \s_{n-1}^{b_{n-1}}$.
		Setting $\gamma_1 = \s_1^{b_1} \cdots \s_{j-1}^{b_{j-1}}$ and $\gamma_2 = \s_j^{b_j} \cdots \s_{n-1}^{b_{n-1}}$,
		we have that by definition $\gamma_1$ and $\gamma_2$ both lie in $N$. Now,
		\[ \gamma_1^{-1} \varphi_1 = \gamma_2 \varphi_2, \]
		and it follows that
		\[ \gamma_1^{-1} \varphi_1 \in \langle \s_1, \ldots, \s_i \rangle \cap \langle \s_j, \ldots, \s_{n-1} \rangle. \]
Then since $\G^+(\calP)$ satisfies the intersection condition, it follows from
		Lemma \ref{l:IntAi} that $\gamma_1^{-1} \varphi_1 \in \langle \s_j, \ldots, \s_i \rangle$. And since $\gamma_1 \in N$,
		this implies that $\ol{\varphi_1} \in \langle \ol{\s_j}, \ldots, \ol{\s_i} \rangle$, which is what we wanted to show.
		\end{proof}

When considering $\Gamma^+(\calP)$ as a group acting on the set of $i$-faces of $\calP$ for some $i$, the kernel of this action is a natural normal subgroup of $\calP$ to consider. (Recall that the {\em kernel} of the action of a group $\Gamma$ on a set $X$ is the subgroup of $\Gamma$ fixing $X$ pointwise.) The next results give sufficient conditions for the kernel of the action on the vertex set to be non-trivial.

\begin{lemma}\label{l:FixOneFixAll}
Let $\calP$ be a tight orientable rotary polyhedron. If $\gamma\in \Gamma^+(\calP)$ fixes a vertex and one of its neighbors then it fixes all vertices of $\calP$.
\end{lemma}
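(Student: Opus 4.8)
The plan is to study the action of $\G = \G^+(\calP) = \langle \s_1, \s_2 \rangle$ on the vertex set of $\calP$ and to propagate the property ``this vertex is fixed by $\gamma$'' along edges. We regard $\G$ as acting on the right on the vertices, which we identify with the right cosets of the base-vertex stabiliser $\langle \s_2 \rangle$. The first thing I would record is the structure of vertex stabilisers. Since $\calP$ is rotary, $\G$ is transitive on vertices and the stabiliser of the base vertex is $\langle \s_2 \rangle$; hence the stabiliser $\Stab(u)$ of an arbitrary vertex $u$ is a conjugate of $\langle \s_2 \rangle$, and in particular is cyclic, generated by an element $\tau_u$ (a conjugate of $\s_2$) that acts as the rotation of $\calP$ about $u$. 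The key geometric input is that $\tau_u$ permutes the $q$ edges through $u$ in a single cycle: these edges are the vertices of the vertex-figure at $u$, which is a $q$-gon on which $\langle \tau_u \rangle$ acts as the rotation group. Consequently $\tau_u$ permutes the neighbours of $u$ transitively, so that for any neighbour $w$ of $u$ the full set of neighbours of $u$ is $\{\, w\tau_u^{\,j} : j \in \Z \,\}$.

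The heart of the argument is then the following local claim: \emph{if $\gamma$ fixes a vertex $u$ together with one of its neighbours $w$, then $\gamma$ fixes every neighbour of $u$.} Indeed, fixing $u$ forces $\gamma \in \Stab(u) = \langle \tau_u \rangle$, say $\gamma = \tau_u^{\,c}$. Since powers of $\tau_u$ commute and $w\gamma = w$, for every $j$ we compute $(w\tau_u^{\,j})\gamma = w\tau_u^{\,j}\tau_u^{\,c} = w\tau_u^{\,c}\tau_u^{\,j} = (w\gamma)\tau_u^{\,j} = w\tau_u^{\,j}$. As $w\tau_u^{\,j}$ ranges over all neighbours of $u$, this shows $\gamma$ fixes the whole neighbour set of $u$.

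Finally I would propagate this along the $1$-skeleton of $\calP$, which is connected; for a tight polyhedron one may even use the concrete spanning cycle given by the boundary of the base facet, a $p$-gon incident to all $p$ vertices, in cyclic order $0, 1, \dots, p-1$ with $i$ adjacent to $i+1$. Starting from the hypothesis that $\gamma$ fixes a vertex $u_0$ and one of its neighbours, the local claim gives that $\gamma$ fixes \emph{every} neighbour of $u_0$. Each such neighbour $u_1$ now has the fixed vertex $u_0$ among its neighbours, so the local claim applies at $u_1$, and so on. Walking along a path $u_0, u_1, u_2, \dots$ in the $1$-skeleton and applying the local claim at each $u_i$ (using the already-fixed previous vertex $u_{i-1}$ as the required fixed neighbour), an induction on path length shows that $\gamma$ fixes every vertex of $\calP$.

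I expect the main obstacle to be the justification of the geometric input underlying the local claim, namely that $\Stab(u)$ is cyclic and that its generator $\tau_u$ cycles transitively through the neighbours of $u$ (equivalently, through the edges of the $q$-gonal vertex-figure at $u$). Once this rotary structure is in place, the commuting-powers computation and the connectivity induction are routine; note that the argument never needs the precise defining relations of the tight group, only transitivity of $\langle \tau_u \rangle$ on the neighbours and connectivity of the $1$-skeleton.
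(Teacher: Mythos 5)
Your proposal is correct and follows essentially the same route as the paper: identify the vertex stabiliser as the cyclic group $\langle \s_2 \rangle$ (up to conjugation/re-basing), use its transitivity on the neighbours together with commutativity of its powers to show that fixing one neighbour forces fixing all neighbours, and then propagate along the connected $1$-skeleton. The only cosmetic difference is that you conjugate the stabiliser to an arbitrary vertex, while the paper invokes the arbitrariness of the base vertex, and your closing observation that tightness is never actually needed is consistent with the paper's argument as well.
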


\begin{proof}
Let $u_0$ be the base vertex of $\calP$. Let $\G^+(\calP) = \langle \s_1, \s_2 \rangle$, and 
let $\gamma \in \Gamma^+(\calP)$ such that it fixes $u_0$ and one of its neighbors $v_0$. 

Since the stabilizer of $u_0$ is $\langle \s_2 \rangle$ then $\gamma = \s_2^a$ for some $a$.
Now, if $\s_2^a$ fixes $v_0$ then it must fix all neighbors of $u_0$, since all of them are images of $v_0$ under $\langle \s_2 \rangle$. Since the choice of base vertex is arbitrary, we have proven that if $\gamma$ fixes a vertex $u$ and one of its neighbors then it fixes all neighbors of $u$.

The result then follows from the connectivity of the $1$-skeleton of $\calP$.
\end{proof}

The fact that the base facet of a tight polytope $\calP$ contains all vertices of $\calP$ implies the following corollary.

\begin{corollary}\label{c:FixOneFixAll}
Let $\calP$ be a tight orientable rotary $n$-polytope with $\Gamma^+(\calP) = \langle \s_1, \dots, \s_{n-1} \rangle$. If $\s_2^a$ fixes a neighbor of the base vertex then it fixes all vertices of $\calP$.
\end{corollary}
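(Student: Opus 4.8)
The plan is to deduce this rank-$n$ statement from the rank-$3$ Lemma~\ref{l:FixOneFixAll} by passing to the base $3$-face of $\calP$, which I claim simultaneously captures every vertex of $\calP$ and every edge of $\calP$ through the base vertex $u_0 = F_0$. Write $\G^+(\calP) = \langle \s_1, \ldots, \s_{n-1} \rangle$ and let $F_3$ denote the base $3$-face, regarded as the section $[F_{-1}, F_3]$. Since $\calP$ is tight, this section is a tight orientable rotary polyhedron, and its rotation group is $\langle \s_1, \s_2 \rangle$ with base vertex $u_0$. Because $\s_2^a \in \langle \s_2 \rangle = \Stab(u_0)$ inside $\langle \s_1, \s_2 \rangle$, the element $\s_2^a$ lies in $\G^+(F_3)$ and already fixes $u_0$.

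The core of the argument is an identification of vertex sets. By Proposition~\ref{prop:tight-group-factoring} a tight polytope of type $\{p_1, \ldots, p_{n-1}\}$ has $|\G^+(\calP)| = p_1 \cdots p_{n-1}$, while its vertex-stabilizer $\langle \s_2, \ldots, \s_{n-1} \rangle$ has order $p_2 \cdots p_{n-1}$; hence $\calP$ has exactly $p_1$ vertices — this is precisely the content of the remark that the base facet contains every vertex. The polyhedron $F_3$, being tight of type $\{p_1, p_2\}$, likewise has exactly $p_1$ vertices, and every vertex of $F_3$ is a vertex of $\calP$ incident to $F_3$. Since the two finite sets have the same cardinality and one is contained in the other, they coincide: every vertex of $\calP$ is a vertex of $F_3$.

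The same bookkeeping controls the edges through $u_0$. These correspond bijectively to the vertices of the vertex-figure, and the vertex-figure of $\calP$ at $u_0$ (tight of type $\{p_2, \ldots, p_{n-1}\}$) and the vertex-figure of $F_3$ at $u_0$ (a $p_2$-gon) each have $p_2$ vertices; as the edges of $F_3$ through $u_0$ are among the edges of $\calP$ through $u_0$, the two sets of $p_2$ edges agree. Consequently every neighbor of $u_0$ in $\calP$ is already a neighbor of $u_0$ in $F_3$. Now the hypothesis supplies a neighbor $w$ of $u_0$ in $\calP$ — hence a neighbor in $F_3$ — fixed by $\s_2^a$, and $\s_2^a$ also fixes $u_0$. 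Applying Lemma~\ref{l:FixOneFixAll} to the tight polyhedron $F_3$ shows that $\s_2^a$ fixes every vertex of $F_3$, which by the previous paragraph is every vertex of $\calP$.

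The step I expect to require the most care is the comparison of the edges through $u_0$: one must be sure that the $p_2$ edges of $\calP$ incident to $u_0$ genuinely all lie in the section $F_3$, so that the supplied neighbor really is a neighbor inside the polyhedron to which Lemma~\ref{l:FixOneFixAll} is applied. Once this flatness-type count is in place, the remaining ingredients are just orbit–stabilizer counts powered by tightness (Proposition~\ref{prop:tight-group-factoring}) together with the intersection condition.
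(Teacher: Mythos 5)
Your proof is correct and takes essentially the same approach as the paper: the paper justifies this corollary by the one-line remark that the base facet of a tight polytope contains all of its vertices (hence, iterating, so does the base $3$-face), after which Lemma~\ref{l:FixOneFixAll} applies. Your cardinality arguments via Proposition~\ref{prop:tight-group-factoring} simply make that flatness fact explicit, and your comparison of the edges through the base vertex additionally verifies a detail the paper leaves implicit, namely that the given neighbor is a neighbor \emph{inside} the base $3$-face, so that the rank-$3$ lemma genuinely applies.
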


		\begin{corollary}\label{cor:NTkernel}
		Let $\calP$ be a tight orientable rotary $n$-polytope with type $\{p_1,\dots, p_{n-1}\}$ with $p_1 \le p_2$. Then the kernel of the action of $\Gamma^+(\calP)$ on the vertex set is non-trivial.
		\end{corollary}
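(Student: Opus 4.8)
The plan is to produce a non-trivial power $\s_2^a$ that fixes some neighbor of the base vertex $F_0$, and then invoke Corollary~\ref{c:FixOneFixAll}, which guarantees that such a $\s_2^a$ fixes every vertex and hence lies in the kernel of the action on the vertex set. The whole argument reduces to a pigeonhole comparison between the number of neighbors of $F_0$ and the order $p_2$ of $\s_2$, and this is exactly where the hypothesis $p_1 \le p_2$ is used.

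First I would bound the number of vertices. The vertex-figure of $\calP$ at $F_0$ is a section of a tight polytope, hence itself tight, and its rotation group is $\langle \s_2, \dots, \s_{n-1} \rangle$; by Proposition~\ref{prop:tight-group-factoring} this group has order $p_2 \cdots p_{n-1}$. Since $\langle \s_2, \dots, \s_{n-1} \rangle$ fixes $F_0$ and $\Gamma^+(\calP)$ acts transitively on the vertices, orbit--stabilizer gives that the number of vertices is at most $|\Gamma^+(\calP)|/|\langle \s_2, \dots, \s_{n-1}\rangle| = p_1$. In particular $F_0$ has at most $p_1 - 1$ neighbors.

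Next I would examine the orbit of a neighbor under $\langle \s_2 \rangle$. Taking $v_0$ to be the neighbor of $F_0$ lying on the base edge $F_1$, the automorphism $\s_2$ fixes $F_0$ and therefore permutes its neighbors, so the orbit of $v_0$ under $\langle \s_2 \rangle$ consists entirely of neighbors of $F_0$. This orbit thus has size at most $p_1 - 1$, which is strictly smaller than $p_2$ because $p_1 \le p_2$. Since $\s_2$ has order $p_2$, the stabilizer of $v_0$ in $\langle \s_2 \rangle$ must be non-trivial, i.e.\ there is an exponent $a$ with $0 < a < p_2$ such that $\s_2^a$ fixes $v_0$. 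As $0 < a < p_2 = |\s_2|$, the element $\s_2^a$ is non-trivial, and Corollary~\ref{c:FixOneFixAll} then shows it fixes all vertices, completing the proof.

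The one point that requires care, and the only place where $p_1 \le p_2$ enters, is the comparison between the number of neighbors of $F_0$ and the order of $\s_2$: the proof hinges on the vertex count being small enough (at most $p_1$) that the $p_2$ edges emanating from $F_0$ cannot reach $p_2$ distinct neighbors, forcing a repetition and hence a non-trivial stabilizing power of $\s_2$. Establishing the bound on the number of vertices through the tightness of the vertex-figure is therefore the crux; the remainder is a short orbit--stabilizer computation.
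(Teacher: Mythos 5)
Your proposal is correct and follows essentially the same argument as the paper: bound the number of vertices by $p_1$, observe that $\langle \s_2 \rangle$ (of order $p_2 \ge p_1$) permutes the at most $p_1 - 1$ neighbors of the base vertex, conclude by pigeonhole/orbit--stabilizer that some non-trivial power $\s_2^a$ fixes a neighbor, and invoke Corollary~\ref{c:FixOneFixAll} to place $\s_2^a$ in the kernel. The only cosmetic difference is that you derive the vertex count via orbit--stabilizer and tightness of the vertex-figure, whereas the paper cites directly that a tight polytope of this type has exactly $p_1$ vertices; both are valid.
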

		
		\begin{proof}
		If $\calP$ is a tight polytope of type $\{p_1, \ldots, p_{n-1}\}$, then it has $p_1$ vertices. The automorphism $\s_2$
		fixes the base vertex while permuting the remaining $p_1 - 1$. If $p_1 \leq p_2$, then each neighbor of the base vertex
		must have a nontrivial stabilizer under $\langle \s_2 \rangle$, since the group has order $p_2$, which is larger
		than the largest possible orbit.
		\end{proof}
		
Now we are ready to exhibit a proper normal subgroup $N$ of $\Gamma^+(\calP)$ that is a key element in discussions in Sections \ref{s:ChiralChiral} and \ref{s:ChiralRegular}.

\begin{proposition}\label{prop:NonTrivialCore}
Let $\calP$ be a tight orientable rotary $n$-polytope with $n \ge 3$ with type $\{p_1, \dots, p_{n-1}\}$ satisfying that $p_1 \ge p_2$, and rotation group $\Gamma^+(\calP)= \langle \sigma_1, \dots, \sigma_{n-1} \rangle$. Then there exists an integer $k$ such that $\langle \sigma_1^k \rangle$ is a non-trivial normal subgroup of $\Gamma^+(\calP)$.
\end{proposition}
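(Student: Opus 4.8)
The plan is to exhibit an explicit nontrivial cyclic subgroup $\langle \s_1^k \rangle$ and to verify its normality one generator at a time. For any $k$ the subgroup $\langle \s_1^k \rangle$ is automatically invariant under conjugation by $\s_1$; it is invariant under conjugation by $\s_j$ for $j \ge 4$ because $\s_1$ commutes with those generators, which is one of the standard relations satisfied by the generators of $[p_1,\dots,p_{n-1}]^+$ and hence of its quotient $\Gamma^+(\calP)$; and by Lemma~\ref{l:rels1s3}(b), applied to the rank-$4$ section $[F_{-1},F_4]$ (a rotary $4$-polytope with rotation group $\langle \s_1,\s_2,\s_3\rangle$, equal to $\calP$ itself when $n=4$), the subgroup $\langle \s_1^k\rangle$ is $\s_3$-invariant as soon as it is $\s_2$-invariant. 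Thus the whole problem reduces to exhibiting a nontrivial $k$ for which $\langle \s_1^k\rangle$ is normalized by $\s_2$; since $\s_1$-invariance is free, this is the same as asking that $\langle \s_1^k \rangle$ be normal in $\langle \s_1,\s_2\rangle$.

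To produce such a $k$ I would pass to the rank-$3$ section $\calK = [F_{-1},F_3]$, whose rotation group is $\langle \s_1,\s_2\rangle$ and which is a tight orientable rotary polyhedron of type $\{p_1,p_2\}$ (sections of a tight rotary polytope are again tight and rotary, and for $n=3$ one simply takes $\calK=\calP$). The normal core $\Core_{\langle\s_1,\s_2\rangle}(\langle\s_1\rangle)$ is contained in the cyclic group $\langle\s_1\rangle$, hence equals $\langle\s_1^k\rangle$ for some $k$, and it is by construction normal in $\langle\s_1,\s_2\rangle$; the only thing left to check is that it is nontrivial.

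This is exactly where the hypothesis $p_1 \ge p_2$ enters, by way of duality. The dual $\calK^\delta$ is a tight orientable rotary polyhedron of type $\{p_2,p_1\}$ whose first entry $p_2$ is at most its second entry $p_1$, so Corollary~\ref{cor:NTkernel} applies to $\calK^\delta$ and tells us that the kernel of the action of $\Gamma^+(\calK^\delta)=\langle\s_1,\s_2\rangle$ on the vertex set of $\calK^\delta$ is nontrivial. Now the vertices of $\calK^\delta$ are precisely the $2$-faces of $\calK$, the stabilizer of the base one being $\langle\s_1\rangle$; hence this kernel is the intersection of all vertex stabilizers, i.e. the normal core $\Core_{\langle\s_1,\s_2\rangle}(\langle\s_1\rangle)$. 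Therefore the core is nontrivial, which yields the desired exponent $k<p_1$.

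Combining the two parts finishes the argument: the subgroup $K=\langle\s_1^k\rangle$ from the second step is nontrivial and normal in $\langle\s_1,\s_2\rangle$, hence $\s_2$-invariant, and by the reductions of the first step it is then invariant under every $\s_i$, so $K\triangleleft\Gamma^+(\calP)$. I expect the lifting argument of the first paragraph to be the delicate point—verifying that $\s_2$-invariance genuinely propagates to all higher generators (to $\s_3$ through Lemma~\ref{l:rels1s3}(b), and to $\s_4,\dots,\s_{n-1}$ through the commutation of $\s_1$ with those generators)—whereas the base case $n=3$ needs no lifting and the case $n=4$ needs only Lemma~\ref{l:rels1s3}(b).
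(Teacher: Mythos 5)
Your proposal is correct and follows essentially the same route as the paper's own proof: both apply the dual form of Corollary~\ref{cor:NTkernel} to the base $3$-face to get a nontrivial normal subgroup of $\langle\s_1,\s_2\rangle$ inside $\langle\s_1\rangle$ (you phrase it as the normal core, the paper as the kernel of the action on the $2$-faces, which is the same thing), and both then propagate normality to $\s_3$ via Lemma~\ref{l:rels1s3}(b) and to $\s_i$, $i\ge 4$, via commutativity with $\s_1$. The only difference is expository: you spell out the reduction and the tightness/duality of the section slightly more explicitly than the paper does.
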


\begin{proof}
By the dual version of Corollary \ref{cor:NTkernel} the group $\langle \sigma_1, \sigma_2 \rangle$ has a non-trivial kernel when acting on the $2$-faces of the base $3$-face of $\cal P$. These $2$-faces correspond to cosets of $\langle \sigma_1 \rangle$ in $\langle \sigma_1, \sigma_2 \rangle$. Then there exists $k \in \{1,\dots, p_1-1\}$ such that
$\langle \sigma_1 \rangle \sigma_2^\ell \sigma_1^k = \langle \sigma_1 \rangle \sigma_2^\ell$ for every $\ell$. In particular, when $\ell=-1$ this implies that $\sigma_2^{-1} \sigma_1^k \sigma_2 \in \langle \sigma_1 \rangle$. Since the latter group is cyclic, we have that $\langle \sigma_1^k \rangle$ is normal in $\langle \sigma_1, \sigma_2 \rangle$. The result follows from Lemma \ref{l:rels1s3} and commutativity of $\s_1^k$ with $\s_i$ for every $i \ge 4$.
\end{proof}

\section{Tight orientable rotary polyhedra and $4$-polytopes}\label{s:polyhedra}

Much of the discussion on tight chiral $n$-polytopes for $n \ge 4$ in Sections \ref{s:ChiralChiral}, \ref{s:ChiralRegular} and \ref{s:5polytopes} is based on what we know about tight orientable rotary polyhedra. In this section we summarize some important facts about them.

We start with a simple result related to Lemma \ref{l:NoChiral2}, and one of its consequences for tight orientable rotary polyhedra.

\begin{lemma}\label{l:SchType2}
For every $p \ge 2$ there is a unique polyhedron of type $\{p,2\}$ and it is regular.
\end{lemma}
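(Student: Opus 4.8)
The plan is to split the statement into an existence part and a uniqueness-plus-regularity part, and to reduce both to the rotation-group description already set up in the excerpt. For existence, I would simply invoke the fact (cited from \cite[Chapter 3]{arp}) that there is a regular polytope of every type $\{p_1,\dots,p_{n-1}\}$ with $p_i \ge 2$; specializing to $n=3$ and $\{p,2\}$ gives a regular polyhedron of the required type. Concretely, this polyhedron is the $p$-gonal dihedron (a $p$-gon and its mirror image glued along their common boundary), which has exactly two facets, and its regularity I would take as known. So the content of the lemma is really the \emph{uniqueness} and the claim that \emph{every} polyhedron of type $\{p,2\}$ is regular.

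For the structural heart of the argument I would lean directly on the proof technique of Lemma~\ref{l:NoChiral2}. Let $\calP$ be any polyhedron of type $\{p,2\}$. Since the last entry of the type is $2$, every edge (the $(n-3)=0$-faces here, i.e.\ vertices, or more precisely the argument runs on the appropriate faces) lies in exactly two facets; running the diamond-condition and connectivity argument of Lemma~\ref{l:NoChiral2} verbatim shows that $\calP$ has exactly two facets, and that every vertex and every edge is incident to both of them. Thus the combinatorial data of $\calP$ is entirely determined by the $1$-skeleton together with the single cyclic facet structure: $\calP$ is built from a $p$-cycle of vertices and edges, with the two facets each consisting of that whole cycle. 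This forces $\calP$ to be isomorphic to the $p$-gonal dihedron, giving uniqueness up to isomorphism.

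Finally, for regularity I would reuse the explicit automorphism exhibited in Lemma~\ref{l:NoChiral2}: the map that fixes every face of rank $\le n-2 = 1$ and interchanges the two facets is an automorphism, and it sends each flag to its $(n-1)$-adjacent flag. Since the underlying $p$-gon is itself regular (every finite polygon is isomorphic to a convex regular $p$-gon, hence admits a flag-transitive rotation and reflection action on its vertices and edges), combining those polygon symmetries with the facet-swap automorphism yields a flag-transitive action on $\calP$. Therefore $\calP$ is regular, which simultaneously reconfirms the existence claim. The only step requiring any care is verifying that the combinatorial reconstruction forces a genuine isomorphism (rather than just matching face counts), but because a rank-$2$ section is determined up to isomorphism by its number of vertices and the two facets are forced to be this common $p$-gon, the reconstruction is rigid and the isomorphism is immediate.
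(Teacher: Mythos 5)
Your proposal is correct and follows essentially the same route as the paper: use the type-$\{p,2\}$ condition plus the diamond condition and connectivity (the argument of Lemma~\ref{l:NoChiral2}) to force exactly two facets incident to every vertex and edge, identify $\calP$ with the $p$-gonal dihedron, and conclude regularity. The paper simply asserts regularity as ``clear,'' whereas you spell out the flag-transitive action via the polygon symmetries and the facet swap, but this is elaboration rather than a different argument.
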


\begin{proof}
Let $\calP$ be a polyhedron with type $\{p,2\}$. Then every vertex of $\calP$ is incident with precisely two edges and precisely two facets. Since adjacent vertices belong to the same two facets, the connectivity of $\calP$ forces $\calP$ itself to have only two facets. It follows that $\calP$ is isomorphic to the face-lattice of the map on the sphere whose $1$-skeleton is an equatorial $p$-gon and its two facets are the northern and southern hemispheres. Clearly $\calP$ is regular.
\end{proof}

\begin{lemma}\label{l:NormalThen2}
If $\calP$ is an orientable rotary tight polyhedron with $\Gamma^+(\calP)=\langle \s_1, \s_2 \rangle$ and $\langle \s_1 \rangle \triangleleft \Gamma^+(\calP)$, then $\calP$ has type $\{p,2\}$ for some $p$. In particular, $\calP$ is regular.
\end{lemma}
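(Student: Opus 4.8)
The plan is to show that the last entry of the Schl\"afli type equals $2$; the regularity claim then follows immediately from Lemma~\ref{l:SchType2}. Write the type as $\{p,q\}$, so that $\s_1$ has order $p$ and $\s_2$ has order $q\ge 2$, and recall that the standard generators satisfy $(\s_1\s_2)^2=\id$ by~(\ref{eq:1stRelation}). The target reduces to proving $q=2$, after which the classification of type $\{p,2\}$ polyhedra does the rest.

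First I would exploit the hypothesis that $\langle \s_1 \rangle$ is normal: conjugating its generator gives $\s_2 \s_1 \s_2^{-1} \in \langle \s_1 \rangle$. Next I would rewrite this conjugate using the relation $(\s_1\s_2)^2 = \id$. From $\s_1 \s_2 \s_1 \s_2 = \id$ one obtains $\s_2 \s_1 \s_2 = \s_1^{-1}$, and hence $\s_2 \s_1 \s_2^{-1} = \s_1^{-1} \s_2^{-2}$. Combining these facts yields $\s_1^{-1}\s_2^{-2} \in \langle \s_1 \rangle$, so that $\s_2^{2} \in \langle \s_1 \rangle$.

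The final step is to bring in the intersection condition. For a rank-$3$ rotary polytope, Lemma~\ref{l:IntAi} (with $j=2$, $i=1$, $k=2$) --- equivalently the $n=3$ instance of~(\ref{eq:eqIntCond}) --- gives $\langle \s_1 \rangle \cap \langle \s_2 \rangle = \{\id\}$. Since $\s_2^2$ lies in both $\langle \s_1 \rangle$ and $\langle \s_2 \rangle$, we conclude that $\s_2^2 = \id$, i.e.\ $q \le 2$; as $q \ge 2$ always holds for a polyhedron, we get $q = 2$. Thus $\calP$ has type $\{p,2\}$, and Lemma~\ref{l:SchType2} shows it is regular.

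The argument is short, and essentially all of the work is the one-line manipulation of $(\s_1\s_2)^2=\id$ needed to isolate the conjugate $\s_2\s_1\s_2^{-1}$; the only point requiring care is the direction of conjugation. I expect no genuine obstacle here. It is worth noting that tightness is not actually essential to the deduction --- the trivial intersection $\langle\s_1\rangle\cap\langle\s_2\rangle=\{\id\}$ holds for every orientable rotary polyhedron --- so tightness serves only to place the lemma in the context in which it will be applied.
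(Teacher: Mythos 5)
Your proof is correct and follows essentially the same route as the paper's: use normality of $\langle \s_1 \rangle$ to place a conjugate of $\s_1$ back in $\langle \s_1 \rangle$, rewrite the conjugate via $(\s_1\s_2)^2 = \id$ to conclude $\s_2^2 \in \langle \s_1 \rangle$, kill it with the intersection condition, and finish with Lemma~\ref{l:SchType2}. Your closing observation that tightness is never actually used is also accurate, as the paper's own argument relies only on the rotary relations and the intersection condition.
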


\begin{proof}
If $\s_2^{-1} \langle \sigma_1 \rangle \s_2 = \langle \s_1 \rangle$ then $\s_2^{-1} \s_1 \s_2 = \s_1^k$ for some $k$. Now, $\s_2^{-1} \s_1 \s_2= \s_2^{-1} \s_2^{-1} \s_1^{-1}$, implying that $\s_2^{-2} = \s_1^{k+1}$. The intersection condition (\ref{eq:eqIntCond}) tells us that $\s_2$ has order $2$ and hence the type of $\calP$ is $\{p,2\}$ for some $p$. Lemma \ref{l:SchType2} implies the regularity of $\calP$.
\end{proof}

The rotation groups of tight orientable rotary polyhedra have many normal subgroups contained in the vertex or facet stabilizer. In the next result we describe some of these normal subgroups.

	\begin{proposition}
	\label{prop:normal-props}
	Suppose $\calP$ is a chiral or orientable rotary polyhedron of type $\{p, q\}$, with $\G^+(\calP) = \langle \s_1, \s_2 \rangle$.
	If $\langle \s_2^a \rangle \triangleleft \Gamma^+(\calP)$, then $\s_2^a \s_1 = \s_1 \s_2^{sa}$ for some $s$ such that $s^2 \equiv 1$ (mod $q/a$).
	In particular, $\s_1^2$ commutes with $\s_2^a$, and if $p$ is odd, then $\s_2^a$ is central.
	\end{proposition}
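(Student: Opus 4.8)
The plan is to exploit the normality of $\langle \s_2^a \rangle$ together with the single defining relation $(\s_1 \s_2)^2 = \id$ of a polyhedron. First I would reduce to the case $a \mid q$ (otherwise replace $a$ by $\gcd(a,q)$, which generates the same subgroup), so that $\s_2^a$ has order exactly $q/a$ and an equality $\s_2^{xa} = \s_2^{ya}$ is equivalent to $x \equiv y \pmod{q/a}$. Because $\langle \s_2^a \rangle \triangleleft \G^+(\calP)$ and this subgroup is cyclic, conjugation by $\s_1$ must carry $\s_2^a$ to a power of itself; writing $\s_1^{-1} \s_2^a \s_1 = \s_2^{sa}$ defines the integer $s$ and is exactly the asserted relation $\s_2^a \s_1 = \s_1 \s_2^{sa}$.

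Next I would pin down $s$ using the involution $\s_1 \s_2$. Since $\s_2$ centralizes $\s_2^a$, conjugation by $\s_1 \s_2$ acts on $\s_2^a$ exactly as conjugation by $\s_1$: a direct computation gives $(\s_1\s_2)^{-1} \s_2^a (\s_1\s_2) = \s_2^{-1}(\s_1^{-1}\s_2^a\s_1)\s_2 = \s_2^{sa}$. Iterating this, $(\s_1\s_2)^{-2}\s_2^a(\s_1\s_2)^2 = \s_2^{s^2 a}$. But $(\s_1\s_2)^2 = \id$, so $\s_2^{s^2 a} = \s_2^a$, which, as $\s_2^a$ has order $q/a$, is precisely $s^2 \equiv 1 \pmod{q/a}$. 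The statement that $\s_1^2$ commutes with $\s_2^a$ is then immediate, since $\s_1^{-2}\s_2^a\s_1^2 = \s_2^{s^2 a} = \s_2^a$.

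For the final claim I would conjugate $p$ times instead of twice: $\s_1^{-p}\s_2^a\s_1^p = \s_2^{s^p a}$ together with $\s_1^p = \id$ yields $s^p \equiv 1 \pmod{q/a}$. When $p$ is odd, $\gcd(2,p) = 1$, so choosing integers $x,y$ with $2x + py = 1$ gives $s = (s^2)^x (s^p)^y \equiv 1 \pmod{q/a}$ from $s^2 \equiv 1$ and $s^p \equiv 1$; hence $\s_1$ centralizes $\s_2^a$. Since $\s_2$ trivially centralizes $\s_2^a$ and these two elements generate $\G^+(\calP)$, the element $\s_2^a$ is central.

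The whole argument is a short sequence of conjugation computations, so I expect no deep obstacle. The only points requiring care are the bookkeeping that makes the congruences modulo $q/a$ meaningful (handled by first arranging $a \mid q$) and the correct use of $(\s_1 \s_2)^2 = \id$ in the step where conjugating twice by the involution $\s_1 \s_2$ must act trivially on $\s_2^a$.
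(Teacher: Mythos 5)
Your proof is correct and follows essentially the same route as the paper: both derive $\s_1^{-1}\s_2^a\s_1 = \s_2^{sa}$ from normality of the cyclic subgroup and then conjugate $\s_2^a$ by the involution $(\s_1\s_2)^2 = \id$ to obtain $s^2 \equiv 1 \pmod{q/a}$, from which the commutation with $\s_1^2$ is immediate. Your handling of the odd-$p$ case (combining $s^2 \equiv 1$ and $s^p \equiv 1$ via B\'ezout) is a cosmetic variant of the paper's observation that $\langle \s_1^2 \rangle = \langle \s_1 \rangle$ when $p$ is odd; both are one-line arguments resting on the same fact.
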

	
	\begin{proof}
	The subgroup $\langle \s_2^a \rangle$ is normal if and only if $\s_1^{-1} \s_2^a \s_1 = \s_2^{sa}$ for some $s$.
	Furthermore, we note that
	\begin{align*}
	\s_2^a &= (\s_1 \s_2)^{-2} \s_2^a (\s_1 \s_2)^2 \\
	&= (\s_2^{-1} \s_1^{-1} \s_2^{-1}) \s_2^{sa} (\s_2 \s_1 \s_2) \\
	&= \s_2^{-1} \s_1^{-1} \s_2^{sa} \s_1 \s_2 \\
	&= \s_2^{-1} \s_2^{s^2 a} \s_2 \\
	&= \s_2^{s^2 a},
	\end{align*}
	so that $a \equiv s^2 a$ (mod $q$), and thus $s^2 \equiv 1$ (mod $q/a$). It is now clear then that $\s_1^2$ commutes
	with $\s_2^a$, and if $p$ is odd, then $\langle \s_1^2 \rangle = \langle \s_1 \rangle$ so that $\s_1$ commutes
	with $\s_2^a$ as well.
	\end{proof}

According to \cite[Proposition 4.6]{tight3}, an orientably regular polyhedron has no multiple edges if and only if $\langle \s_2 \rangle$ is core-free. The following lemma is a consequence of \cite[Lemma 4.7 (c)]{tight3}.

\begin{lemma}\label{l:sigma2IsNormal}
Let $\calP$ be a tight regular polyhedron with $\Gamma^+(\calP)=\langle \s_1,\s_2 \rangle$ and $\langle \s_2 \rangle$ core-free. Then $\langle \s_1^2 \rangle \triangleleft \Gamma^+(\calP)$.
\end{lemma}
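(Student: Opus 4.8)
The plan is to reduce the statement to a single conjugation. Since $\langle \s_1^2 \rangle \le \langle \s_1 \rangle$ lies in an abelian group, $\s_1$ already normalizes it, and because $\Gamma^+(\calP) = \langle \s_1, \s_2 \rangle$, normality of $\langle \s_1^2 \rangle$ is equivalent to the single containment $\s_2 \s_1^2 \s_2^{-1} \in \langle \s_1^2 \rangle$. Everything will hinge on identifying this conjugate.

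First I would reinterpret the hypothesis. The stabilizer of the base vertex is $\langle \s_2 \rangle$ (as used in the proof of Lemma~\ref{l:FixOneFixAll}), so the kernel of the action of $\Gamma^+(\calP)$ on the vertex set is exactly the core of $\langle \s_2 \rangle$; thus $\langle \s_2 \rangle$ is core-free precisely when this action is faithful. Since $\calP$ is tight it has $p$ vertices, and its base facet, a $p$-gon, already accounts for all of them, so $\s_1$ permutes the vertex set as a single $p$-cycle and $\langle \s_1 \rangle$ acts regularly (here I use $|\Gamma^+(\calP)| = pq$ from Proposition~\ref{prop:tight-group-factoring} together with the intersection condition). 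Identifying the vertices with $\Z/p\Z$ so that $\s_1$ is the translation $x \mapsto x+1$, the relation $(\s_1 \s_2)^2 = \id$ becomes $\s_2 \s_1 \s_2 = \s_1^{-1}$, equivalently $\s_2 \s_1 \s_2^{-1} = \s_1^{-1} \s_2^{-2}$.

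The odd and even cases of $p$ behave differently. When $p$ is odd, $\langle \s_1^2 \rangle = \langle \s_1 \rangle$, so the goal is that $\langle \s_1 \rangle$ itself be normal; feeding $\s_2 \s_1 \s_2^{-1} = \s_1^{-1} \s_2^{-2}$ into this demand forces $\s_2^{-2} \in \langle \s_1 \rangle \cap \langle \s_2 \rangle = \{\id\}$, so the statement for odd $p$ is equivalent to $q = 2$, i.e.\ to $\calP$ having type $\{p,2\}$ as in Lemma~\ref{l:NormalThen2}. Proving that core-freeness forces $q = 2$ is already substantial: I would attack it by iterating Proposition~\ref{prop:NonTrivialCore}, which (when $p \ge q$) produces a nontrivial normal subgroup inside $\langle \s_1 \rangle$ and hence a proper tight quotient with strictly smaller first entry on which $\langle \overline{\s_1} \rangle$ remains core-free, and inducting. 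When $p$ is even, $\langle \s_1^2 \rangle$ has index $2$ in $\langle \s_1 \rangle$, and one must pin the conjugate $\s_2 \s_1^2 \s_2^{-1}$ down exactly as an even translation, not merely as a parity-preserving permutation.

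The main obstacle is precisely this determination of $\s_2 \s_1^2 \s_2^{-1}$. The defining relation alone gives only $\s_2 \s_1 \s_2^{-1} = \s_1^{-1}\s_2^{-2}$, whose right-hand side carries a nontrivial $\s_2$-component, so $\langle \s_1 \rangle$ need not be normal and the relation by itself is insufficient; crude invariants such as the even/odd block system are necessary but far from enough, since a conjugate of $\s_1^2$ of the correct cycle type could still fall outside $\langle \s_1^2 \rangle$. What closes the gap is the combination of tightness with core-freeness: these force $\Gamma^+(\calP)$ to be a highly constrained transitive group of degree $p$ carrying a regular cyclic subgroup, and it is the classification of such tight regular polyhedra that shows $\s_2$ normalizes $\langle \s_1^2 \rangle$. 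Once normality is in hand, Proposition~\ref{prop:normal-props}, applied to the dual polyhedron, records the precise relation $\s_1^2 \s_2 = \s_2 \s_1^{2s}$ that $\langle \s_1^2 \rangle$ then satisfies.
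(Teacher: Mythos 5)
Your reductions are sound as far as they go: normality of $\langle \s_1^2 \rangle$ does reduce to the single containment $\s_2 \s_1^2 \s_2^{-1} \in \langle \s_1^2 \rangle$, core-freeness of $\langle \s_2 \rangle$ is exactly faithfulness of the vertex action, and for odd $p$ the conclusion is indeed equivalent to $q = 2$. But the proof never actually happens: the one step that carries all the content --- pinning down $\s_2 \s_1^2 \s_2^{-1}$ --- is delegated to ``the classification of such tight regular polyhedra,'' i.e.\ to Theorem~\ref{t:TightRegularHedra} (that is, \cite[Theorem 4.13]{tight3}). That appeal is circular: in \cite{tight3} the classification is obtained using Lemma 4.7(c), which is precisely the statement you are trying to prove (and is what this paper cites as its justification for Lemma~\ref{l:sigma2IsNormal}). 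Moreover, even granting the classification in the form stated in this paper, it records only the admissible Schl\"afli types; a list of types cannot tell you that $\s_2$ normalizes $\langle \s_1^2 \rangle$, since that is a statement about the presentation of $\G^+(\calP)$, not about its type.

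The auxiliary induction you propose for the odd-$p$ case also does not close. Proposition~\ref{prop:NonTrivialCore} applies only when $p \ge q$, so the case $p < q$ is untouched; and after quotienting by a nontrivial normal subgroup $\langle \s_1^k \rangle$ there is no reason for the image of $\langle \s_2 \rangle$ to remain core-free --- the pullback of the core of $\langle \ol{\s_2} \rangle$ is a normal subgroup of $\G^+(\calP)$ contained in $\langle \s_1^k \rangle \langle \s_2 \rangle$, which need not lie inside $\langle \s_2 \rangle$. (You also wrote that $\langle \ol{\s_1} \rangle$ stays core-free, but the hypothesis you need to propagate concerns $\langle \s_2 \rangle$.) For comparison: the paper does not prove this lemma internally at all; it quotes it as a direct consequence of \cite[Lemma 4.7(c)]{tight3}, a structural result that precedes, and underlies, the classification. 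So the honest options are either to cite that lemma, as the paper does, or to give a genuine computation establishing $\s_2 \s_1^2 \s_2^{-1} \in \langle \s_1^2 \rangle$; your proposal does neither.
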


Tight orientably regular polyhedra with no multiple edges were classified in \cite[Theorem 4.13]{tight3}. The next theorem is a direct consequence.

\begin{theorem}\label{t:TightRegularHedra}
The types of the tight orientably regular polyhedra with no multiple edges are:
\begin{itemize}
 \item[(a)] $\{p,2\}$ for some $p \ge 2$,
 \item[(b)] $\{2q,q\}$ for some odd integer $q\ge 3$,
 \item[(c)] $\{p,q\}$ with $p = 2^{\alpha_1}P_2^{\alpha_2} \cdots P_k^{\alpha_k}$ for some $\alpha_1 > 0$, some distinct odd primes $P_2,\dots,P_k$, and $q$ a proper even divisor of $p$ satisfying that
 \begin{itemize}
  \item the maximal power of $2$ dividing $q$ is either $2$, $4$ or $2^{\alpha_1-1}$, and if it is $4$ then $\alpha_1 \geq 3$,
  \item for $i\in \{2,\dots,k\}$, either $P_i^{\alpha_i}$ divides $q$ or $P_i$ is coprime with $q$.
 \end{itemize}
\end{itemize}
\end{theorem}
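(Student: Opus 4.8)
The plan is to read off the list of Schl\"afli types from the classification of the polyhedra themselves (equivalently, of their rotation groups) established in \cite[Theorem 4.13]{tight3}; the present statement records only \emph{which} symbols $\{p,q\}$ occur, so the task is essentially one of extraction and bookkeeping. First I would fix the group-theoretic dictionary: a tight orientably regular polyhedron of type $\{p,q\}$ corresponds to a group $\G^+(\calP) = \langle \s_1, \s_2\rangle$ of order $pq$ with $\G^+(\calP) = \langle \s_1\rangle\langle\s_2\rangle$, $\s_1^p = \s_2^q = (\s_1\s_2)^2 = \id$, and $\langle\s_1\rangle \cap \langle\s_2\rangle = \{\id\}$. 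Having no multiple edges means, by \cite[Proposition 4.6]{tight3}, that $\langle\s_2\rangle$ is core-free, and then Lemma \ref{l:sigma2IsNormal} gives that $\langle\s_1^2\rangle \triangleleft \G^+(\calP)$. These two facts are the structural engine of the argument.

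Next I would dispose of the degenerate and odd cases using the internal lemmas. If $q = 2$ we are in case (a), where by Lemma \ref{l:SchType2} the polyhedron is unique and regular. If instead $q \ge 3$ but $p$ is odd, then $\langle \s_1\rangle = \langle \s_1^2\rangle$ is normal, so Lemma \ref{l:NormalThen2} forces the type to be $\{p,2\}$, contradicting $q \ge 3$; hence whenever $q \ge 3$ the number $p$ must be even, and I would write $p = 2^{\alpha_1} P_2^{\alpha_2}\cdots P_k^{\alpha_k}$ as in (c). The remaining work is to determine, for $p$ even and $q \ge 3$, exactly which pairs $(p,q)$ support such a group.

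For this I would use the conjugation relation coming from the dual of Proposition \ref{prop:normal-props}: since $\langle\s_1^2\rangle$ is normal of index $2$ in $\langle\s_1\rangle$, one has $\s_2^{-1}\s_1^2\s_2 = \s_1^{2t}$ for some $t$ with $t^2 \equiv 1 \pmod{p/2}$, which presents $\G^+(\calP)$ as a metacyclic-type extension determined by $(p,q,t)$. Imposing that $\langle\s_2\rangle$ be core-free while $\langle\s_1^2\rangle$ stays normal, together with the intersection condition, then pins down the admissible data: the parity of $q$ separates the two surviving families, odd $q$ giving $\{2q,q\}$ as in (b) and even $q$ giving the divisibility-and-congruence constraints of (c). I expect the genuine obstacle to be precisely this last number-theoretic analysis --- the $2$-adic conditions (why the $2$-part of $q$ can only be $2$, $4$, or $2^{\alpha_1 - 1}$, with $4$ forcing $\alpha_1 \ge 3$) and the odd-prime conditions --- which is exactly what is carried out in \cite[Theorem 4.13]{tight3}. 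Consequently the real content of the present proof is to match that paper's parameterization of the groups to Schl\"afli symbols and to verify that collecting the resulting types yields exactly the three lists (a), (b), (c), with no omissions and no spurious overlaps between them.
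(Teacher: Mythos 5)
Your proposal is correct and takes essentially the same approach as the paper: the paper offers no independent argument for this theorem, stating only that it is a direct consequence of the classification in \cite[Theorem 4.13]{tight3}, which is exactly the reference to which you defer all of the substantive number-theoretic work. The preliminary reductions you sketch (core-freeness of $\langle \s_2 \rangle$ via \cite[Proposition 4.6]{tight3}, normality of $\langle \s_1^2 \rangle$ via Lemma~\ref{l:sigma2IsNormal}, and ruling out odd $p$ when $q \ge 3$ via Lemma~\ref{l:NormalThen2}) are consistent with the paper's surrounding lemmas but go beyond what the paper itself records for this statement.
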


In \cite{tight-chiral-polyhedra} an {\em atomic} chiral polyhedron was defined as a tight chiral polyhedron with type $\{p,q\}$ that covers no chiral polyhedron of type $\{p',q\}$ or of type $\{p,q'\}$ for $p'$ a proper divisor of $p$ and $q'$ a proper divisor of $q$. It is easy to see that every tight chiral polyhedron covers an atomic chiral polyhedron.

The atomic chiral polyhedra were classified in \cite[Lemma 4.10, Theorem 4.11, Theorem 4.14]{tight-chiral-polyhedra}. Here we summarize and slightly simplify this classification (see \cite[Theorem 4.15]{tight-chiral-polyhedra}).

	\begin{table}[htbp]
	\centering
	\begin{tabular}{|l|l|l|l|l|} \hline
	$\calP$ & Extra relations & $X(\calP)$ & $\enant{\calP}$ & Notes \\ \hline
	$\calP(2m, m^{\alpha})_k$ & $\s_2 \s_1^2 = \s_1^2 \s_2^{1+k m^{\alpha-1}}$ & $\langle \s_2^{m^{\alpha-1}} \rangle$ & $\calP(2m, m^{\alpha})_{m-k}$ & $m$ odd prime, $\alpha \geq 2$ \\
	& & & & $1 \leq k \leq m-1$ \\ \hline
	$\calP(m^{\alpha}, 2m)_k$ & $\s_2^2 \s_1= \s_1^{1+k m^{\alpha-1}} \s_2^2$ & $\langle \s_1^{m^{\alpha-1}} \rangle$ & $\calP(m^{\alpha}, 2m)_{m-k}$ & $m$ odd prime, $\alpha \geq 2$ \\
	& & & & $1 \leq k \leq m-1$ \\ \hline
	
	$\calP(8, 2^{\beta})_{\epsilon}$ & $\s_2 \s_1^2 = \s_1^2 \s_2^{1+\epsilon 2^{\beta-2}}$  & $\langle \s_2^{2^{\beta-1}} \rangle$ & $\calP(8, 2^{\beta})_{-\epsilon}$ & $\beta \geq 5$, $\epsilon = \pm 1$ \\ \hline
	$\calP(2^{\beta}, 8)_{\epsilon}$ & $\s_1 \s_2^2 = \s_2^2 \s_1^{1+\epsilon 2^{\beta-2}}$  & $\langle \s_1^{2^{\beta-1}} \rangle$ & $\calP(2^{\beta}, 8)_{-\epsilon}$ & $\beta \geq 5$, $\epsilon = \pm 1$ \\ \hline
	
	$\calP(2^{\beta-1}, 2^{\beta})_{\epsilon}$ & $\s_2^{-1} \s_1 = \s_1^{-1+2^{\beta-2}} \s_2^{-3+\epsilon 2^{\beta-2}}$ & $\langle \s_2^{2^{\beta-1}} \rangle$ & $\calP(2^{\beta-1}, 2^{\beta})_{-\epsilon}$ & $\beta \geq 5$, $\epsilon = \pm 1$ \\
	& $\s_2 \s_1^{-1} = \s_1^{1+2^{\beta-2}} \s_2^{3+\epsilon 2^{\beta-2}}$ & & & \\ \hline
	$\calP(2^{\beta}, 2^{\beta-1})_{\epsilon}$ & $\s_1^{-1} \s_2 = \s_2^{-1+2^{\beta-2}} \s_1^{-3+\epsilon 2^{\beta-2}}$ & $\langle \s_1^{2^{\beta-1}} \rangle$ & $\calP(2^{\beta}, 2^{\beta-1})_{-\epsilon}$ & $\beta \geq 5$, $\epsilon = \pm 1$ \\
	& $\s_1 \s_2^{-1} = \s_2^{1+2^{\beta-2}} \s_1^{3+\epsilon 2^{\beta-2}}$ & & & \\ \hline
	\end{tabular}
	\caption{The atomic chiral polyhedra. An atomic chiral polyhedron with name $\calP(p,q)_t$ has an automorphism group that is a quotient of $[p,q]^+$ by the relation(s) given in the ``Extra relations'' column,
	and the subscript indicates the name of an additional parameter.}
	\label{tab:atomic-polyhedra}
	\end{table}

\begin{theorem} \label{t:atomic-polyhedra}
Every atomic chiral polyhedron $\calP$ is one of the polyhedra in Table~\ref{tab:atomic-polyhedra}, with chirality group $X(\calP)$ and enantiomorph $\enant{\calP}$ as described in the table.
\end{theorem}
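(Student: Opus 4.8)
The plan is to treat Theorem~\ref{t:atomic-polyhedra} as a consolidation of the classification already carried out in \cite[Lemma 4.10, Theorem 4.11, Theorem 4.14]{tight-chiral-polyhedra}, so that the proof consists of reconciling those results with the six rows of Table~\ref{tab:atomic-polyhedra} and confirming the listed chirality groups and enantiomorphs. First I would recall the setup: a tight chiral polyhedron $\calP$ of type $\{p,q\}$ has $\Gamma^+(\calP) = \langle \s_1,\s_2 \rangle$ of order $pq$, and by Proposition~\ref{prop:tight-group-factoring} every element has a normal form $\s_1^a \s_2^b$. Hence the isomorphism type of $\Gamma^+(\calP)$ is pinned down by a single commutation rule expressing $\s_2 \s_1^2$ (equivalently $\s_2^2 \s_1$) back in normal form, and the candidate rules are exactly the ``extra relations'' displayed in the table.

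Next I would reconstruct why only the six listed families can be atomic. Using $(\s_1 \s_2)^2 = \id$ together with Proposition~\ref{prop:normal-props} applied to $\calP$ and, dually, to $\calP^\delta$, the admissible commutation rules are severely constrained: the exponent $s$ governing the conjugation action on the relevant cyclic normal subgroup must satisfy $s^2 \equiv 1$ modulo the appropriate divisor of $q$ (or of $p$), and chirality forces this exponent to be nontrivial. Feeding the resulting congruences through the intersection condition (\ref{eq:eqIntCond}) together with the requirement that $\calP$ actually be chiral yields the arithmetic constraints recorded in the ``Notes'' column: $m$ an odd prime with $\alpha \ge 2$ in the first two rows, and powers of two with $\beta \ge 5$ in the last four. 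The atomicity hypothesis --- that $\calP$ cover no chiral polyhedron of type $\{p',q\}$ or $\{p,q'\}$ with $p' \mid p$, $q' \mid q$ proper --- is then what singles out these minimal cases from the full list of tight chiral polyhedra, since any admissible larger type factors through one of them. Throughout, the two columns of mutually dual families are obtained for free from the duality that replaces the generating pair $(\s_1,\s_2)$ by $(\s_2^{-1},\s_1^{-1})$.

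Finally I would verify the last two data columns. For the chirality group I would exhibit, in each row, the explicit cyclic subgroup of $\langle \s_1 \rangle$ or $\langle \s_2 \rangle$ claimed as $X(\calP)$ and check that it is exactly the kernel of the quotient from the smallest regular cover to $\Gamma(\calP)$, as provided by Lemma~\ref{l:ChirGruIsKer}. For the enantiomorph I would apply the change-of-base-flag substitution $\{\s_1^{-1},\s_2^{-1}\}$ to the extra relation of each family and read off that it negates the parameter --- sending $\epsilon$ to $-\epsilon$ in the last four rows and $k$ to $m-k$ (that is, $-k$ modulo $m$) in the first two --- producing the entry in the $\enant{\calP}$ column. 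I expect the main obstacle to be the case analysis behind the second paragraph: disentangling the interaction of the $2$-part and the odd part of $pq$, verifying the intersection condition in each surviving case, and confirming that nothing outside the six families is atomic. Since this is precisely the content of the cited theorems, in practice that labour is imported, and the remaining effort is the bookkeeping needed to present the outcome uniformly as a single table.
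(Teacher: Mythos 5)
Your overall strategy---importing the classification from \cite[Lemma 4.10, Theorem 4.11, Theorem 4.14]{tight-chiral-polyhedra}, reading off the enantiomorph via the substitution $\s_i \mapsto \s_i^{-1}$, and obtaining the dual families via $(\s_1,\s_2) \mapsto (\s_2^{-1},\s_1^{-1})$---is the same as the paper's, and your enantiomorph and duality computations agree with what the paper actually does. The genuine gap lies in the reconciliation step, which you dismiss as bookkeeping. The presentations in Table~\ref{tab:atomic-polyhedra} are \emph{not} the presentations appearing in the cited classification: for instance, \cite[Theorem 4.11]{tight-chiral-polyhedra} presents the type $\{2m, m^{\alpha}\}$ polyhedra by the \emph{two} relations $\s_2^{-1}\s_1 = \s_1^3\s_2^{1+km^{\alpha-1}}$ and $\s_2\s_1^{-1} = \s_1^{-3}\s_2^{-1+km^{\alpha-1}}$, whereas the table lists the single relation $\s_2\s_1^2 = \s_1^2\s_2^{1+km^{\alpha-1}}$. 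The core of the paper's proof is the algebraic verification, using $(\s_1\s_2)^2 = \id$, that the first cited relation is equivalent to the table's relation and that the second is then superfluous. Your proposal skips this by asserting that a tight group is ``pinned down by a single commutation rule expressing $\s_2\s_1^2$ back in normal form'' and that the candidate rules ``are exactly the extra relations displayed in the table.'' That assertion fails for two of the six families: for type $\{2^{\beta-1},2^{\beta}\}$ and its dual, the table retains two extra relations that are not of this shape, and the paper explicitly remarks that for these types the presentation cannot be simplified in the same way. A uniform argument along your lines would therefore break down exactly where the table itself signals that it must.

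Two further points. For the chirality groups, your plan to verify each claimed subgroup against the kernel supplied by Lemma~\ref{l:ChirGruIsKer} is workable in principle, but it requires constructing the smallest regular cover $\G(\calP)\lozenge\G(\enant{\calP})$ in every case (and the kernel lives in that cover, not in $\G(\calP)$, so one must also track the identification). The paper's argument is lighter: the quotient of $\calP$ by the candidate subgroup, e.g.\ $\langle\s_2^{m^{\alpha-1}}\rangle$, is regular (imported from \cite{tight-chiral-polyhedra}), so the universal property of $X(\calP)$ places $X(\calP)$ inside it, and nontriviality of $X(\calP)$ (chirality) plus the prime order of the subgroup forces equality. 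Finally, your second paragraph's sketch of re-deriving the classification from Proposition~\ref{prop:normal-props} and congruences $s^2 \equiv 1$ would not by itself recover the cited results; since you concede that this labour is imported wholesale, it does no harm, but it also contributes nothing to the proof.
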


\begin{proof}
First we will prove the claim for atomic chiral polyhedra of type $\{2m, m^{\alpha}\}$ and $\{m^{\alpha}, 2m\}$.
We start by noting that for any rotation group $\langle \s_1, \s_2 \rangle$ and
for all $t$, the relation $\s_2^{-1} \s_1 = \s_1^3 \s_2^t$ is equivalent to $\s_2 \s_1^2 = \s_1^2 \s_2^t$, since:
\begin{align*}
\s_2^{-1} \s_1 &= \s_1^3 \s_2^t && \text{Multiply both sides by $\s_1^{-1}$ on the left} \\
\s_1^{-1} \s_2^{-1} \s_1 &= \s_1^2 \s_2^t && \text{Use $\s_1^{-1} \s_2^{-1} = \s_2 \s_1$} \\
\s_2 \s_1^2 &= \s_1^2 \s_2^t.
\end{align*}
Similarly, for all $t$, the relation $\s_2 \s_1^{-1} = \s_1^{-3} \s_2^t$ is equivalent to $\s_1^2 \s_2 = \s_2^{-t} \s_1^2$, since:
\begin{align*}
\s_2 \s_1^{-1} &= \s_1^{-3} \s_2^t && \text{Multiply both sides by $\s_1$ on the left } \\
\s_1 \s_2 \s_1^{-1} &= \s_1^{-2} \s_2^t && \text{Use $\s_1 \s_2 = \s_2^{-1} \s_1^{-1}$} \\
\s_2^{-1} \s_1^{-2} &= \s_1^{-2} \s_2^t && \text{Invert both sides} \\
\s_1^2 \s_2 &= \s_2^{-t} \s_1^2.
\end{align*}
Now, suppose that $\calP$ is the atomic chiral polyhedron of type $\{2m, m^{\alpha}\}$ whose group is the quotient of $[2m, m^{\alpha}]^+$
by the relations $\s_2^{-1} \s_1 = \s_1^3 \s_2^{1+km^{\alpha-1}}$ and $\s_2 \s_1^{-1} = \s_1^{-3} \s_2^{-1+km^{\alpha-1}}$.
(See \cite[Theorem 4.11]{tight-chiral-polyhedra}.) Then the above discussion shows that this group is equivalent to the quotient of $[2m, m^{\alpha}]^+$
by the relations $\s_2 \s_1^2 = \s_1^2 \s_2^{1+km^{\alpha-1}}$ and $\s_1^2 \s_2 = \s_2^{1-km^{\alpha-1}} \s_1^2$. Furthermore,
the second of those relations is superfluous, since if $\s_2 \s_1^2 = \s_1^2 \s_2^{1+km^{\alpha-1}}$ then
\[ \s_2^{1-km^{\alpha-1}} \s_1^2 = \s_1^2 \s_2^{(1-km^{\alpha-1})(1+km^{\alpha-1})} = \s_1^2 \s_2. \]
So $\G(\calP)$ may be written in the form as it appears in Table~\ref{tab:atomic-polyhedra}.

Next, the proof of \cite[Theorem 3.6]{tight-chiral-polyhedra} shows that $\langle \s_2^{m^{\alpha-1}} \rangle$ is normal and that
the quotient of $\calP$ by this normal subgroup is regular. Thus $X(\calP)$ is a nontrivial subgroup of $\langle \s_2^{m^{\alpha-1}} \rangle$,
and since the latter has prime order $m$, this implies that $X(\calP) = \langle \s_2^{m^{\alpha-1}} \rangle$.

To find a presentation for $\G(\enant{\calP})$, we may change the defining relations of $\G(\calP)$ by replacing $\s_1$ with $\s_1^{-1}$ and
replacing $\s_2$ with $\s_2^{-1}$. This yields:
\begin{align*}
\s_2^{-1} \s_1^{-2} &= \s_1^{-2} \s_2^{-1-km^{\alpha-1}} && \text{Invert both sides} \\
\s_1^2 \s_2 &= \s_2^{1+km^{\alpha-1}} \s_1^2.
\end{align*}
From this we obtain $\s_1^2 \s_2^{1-km^{\alpha-1}} = \s_2^{(1+km^{\alpha-1})(1-km^{\alpha-1})} \s_1^2 = \s_2 \s_1^2$. Thus,
the enantiomorph replaces the parameter $k$ with $-k$ (or equivalently, $m-k$). 

A presentation for the dual of $\calP$ (with respect to the same base flag as $\calP$) 
is obtained by changing each defining relation, replacing $\s_1$ with $\s_2^{-1}$
and $\s_2$ with $\s_1^{-1}$. Applying this to the relation $\s_2 \s_1^2 = \s_1^2 \s_2^{1+km^{\alpha-1}}$ and then inverting both sides
yields $\s_2^2 \s_1 = \s_1^{1+km^{\alpha-1}} \s_2^2$, matching the second row of Table~\ref{tab:atomic-polyhedra}.

This finishes the proof for atomic chiral polyhedra of type $\{2m, m^{\alpha}\}$ and their duals. The proof for the remaining
polyhedra is analogous (referencing \cite[Theorems 3.7 and 3.8]{tight-chiral-polyhedra}), except that for type $\{2^{\beta-1}, 2^{\beta}\}$ and its dual, it is not possible to simplify the presentation
in the same way that we can for the other two cases.
\end{proof}

\begin{corollary}\label{c:qIsPrimePower}
Let $\calP$ be an atomic chiral polyhedron with type $\{p,q\}$ and $p\ge q$. Then $p$ is a prime power.
\end{corollary}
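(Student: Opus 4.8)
The plan is to appeal directly to the classification of atomic chiral polyhedra given by Theorem~\ref{t:atomic-polyhedra}, which forces $\calP$ to be one of the six families listed in Table~\ref{tab:atomic-polyhedra}. Since the hypothesis $p \ge q$ is purely a constraint on the type $\{p,q\}$, the first step is to read off the type of each family from the table and decide, using the parameter restrictions in the ``Notes'' column, for which families the inequality $p \ge q$ can actually hold.

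Carrying this out, three of the families automatically satisfy $p < q$ and are therefore excluded by the hypothesis: for $\calP(2m, m^{\alpha})_k$ with $m$ an odd prime and $\alpha \ge 2$ we have $q/p = m^{\alpha-1}/2 \ge 3/2 > 1$; for $\calP(8, 2^{\beta})_{\epsilon}$ with $\beta \ge 5$ we have $q = 2^{\beta} \ge 32 > 8 = p$; and for $\calP(2^{\beta-1}, 2^{\beta})_{\epsilon}$ we have $p = 2^{\beta-1} < 2^{\beta} = q$. The remaining three families are exactly the duals of these, and in each of them $p$ is visibly a prime power: $\calP(m^{\alpha}, 2m)_k$ has $p = m^{\alpha}$ with $m$ an odd prime, while both $\calP(2^{\beta}, 8)_{\epsilon}$ and $\calP(2^{\beta}, 2^{\beta-1})_{\epsilon}$ have $p = 2^{\beta}$. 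For these one checks that $p \ge q$ genuinely holds (since $m^{\alpha-1} \ge 2$ and $2^{\beta} > 2^{\beta-1}$, and $2^{\beta} > 8$ when $\beta \ge 5$), so they are precisely the families allowed by the hypothesis, and the desired conclusion is then immediate.

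The argument is essentially a case-by-case table lookup, so there is no substantive obstacle; the only point requiring care is to verify the inequality $p \ge q$ correctly in each row, so that exactly the three families whose first type-entry is a prime power survive. A cleaner way to organize the write-up is to observe that the three surviving families are precisely the duals of the three excluded ones, and that passing to the dual swaps the roles of $p$ and $q$ while preserving the property of being a prime power; this reduces the verification to three inequalities rather than six.
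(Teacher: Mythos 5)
Your proposal is correct and is essentially the paper's own argument: the paper states this corollary with no separate proof precisely because it is an immediate table lookup from the classification in Theorem~\ref{t:atomic-polyhedra}, exactly as you carry out. Your case checks (which rows of Table~\ref{tab:atomic-polyhedra} allow $p \ge q$, and that in those rows $p$ is $m^{\alpha}$ or $2^{\beta}$) are all accurate; the verification that $p \ge q$ actually holds in the surviving families is harmless but not needed for the implication.
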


It turns out that the atomic chiral polyhedra satisfy a stronger condition than their definition implies.

\begin{corollary} \label{c:atomic-no-quo}
If $\calP$ is an atomic chiral polyhedron, then it does not properly cover any chiral polyhedron.
\end{corollary}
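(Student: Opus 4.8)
The plan is to leverage the explicit classification in Theorem~\ref{t:atomic-polyhedra} together with the fact (Proposition~\ref{prop:RegularsNotCoverChirals}) that a tight regular polyhedron cannot cover a chiral one. The definition of ``atomic'' only forbids chiral quotients that reduce exactly one of the two type entries, so the entire content of the corollary is to rule out a proper chiral quotient in which \emph{both} entries strictly decrease; the key will be to show that such a quotient collapses the chirality group.

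First I would set up the quotient. Suppose $\calP$ is atomic of type $\{p,q\}$ and properly covers a chiral polyhedron $\calQ = \calP/K$ with $K \triangleleft \G^+(\calP)$. By Proposition~\ref{prop:tight-gp-quo} the polyhedron $\calQ$ is tight, and by Lemma~\ref{l:subgroups} it has type $\{p', q'\}$ with $p' \mid p$, $q' \mid q$, and $K = \langle \s_1^{p'}\rangle\langle \s_2^{q'}\rangle$. Since $\calQ$ is tight, $|K| = pq/(p'q')$, so the cover being proper forces $(p',q') \neq (p,q)$. Now if $q' = q$ then $\calQ$ is a chiral quotient of type $\{p',q\}$ with $p'$ a proper divisor of $p$, and if $p' = p$ then $\calQ$ has type $\{p,q'\}$ with $q'$ a proper divisor of $q$; both are excluded by atomicity. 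Hence $p' < p$ \emph{and} $q' < q$.

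Next I would reduce to a single family using duality. The dual of an atomic chiral polyhedron is again atomic (the defining condition and the covering relation are both preserved under passing to the dual), and $\calQ$ is chiral if and only if $\calQ^\delta$ is chiral, so it suffices to treat the rows of Table~\ref{tab:atomic-polyhedra} in which $X(\calP) \le \langle \s_2\rangle$. In each such row the larger entry $q$ is a prime power $\ell^t$ (cf.\ Corollary~\ref{c:qIsPrimePower}) and $X(\calP) = \langle \s_2^{q/\ell}\rangle$. Because $q' \mid q = \ell^t$ with $q' < q$, we get $q' \mid q/\ell$, and therefore the image of the generator $\s_2^{q/\ell}$ of $X(\calP)$ in $\G(\calQ)$ is $(\s_2 K)^{q/\ell} = 1$, since the image of $\s_2$ has order $q'$. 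Thus $X(\calP) \le K$.

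Finally, since $X(\calP) \le K$, the polyhedron $\calQ = \calP/K$ is a quotient of $\calP/X(\calP)$, which is regular by the defining property of the chirality group and tight by Proposition~\ref{prop:tight-gp-quo}. Proposition~\ref{prop:RegularsNotCoverChirals} then forces $\calQ$ to be regular, contradicting the assumption that $\calQ$ is chiral. The main obstacle is the penultimate step: one must recognize that driving both type entries strictly down, in the presence of the prime-power structure guaranteed by the classification, necessarily absorbs the (prime-order) chirality group into $K$. This is precisely what upgrades the purely combinatorial atomicity hypothesis to the sharper conclusion that no proper chiral quotient exists at all.
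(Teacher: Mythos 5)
Your proposal is correct and takes essentially the same route as the paper's proof: reduce by duality to the side where the chirality group sits inside the cyclic group generated by the generator of the larger, prime-power type entry, let atomicity exclude the quotients in which that entry survives, and observe that once it drops the kernel must absorb $X(\calP)$, forcing the quotient to be regular. The only differences are organizational — you first deduce that both type entries must strictly decrease and you cite Lemma~\ref{l:subgroups} and Proposition~\ref{prop:RegularsNotCoverChirals} explicitly, where the paper argues directly with the case $p' < p$ and leaves the final regularity step implicit.
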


\begin{proof}
Suppose that $\calP$ is an atomic chiral polyhedron of type $\{p, q\}$, and without loss of generality, assume that $p \geq q$ so that $p$ is a prime power $m^{\alpha}$ (where we could have $m = 2$). 
By the definition of atomic, $\calP$ does not properly cover any chiral polyhedra of type $\{p, q'\}$ or $\{p', q\}$. Furthermore, if $\calQ$ is an orientable rotary polyhedron of type $\{p', q'\}$
where $p'$ is a proper divisor of $p$, then the kernel of the natural map from $\G(\calP)$ to $\G(\calQ)$ contains $\langle \s_1^{m^{\alpha-1}} \rangle$ (see Table~\ref{tab:atomic-polyhedra}),
and since that is the chirality group of $\calP$, it follows that $\calQ$ is regular.
\end{proof}

The following result is an immediate consequence of the definition of atomicity and \cite[Corollary 4.3]{tight-chiral-polyhedra}, which states that every tight chiral polyhedron
of type $\{p, q\}$ covers a tight orientable rotary polyhedron of type $\{p', q\}$ or $\{p, q'\}$.

\begin{proposition} \label{p:nonatomic-quo}
If $\calP$ is a tight chiral polyhedron of type $\{p, q\}$ that is not atomic, then it covers a tight chiral polyhedron of type $\{p', q\}$ or $\{p, q'\}$.
\end{proposition}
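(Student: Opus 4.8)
The plan is to read the conclusion almost directly off the negation of the definition of atomicity, using only that tightness passes to quotients. Since $\calP$ is a tight chiral polyhedron of type $\{p,q\}$ but is \emph{not} atomic, the defining property must fail somewhere; as $\calP$ is already assumed to be tight and chiral, the only clause that can fail is the requirement that $\calP$ cover no chiral polyhedron of type $\{p',q\}$ (with $p'$ a proper divisor of $p$) or of type $\{p,q'\}$ (with $q'$ a proper divisor of $q$). Hence there exists a chiral polyhedron $\calQ$ of type $\{p',q\}$ or $\{p,q'\}$ that $\calP$ covers. This is the only substantive step, and it is purely a matter of parsing the definition.

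It then remains to check that the cover $\calQ$ is itself tight. Because $\calP$ covers $\calQ$ and both are orientable rotary, $\calQ \cong \calP/N$ for some $N \triangleleft \Gamma^+(\calP)$, so $\calQ$ is a quotient of the tight polyhedron $\calP$. Proposition~\ref{prop:tight-gp-quo} then yields that $\calQ$ is tight: by Proposition~\ref{prop:tight-group-factoring} the factorization $\Gamma^+(\calP) = \langle \s_1 \rangle \langle \s_2 \rangle$ descends to $\Gamma^+(\calQ) = \langle \ol{\s_1} \rangle \langle \ol{\s_2} \rangle$, and together with the prescribed type this forces $|\Gamma^+(\calQ)|$ to meet the tight bound $p' q$ (respectively $p q'$). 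Combining this with the previous paragraph, $\calP$ covers a tight chiral polyhedron of type $\{p',q\}$ or $\{p,q'\}$, which is exactly the assertion; here \cite[Corollary 4.3]{tight-chiral-polyhedra} can be cited to confirm that reducing through a tight orientable rotary cover indeed keeps one Schl\"afli entry fixed, matching the two type patterns claimed.

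I do not anticipate a genuine obstacle: the statement is essentially a contrapositive rephrasing of the definition, and the single extra ingredient, closure of tightness under quotients, is already available in Proposition~\ref{prop:tight-gp-quo}. The only point needing a little care is distinguishing ``$\calP$ covers a tight chiral polyhedron'' from the weaker ``$\calP$ covers a chiral polyhedron that happens to be a quotient of a tight one'' — but these coincide precisely because Proposition~\ref{prop:tight-gp-quo} guarantees that every quotient of a tight orientable rotary polytope is again tight, so the covered $\calQ$ is tight on the nose rather than merely by association.
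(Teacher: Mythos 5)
Your proof is correct, and it is in fact somewhat more self-contained than the paper's. The paper disposes of this proposition in a single sentence, citing the definition of atomicity together with Corollary 4.3 of the earlier paper (every tight chiral polyhedron of type $\{p,q\}$ covers a tight \emph{orientable rotary} polyhedron of type $\{p',q\}$ or $\{p,q'\}$). Taken literally, that citation supplies tightness but not chirality of the cover (Corollary 4.3's cover could be regular), while the definition supplies chirality but not, by itself, tightness; the glue in either reading is exactly the step you make explicit: any chiral polyhedron covered by $\calP$ is of the form $\calP/N$ for some $N \triangleleft \Gamma^+(\calP)$, hence is tight by Proposition~\ref{prop:tight-gp-quo} (via the factorization criterion of Proposition~\ref{prop:tight-group-factoring}, whose implication toward tightness uses the intersection condition, available since the quotient is a polytope). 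So your route replaces the external structural result by the purely formal tightness-inheritance argument, which is all that is actually needed: non-atomicity hands you the chiral cover of type $\{p',q\}$ or $\{p,q'\}$, and tightness is automatic. One cosmetic remark: your closing appeal to Corollary 4.3 ``to confirm the type pattern'' is superfluous, since that pattern is built into the definition of atomicity itself.
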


When mixing tight orientable rotary polyhedra we may not get a tight structure, as shown next.

\begin{proposition}\label{prop:mixOfAtomic}
Let $\calP$ and $\calQ$ be distinct atomic chiral polyhedra of types $\{p, q\}$ and $\{p, q'\}$, respectively, with $q'$ a divisor of $q$ (not necessarily proper). Then $\calP \lozenge \calQ$ is not tight, regardless of the choice of base flags.
\end{proposition}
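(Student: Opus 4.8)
The plan is to turn the tightness of the mix into a covering statement and then contradict Corollary~\ref{c:atomic-no-quo}. Write $\Gamma^+(\calP) = \langle \s_1, \s_2 \rangle$ and $\Gamma^+(\calQ) = \langle \s_1', \s_2' \rangle$, so that $\Gamma^+(\calP \lozenge \calQ) = \langle \tau_1, \tau_2 \rangle \le \Gamma^+(\calP) \times \Gamma^+(\calQ)$ with $\tau_i = (\s_i, \s_i')$. First I would record the orders of the mix generators: since $\calP$ and $\calQ$ share the first entry $p$ of their types, $\tau_1$ has order $p$, and since $q'$ divides $q$, $\tau_2$ has order $\lcm(q, q') = q$. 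These orders do not depend on the chosen base flags, because passing to an adjacent flag only replaces the standard generators by generators of the same orders (for a polyhedron, by $\s_1^{-1}, \s_2^{-1}$).

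Next I would show that tightness of the mix forces $\calP$ to cover $\calQ$. The first projection $\pi_1 \colon \Gamma^+(\calP \lozenge \calQ) \to \Gamma^+(\calP)$ is surjective, so $|\Gamma^+(\calP \lozenge \calQ)| \ge |\Gamma^+(\calP)| = pq$. On the other hand, if the mix is tight then, by the group-theoretic notion of tightness following Proposition~\ref{prop:tight-group-factoring}, $\Gamma^+(\calP \lozenge \calQ) = \langle \tau_1 \rangle \langle \tau_2 \rangle$, a set of size at most $|\langle \tau_1 \rangle|\cdot|\langle \tau_2 \rangle| = pq$. Hence $|\Gamma^+(\calP \lozenge \calQ)| = pq$ and $\pi_1$ is an isomorphism; composing $\pi_1^{-1}$ with the second projection gives an epimorphism $\Gamma^+(\calP) \to \Gamma^+(\calQ)$ sending $\s_i \mapsto \s_i'$, that is, a covering $\calP \to \calQ$.

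I would then finish by contradiction. As $\calP$ is an atomic chiral polyhedron and $\calQ$ is chiral, Corollary~\ref{c:atomic-no-quo} prevents $\calP$ from properly covering $\calQ$, so the covering just produced must be an isomorphism and $\calP \cong \calQ$, contradicting the distinctness of $\calP$ and $\calQ$. To cover all base-flag choices simultaneously, I would observe that any such choice mixes $\calP$ or $\calP^*$ with $\calQ$ or $\calQ^*$; since $\calQ^*$ is again a chiral polyhedron of type $\{p, q'\}$, the same argument shows that tightness would force $\calP$ to cover $\calQ^*$, and hence $\calP \cong \calQ^* \cong \calQ$, yielding the same contradiction.

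The step requiring the most care is the base-flag bookkeeping together with the order equality $|\Gamma^+(\calP \lozenge \calQ)| = pq$: one must check that switching to enantiomorphic generators changes neither the generator orders nor the conclusion, so that a single covering argument disposes of all four combinations at once. Everything else is a short diagram chase with the two projections followed by a direct appeal to Corollary~\ref{c:atomic-no-quo}.
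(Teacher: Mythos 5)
Your proposal is correct and follows essentially the same route as the paper's proof: the paper likewise observes that the mix has type $\{p,q\}$, that tightness would force it to be isomorphic to $\calP$ (via the covering/projection onto $\G^+(\calP)$) with $\calQ$ as a proper quotient, and that this contradicts atomicity of $\calP$. Your write-up merely expands the details the paper leaves implicit, namely the order count $|\G^+(\calP \lozenge \calQ)| = pq$, the explicit appeal to Corollary~\ref{c:atomic-no-quo}, and the enantiomorph bookkeeping for the various base-flag choices.
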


\begin{proof}
The mix of $\calP$ and $\calQ$ with respect to any choice of base flags must have type $\{p,q\}$, and if it were tight then it should be isomorphic to $\calP$ and have $\calQ$ as a proper quotient. This is not possible since $\calP$ is atomic.
\end{proof}

We conclude this section with some technical lemmas that allow us to find polytopal quotients of tight orientable rotary $4$-polytopes.

\begin{lemma}\label{l:CoreIsNots2}
Let $\calP$ be a tight chiral polyhedron with type $\{p,q\}$ and $\Gamma(\calP) = \langle \s_1,\s_2 \rangle$. Then $\langle \s_i^2 \rangle$ is not normal in $\Gamma(\calP)$.
\end{lemma}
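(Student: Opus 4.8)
The plan is to reduce the statement to the single assertion that $\langle \s_2^2 \rangle$ is not normal, and then derive a contradiction from the assumption that it is. For the reduction, recall that the dual $\calP^\delta$ of a tight chiral polyhedron is again a tight chiral polyhedron, whose standard generators (with respect to a suitable base flag) are $\s_2^{-1}$ and $\s_1^{-1}$. Applying the $\langle \s_2^2 \rangle$-statement to $\calP^\delta$, whose \emph{second} standard generator is $\s_1^{-1}$, shows that $\langle \s_1^2 \rangle = \langle (\s_1^{-1})^2 \rangle$ is not normal in $\Gamma(\calP^\delta) = \Gamma(\calP)$. So it suffices to treat $\langle \s_2^2 \rangle$, and I will assume for contradiction that $\langle \s_2^2 \rangle \triangleleft \Gamma(\calP)$. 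One identity I expect to use throughout is that, since $(\s_1 \s_2)^2 = \id$, we have $\s_1^{-1} \s_2 \s_1 = \s_1^{-2} \s_2^{-1}$, and hence
\[ \s_1^{-1} \s_2^2 \s_1 = (\s_1^{-2} \s_2^{-1})^2; \]
normality of $\langle \s_2^2 \rangle$ forces this element to be an even power of $\s_2$.

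The case $q$ odd is immediate: then $\langle \s_2^2 \rangle = \langle \s_2 \rangle$, so $\langle \s_2 \rangle$ is normal, and Lemma~\ref{l:NormalThen2} (applied to the dual) shows $\calP$ has type $\{p,2\}$ and is regular, contradicting chirality. So the real work is the case $q$ even, which is the crux. Here the plan is to pass to an atomic quotient: every tight chiral polyhedron covers an atomic chiral polyhedron $\calP_0$, the covering epimorphism $\Gamma(\calP) \to \Gamma(\calP_0)$ carries the normal subgroup $\langle \s_2^2 \rangle$ onto the normal subgroup $\langle \s_2^2 \rangle$ of $\Gamma(\calP_0)$, and $\calP_0$ is again chiral; so it is enough to reach a contradiction in $\Gamma(\calP_0)$.

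For the atomic polyhedra I would go through the families of Table~\ref{tab:atomic-polyhedra} with even second entry, one at a time. By tightness every group element has a unique normal form $\s_1^a \s_2^b$ (Proposition~\ref{prop:tight-group-factoring}), and the goal in each case is to rewrite $\s_1^{-1} \s_2^2 \s_1$ in this form and check that the exponent $a$ is never $\equiv 0$, which contradicts $\s_1^{-1} \s_2^2 \s_1 \in \langle \s_2^2 \rangle$. For families whose defining relation directly controls $\s_2^2 \s_1$, such as $\calP(m^\alpha, 2m)_k$, this is a one-line computation yielding an $\s_1$-exponent divisible by $m^{\alpha-1}$ but not by $m^\alpha$. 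For families whose relation instead controls $\s_2 \s_1^2$, such as $\calP(8, 2^\beta)_\epsilon$, I would feed in the identity $\s_1^{-1}\s_2^2\s_1 = (\s_1^{-2}\s_2^{-1})^2$, which produces a factor $\s_1^{-4}$; this survives because $p > 4$ in every even-$q$ atomic family.

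The main obstacle is this even case, and within it the heaviest computation is the family $\calP(2^{\beta-1}, 2^\beta)_\epsilon$ (and its dual), whose presentation has two intertwined defining relations that do not directly express any conjugate of $\s_2^2$ by $\s_1$; there I expect to need a careful reduction using both relations together with $(\s_1 \s_2)^2 = \id$, tracking the $\s_1$-exponent modulo $p = 2^{\beta-1}$ to confirm it is nonzero. An alternative I would try first, in the hope of avoiding the case analysis, is to argue directly that $\langle \s_2^2 \rangle \triangleleft \Gamma(\calP)$ forces regularity: normality places $\langle \s_2^2 \rangle$ inside the core of the vertex-stabilizer $\langle \s_2 \rangle$, so $\s_2^2$ fixes every vertex, and by Proposition~\ref{prop:vfig-quo} together with Lemma~\ref{l:SchType2} the quotient $\calP / \langle \s_2^2 \rangle$ is the regular polyhedron of type $\{p,2\}$. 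The reason I nonetheless fall back on the atomic classification is that this structural consequence, and the relation $(\s_1^{-2}\s_2^{-1})^2 \in \langle \s_2^2 \rangle$ it produces, appear to be symmetric under passing to the enantiomorph, and so do not by themselves single out regularity without invoking more of the relation structure of $\calP$.
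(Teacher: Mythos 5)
Your skeleton is the same as the paper's: assume normality, observe that the covering epimorphism onto an atomic quotient $\calP_0$ (write $\Gamma(\calP_0)=\langle\tau_1,\tau_2\rangle$) carries the offending subgroup to a normal subgroup $\langle\tau_2^2\rangle$ of $\Gamma(\calP_0)$, and derive a contradiction from the classification in Table~\ref{tab:atomic-polyhedra}. The difference is in how the atomic cases are dispatched. The paper never computes with the presentations: up to duality it puts the atomic quotient in the form $p'>q'$, then quotes core results from \cite{tight-chiral-polyhedra} --- $\langle\tau_1\rangle$ has a nontrivial core (Prop.\ 4.1 there) while one of $\langle\tau_1\rangle,\langle\tau_2\rangle$ is core-free (Prop.\ 4.5), so $\langle\tau_2^2\rangle$ cannot be normal; and $\langle\tau_1^2\rangle$ is ruled out by splitting into the odd-prime-power case (where $\langle\tau_1^2\rangle=\langle\tau_1\rangle$ and Lemma~\ref{l:NormalThen2} applies) and the $2$-power case (where Lemma 4.13 of that paper says the core of $\langle\tau_1\rangle$ is exactly $\langle\tau_1^4\rangle$). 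Your direct normal-form computations are a legitimate, more self-contained substitute, and the case you fear most is in fact short: in $\calP(2^{\beta-1},2^{\beta})_\epsilon$, combining $\s_2\s_1=\s_1^{-1}\s_2^{-1}$ with the relation $\s_2\s_1^{-1}=\s_1^{1+2^{\beta-2}}\s_2^{3+\epsilon 2^{\beta-2}}$ gives $\s_1^{-1}\s_2^2\s_1=\s_1^{2^{\beta-2}}\s_2^{2+\epsilon 2^{\beta-2}}$, whose $\s_1$-exponent is nonzero modulo $2^{\beta-1}$; in the dual family one gets $\s_1^{-1}\s_2^2\s_1=\s_2^{-1+2^{\beta-2}}\s_1^{-4+\epsilon 2^{\beta-2}}\s_2^{-1}$, which cannot lie in $\langle\s_2\rangle$ because $\langle\s_1\rangle\cap\langle\s_2\rangle$ is trivial and $-4+\epsilon 2^{\beta-2}\not\equiv 0 \pmod{2^{\beta}}$.

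The one genuine gap is in your enumeration of the possible atomic quotients. You check only the families of Table~\ref{tab:atomic-polyhedra} ``with even second entry,'' implicitly assuming that $q$ even forces the second entry of $\calP_0$ to be even. But the second entry $q'$ of $\calP_0$ merely divides $q$; nothing you have said rules out that a tight chiral polyhedron with $q$ even covers $\calP(2m,m^{\alpha})_k$, whose second entry $m^{\alpha}$ is odd. (This exposure is created precisely by your up-front duality reduction: the paper, which tracks $\langle\s_1^2\rangle$ and $\langle\s_2^2\rangle$ simultaneously, is free to re-dualize the quotient so that $p'>q'$, while you, having fixed attention on the second generator, are not.) The fix is the argument you already used for odd $q$, applied to $\calP_0$ instead of $\calP$: if $q'$ is odd then $\langle\tau_2^2\rangle=\langle\tau_2\rangle$, and its normality forces $\calP_0$ to be regular by the dual of Lemma~\ref{l:NormalThen2}, contradicting the chirality of $\calP_0$. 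So the correct case split is on the parity of $q'$, not of $q$; with that correction, and the computations written out, your argument is complete. One small slip elsewhere: in your odd-$q$ case, normality of $\langle\s_2\rangle$ gives type $\{2,q\}$ (dual version of Lemma~\ref{l:NormalThen2}), not $\{p,2\}$, though the regularity contradiction is unaffected.
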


\begin{proof}
Let $\calQ$ be an atomic chiral polyhedron covered by $\calP$ with automorphism group $\langle \tau_1, \tau_2 \rangle$. If we assume that $\langle \s_i^2 \rangle \triangleleft \Gamma(\calP)$ then by the correspondence theorem in group theory we must also have that $\langle \tau_i^2 \rangle \triangleleft \Gamma(\calQ)$.

It was proven in \cite[Proposition 4.1]{tight-chiral-polyhedra} that if $\calQ$ has type $\{p',q'\}$ and $p'>q'$ then $\langle \tau_1 \rangle$ has a proper subgroup normal in $\Gamma(\calQ)$. On the other hand, it is shown in \cite[Proposition 4.5]{tight-chiral-polyhedra} that either $\langle \tau_1 \rangle$ or $\langle \tau_2 \rangle$ is core-free in $\Gamma(\calQ)$. Up to duality we may assume that $p'>q'$ and hence we only need to show that $\langle \tau_1^2 \rangle$ is not normal in $\Gamma(\calQ)$.

Now, using the classification of atomic chiral polyhedra we see that if $\{p,q\} = \{m^{\alpha}, 2m\}$ then $\langle \tau_1^2 \rangle = \langle \tau_1 \rangle$ and this is not normal in $\Gamma(\calQ)$ (see Lemma \ref{l:NormalThen2}). On the other hand, if $p$ and $q$ are powers of $2$, the dual version of \cite[Lemma 4.13]{tight-chiral-polyhedra} tells us that the core of $\langle \tau_1 \rangle$ is $\langle \tau_1^4 \rangle$. Hence, $\langle \tau_1^2 \rangle$ is not normal in $\Gamma(\calQ)$.
\end{proof}

\begin{lemma}\label{l:s1NotNormal}
Let $\calP$ be a chiral $4$-polytope with chiral facets and let $K$ be the kernel of the action of $\Gamma(\calP)$ on the vertex set. Then $\sigma_i \notin K$ for $i \in \{1,2,3\}$.
\end{lemma}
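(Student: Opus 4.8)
The plan is to identify $K$ with the normal core of the base-vertex stabilizer and then rule out each generator separately, with the two genuinely delicate cases both reduced to the normality of $\langle\s_2^2\rangle$ in the facet group. Write $v_0 = F_0$ for the base vertex; its stabilizer in $\Gamma(\calP) = \langle\s_1,\s_2,\s_3\rangle$ is $\langle\s_2,\s_3\rangle$, and since $\calP$ is vertex-transitive, $K = \Core_{\Gamma(\calP)}(\langle\s_2,\s_3\rangle)$, so in particular $K \le \langle\s_2,\s_3\rangle$. Because $\s_1$ moves $v_0$ (equivalently $\s_1 \notin \langle\s_2,\s_3\rangle$ by the intersection condition, Lemma~\ref{l:IntAi}), we obtain $\s_1 \notin K$ immediately.

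For the remaining generators I would work inside the base facet $\calQ$, a chiral polyhedron with $\Gamma(\calQ) = \langle\s_1,\s_2\rangle$; being a section of $\calP$ it is a tight chiral polyhedron, so Lemma~\ref{l:CoreIsNots2} applies to it. The vertices of $\calQ$ form a subset of the vertices of $\calP$, hence any element of $K$ fixes the vertices of $\calQ$; consequently $K \cap \Gamma(\calQ)$ is contained in the kernel $C := \Core_{\Gamma(\calQ)}(\langle\s_2\rangle) \le \langle\s_2\rangle$ of the action of $\Gamma(\calQ)$ on its own vertex set. The key reduction is the claim that $\s_2 \in K$ and $\s_3 \in K$ both force $\s_2^2 \in K$. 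The first case is trivial. For the second I would conjugate by $\s_1$ and use the identity $\s_1^{-1}\s_3\s_1 = \s_2^2\s_3$, which follows at once from Equation~(\ref{eq:magicalRelation}); normality of $K$ then gives $\s_2^2\s_3 \in K$, and multiplying by $\s_3^{-1} \in K$ yields $\s_2^2 \in K$.

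Once $\s_2^2 \in K$, it lies in $C \le \langle\s_2\rangle$. Since $C$ is normal in $\Gamma(\calQ)$ and contained in the cyclic group $\langle\s_2\rangle$, every $\Gamma(\calQ)$-conjugate of $\s_2^2$ stays inside $\langle\s_2\rangle$ with its order unchanged; as a cyclic group has a unique subgroup of each order, this forces $\langle\s_2^2\rangle \triangleleft \Gamma(\calQ)$. This contradicts Lemma~\ref{l:CoreIsNots2}, and therefore neither $\s_2$ nor $\s_3$ can belong to $K$.

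I expect the $\s_3$ case to be the main obstacle. Unlike $\s_1$ and $\s_2$, the generator $\s_3$ does not lie in the facet group $\langle\s_1,\s_2\rangle$, so it cannot be analyzed directly within $\calQ$. The device that resolves this is precisely the commutation relation $\s_1^{-1}\s_3\s_1 = \s_2^2\s_3$, which deposits information about $\s_3$ back into $\langle\s_2\rangle$; the crux is checking that this relation together with the normality of $K$ genuinely places $\s_2^2$ in $K$. Beyond elementary facts about cyclic groups, the only external input is Lemma~\ref{l:CoreIsNots2}, applied to the tight chiral facet $\calQ$.
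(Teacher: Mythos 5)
Your proposal is correct and takes essentially the same route as the paper's proof: $\s_1 \notin K$ from the intersection condition; both remaining cases reduced to $\s_2^2 \in K$; then $\langle \s_2^2 \rangle \triangleleft \langle \s_1, \s_2 \rangle$ by uniqueness of subgroups of each order in a cyclic group; contradiction with Lemma~\ref{l:CoreIsNots2}. The differences are cosmetic: you conjugate $\s_3$ by $\s_1$ (via Equation~(\ref{eq:magicalRelation})) where the paper conjugates by $\s_2$ (via $(\s_2 \s_3)^2 = \id$); you treat $\s_2 \in K$ uniformly with $\s_3 \in K$, where the paper instead derives $\s_1^2 \in K$, hence $\s_1^2 = \id$, contradicting Lemma~\ref{l:NoChiral2}; and you route the normality step through the core of $\langle \s_2 \rangle$ in $\G(\calQ)$ rather than through the identity $\langle \s_1, \s_2 \rangle \cap \langle \s_2, \s_3 \rangle = \langle \s_2 \rangle$ directly. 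One caveat on your writeup: the claim that the facet $\calQ$ is tight ``being a section of $\calP$'' does not follow from the stated hypotheses, because $\calP$ itself is not assumed tight (sections of \emph{tight} polytopes are tight, but nothing here makes $\calP$ tight). Tightness of $\calQ$ is needed to apply Lemma~\ref{l:CoreIsNots2}, which is stated only for tight chiral polyhedra --- but the paper's own proof makes exactly the same silent assumption when it invokes that lemma, and in every application of the present lemma in the paper $\calP$ is tight, so this defect is inherited from the statement rather than introduced by your argument.
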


\begin{proof}
The group $K$ is a normal subgroup of $\G(\calP)$ that is contained in $\langle \s_2, \s_3 \rangle$ since it fixes the
base vertex. The intersection condition (\ref{eq:IntCond2}) implies that $\s_1 \not \in K$.

If $\s_2 \in K$, then also $\s_1 \s_2 \s_1^{-1} \in K$, which implies that $\s_2^{-1} \s_1^{-2} \in K$ and so $\s_1^2 \in K$.
Since $\langle \s_1 \rangle$ has trivial intersection with $K$ (again by the intersection condition), this implies that
$\s_1^2 = \id$, which contradicts Lemma~\ref{l:NoChiral2}.

Similarly, if $\s_3 \in K$, then also $\s_2 \s_3 \s_2^{-1} \in K$, which implies that $\s_3^{-1} \s_2^{-2} \in K$ and so
$\s_2^2 \in K$. It follows that $\s_1^{-1} \s_2^2 \s_1 \in K$. Then $\s_1^{-1} \s_2^2 \s_1$ lies in the intersection
of $\langle \s_1, \s_2 \rangle$ with $\langle \s_2, \s_3 \rangle$, and so it must lie in $\langle \s_2 \rangle$. This
implies that $\langle \s_2^2 \rangle$ is normal in $\langle \s_1, \s_2 \rangle$, contradicting Lemma~\ref{l:CoreIsNots2}.
\end{proof}

	\begin{lemma} \label{lem:vertex-kernel}
	Let $\calP$ be a tight orientable rotary $4$-polytope, with $\G^+(\calP) =
	\langle \s_1, \s_2, \s_{3} \rangle$. Let $K$ be the kernel of the action of $\G^+(\calP)$ on the vertex set. Then:
	\begin{enumerate}
	\item There are integers $a$ and $b$ such that $K = \langle \s_2^{a} \rangle \langle \s_3^{b} \rangle$.
	\item $\calP / K$ is a tight orientable rotary $4$-polytope.
	\end{enumerate}
	\end{lemma}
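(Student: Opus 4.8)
The plan is to first pin down where $K$ lives and then build the factorisation in (a) explicitly before invoking the quotient machinery for (b). Since $K$ fixes the base vertex $F_0$, and the stabiliser of $F_0$ in $\Gamma^+(\calP)$ is $\langle \s_2,\s_3\rangle$, we have $K\le\langle\s_2,\s_3\rangle$ (indeed $K$ is the core of this subgroup). Let $a$ and $b$ be the least positive integers with $\s_2^a\in K$ and $\s_3^b\in K$; these exist because $\s_2^{p_2}=\s_3^{p_3}=\id\in K$. The inclusion $\langle\s_2^a\rangle\langle\s_3^b\rangle\subseteq K$ is immediate. For the reverse inclusion I would take $\gamma\in K$ and use that the vertex-figure at $F_0$ is tight, so that $\langle\s_2,\s_3\rangle=\langle\s_2\rangle\langle\s_3\rangle$ by Proposition~\ref{prop:tight-group-factoring}, to write $\gamma=\s_2^i\s_3^j$.

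The heart of (a) is to separate the two factors, and this is the one genuinely polytope-theoretic step. Because $\s_3$ fixes the base vertex $F_0$ and the base edge $F_1$, it fixes the other endpoint $v$ of $F_1$, which is a neighbour of $F_0$; hence $\s_3^j$ fixes $v$. As $\gamma\in K$ fixes every vertex, it fixes $v$, and therefore $\s_2^i=\gamma\s_3^{-j}$ fixes $v$ as well. Corollary~\ref{c:FixOneFixAll} then forces $\s_2^i$ to fix \emph{all} vertices, i.e. $\s_2^i\in K$, whence also $\s_3^j=\s_2^{-i}\gamma\in K$. By minimality of $a$ and $b$ we get $a\mid i$ and $b\mid j$, so $\gamma\in\langle\s_2^a\rangle\langle\s_3^b\rangle$; this proves (a).

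For (b) I would feed the factorisation from (a) into Proposition~\ref{prop:vfig-quo}. Writing $K=\langle\s_1^{p_1}\rangle\langle\s_2^{a}\rangle\langle\s_3^{b}\rangle$, with the first factor trivial, puts $K$ in exactly the form required by that proposition, which then yields that $\Gamma^+(\calP)/K$ is the rotation group of a tight orientable rotary polytope, provided $K$ contains no generator $\s_i$. That $\s_1\notin K$ is clear, since $K\le\langle\s_2,\s_3\rangle$ and the intersection condition gives $\langle\s_1\rangle\cap\langle\s_2,\s_3\rangle=\{\id\}$ (Lemma~\ref{l:IntAi}).

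The main obstacle is ruling out $\s_2\in K$ and $\s_3\in K$, since either would collapse a Schl\"afli entry to $1$ and so would be incompatible with $\calP/K$ being a genuine $4$-polytope. For $\s_2$ I would argue as follows: if $\s_2\in K$, then $K\cap\langle\s_1,\s_2\rangle$ is the kernel of the action of the facet group $\langle\s_1,\s_2\rangle$ on the vertices of the base facet (which are all of the vertices of $\calP$); it is normal in $\langle\s_1,\s_2\rangle$, is contained in $\langle\s_1,\s_2\rangle\cap\langle\s_2,\s_3\rangle=\langle\s_2\rangle$, and contains $\s_2$, so it equals $\langle\s_2\rangle$. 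Thus $\langle\s_2\rangle\triangleleft\langle\s_1,\s_2\rangle$, and the dual of Lemma~\ref{l:NormalThen2} forces the facet to have type $\{2,p_2\}$, a degeneracy excluded here. I expect $\s_3\notin K$ to be the hard part: the parallel argument lives in the vertex-figure, but $\s_3$ fixing every vertex of $\calP$ only says $\s_3$ fixes every \emph{neighbour} of $F_0$, not every edge at $F_0$, so one cannot simply transcribe the facet argument. The real work is to use non-degeneracy of the vertex-figure (in particular the absence of multiple edges, so that neighbours and edges at $F_0$ correspond) to promote ``$\s_3$ fixes all neighbours'' to ``$\s_3$ lies in the vertex-kernel of the vertex-figure,'' which then forces $p_2\le 2$ by the analogue of the $\s_2$ computation. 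Once $\s_2,\s_3\notin K$ is in hand, Proposition~\ref{prop:vfig-quo} finishes (b).
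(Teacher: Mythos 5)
Part (a) of your proposal is correct and is essentially the paper's own argument: the same minimal exponents $a,b$, the same factorization $\gamma = \s_2^i \s_3^j$ coming from tightness, and the same appeal to Corollary~\ref{c:FixOneFixAll} once some power of $\s_3$ is known to fix a neighbor of the base vertex. The only difference is how that last fact is established: the paper computes with cosets, $H\s_1^{-1}\s_3^{-1} = H(\s_1\s_2^2\s_3)^{-1} = H\s_1^{-1}$, using the relation $\s_3\s_1 = \s_1\s_2^2\s_3$, while you argue combinatorially that $\s_3$ fixes the base edge and one of its endpoints, hence also the other endpoint. Both are valid.

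Part (b) is where the genuine gap lies, and under the hypotheses you allow yourself it is not fixable. The paper deduces (b) from Proposition~\ref{prop:vfig-quo} together with Lemma~\ref{l:s1NotNormal}, and the latter carries the extra hypothesis that $\calP$ is \emph{chiral with chiral facets}. You instead try to rule out $\s_2, \s_3 \in K$ using only ``tight orientable rotary.'' Your $\s_2$ argument correctly derives that $\s_2 \in K$ forces $\langle \s_2 \rangle \triangleleft \langle \s_1, \s_2 \rangle$ and hence $p_1 = 2$, but the final step --- ``a degeneracy excluded here'' --- is not licensed by anything in the statement. In fact it cannot be: the universal polytope of type $\{2,q,2\}$ is a tight orientably regular $4$-polytope, and there $\s_1\s_2\s_1^{-1} = \s_2^{-1}\s_1^{-2} = \s_2^{-1}$, so $\s_2$ genuinely lies in $K$ and $\calP/K$ is not a $4$-polytope. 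So the lemma as literally stated fails for such $\calP$; any correct proof of (b) must import chirality somewhere, which is exactly what the paper's citation of Lemma~\ref{l:s1NotNormal} does (harmlessly, since the lemma is only ever applied to atomic chiral $4$-polytopes with chiral facets and vertex-figures, or their duals).

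For $\s_3$ you openly stop at a sketch, and the route you gesture at (absence of multiple edges in the vertex-figure) is not the one that works; by the example above, no argument avoiding chirality can work. The paper's actual mechanism is short and purely group-theoretic: if $\s_3 \in K$, then normality of $K$ and $(\s_2\s_3)^2 = \id$ give $\s_2\s_3\s_2^{-1} = \s_3^{-1}\s_2^{-2} \in K$, hence $\s_2^2 \in K$; then $\s_1^{-1}\s_2^2\s_1 \in K \cap \langle \s_1, \s_2 \rangle \le \langle \s_2 \rangle$, so $\langle \s_2^2 \rangle$ is normal in the facet group, contradicting Lemma~\ref{l:CoreIsNots2} --- and that lemma is precisely where chirality of the facets enters. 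In summary: (a) stands as written, but (b) needs both a strengthening of the hypotheses (to match how the paper actually uses the lemma) and the $\s_2^2$-trick above in place of your unfinished vertex-figure argument.
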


	\begin{proof}
	Let $a$ be the smallest positive integer such that $\s_2^a \in K$, and let $b$ be the smallest positive integer such
	that $\s_3^b \in K$. (We allow the possibility that $\s_2^a = \id$ or $\s_3^b = \id$.) Let $N = \langle \s_2^a \rangle
	\langle \s_3^b \rangle$. Then clearly $N$ is contained in $K$. To prove the first part,
	it remains to show that $K$ is contained in $N$.
	
	Let $H = \langle \s_2, \s_{3} \rangle$, and suppose that the order of $\s_1$ is $p$. Since $\calP$ is tight,
	it has $p$ vertices, which we can identify with the cosets $H, H \s_1, \ldots, H \s_1^{p-1}$. The action of each
	automorphism on the vertices is by multiplication on the right. 	
	Now, suppose that $\varphi \in K$, which in particular implies that $\varphi \in \langle \s_2, \s_{3} \rangle$.
	Since $\calP$ is tight, Proposition~\ref{prop:tight-group-factoring} implies that
	we may write $\varphi = \s_2^{c} \s_{3}^{d}$. Since $\s_2^c \s_3^d$ fixes all vertices, it follows that
	the action of $\s_2^{c}$ on vertices is the same as the action of $\s_3^{-d}$ on vertices.
	Note that $\s_3^{-1}$ fixes the neighbor of the base vertex in the base edge; namely,
	\[ H \s_1^{-1} \s_3^{-1} = H (\s_3 \s_1)^{-1} = H (\s_1 \s_2^2 \s_3)^{-1} = H \s_1^{-1}. \]
	It follows that $\s_3^{-d}$ fixes that vertex, and thus so does $\s_2^{c}$. However,
	by Corollary \ref{c:FixOneFixAll}, if a power of $\s_2$ fixes a neighbor of the base vertex,
	then it fixes all vertices. Therefore, $\s_2^{c} \in K$, from which it follows that
	$\s_3^{d} \in K$. Then by our choice of $a$ and $b$, it follows that $\varphi \in N$.
	
	The second part follows from the first along with Lemma~\ref{l:s1NotNormal}.
	\end{proof}

\section{Atomic chiral $4$-polytopes with chiral facets and vertex-figures}\label{s:ChiralChiral}

	To understand the structure of tight chiral $4$-polytopes, we use a strategy similar to what was done with
	tight chiral polyhedra. First, we say that a tight chiral $4$-polytope is \emph{atomic} if it does not
	properly cover any tight chiral polytopes. It is clear that every tight chiral polytope covers an atomic
	chiral polytope. Our goal will be to classify the atomic chiral $4$-polytopes.

	By Proposition \ref{prop:facvfChiral} (a), the facets or the vertex-figures of an atomic chiral $4$-polytope must be chiral.
	In this section we classify all atomic chiral $4$-polytopes that have chiral facets and chiral vertex-figures, leaving the case when one of them is regular for Section \ref{s:ChiralRegular}.
	We will show in Theorem \ref{t:ch-ch-atomics} that an atomic chiral $4$-polytope with chiral facets and chiral vertex-figures must have atomic chiral facets and atomic chiral vertex-figures.
	The classification of atomic chiral polyhedra will be then used to find all atomic chiral $4$-polytopes with chiral facets
	and vertex-figures.

\subsection{The structure of atomic chiral $4$-polytopes with chiral facets and vertex-figures}

Now we study atomic chiral $4$-polytopes with chiral facets and chiral vertex-figures. We find several restrictions on atomic chiral $4$-polytopes, culminating in Theorem~\ref{t:ch-ch-atomics}.


	\begin{proposition}\label{prop:ch-ch-props}
	Let $\calP$ be an atomic chiral $4$-polytope of type $\{p, q, r\}$, with chiral facets and vertex-figures, and with $\Gamma(\calP) = \langle \sigma_1, \sigma_2, \sigma_3 \rangle$. Then:
	\begin{enumerate}
	\item $\langle \s_1 \rangle$ and $\langle \s_3 \rangle$ are core-free in $\G(\calP)$.
	\item $q > p$ and $q > r$.
	\end{enumerate}
	\end{proposition}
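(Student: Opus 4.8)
The plan is to prove the two parts in the order stated, using the atomicity of $\calP$ together with the structural results about tight orientable rotary polyhedra established in Section~\ref{s:polyhedra}. The key observation is that the facets of $\calP$ are chiral polyhedra of type $\{p,q\}$ with rotation group a quotient of $\langle \s_1, \s_2 \rangle$, and the vertex-figures are chiral polyhedra of type $\{q,r\}$ with rotation group a quotient of $\langle \s_2, \s_3 \rangle$. Atomicity of $\calP$ will force these facets and vertex-figures to be well-behaved (indeed atomic, though that full statement is deferred to Theorem~\ref{t:ch-ch-atomics}), and I expect to exploit exactly the normal-subgroup structure that atomicity forbids.

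For part (a), I would argue by contradiction. Suppose $\langle \s_1 \rangle$ is not core-free, so $\Core(\langle \s_1 \rangle)$ is a nontrivial normal subgroup of $\G(\calP)$ contained in $\langle \s_1 \rangle$, hence of the form $\langle \s_1^k \rangle$ for some proper divisor-exponent $k$ with $1 \le k < p$. Since $\s_1^k$ commutes with $\s_3$ (as $\s_1$ and $\s_3$ commute up to the relations, and more precisely $\langle \s_1^k\rangle$ being normal interacts with Lemma~\ref{l:rels1s3}(b)), the subgroup $\langle \s_1^k \rangle$ is normal in all of $\G(\calP)$, and by Proposition~\ref{prop:vfig-quo} the quotient $\G(\calP)/\langle \s_1^k \rangle$ is the rotation group of a tight orientable rotary $4$-polytope, which would then be a proper quotient of $\calP$. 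The remaining point is to check that this quotient is still \emph{chiral}: its facets are a quotient of the chiral facet $\calQ$ of $\calP$ of type $\{p', q\}$ with $p'$ a proper divisor of $p$, and I would invoke the polyhedral results (for instance the reasoning behind Lemma~\ref{l:CoreIsNots2} and Corollary~\ref{c:atomic-no-quo}) to ensure that this smaller facet is still chiral, so that the quotient $4$-polytope is chiral by Proposition~\ref{prop:RegularsNotCoverChirals}. This contradicts the atomicity of $\calP$. The case of $\langle \s_3 \rangle$ follows by the duality between facets and vertex-figures (replacing $\s_i$ by $\s_{n-1-i}^{-1}$).

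For part (b), I would prove $q > p$ (the inequality $q > r$ then follows by duality). The contrapositive reformulation is natural here: if $p \ge q$, then by Proposition~\ref{prop:NonTrivialCore} applied to the chiral facet of type $\{p,q\}$ there exists an integer $k$ with $\langle \s_1^k \rangle$ a nontrivial normal subgroup of $\langle \s_1, \s_2 \rangle$. The content of Lemma~\ref{l:rels1s3}(b) then promotes this to normality in all of $\G(\calP)$, so $\langle \s_1 \rangle$ would fail to be core-free, directly contradicting part (a). Thus $p \ge q$ is impossible, giving $q > p$; since $p \ne q$ would otherwise have to hold I should be careful to rule out $p = q$ as well, but the same normality argument covers $p \ge q$ including equality.

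The main obstacle I anticipate is the chirality-preservation step inside part (a): producing a nontrivial proper normal subgroup is routine once core-freeness fails, but I must guarantee that the resulting proper quotient of $\calP$ is genuinely chiral rather than regular, since only then does it contradict atomicity. This is where the polyhedral classification is essential — I need the smaller facet (or vertex-figure) to remain chiral, which relies on the fact (from Table~\ref{tab:atomic-polyhedra} and the results on atomic chiral polyhedra) that quotients of chiral polyhedra of type $\{p,q\}$ down to $\{p',q\}$ can still be chiral, controlled by the chirality group lying in a single cyclic factor. Carefully matching the chirality group $X(\calP)$ against the normal subgroup we are quotienting by — using Lemmas~\ref{l:ChirGruIsKer} and~\ref{l:chirGroupFacet} — will be the delicate bookkeeping, but no single step should be technically deep beyond invoking the established polyhedral results.
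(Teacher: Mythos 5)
Your part (b) is essentially the paper's argument (Proposition~\ref{prop:NonTrivialCore} plus the promotion via Lemma~\ref{l:rels1s3}(b), contradicting part (a)), and your skeleton for part (a) also matches the paper's: a nontrivial core gives a normal subgroup $N \le \langle \s_1 \rangle$, Proposition~\ref{prop:vfig-quo} yields a proper tight rotary quotient, and chirality of that quotient contradicts atomicity. But the step you yourself flag as the crux --- showing the quotient is chiral --- is where your plan genuinely fails. You propose to show that the \emph{facets} of $\calP/\langle \s_1^k \rangle$, of type $\{p', q\}$, remain chiral, citing the polyhedral results. This cannot be made to work: quotienting a chiral polyhedron of type $\{p,q\}$ down to type $\{p',q\}$ can perfectly well produce a regular polyhedron. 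The facet's chirality group may lie inside $\langle \s_1 \rangle$ (for example, the atomic polyhedra of type $\{m^\alpha, 2m\}$ have chirality group $\langle \s_1^{m^{\alpha-1}} \rangle$, see Table~\ref{tab:atomic-polyhedra}), and whenever $N$ contains it the quotient facet is regular. At this point in the paper you do not yet know that $q > p$, nor that the chirality groups involved lie in $\langle \s_2 \rangle$ --- those facts are part (b) and Proposition~\ref{prop:chirGroups}, both of which depend on this proposition, so invoking them (or Lemmas~\ref{l:ChirGruIsKer} and~\ref{l:chirGroupFacet} in the way you sketch) would be circular. Worse, Corollary~\ref{c:atomic-no-quo}, which you cite in support, says the opposite of what you need: an atomic chiral polyhedron has \emph{no} chiral proper quotients. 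The missing idea is to look at the other end of the polytope: since $N \le \langle \s_1 \rangle$, the intersection condition gives $N \cap \langle \s_2, \s_3 \rangle = 1$, so the \emph{vertex-figures} of $\calP/N$ are isomorphic to those of $\calP$, which are chiral by hypothesis; hence $\calP/N$ is chiral. That single observation, which your proposal never makes, is the paper's entire chirality-preservation step.

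There is also a smaller gap: you allow $k = 1$, i.e.\ $N = \langle \s_1 \rangle$, but Proposition~\ref{prop:vfig-quo} applies only when $N$ contains no generator $\s_i$. The paper disposes of this case first: if $\s_1 \in N$ then $\langle \s_1 \rangle$ is normal in $\langle \s_1, \s_2 \rangle$, so Lemma~\ref{l:NormalThen2} forces the facets to have type $\{p,2\}$ and be regular, contradicting the hypothesis that the facets are chiral. With that case excluded and the vertex-figure observation in place of your facet argument, the proof closes exactly as in the paper.
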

	
	\begin{proof}
	By duality, for the first part it suffices to prove that $\langle \s_1 \rangle$ is core-free.
	Suppose that $\calP$ is a tight chiral $4$-polytope with chiral facets and vertex-figures,
	and suppose that $\langle \s_1 \rangle$ is not core-free. In other words, there is a normal subgroup
	$N = \langle \s_1^a \rangle$ of $\G(\calP)$. If $\s_1 \in N$, then $\langle \s_1 \rangle$ is normal
	in $\langle \s_1, \s_2 \rangle$, and by Lemma~\ref{l:NormalThen2}, this implies that the facets are regular,
	contradicting our assumptions. So $\s_1 \not \in N$. Then the dual of Proposition~\ref{prop:vfig-quo} shows that 
	$\G(\calP) / N$ is the rotation group of a tight rotary polytope $\calQ$. Since $\langle \s_2, \s_3 \rangle$ has 
	trivial intersection with $N$, the vertex-figures of $\calQ$ must be isomorphic to the vertex-figures of $\calP$, which are chiral.
	Thus $\calQ$ is chiral, which means that $\calP$ is not atomic. This proves part (a).
	
By Proposition \ref{prop:NonTrivialCore}, if $p \ge q$ then there exists a proper divisor $k$ of $p$ such that $\langle \sigma_1^k \rangle \triangleleft \Gamma(\calP)$ contradicting part (a). A dual argument follows if $r \ge q$.
	\end{proof}
	
	\begin{proposition}\label{prop:chirGroups}
	Let $\calP$ be an atomic chiral $4$-polytope of type $\{p, q, r\}$, with chiral facets and vertex-figures, and with $\Gamma(\calP) = \langle \sigma_1, \sigma_2, \sigma_3 \rangle$. Then:
	\begin{enumerate}
	 \item The chirality group $X(\calP)$ is $\langle \sigma_2^{q'} \rangle$ for some $q'$ with $q/q'$ prime,
	 \item The chirality groups of the base facet and vertex-figure are isomorphic to $X(\calP)$.
	\end{enumerate} 
	\end{proposition}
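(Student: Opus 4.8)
The plan is to determine $X(\calP)$ as a product of cyclic groups, show that its $\s_1$- and $\s_3$-parts vanish so that $X(\calP)\le\langle\s_2\rangle$, and then use atomicity to force prime index; part (b) will then follow almost formally. For the setup, $\calP/X(\calP)$ is regular by definition of the chirality group and tight by Proposition~\ref{prop:tight-gp-quo}. Granting that it is moreover a tight regular \emph{polytope} — a point one must confirm, since a priori the chirality quotient is only a regular \emph{structure} — Lemma~\ref{l:subgroups} gives
\[ X(\calP) = \langle \s_1^{\alpha_1} \rangle \langle \s_2^{\alpha_2} \rangle \langle \s_3^{\alpha_3} \rangle, \]
with $X(\calP) \cap \langle \s_i \rangle = \langle \s_i^{\alpha_i} \rangle$ for each $i$ (computed from the unique factorization of Proposition~\ref{prop:tight-group-factoring}(c) and Lemma~\ref{l:IntAi}). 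Since $\calP/X(\calP)$ is a polytope, every entry of its type is at least $2$, so $\s_i \notin X(\calP)$ for all $i$. It remains to prove $\alpha_1 = p$ and $\alpha_3 = r$, and then that $q/\alpha_2$ is prime.

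To kill the $\s_1$-part I would show $\langle \s_1^{\alpha_1} \rangle \triangleleft \Gamma(\calP)$ and invoke the core-freeness of $\langle \s_1 \rangle$ from Proposition~\ref{prop:ch-ch-props}(a). Conjugation by $\s_1$ is automatic, and by Lemma~\ref{l:rels1s3}(b) conjugation by $\s_3$ preserves $\langle \s_1^{\alpha_1} \rangle$ exactly when conjugation by $\s_2$ does, so the question reduces to whether $\langle \s_1^{\alpha_1} \rangle \triangleleft \langle \s_1, \s_2 \rangle$, i.e. to a statement inside the base facet. Here the inequality $q > p$ of Proposition~\ref{prop:ch-ch-props}(b) enters: writing $\calQ$ for the facet (a tight chiral polyhedron whose larger Schl\"afli entry sits in the vertex position), Lemma~\ref{l:chirGroupFacet} together with the classification in Theorem~\ref{t:atomic-polyhedra}, applied to the atomic polyhedron covered by $\calQ$, shows that $X(\calQ)$ is a power of $\s_2$. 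This is the key input for controlling the $\s_1$-direction of $L := X(\calP) \cap \langle \s_1, \s_2 \rangle = \langle \s_1^{\alpha_1} \rangle \langle \s_2^{\alpha_2} \rangle$ and hence for proving $\langle \s_1^{\alpha_1} \rangle \triangleleft \langle \s_1, \s_2 \rangle$. The dual argument, using $q > r$ and the fact that the vertex-figure has its larger entry in the face position so that its chirality group is again a power of the shared generator $\s_2$, disposes of $\langle \s_3^{\alpha_3} \rangle$. We then have $X(\calP) = \langle \s_2^{\alpha_2} \rangle$, and we set $q' = \alpha_2$.

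For the primality and for part (b) I would exploit atomicity through Proposition~\ref{prop:vfig-quo}. Once $X(\calP) = \langle \s_2^{q'} \rangle$ is known, every subgroup $\langle \s_2^{kq'} \rangle$ strictly between $1$ and $X(\calP)$ is characteristic in $X(\calP)$, hence normal in $\Gamma(\calP)$, and contains no generator; so $\calP / \langle \s_2^{kq'}\rangle$ is a tight polytope by Proposition~\ref{prop:vfig-quo}, and it is chiral because $\langle\s_2^{kq'}\rangle\not\supseteq X(\calP)$. This contradicts atomicity, so $X(\calP)$ has no proper nontrivial subgroup, i.e. $q/q'$ is prime. For (b), Lemma~\ref{l:chirGroupFacet} and its dual show that the chirality groups of the base facet and base vertex-figure are subgroups of $X(\calP)$; they are nontrivial because those sections are chiral, and since $X(\calP)$ has prime order they must each equal $X(\calP)$.

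The main obstacle is the middle step: proving that $\langle \s_1^{\alpha_1} \rangle$ is genuinely normal in the facet group (and dually for $\s_3$). The inclusion $X(\calQ) \le X(\calP)$ only accounts for the chirality already visible in the facet, so one must rule out extra $\s_1$-content in $X(\calP)$; knowing $X(\calQ) \le \langle \s_2 \rangle$ forces the $\s_1$-part of $X(\calQ)$ to vanish but does not by itself pin down $\alpha_1$. Closing this gap is where the explicit relations of Lemma~\ref{l:rels1s3} and the precise presentations of the atomic polyhedra in Table~\ref{tab:atomic-polyhedra} must be combined, and it is the part most likely to require a genuine computation rather than a soft argument. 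A route I would keep in reserve, which also sidesteps the question of whether $\calP/X(\calP)$ is polytopal, is to compute $X(\calP)$ directly as the kernel of Lemma~\ref{l:ChirGruIsKer}, using the explicit generators $(\s_1,\s_1^{-1})$, $(\s_2,\s_1^2\s_2)$, $(\s_3,\s_3)$ of the smallest regular cover together with Lemma~\ref{l:mrcFacets}, and showing that this kernel is contained in the facet subgroup.
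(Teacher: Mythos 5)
Your outline has two unclosed gaps, and the second one is precisely the hard content of the proposition. First, you apply Lemma~\ref{l:subgroups} to $K = X(\calP)$, which requires $\calP/X(\calP)$ to be a tight orientable rotary \emph{polytope}; the definition of the chirality group only guarantees a regular \emph{structure} (the intersection condition may fail in the quotient), and you flag this but never confirm it. Second, and more seriously, your route to $X(\calP) \le \langle \s_2 \rangle$ requires showing $\langle \s_1^{\alpha_1} \rangle \triangleleft \langle \s_1, \s_2 \rangle$, so that core-freeness of $\langle \s_1 \rangle$ kills the $\s_1$-part. But normality of $X(\calP)$ does not descend to its cyclic factors: conjugating $\s_1^{\alpha_1}$ by $\s_2$ only lands in $X(\calP)$, i.e.\ is of the form $\s_1^{a\alpha_1}\s_2^{b\alpha_2}\s_3^{c\alpha_3}$, not necessarily in $\langle \s_1^{\alpha_1} \rangle$. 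You acknowledge that this step is ``the part most likely to require a genuine computation'' and supply no argument for it; the reserve route via Lemma~\ref{l:ChirGruIsKer} is likewise only a pointer. So the proposal, as it stands, does not prove the statement.

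The paper's proof sidesteps both problems by never decomposing $X(\calP)$ at all. It takes $H$ and $K$ to be the kernels of the actions of $\Gamma(\calP)$ on the vertices and on the facets: these are normal by construction, nontrivial by Corollary~\ref{cor:NTkernel} and its dual (using $q>p$ and $q>r$ from Proposition~\ref{prop:ch-ch-props}(b)), contained in $\langle \s_2,\s_3 \rangle$ and $\langle \s_1,\s_2 \rangle$ respectively, and --- crucially --- Lemma~\ref{lem:vertex-kernel} shows their quotients are polytopes, so atomicity forces $\calP/H$ and $\calP/K$ to be regular. Lemma~\ref{l:mixquotient} then gives that $\calP/(H\cap K) \cong (\calP/H)\lozenge(\calP/K)$ is regular, whence $X(\calP) \le H\cap K \le \langle \s_2 \rangle$ by the intersection condition. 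From there the prime-index argument and part (b) (via Lemma~\ref{l:chirGroupFacet} and its dual) run essentially as you sketch them. The missing idea in your proposal is exactly this use of action kernels --- which are normal, nontrivial, and polytopal ``for free'' --- in place of the chirality group itself.
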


	\begin{proof}
	Let $H$ and $K$ be the kernels of the actions of $\Gamma(\calP)$ on the vertices and on the facets of $\calP$, respectively. By Proposition~\ref{prop:ch-ch-props}(b) together with Corollary \ref{cor:NTkernel} and its dual form, $H \le \langle \sigma_2, \sigma_3 \rangle$ and $K \le \langle \sigma_1, \sigma_2 \rangle$ are non-trivial normal subgroups of $\Gamma(\calP)$. Therefore $H \cap K$ is a normal subgroup of $\Gamma(\calP)$ that by the intersection condition is contained in $\langle \sigma_2 \rangle$.

	Now, Lemma~\ref{lem:vertex-kernel} and its dual show that $\calP/H$ and $\calP/K$ are polytopes, and since $H$ and $K$ are nontrivial and 
	$\calP$ is atomic, $\calP/H$ and $\calP/K$ are regular. Moreover, by Lemma \ref{l:mixquotient}, $\calP/(H\cap K) \cong \calP/H \lozenge \calP/K$ is also regular, implying that $H \cap K$ is non-trivial.

	Since $\calP/(H\cap K)$ is regular, $X(\calP) \le H\cap K = \langle \sigma_2^m \rangle$ for some $m$. If $q/m$ is not prime, then $\langle \sigma_2^{mk} \rangle \triangleleft \Gamma(\calP)$ for any $k$, in particular, for some $k$ such that $q/mk$ is prime. By atomicity of $\calP$, its quotient by $\langle \sigma_2^{mk} \rangle$ is regular and, since it is a maximal quotient, $X(\calP) = \langle \sigma_2^{mk} \rangle$. This concludes part (a).

	Part (b) follows from Part (a) and Lemma \ref{l:chirGroupFacet}.
	\end{proof}

	\begin{proposition}\label{prop:primeFacs}
	Let $\calP$ be an atomic chiral $4$-polytope of type $\{p,q,r\}$ with chiral facets and vertex-figures. 
	If $q$ is a prime power then the facets and vertex-figures of $\calP$ are atomic chiral polyhedra.
	\end{proposition}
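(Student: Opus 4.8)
The plan is to prove that the base facet is an atomic chiral polyhedron and then deduce the statement for vertex-figures by duality. Write $\mathcal{F} = \langle \s_1, \s_2 \rangle$ for the base facet, a tight chiral polyhedron of type $\{p,q\}$ with $q > p$ by Proposition~\ref{prop:ch-ch-props}(b), and suppose for contradiction that $\mathcal{F}$ is not atomic. By Proposition~\ref{p:nonatomic-quo}, $\mathcal{F}$ then covers a tight chiral polyhedron $\mathcal{F}'$ of type $\{p', q\}$ with $p' \mid p$ proper, or of type $\{p, q'\}$ with $q' \mid q$ proper. In either case Lemma~\ref{l:subgroups} pins down the kernel $N$ of the covering: in the first case $N = \langle \s_1^{p'} \rangle$ (since the second type entry is unchanged, $\langle \s_2^{q} \rangle$ is trivial), and in the second case $N = \langle \s_2^{q'} \rangle$. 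I would treat these two cases separately.

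For the type $\{p, q'\}$ case I would use the chirality group. Writing $q = s^{\gamma}$ for a prime $s$, Proposition~\ref{prop:chirGroups}(a) gives $X(\calP) = \langle \s_2^{q'_0} \rangle$ with $q/q'_0$ prime, so $q'_0 = s^{\gamma-1}$ and $X(\calP)$ is the unique subgroup of $\langle \s_2 \rangle$ of order $s$. Combining Proposition~\ref{prop:chirGroups}(b), which gives $X(\mathcal{F}) \cong X(\calP)$, with Lemma~\ref{l:chirGroupFacet}, which gives $X(\mathcal{F}) \le X(\calP)$, forces $X(\mathcal{F}) = \langle \s_2^{s^{\gamma-1}} \rangle$. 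Since $q' \mid q$ is a proper divisor and $q$ is a prime power, $q' = s^{\gamma'}$ with $\gamma' \le \gamma-1$, so $\langle \s_2^{q'} \rangle \supseteq \langle \s_2^{s^{\gamma-1}} \rangle = X(\mathcal{F})$. Thus the kernel of $\mathcal{F} \to \mathcal{F}'$ contains the chirality group, and by the universal property of $X(\mathcal{F})$ the quotient $\mathcal{F}'$ is regular, contradicting its chirality. This is precisely where the hypothesis that $q$ is a prime power is used.

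For the type $\{p', q\}$ case the kernel $N = \langle \s_1^{p'} \rangle$ is a nontrivial subgroup of $\langle \s_1 \rangle$ (nontrivial because $p' < p$) that is normal in $\langle \s_1, \s_2 \rangle$, so in particular $\s_2^{-1} N \s_2 = N$. By Lemma~\ref{l:rels1s3}(b) this yields $\s_3^{-1} N \s_3 = N$ as well, and since $\s_1$ trivially normalizes $N$ we conclude $N \triangleleft \Gamma(\calP)$. But then $\langle \s_1 \rangle$ contains a nontrivial normal subgroup of $\Gamma(\calP)$, contradicting the core-freeness of $\langle \s_1 \rangle$ from Proposition~\ref{prop:ch-ch-props}(a). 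Hence both cases are impossible and $\mathcal{F}$ is atomic.

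Finally, I would obtain the statement for vertex-figures by applying the facet result to the dual polytope $\calP^{\delta}$: it is again an atomic chiral $4$-polytope with chiral facets and vertex-figures (all of these properties being preserved under duality) whose middle Schl\"afli entry is still the prime power $q$, and its facets are the duals of the vertex-figures of $\calP$; so those facets are atomic, and therefore so are the vertex-figures of $\calP$. The main obstacle is the type $\{p, q'\}$ case, as it is the only place the prime-power hypothesis enters and it requires identifying $X(\mathcal{F})$ exactly as the order-$s$ subgroup $\langle \s_2^{s^{\gamma-1}} \rangle$ rather than merely up to isomorphism.
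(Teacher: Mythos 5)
Your proof is correct and is essentially the paper's own argument: the prime-power hypothesis forces $X(\mathcal{F}) = X(\calP)$ to be the unique minimal subgroup of $\langle \s_2 \rangle$, hence contained in the kernel $\langle \s_2^{q'} \rangle$ of any covering of type $\{p,q'\}$ (so that quotient is regular), while a covering of type $\{p',q\}$ is excluded because its kernel $\langle \s_1^{p'} \rangle$ would be normal in all of $\G(\calP)$, contradicting core-freeness of $\langle \s_1 \rangle$ from Proposition~\ref{prop:ch-ch-props}(a); duality then handles the vertex-figures. The only differences are organizational: you set up the two cases via Proposition~\ref{p:nonatomic-quo} and Lemma~\ref{l:subgroups} and argue by contradiction, and you make explicit two steps the paper leaves implicit, namely pinning down $X(\mathcal{F})$ as a subgroup equal to $X(\calP)$ (via Lemma~\ref{l:chirGroupFacet} together with Proposition~\ref{prop:chirGroups}(b)) and lifting normality of $\langle \s_1^{p'} \rangle$ from $\langle \s_1, \s_2 \rangle$ to $\G(\calP)$ via Lemma~\ref{l:rels1s3}(b).
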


	\begin{proof}
	Suppose $\calP$ has facets isomorphic to $\calQ_1$ and vertex-figures isomorphic to $\calQ_2$.
	By Proposition~\ref{prop:chirGroups}, $X(\calP) = X(\calQ_1)=X(\calQ_2)$, and these groups are cyclic of prime order.
	If $q$ is a prime power then $X(\calP)$ is contained in all proper subgroups of $\langle \sigma_2 \rangle$, and so
	$\calQ_1$ does not cover any tight chiral polyhedra of type $\{p, q'\}$ with $q'$ a proper divisor of $q$.
	Proposition~\ref{prop:ch-ch-props} says that $\langle \s_1 \rangle$ is core-free, and so also $\calQ_1$ does not
	cover any tight chiral polyhedra of type $\{p', q\}$ with $p'$ a proper divisor of $p$. It follows that $\calQ_1$
	is atomic, and a dual argument proves that $\calQ_2$ is atomic as well.
	\end{proof}
	
	We are now ready to prove the main necessary condition for a tight chiral $4$-polytope with chiral facets and chiral vertex-figures to be atomic.

	\begin{theorem} \label{t:ch-ch-atomics}
	If $\calP$ is an atomic chiral $4$-polytope with chiral facets and chiral vertex-figures then the facets and vertex-figures are atomic chiral polyhedra.
	\end{theorem}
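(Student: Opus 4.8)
The plan is to reduce the statement to Proposition~\ref{prop:primeFacs} by showing that the middle entry $q$ of the type $\{p,q,r\}$ is a prime power. By Proposition~\ref{prop:ch-ch-props}(b) we have $q>p$ and $q>r$, so $q$ is the largest entry of both the facet (of type $\{p,q\}$) and the vertex-figure (of type $\{q,r\}$). Hence Corollary~\ref{c:qIsPrimePower} shows that if either the facet or the vertex-figure were atomic then $q$ would be a prime power, and in that case Proposition~\ref{prop:primeFacs} would immediately give that both are atomic. So it suffices to prove that $q$ is a prime power, and I would argue by contradiction, assuming that $q$ is not a prime power and that neither the facet nor the vertex-figure is atomic.

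The first ingredient is a rigidity property of $\langle\s_2\rangle$. Writing $X(\calP)=\langle\s_2^{q'}\rangle$ (Proposition~\ref{prop:chirGroups}(a)), of prime order $t=q/q'$, I would first show that \emph{every nontrivial normal subgroup of $\G(\calP)$ contained in $\langle\s_2\rangle$ contains $X(\calP)$}. Such a subgroup is $\langle\s_2^e\rangle$ with $1<e<q$ (the case $e=1$ is excluded since $\langle\s_2\rangle$ is not normal), it contains no generator, so Proposition~\ref{prop:vfig-quo} makes $\calP/\langle\s_2^e\rangle$ a tight orientable rotary polytope; atomicity of $\calP$ forbids a proper chiral quotient, so this quotient is regular and therefore $\langle\s_2^e\rangle\supseteq X(\calP)$. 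In particular the core of $\langle\s_2\rangle$ in $\G(\calP)$ has a unique minimal subgroup and so is a cyclic $t$-group.

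Next I would locate the $q$-reductions coming from the facet and the vertex-figure. Since $\langle\s_1\rangle$ is core-free in $\G(\calP)$ (Proposition~\ref{prop:ch-ch-props}(a)) and, by Lemma~\ref{l:rels1s3}(b), a subgroup of $\langle\s_1\rangle$ that is normal in the facet $\langle\s_1,\s_2\rangle$ is automatically normal in $\G(\calP)$, the facet cannot cover a chiral polyhedron of type $\{p',q\}$: its kernel would be a proper $\G(\calP)$-normal subgroup of $\langle\s_1\rangle$. Thus, by Proposition~\ref{p:nonatomic-quo}, a non-atomic facet must cover a chiral polyhedron of type $\{p,q_*\}$, yielding an $\s_1$-invariant subgroup $M_1=\langle\s_2^{q_*}\rangle$ with $M_1\not\supseteq X(\calP)$ (so $q_*$ keeps the full $t$-part of $q$); dually the non-atomic vertex-figure gives an $\s_3$-invariant $M_2=\langle\s_2^{q_{**}}\rangle$ with $M_2\not\supseteq X(\calP)$. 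Because $\langle\s_2\rangle$ is cyclic, any subgroup of $M_1$ is the unique subgroup of its order in $M_1$ and hence $\s_1$-invariant, and similarly inside $M_2$; applying this to $M_1\cap M_2$ shows that $M_1\cap M_2$ is invariant under $\s_1$, $\s_2$ and $\s_3$, hence normal in $\G(\calP)$, and as it avoids $X(\calP)$ the previous paragraph forces $M_1\cap M_2$ to be trivial.

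To finish, I would iterate these $q$-reductions, on each side reducing $q$ as far as possible while keeping $p$ (respectively $r$) fixed, so that the successive kernels remain pure powers of $\s_2$ and hence $\s_1$-invariant (respectively $\s_3$-invariant). Since the chirality group stays inside $\langle\s_2\rangle$ along these quotients (Lemma~\ref{l:ChirGroupS2}), the classification of atomic chiral polyhedra (Theorem~\ref{t:atomic-polyhedra}) identifies the terminal reduced $q$-parameter as a power of $t$, necessarily the full $t$-part $t^{e}$ of $q$. Then both $M_1$ and $M_2$ coincide with the Hall $t'$-subgroup $\langle\s_2^{t^{e}}\rangle$ of $\langle\s_2\rangle$, which is therefore simultaneously $\s_1$- and $\s_3$-invariant, hence a nontrivial $\G(\calP)$-normal subgroup of $\langle\s_2\rangle$ avoiding $X(\calP)$; this contradicts the rigidity property and forces $q=t^{e}$, a prime power. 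The main obstacle is exactly this last coordination across the \emph{middle} generator $\s_2$: unlike $\s_1$ and $\s_3$, for which Lemma~\ref{l:rels1s3} and core-freeness promote facet/vertex-figure normality to $\G(\calP)$-normality, a subgroup of $\langle\s_2\rangle$ coming from a $q$-reduction is a priori invariant under only one of $\s_1,\s_3$; the delicate work is to force the facet's and the vertex-figure's reductions to land on the same $t$-power (via the atomic classification and, if needed, the uniqueness of the amalgamation in Lemma~\ref{l:flatflat}), so that the cyclic-intersection trick produces a genuinely $\G(\calP)$-normal, $X(\calP)$-avoiding subgroup.
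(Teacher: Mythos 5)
Your setup and your first three steps are sound, and they track the paper's Propositions~\ref{prop:chirGroups} and~\ref{prop:primeFacs} closely: the reduction to ``$q$ is a prime power,'' the rigidity property (via Proposition~\ref{prop:vfig-quo}, atomicity, and the universal property of the chirality group), the exclusion of facet quotients of type $\{p',q\}$ (via core-freeness and Lemma~\ref{l:rels1s3}(b)), and the triviality of $M_1\cap M_2$ are all correctly argued.

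The gap is the step you yourself flag as ``the main obstacle,'' and it is genuine. Your contradiction needs the terminal pure $q$-reduction of the facet to land exactly on the $t$-part $t^{e}$ of $q$, so that $M_1=\langle\s_2^{t^{e}}\rangle$; you justify this by applying Theorem~\ref{t:atomic-polyhedra} (via Lemma~\ref{l:ChirGroupS2}) to that terminal quotient. But the classification applies only to \emph{atomic} polyhedra, and the terminal pure $q$-reduction $\calQ_1/\langle\s_2^{\bar q}\rangle$ need not be atomic: by Proposition~\ref{p:nonatomic-quo} it may fail to be atomic precisely because it covers chiral polyhedra of type $\{p',\bar q\}$, about which the classification then says nothing. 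Moreover, your argument excluding $\{p',q\}$-quotients of $\calQ_1$ does not descend to $\calQ_1/\langle\s_2^{\bar q}\rangle$: Lemma~\ref{l:rels1s3}(b) promotes $\langle\s_1,\s_2\rangle$-normality to $\G(\calP)$-normality only inside the ambient group, and $\langle\s_2^{\bar q}\rangle$ is \emph{not} normal in $\G(\calP)$ (by your own rigidity property it cannot be, since it is nontrivial and avoids $X(\calP)$), so there is no quotient $4$-polytope in which to rerun that argument. Equivalently: the atomic quotient of the facet has kernel $N_1=\langle\s_1^{a}\rangle\langle\s_2^{t^{e}}\rangle$ with a genuine $\s_1$-part, and nothing forces its $\s_2$-part $\langle\s_2^{t^{e}}\rangle$ alone to be $\s_1$-invariant ($N_1\cap\langle\s_2\rangle$ is neither obviously normal in $\langle\s_1,\s_2\rangle$ nor characteristic in $N_1$). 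This is exactly where the paper takes a different route: it works with the mixed kernels $N_1=\langle\s_1^{a}\rangle\langle\s_2^{b}\rangle$ and $N_2=\langle\s_2^{b}\rangle\langle\s_3^{c}\rangle$ of the \emph{atomic} quotients of the facet and vertex-figure (so the classification legitimately pins $b$ to the $t$-part of $q$ on both sides), glues them into $K=\langle\s_1^{a}\rangle\langle\s_2^{b}\rangle\langle\s_3^{c}\rangle$, verifies $K\triangleleft\G(\calP)$ by direct computation with Lemma~\ref{l:rels1s3}, and derives the contradiction not from rigidity but from the quotient polytope: $\calP/K$ must be regular by atomicity, yet its facets, being tight of type $\{a,b\}$ and covered by the tight chiral polyhedron $\calQ_1/N_1$ of the same type, are isomorphic to $\calQ_1/N_1$ and hence chiral. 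Without a proof that the facet's and vertex-figure's reductions can be taken inside $\langle\s_2\rangle$ alone and reach $t^{e}$, your final step does not go through.
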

		
	\begin{proof}
	Assume that $\calP$ has type $\{p,q,r\}$. The facets and vertex-figures of $\calP$ are isomorphic to some chiral polyhedra 
	$\calQ_1$ and $\calQ_2$, respectively. Proposition \ref{prop:chirGroups} (b) tells us that $X(\calP) = \langle \sigma_2^{q'} \rangle$ with $q/q'$ prime and that $X(\calP) = X(\calQ_1) = X(\calQ_2)$.

	Assume to the contrary that $\calQ_i$ is not atomic for some $i \in \{1,2\}$. Then, by Proposition \ref{prop:primeFacs}, $q$ must have at least two distinct prime factors, which by Corollary~\ref{c:qIsPrimePower} implies that neither $\calQ_1$ nor $\calQ_2$ is atomic. Let $m = q/q'$, which is prime (but not necessarily odd). Then $q = m^{\alpha} t$ for some $\alpha$ and some $t$ not divisible by $m$.

	Since $\calQ_1$ is not atomic there exists $N_1 \triangleleft \langle \s_1,\s_2 \rangle$ such that $\calQ_1/N_1$ is an atomic chiral polyhedron. By Lemma \ref{l:subgroups} there exist $a$ and $b$ such that $N_1 = \langle \s_1^{a} \rangle \langle \s_2^{b} \rangle$ and $\calQ_1 / N_1$ has type $\{a,b\}$.

	Now $X(\calQ_1)=X(\calP)$, and therefore $\sigma_2^{q'} \notin N_1$. It follows that $q/b$ divides $t$ and $m^{\alpha}$ divides $b$. Lemma \ref{l:ChirGroupS2} implies that $X(\calQ_1/N_1)$ is contained in the subgroup generated by the second standard generator of $\Gamma(\calQ_1/N_1)$. Since $\calQ_1/N_1$ is atomic, we can conclude that $a < b$ by the classification of atomic chiral polyhedra. Corollary \ref{c:qIsPrimePower} now tells us that $b = m^{\alpha}$.

	We proceed in a dual manner to observe that there exists $N_2= \langle \s_2^{b'}\rangle \langle \s_3^{c} \rangle \le \langle \s_2,\s_3 \rangle$ such that $\calQ_2/N_2$ is an atomic chiral polyhedron with type $\{b',c\} = \{m^{\alpha},c\}$. In particular, $b=b'$.

	Let $K= \langle \s_1^{a} \rangle \langle \s_2^{b} \rangle \langle \s_3^{c} \rangle$.
	We claim that $K\triangleleft \Gamma(\calP)$. 
	To see this, note that
	\[\sigma_2^{-1}(\sigma_1^{k_1 a} \sigma_2^{k_2 b} \sigma_3^{k_3 c})\sigma_2 = (\sigma_2^{-1} \sigma_1^{k_1 a} \sigma_2^{k_2 b} \sigma_2)(\sigma_2^{-1} \sigma_3^{k_3 c} \sigma_2) \in (\langle \sigma_1^{a} \rangle \langle \sigma_2^{b} \rangle)(\langle \sigma_2^{b} \rangle \langle \sigma_3^{c} \rangle),\]
	and as noted in the proof of Lemma \ref{l:rels1s3},
	\[
	\sigma_3(\sigma_1^{k_1 a} \sigma_2^{k_2 b} \sigma_3^{k_3 c})\sigma_3^{-1} = (\sigma_2^{-1}(\sigma_1^{-k_1 a})\sigma_2)(\sigma_3 \sigma_2^{k_2 b}\sigma_3^{k_3 c} \sigma_3^{-1})
	\in (\langle \sigma_1^{a} \rangle \langle \sigma_2^{b} \rangle)(\langle \sigma_2^{b} \rangle \langle \sigma_3^{c} \rangle).\]
	A dual argument shows that $K$ is invariant under conjugation by $\sigma_1$.
	
	Now, Lemma \ref{l:s1NotNormal} and Proposition~\ref{prop:vfig-quo} imply that $\calP/K$ is a polytope, and since $\calP$ is atomic this polytope must be regular of type $\{a, b, c\}$. In particular, this implies that the facets are regular polyhedra of type $\{a, b\}$. On the other hand, the facets must be a quotient of $\calQ_1/N_1$, which is a tight chiral polyhedron of type $\{a, b\}$. But no tight polyhedron properly covers another polyhedron of the same type, and so we have a contradiction.
	\end{proof}

	Now let us show that the conditions in Proposition~\ref{prop:ch-ch-props} and Theorem~\ref{t:ch-ch-atomics} suffice
	if we want to build an atomic chiral $4$-polytope.
	
	\begin{corollary}
	\label{cor:atomic-ch-ch}
	A tight chiral $4$-polytope $\calP$ with chiral facets and vertex-figures is atomic if and only if
	\begin{enumerate}
	\item The facets and vertex-figures are atomic, and
	\item $\langle \s_1 \rangle$ and $\langle \s_3 \rangle$ are core-free in $\G(\calP)$.
	\end{enumerate}
	\end{corollary}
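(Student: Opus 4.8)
The forward direction is already in hand: if $\calP$ is atomic, then Theorem~\ref{t:ch-ch-atomics} yields condition (a), and Proposition~\ref{prop:ch-ch-props}(a) yields condition (b). So the real work lies in the converse, and the plan is to argue by contradiction. I would assume that $\calP$ satisfies (a) and (b) but fails to be atomic, so that $\calP$ properly covers some tight chiral $4$-polytope $\calQ = \calP/N$ with $N$ a nontrivial normal subgroup of $\G(\calP)$. Since $\calQ$ is a tight orientable rotary $4$-polytope, Lemma~\ref{l:subgroups} lets me write $N = \langle \s_1^{\alpha_1} \rangle \langle \s_2^{\alpha_2} \rangle \langle \s_3^{\alpha_3} \rangle$ and tells me that $\calQ$ has type $\{\alpha_1, \alpha_2, \alpha_3\}$.

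The first substantive step is to identify the facets and vertex-figures of $\calQ$. Writing $\calQ_1$ and $\calQ_2$ for the (atomic, by (a)) facets and vertex-figures of $\calP$, the facet subgroup of $\calQ$ is $\langle \s_1, \s_2 \rangle / (\langle \s_1, \s_2 \rangle \cap N)$, so I need to compute $\langle \s_1, \s_2 \rangle \cap N$. Using the factorization of $N$ together with the intersection condition in the form of Lemma~\ref{l:IntAi} (which gives $\langle \s_1, \s_2 \rangle \cap \langle \s_3 \rangle = \{\id\}$), any element of $N$ lying in $\langle \s_1, \s_2 \rangle$ must have trivial $\s_3$-part, so $\langle \s_1, \s_2 \rangle \cap N = \langle \s_1^{\alpha_1} \rangle \langle \s_2^{\alpha_2} \rangle =: N_1$. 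Dually $\langle \s_2, \s_3 \rangle \cap N = \langle \s_2^{\alpha_2} \rangle \langle \s_3^{\alpha_3} \rangle =: N_2$. Hence the facets of $\calQ$ are $\calQ_1/N_1$ and its vertex-figures are $\calQ_2/N_2$.

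Now since $\calQ$ is chiral, Proposition~\ref{prop:facvfChiral}(a) guarantees it has chiral facets or chiral vertex-figures. In the first case $\calQ_1/N_1$ is chiral; but $\calQ_1$ is atomic, so by Corollary~\ref{c:atomic-no-quo} it has no proper chiral quotient, forcing $N_1$ to be trivial, i.e.\ $\s_1^{\alpha_1} = \s_2^{\alpha_2} = \id$. Then $N = \langle \s_3^{\alpha_3} \rangle$ is a nontrivial normal subgroup of $\G(\calP)$ contained in $\langle \s_3 \rangle$, contradicting the core-freeness of $\langle \s_3 \rangle$ from (b). The case of chiral vertex-figures is symmetric: $N_2$ trivial forces $N = \langle \s_1^{\alpha_1} \rangle$, which contradicts the core-freeness of $\langle \s_1 \rangle$. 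In either case the cover cannot be proper, so $\calP$ is atomic.

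The one delicate point is the computation of the intersections $\langle \s_1, \s_2 \rangle \cap N$ and $\langle \s_2, \s_3 \rangle \cap N$; everything else is bookkeeping. This is precisely where the intersection condition does the essential work, stripping off the unwanted generator power from an element of $N$ that happens to lie in a facet or vertex-figure subgroup, and it is also the step that channels the two hypotheses—atomicity of the sections and core-freeness of $\langle \s_1 \rangle$, $\langle \s_3 \rangle$—into a contradiction.
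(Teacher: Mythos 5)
Your proof is correct and takes essentially the same route as the paper's: argue by contradiction with a proper (automatically tight) chiral quotient, invoke Proposition~\ref{prop:facvfChiral} to get chiral facets or vertex-figures of the quotient, use atomicity of the sections (via Corollary~\ref{c:atomic-no-quo}) to force those sections to survive the quotient intact, and then contradict core-freeness of $\langle \s_1 \rangle$ or $\langle \s_3 \rangle$. The only difference is bookkeeping in identifying the kernel: the paper pins it down as $\langle \s_3^{r'} \rangle$ by a tightness/order count, while you derive it from Lemma~\ref{l:subgroups} together with the intersection condition; both are valid.
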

	
	\begin{proof}
	Theorem~\ref{t:ch-ch-atomics} and Proposition~\ref{prop:ch-ch-props} prove that the conditions are necessary. 
	Now, suppose that $\calP$ satisfies the conditions. If $\calQ$ is a proper chiral quotient of $\calP$, then
	$\calQ$ is still tight, and so Proposition~\ref{prop:facvfChiral} says that either the facets or vertex-figures are chiral.
	Without loss of generality, suppose that the facets of $\calQ$ are chiral. The facets of $\calP$ cover
	the facets of $\calQ$, and since the facets of $\calP$ are atomic, this implies that $\calQ$ has the same facets.
	In particular, if $\calP$ has type $\{p, q, r\}$, then $\calQ$ has type $\{p, q, r'\}$ for some $r'$ dividing $r$.
	By tightness, $|\G(\calP)| = pqr$ and $|\G(\calQ)| = pqr'$, and so since $\calQ$ is a proper quotient of $\calP$,
	we have $r' \neq r$. Furthermore, $\G(\calQ) = \G(\calP) / \langle \s_3^{r'} \rangle$. But this contradicts
	that $\langle \s_3 \rangle$ is core-free in $\G(\calP)$.
	\end{proof}
	
\subsection{Classification of atomic chiral $4$-polytopes with chiral facets and vertex-figures}

	In light of Lemma \ref{l:flatflat}, once we know the possible types of facets and vertex-figures of an atomic
	chiral $4$-polytope, all we need to do is try amalgamating the compatible pairs and see which ones give us
	a group of the proper size that satisfies the intersection condition. Theorem~\ref{t:ch-ch-atomics} implies
	that the facets and vertex-figures must appear on Table~\ref{tab:atomic-polyhedra}. Combined with Proposition
	\ref{prop:ch-ch-props}, we find that the automorphism group of an atomic chiral $4$-polytope with chiral facets and vertex-figures
	must be one of the groups in Table~\ref{tab:atomic-cc}. For simplicity we avoid including the various parameters (such as $m$, $\alpha$, and $k_1$) in the names of the groups. The ``extra relations'' show how to define the group as a quotient of the given parent group.
	
	\begin{table}[htbp]
	\centering
	\begin{tabular}{|l|l|l|l|} \hline
	Group name & Parent group & Extra relations & Notes \\ \hline
	$\G_1$ & $[2m, m^{\alpha}, 2m]^+$ & $\s_2 \s_1^2 = \s_1^2 \s_2^{1+k_1 m^{\alpha-1}}$ & $m$ odd prime, $\alpha \geq 2$, \\	
	 & & $\s_3^2 \s_2 = \s_2^{1+k_2 m^{\alpha-1}} \s_3^2$ & $k_1, k_2 \in \{1, \ldots, m-1\}$ \\ \hline

	$\G_2$ & $[8, 2^{\beta}, 8]^+$ & $\s_2 \s_1^2 = \s_1^2 \s_2^{1+\epsilon_1 2^{\beta-2}}$ & $\beta \geq 5$, $\epsilon_1, \epsilon_2 \in \{-1, 1\}$ \\
	 & & $\s_3^2 \s_2 = \s_2^{1+\epsilon_2 2^{\beta-2}} \s_3^2$ & \\ \hline

	$\G_3$ & $[2^{\beta-1}, 2^{\beta}, 2^{\beta-1}]^+$ & $\s_2^{-1} \s_1 = \s_1^{-1+2^{\beta-2}} \s_2^{-3+\epsilon_1 2^{\beta-2}}$ & $\beta \geq 5$, $\epsilon_1, \epsilon_2 \in \{-1, 1\}$ \\
	& & $\s_2 \s_1^{-1} = \s_1^{1+2^{\beta-2}} \s_2^{3+\epsilon_1 2^{\beta-2}}$ & \\
	& & $\s_3^{-1} \s_2 = \s_2^{3 + \epsilon_2 2^{\beta-2}} \s_3^{1+2^{\beta-2}}$ & \\
	& & $\s_3 \s_2^{-1} = \s_2^{-3 + \epsilon_2 2^{\beta-2}} \s_3^{-1+2^{\beta-2}}$ & \\ \hline

	$\G_4$ & $[8, 2^{\beta}, 2^{\beta-1}]^+$ & $\s_2 \s_1^2 = \s_1^2 \s_2^{1+\epsilon_1 2^{\beta-2}}$ & $\beta \geq 5$, $\epsilon_1, \epsilon_2 \in \{-1, 1\}$ \\
	& & $\s_3^{-1} \s_2 = \s_2^{3 + \epsilon_2 2^{\beta-2}} \s_3^{1+2^{\beta-2}}$ & \\
	& & $\s_3 \s_2^{-1} = \s_2^{-3 + \epsilon_2 2^{\beta-2}} \s_3^{-1+2^{\beta-2}}$ & \\ \hline
	\end{tabular}
	\caption{The possible groups of atomic chiral $4$-polytopes with chiral facets and vertex-figures}
	\label{tab:atomic-cc}
	\end{table}
		
	Using GAP \cite{GAP}, we verified that $\G_2, \G_3,$ and $\G_4$ have the correct order and satisfy the intersection
	condition for $\beta = 5$ and $\beta = 6$, and for all four choices of $(\epsilon_1, \epsilon_2)$.
	Thus, for these parameter values, the group is the automorphism group of a tight chiral
	polytope. We similarly verified that $\G_1$ is the automorphism group of a tight chiral polytope
	for $m = 3, \alpha \in \{2,3\}, k_1 = k_2 \in \{1,2\}$ and for $m = 5, \alpha = 2, k_1 = k_2 \in \{1, \ldots, 4\}$.
	Furthermore, for these values of $m$ and $\alpha$, we verified that $\G_1$ does not have the proper order when $k_1 \neq k_2$,
	and so does not define the automorphism group of a tight chiral polytope.
		
	For the group $\G_1$, we will show that we do in fact need $k_1 = k_2$. Then, for each group
	we will describe a permutation representation of the group. There is a standard strategy that we used to
	determine the permutation representation, based on the following facts.
	If $\calP$ is a tight chiral $4$-polytope of type $\{p, q, r\}$, then the cosets of $\langle \s_1 \rangle$ are of the form
	$\langle \s_1 \rangle \s_2^b \s_3^c$, and $\G(\calP)$ acts on the set of cosets by right multiplication.
	Furthermore, since $\G(\calP)$ is tight, then for every $i$ we can rewrite $\langle \s_1 \rangle \s_2^b \s_3^c \s_i$ as 
	$\langle \s_1 \rangle \s_2^{b'} \s_3^{c'}$ for some $b'$ and $c'$. So for each $i$, we determined how $b'$ and $c'$
	depend on $b$ and $c$. We then encode the coset $\langle \s_1 \rangle \s_2^b \s_3^c$ as the pair $(b,c) \in \Z_q \times \Z_r$
	and write down a description of the multiplication.
	
	Once we have a permutation representation, the following lemma will show that we indeed have found the group
	of a tight chiral polytope.

	\begin{lemma}
	\label{lem:perm-rep}
	Suppose that $\calP$ is a tight orientable rotary polyhedron and that $\calQ$ is a tight chiral polyhedron.
	Let $\G = \langle \s_1, \s_2, \s_3 \rangle = [p,q,r]^+/N_3$, the amalgamation of $\G^+(\calP)$ with $\G^+(\calQ)$
	 as defined in Lemma~\ref{l:flatflat}. 
	Suppose that there is a permutation group $G = \langle \pi_1, \pi_2, \pi_3 \rangle$ on $\Z_q \times \Z_r$ such that
	the function that sends each $\s_i$ to $\pi_i$ determines a group epimorphism. Further, suppose that:
	\begin{enumerate}
	\item $\pi_1$ fixes $(0,0)$.
	\item There is some point $(b,c)$ such that the smallest power of $\pi_1$ that fixes $(b,c)$ is $\pi_1^p$.
	\item $(b, 0) \pi_2 = (b+1, 0)$ for all $b$.
	\item $(b,c) \pi_3 = (b, c+1)$ for all $b$ and $c$.
	\end{enumerate}
	Then $\G \cong G$, and $\G$ is the rotation group of a tight chiral polytope of type $\{p, q, r\}$.
	\end{lemma}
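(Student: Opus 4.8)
The plan is to pin down $|\G|$ exactly by squeezing it between $pqr$ and $pqr$: I will show $|\G| \le pqr$ directly from the amalgamation structure, show $|G| \ge pqr$ from the hypotheses (a)--(d) on the action, and then use the given epimorphism $\s_i \mapsto \pi_i$ to force $|\G| = |G| = pqr$ with the map an isomorphism. Throughout write $A = \langle \s_1, \s_2 \rangle$ and $V = \langle \s_2, \s_3 \rangle$. Since $\calP$ and $\calQ$ are tight, $A$ and $V$ are images of $\G^+(\calP)$ and $\G^+(\calQ)$, so $A = \langle \s_1 \rangle \langle \s_2 \rangle$ with $|A| \le pq$, and $V = \langle \s_2 \rangle \langle \s_3 \rangle$ with $|V| \le qr$.

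For the upper bound I will show $\G = \bigcup_{c=0}^{r-1} A \s_3^c$, so that $[\G : A] \le r$ and hence $|\G| \le pqr$. It suffices to check that this union of right cosets is closed under right multiplication by each generator. Closure under $\s_3$ is immediate; closure under $\s_2$ follows from $\s_3^c \s_2 \in V = \langle \s_2 \rangle \langle \s_3 \rangle$ together with $\langle \s_2 \rangle \le A$. For $\s_1$ the key computation is that $\s_3^c \s_1 = \s_1 v_c$ for some $v_c \in V$, proved by induction on $c$ from the relation $\s_3 \s_1 = \s_1 \s_2^2 \s_3$ of \eqref{eq:magicalRelation}: if $\s_3^{c-1}\s_1 = \s_1 v_{c-1}$ then $\s_3^c \s_1 = \s_3 \s_1 v_{c-1} = \s_1 (\s_2^2 \s_3 v_{c-1})$ with $\s_2^2 \s_3 v_{c-1} \in V$. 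Since $\s_1 \in A$ and $\langle \s_2 \rangle \le A$, writing $v_c = \s_2^x \s_3^{c'}$ gives $A \s_3^c \s_1 = A \s_1 v_c = A v_c = A \s_3^{c'}$, which lies in the union.

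For the lower bound I will use the action of $G$ on $\Z_q \times \Z_r$. Conditions (c) and (d) show that $\langle \pi_2, \pi_3 \rangle$ already acts transitively: from $(0,0)$, repeated application of $\pi_2$ reaches every $(b,0)$, and then $\pi_3$ reaches every $(b,c)$. Hence the $G$-orbit of $(0,0)$ has size $qr$, and by orbit--stabilizer $|G| = qr \cdot |\Stab_G((0,0))|$. Condition (a) gives $\langle \pi_1 \rangle \le \Stab_G((0,0))$, and condition (b) forces $\pi_1^0, \pi_1^1, \dots, \pi_1^{p-1}$ to be distinct (a coincidence $\pi_1^i = \pi_1^j$ with $0 \le i < j \le p-1$ would give a power $\pi_1^{j-i}$ of exponent strictly between $0$ and $p$ fixing the witness point), so $\pi_1$ has order at least $p$ and $|\Stab_G((0,0))| \ge p$. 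Therefore $|G| \ge pqr$.

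Combining, the epimorphism yields $pqr \ge |\G| \ge |G| \ge pqr$, so $|\G| = |G| = pqr$ and $\s_i \mapsto \pi_i$ is an isomorphism. Because $\G = \langle \s_1 \rangle \langle \s_2 \rangle \langle \s_3 \rangle$ has order $pqr$, each $\langle \s_i \rangle$ attains its full order and every element has a \emph{unique} expression $\s_1^a \s_2^b \s_3^c$; reading off this uniqueness gives the three instances of the intersection condition \eqref{eq:eqIntCond} directly, so by Proposition~\ref{prop:tight-group-factoring} (equivalently Lemma~\ref{l:flatflat}) $\G$ is the rotation group of a tight orientable rotary $4$-polytope of type $\{p,q,r\}$. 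Its vertex-figure group is $V$, which has order $qr$ and is therefore isomorphic to $\G^+(\calQ)$, so the vertex-figure is $\calQ$; since $\calQ$ is chiral and every section of a regular polytope is regular, the polytope cannot be regular and is thus chiral. I expect the main obstacle to be the upper-bound step—verifying that the cosets $A\s_3^c$ close up under right multiplication by $\s_1$—because this is exactly where the failure of $V$ to be normalized by $\s_1$ must be absorbed into $A$, and it is what makes tightness of the amalgam a genuine (rather than purely formal) consequence of tightness of the facet and vertex-figure.
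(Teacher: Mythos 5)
Your proof is correct, and it is worth recording how it differs from the paper's. Both arguments rest on the same two pillars: the upper bound $|\G| \le pqr$ coming from tightness of the amalgam, and the permutation action on $\Z_q \times \Z_r$ providing the matching lower bound. But the middle of the argument is organized in the opposite order. The paper first verifies the intersection condition (\ref{eq:eqIntCond}) for $G$ directly from the action---writing a putative element of $\langle \pi_1, \pi_2 \rangle \cap \langle \pi_2, \pi_3 \rangle$ as $\pi_1^a \pi_2^b = \pi_2^{b'} \pi_3^c$ and evaluating both sides at $(0,0)$, and similarly for the cyclic intersections---and only then invokes Proposition~\ref{prop:tight-group-factoring} to conclude $|G| = pqr$ and hence $G \cong \G$. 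You reverse this: you obtain $|G| \ge pqr$ purely by counting (transitivity of $\langle \pi_2, \pi_3 \rangle$ from hypotheses (c) and (d), plus orbit--stabilizer with $|\Stab_G((0,0))| \ge p$ from (a) and (b)), conclude the isomorphism by squeezing, and only afterwards read off the three instances of the intersection condition from the uniqueness of the factorization $\s_1^a \s_2^b \s_3^c$ forced by $|\G| = pqr$. Your orbit--stabilizer step replaces the paper's hands-on coset evaluations with a cleaner counting argument, at the small cost of the extra (but easy) observation that unique factorization yields the intersection condition; note this last step does require, as you implicitly use, that $\langle \s_1, \s_2 \rangle = \langle \s_1 \rangle \langle \s_2 \rangle$ and $\langle \s_2, \s_3 \rangle = \langle \s_2 \rangle \langle \s_3 \rangle$, which you secured at the outset. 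A further genuine difference: the paper simply asserts that $\G$ is a tight quotient of $[p,q,r]^+$, whereas you prove it via the cosets $A \s_3^c$ and the relation (\ref{eq:magicalRelation}); your closing remark that this is where tightness of the amalgam is actually earned is accurate, and your write-up supplies a detail the paper leaves implicit. The concluding chirality argument (the vertex-figure group has order $qr$, hence the vertex-figures are isomorphic to the chiral polyhedron $\calQ$, which precludes regularity) coincides with the paper's appeal to Lemma~\ref{l:flatflat}.
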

	
	\begin{proof}
	First, note that since $G$ is a quotient of $\G$, then $\pi_1^p = \pi_2^q = \pi_3^r = \id$.
	The given conditions then imply that no smaller powers of any $\pi_i$ will equal the identity. Now, since $\G$ is a tight quotient of $[p,q,r]^+$, it follows that
	$|G| \leq |\G| \leq pqr$. If we can show that $G$ satisfies the intersection condition, then Proposition~\ref{prop:tight-group-factoring} will imply that $|G| = pqr$, and thus that $G \cong \G$ and that $\G$ is the rotation group of a tight orientable rotary polytope of type $\{p, q, r\}$. Furthermore, by Lemma~\ref{l:flatflat}, such a polytope will have chiral vertex-figures isomorphic to $\calQ$ and thus
	it will be chiral itself.
	
	To show that $G$ satisfies the intersection condition, we first need to show that
	\[ \langle \pi_1 \rangle \cap \langle \pi_2 \rangle = \{ \id \} = \langle \pi_2 \rangle \cap \langle \pi_3 \rangle. \]
	If $\varphi = \pi_1^a = \pi_2^b$, then
	\[ (0, 0) = (0, 0) \pi_1^a = (0, 0) \pi_2^b = (b, 0), \]
	and so $b \equiv 0$ (mod $q$), which implies that $\varphi$ is trivial.
	Similarly, if $\varphi = \pi_2^b = \pi_3^c$, then
	\[ (b, 0) = (0, 0) \pi_2^b = (0, 0) \pi_3^c = (0, c), \]
	which implies that $\varphi$ is trivial. Finally, we need to show that
	\[ \langle \pi_1, \pi_2 \rangle \cap \langle \pi_2, \pi_3 \rangle. \]
	Consider $\varphi$ in this intersection. Since $G$ is a quotient of the tight group $\G$, we may write
	$\varphi = \pi_1^a \pi_2^b = \pi_2^{b'} \pi_3^c$ for some $a, b, b', c$. We have
	\[ (0, 0) \pi_1^a \pi_2^b = (0, 0) \pi_2^b = (b, 0) \]
	and
	\[ (0, 0) \pi_2^{b'} \pi_3^c = (b', 0) \pi_3^c = (b', c). \]
	It follows that $c \equiv 0$ (mod $r$) and thus that $\pi_3^c = \id$.
	So $\varphi = \pi_2^{b'} \in \langle \pi_2 \rangle$, as desired.
	\end{proof}

	\begin{theorem} \label{t:2m-ma-2m}
	The group $\G_1$ is the automorphism group of an atomic chiral $4$-polytope of type $\{2m, m^{\alpha}, 2m\}$ 
	if and only if $k_1 = k_2$.
	\end{theorem}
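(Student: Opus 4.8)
The plan is to prove the two implications separately. Write $t = m^{\alpha-1}$ and $z = \s_2^{t}$, so that $z$ is a power of $\s_2$ with $z^m = \id$. Before either direction I would record three preliminary facts that hold in $\G_1$ for any admissible $k_1, k_2$: since $\alpha \ge 2$ gives $t^2 \equiv 0 \pmod{m^\alpha}$, the extra relations imply that $z$ commutes with $\s_1^2$, with $\s_3^2$, and with $\s_2$; moreover each of $\s_1$ and $\s_3$ inverts $z$, i.e. $\s_1 z \s_1^{-1} = \s_3 z \s_3^{-1} = z^{-1}$. The last fact is a short computation: using $\s_1 \s_2 \s_1 = \s_2^{-1}$ one has $\s_1^{-1} z \s_1 = (\s_1^{-2}\s_2^{-1})^{t}$, and normal-ordering this product (with $z$ central in $\langle \s_1^2, \s_2 \rangle$) leaves $\s_1^{-2t}\s_2^{-t}$ up to a power of $z$ whose exponent is a multiple of $\binom{t}{2}$; this equals $z^{-1}$ because $\s_1^{2t} = \id$ and $\binom{t}{2} \equiv 0 \pmod m$. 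The statement for $\s_3$ is identical using $\s_3\s_2\s_3 = \s_2^{-1}$.

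For the necessity of $k_1 = k_2$, the key step is to compute the commutator $[\s_1^2, \s_3^2]$ in two ways. On one side, Lemma~\ref{l:rels1s3}(a) gives $\s_3 \s_1^2 \s_3^{-1} = \s_2^{-1}\s_1^{-2}\s_2$, and the first extra relation rewrites $\s_2^{-1}\s_1^{-2}\s_2 = \s_1^{-2} z^{k_1}$; conjugating once more by $\s_3$ (using that $\s_3$ inverts $z$) yields $\s_3^2 \s_1^2 \s_3^{-2} = \s_1^2 z^{-2k_1}$, so $[\s_1^2, \s_3^2] = z^{2k_1}$. On the other side, applying Lemma~\ref{l:rels1s3}(a) to the dual $4$-polytope, whose standard generators are $\s_3^{-1}, \s_2^{-1}, \s_1^{-1}$, gives $\s_1^{-1}\s_3^2 \s_1 = \s_2 \s_3^{-2}\s_2^{-1}$, which the second extra relation turns into $\s_3^{-2} z^{k_2}$; conjugating again by $\s_1$ gives $\s_1^{-2}\s_3^2 \s_1^2 = \s_3^2 z^{-2k_2}$ and hence $[\s_1^2, \s_3^2] = z^{2k_2}$. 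Therefore $z^{2(k_1-k_2)} = \id$ in $\G_1$. If $\G_1$ is the automorphism group of a polytope of type $\{2m, m^\alpha, 2m\}$ then $\s_2$ has order exactly $m^\alpha$, so $z$ has order exactly $m$; since $m$ is an odd prime, $2(k_1-k_2) \equiv 0 \pmod m$ forces $k_1 \equiv k_2 \pmod m$, and as $k_1, k_2 \in \{1, \dots, m-1\}$ we get $k_1 = k_2$.

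For sufficiency, suppose $k_1 = k_2 = k$. By Theorem~\ref{t:atomic-polyhedra} the facet $\calP(2m, m^\alpha)_k$ and vertex-figure $\calP(m^\alpha, 2m)_k$ are atomic chiral polyhedra sharing the middle entry $m^\alpha$, and $\G_1$ is precisely the amalgamation of their rotation groups in the sense of Lemma~\ref{l:flatflat}. I would then build the permutation representation prescribed by Lemma~\ref{lem:perm-rep}: encode the coset $\langle \s_1\rangle \s_2^b \s_3^c$ as the pair $(b,c) \in \Z_{m^\alpha}\times \Z_{2m}$ and determine, from the facet and vertex-figure relations, explicit permutations $\pi_1, \pi_2, \pi_3$ describing right multiplication by $\s_1, \s_2, \s_3$. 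After checking that $\s_i \mapsto \pi_i$ respects the defining relations and that the four hypotheses of Lemma~\ref{lem:perm-rep} hold (in particular $\pi_3$ adds $1$ to the second coordinate, $\pi_2$ adds $1$ to the first coordinate on the line $c = 0$, $\pi_1$ fixes $(0,0)$, and $\pi_1^{2m}$ is the least power of $\pi_1$ fixing a suitable point), Lemma~\ref{lem:perm-rep} delivers that $\G_1$ has order $4m^{\alpha+2}$, satisfies the intersection condition, and is the rotation group of a tight chiral $4$-polytope of type $\{2m, m^\alpha, 2m\}$. Atomicity then follows from Corollary~\ref{cor:atomic-ch-ch}: the facets and vertex-figures are atomic by construction, and $\langle \s_1\rangle$ and $\langle \s_3\rangle$ are core-free because, from the permutation representation, no nontrivial power of $\pi_1$ or $\pi_3$ fixes all of $\Z_{m^\alpha}\times\Z_{2m}$.

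I expect the main obstacle to be the sufficiency half, specifically producing correct closed formulas for $\pi_1$ and $\pi_2$ and verifying that they satisfy the twisted relation $\s_2 \s_1^2 = \s_1^2 \s_2^{1 + k m^{\alpha-1}}$ together with its vertex-figure counterpart. The difficulty is purely bookkeeping: moving $\s_1$ and $\s_2$ past powers of $\s_3$ requires repeated use of $\s_3 \s_1 = \s_1 \s_2^2 \s_3$ and of the dihedral relations, and the chiral twist by $z^{k}$ must be tracked carefully throughout. By contrast, the necessity argument is short once the two conjugation facts about $z$ are in hand, so the two-sided commutator computation is the conceptual core rather than the technical bottleneck.
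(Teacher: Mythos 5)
Your necessity argument is correct and complete. The preliminary facts all hold (commutation of $z=\s_2^{m^{\alpha-1}}$ with $\s_1^2$ and $\s_3^2$ uses $t^2\equiv 0 \pmod{m^\alpha}$; the inversion of $z$ by $\s_1$ via normal-ordering $(\s_1^{-2}\s_2^{-1})^t$ is valid since $\binom{t}{2}\equiv 0 \pmod m$ and $\s_1^{2t}=\id$), and evaluating $\s_1^{-2}\s_3^2\s_1^2\s_3^{-2}$ once via Lemma~\ref{l:rels1s3}(a) and once via its dual form correctly forces $z^{2(k_1-k_2)}=\id$, hence $k_1=k_2$. This is a mild variant of the paper's argument, which instead evaluates the single word $\s_3\s_2^{-1}\s_1$ in two ways; both proofs rest on the same inversion facts, so the difference is cosmetic.

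The sufficiency half, however, has a genuine gap: you never construct the permutation representation, and that construction is the entire technical content of this direction. The paper must exhibit explicit formulas --- with $D=km^{\alpha-1}$ and $\ol{b}=-b+\frac{b(b-1)}{2}D$, it defines $(b,c)\pi_1=(\ol{b}+\frac{c}{2}D,\,-c)$ for $c$ even, and so on --- and then verify, by a sequence of intermediate identities, that these permutations satisfy every defining relation of $\G_1$. This cannot be deferred as bookkeeping: a priori $\G_1$ could collapse to a group of order smaller than $4m^{\alpha+2}$ (this is exactly what happens when $k_1\neq k_2$), and one cannot ``read off'' the action of right multiplication on the cosets $\langle\s_1\rangle\s_2^b\s_3^c$ from the relations without already knowing the group has full order, which is the very thing being proved; the formulas must be guessed and then checked to define a homomorphism. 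You correctly identify this as the bottleneck, but identifying it is not the same as crossing it. There is also a smaller error at the end: your core-freeness argument for $\langle\s_3\rangle$ is invalid. A normal subgroup of $\G_1$ contained in $\langle\s_3\rangle$ need not act trivially on the cosets of $\langle\s_1\rangle$; only normal subgroups inside the point stabilizer $\langle\s_1\rangle$ are forced into the kernel of that action (consider $\Z_2\times\Z_2$ acting on the cosets of one factor: the other factor is normal yet acts nontrivially). Core-freeness of $\langle\s_3\rangle$ should instead be deduced from its core-freeness inside the vertex-figure group $\langle\s_2,\s_3\rangle$, which holds for the atomic chiral polyhedron of type $\{m^{\alpha},2m\}$ by the results cited in the proof of Lemma~\ref{l:CoreIsNots2}, or from the dual permutation representation.
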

	
	\begin{proof}
	First let us show that $k_1 = k_2$.
	Note that
	\begin{align*}
	\s_1 \s_2^{k_1 m^{\alpha-1}} &= \ul{\s_1 \s_2^{-1}} \s_2^{1+k_1 m^{\alpha-1}} \\
	&= \s_2^{1-k_1 m^{\alpha-1}} \ul{\s_1^3 \s_2^{1+k_1 m^{\alpha-1}}} \\
	&= \s_2^{1-k_1 m^{\alpha-1}} \s_2^{-1} \s_1 \\
	&= \s_2^{-k_1 m^{\alpha-1}} \s_1 \\
	\end{align*}
	Thus, conjugation by $\s_1$ inverts $\s_2^{k_1 m^{\alpha-1}}$, and since $1 \leq k_1 \leq m-1$ and $m$
	is prime, this implies that conjugation by $\s_1$ inverts $\s_2^{m^{\alpha-1}}$. A similar argument
	shows that conjugation by $\s_3$ inverts $\s_2^{m^{\alpha-1}}$.
	Then, using Lemma \ref{l:rels1s3}(a) we see that
	\begin{align*}
	\s_3 \underline{\s_2^{-1} \s_1} &= \underline{\s_3 \s_1^3} \s_2^{1+k_1 m^{\alpha-1}} \\
	&= \s_2^{-1} \s_1^{-3} \underline{\s_2 \s_3 \s_2} \s_2^{k_1 m^{\alpha-1}} \\
	&= \s_2^{-1} \s_1^{-3} \underline{\s_3^{-1} \s_2^{k_1 m^{\alpha-1}}} \\
	&= \underline{\s_2^{-1} \s_1^{-3}} \s_2^{-k_1 m^{\alpha-1}} \s_3^{-1} \\
	&= \s_1^{-1} \s_2^{1-2k_1 m^{\alpha-1}} \s_3^{-1}.
	\end{align*}
	On the other hand,
	\begin{align*}
	\underline{\s_3 \s_2^{-1}} \s_1 &= \s_2^{1+k_2 m^{\alpha-1}} \underline{\s_3^3 \s_1} \\
	&= \s_2^{k_2 m^{\alpha-1}} \underline{\s_2 \s_1 \s_2} \s_3^{-3} \s_2^{-1} \\
	&= \underline{\s_2^{k_2 m^{\alpha-1}} \s_1^{-1}} \s_3^{-3} \s_2^{-1} \\
	&= \s_1^{-1} \s_2^{-k_2 m^{\alpha-1}} \underline{\s_3^{-3} \s_2^{-1}} \\
	&= \s_1^{-1} \s_2^{1 - 2k_2 m^{\alpha-1}} \s_3^{-1}. \\
	\end{align*}
	Thus $\s_2^{1-2k_1 m^{\alpha-1}} = \s_2^{1-2k_2 m^{\alpha-1}}$, and 
	since $k_1$ and $k_2$ are defined modulo $m$ (which is an odd prime), it follows that $k_1 = k_2$.

	Now, fix $k_1 = k_2 = k$. Let $D = k m^{\alpha-1}$. For $b \in \Z_{m^{\alpha}}$, we define $\overline{b} = \ds -b + \frac{b(b-1)}{2} D$. 
	Then we define permutations of $\Z_{m^{\alpha}} \times \Z_{2m}$ as follows:
	\begin{align*}
	(b, c) \pi_1 &= \begin{cases}
	(\overline{b} + \frac{c}{2} D, -c), & \textrm{ if $c$ is even}, \\
	(\overline{b} + 2 - \frac{c-1}{2} D, 2-c), & \textrm{ if $c$ is odd,}
	\end{cases} \\
	(b, c) \pi_2 &= \begin{cases}
	(b + 1 + \frac{c}{2} D, c), & \textrm{ if $c$ is even}, \\
	(b - 1 - \frac{c-1}{2} D, c-2), & \textrm{ if $c$ is odd,}
	\end{cases} \\
	(b, c) \pi_3 &= (b, c+1).
	\end{align*}
	We want to show that $\langle \pi_1, \pi_2, \pi_3 \rangle$ satisfies the defining relations of $\G_1$. Here are
	several intermediate calculations; the first three formulas help verify the fourth and fifth.
	\begin{enumerate}
	\item $\ol{b} D = -bD$
	\item $\ol{b + tD} = \ol{b} - tD$
	\item $\ol{\ol{b}} = b(1+D)$
	\item $(b, c) \pi_1 \pi_2 = (\ol{b}+1, -c)$
	\item $(b, c) \pi_1^2 = \begin{cases}
	(b(1+D) - cD, c) & \textrm{ if $c$ is even}, \\
	(b(1-D) + cD, c) & \textrm{ if $c$ is odd}.
	\end{cases}$
	\item $(b, c) \pi_2^m = \begin{cases}
	(b+m, c), & \textrm{ if $c$ is even}, \\
	(b-m, c), & \textrm{ if $c$ is odd}.
	\end{cases}	$
	\end{enumerate}
	From the above, it is straightforward to show that
	\[ (b, c) \pi_1^{2t} = \begin{cases}
	(b(1+tD) - tcD, c) & \textrm{ if $c$ is even}, \\
	(b(1-tD) + tcD, c) & \textrm{ if $c$ is odd}.
	\end{cases} \]
	Then $(b, c) \pi_1^{2m} = (b,c)$ since $mD \equiv 0$ (mod $m^{\alpha}$). We note that the action of $\pi_1$ on the second coordinate makes it clear that $\pi_1$ has even order, and for $1 \leq t \leq m-1$ we have $(1, 0) \pi_1^{2t} = (1+tD, 0) \neq (1, 0)$. So $\pi_1$ has order $2m$ (and not a proper divisor).
	
	From the sixth calculation above, it is clear that $\pi_2^{m^{\alpha}} = \id$. It's also clear that
	$\pi_3$ has order $2m$.

	Next, we want to show that $(\pi_1 \pi_2)^2 = (\pi_1 \pi_2 \pi_3)^2 = \id$. Since $(b, c) \pi_1 \pi_2 = 
	(\ol{b} + 1, -c)$, we have:
	\begin{align*}
	(b, c) (\pi_1 \pi_2)^2 &= (\ol{b} + 1, -c) \pi_1 \pi_2 \\
	&= (\ol{\ol{b} + 1} + 1, c), \\
	\end{align*}
	and
	\begin{align*}
	\ol{\ol{b} + 1} + 1 &= -(\ol{b}+1) + \frac{(\ol{b}+1)\ol{b}}{2}D + 1 \\
	&= -\ol{b} + \frac{(-b+1)(-b)}{2}D \\
	&= b - \frac{b(b-1)}{2}D + \frac{(-b+1)(-b)}{2}D \\
	&= b.
	\end{align*}
	So $(b, c) (\pi_1 \pi_2)^2 = (b, c)$. Essentially the same proof shows that $(b, c) (\pi_1 \pi_2 \pi_3)^2 = (b, c)$.
	Verifying that $(b, c) (\pi_2 \pi_3)^2 = (b,c)$ is straightforward.
	
	Finally, verifying that $\pi_2 \pi_1^2 = \pi_1^2 \pi_2^{1+D}$ and $\pi_3^2 \pi_2 = \pi_2^{1+D} \pi_3^2$ is
	relatively straightforward with the hints above. Lemma~\ref{lem:perm-rep} and Corollary \ref{cor:atomic-ch-ch}
	then finish the proof.
	\end{proof}
	
	\begin{theorem} \label{t:8-2b-8}
	The group $\G_2$ is the automorphism group of an atomic chiral $4$-polytope of type $\{8, 2^{\beta}, 8\}$ 
	for all four choices of $(\epsilon_1, \epsilon_2)$ and for every $\beta \geq 5$.
	\end{theorem}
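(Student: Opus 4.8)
The plan is to reprise the template of the proof of Theorem~\ref{t:2m-ma-2m}: exhibit an explicit faithful permutation representation of $\G_2$ on $\Z_{2^\beta} \times \Z_8$, check that the images of the generators satisfy the defining relations, and then invoke Lemma~\ref{lem:perm-rep} together with Corollary~\ref{cor:atomic-ch-ch}. Here $p = r = 8$ and $q = 2^\beta$, so following the strategy described before Lemma~\ref{lem:perm-rep} I encode the coset $\langle \s_1 \rangle \s_2^b \s_3^c$ as the pair $(b, c) \in \Z_{2^\beta} \times \Z_8$ and work out the right action of each $\s_i$ on these pairs.

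Before building the representation I would dispatch the question of whether $\epsilon_1$ and $\epsilon_2$ must be linked, as $k_1$ and $k_2$ were for $\G_1$. Running the analogue of the computation in Theorem~\ref{t:2m-ma-2m} shows that conjugation by $\s_1$ inverts $\s_2^{\epsilon_1 2^{\beta-2}}$ and conjugation by $\s_3$ inverts $\s_2^{\epsilon_2 2^{\beta-2}}$, and the resulting comparison reduces to the equality $\s_2^{\epsilon_1 2^{\beta-1}} = \s_2^{\epsilon_2 2^{\beta-1}}$. Since $\s_2$ has order $2^\beta$, the element $\s_2^{2^{\beta-1}}$ is an involution and $\s_2^{\pm 2^{\beta-1}} = \s_2^{2^{\beta-1}}$, so this equality holds automatically for both signs. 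This is exactly why, in contrast to $\G_1$, all four choices of $(\epsilon_1, \epsilon_2)$ are admissible, and I would record this observation at the outset.

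I would then construct the permutations explicitly. Setting $(b,c)\pi_3 = (b, c+1)$ handles the third generator, and $\pi_2$ is obtained by pushing $\s_2$ leftward past $\s_3^c$ via the vertex-figure relation $\s_3^2 \s_2 = \s_2^{1+\epsilon_2 2^{\beta-2}} \s_3^2$, which splits by the parity of $c$ and accumulates a correction of order $2^{\beta-2}$ governed by $\lfloor c/2 \rfloor$; likewise $\pi_1$ comes from moving $\s_1$ past $\s_2^b \s_3^c$ using the facet relation $\s_2 \s_1^2 = \s_1^2 \s_2^{1+\epsilon_1 2^{\beta-2}}$ together with $\s_1 \s_2 = \s_2^{-1}\s_1^{-1}$ and $\s_2 \s_3 = \s_3^{-1}\s_2^{-1}$, producing a parity split and a quadratic-in-$b$ correction analogous to the $\overline{b}$ of the previous proof. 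The main obstacle is the ensuing relation-checking: verifying $(\pi_1\pi_2)^2 = (\pi_2\pi_3)^2 = (\pi_1\pi_2\pi_3)^2 = \id$, that $\pi_1^8 = \pi_2^{2^\beta} = \pi_3^8 = \id$ with no smaller power trivial, and that both extra relations hold as permutations. This is delicate $2$-adic bookkeeping in which the accumulated multiples of $2^{\beta-2}$ must cancel modulo $2^\beta$ precisely when required; the hypothesis $\beta \geq 5$ guarantees that $2^{\beta-2}$ is a genuine proper $2$-power distinct from $2^{\beta-1}$, matching the regime in which $\calP(8, 2^\beta)_{\epsilon_1}$ and $\calP(2^\beta, 8)_{\epsilon_2}$ are atomic chiral polyhedra.

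Once the representation is confirmed, Lemma~\ref{lem:perm-rep} yields that $\G_2$ is the rotation group of a tight chiral $4$-polytope of type $\{8, 2^\beta, 8\}$, whose facets and vertex-figures are, by Lemma~\ref{l:flatflat}, the atomic polyhedra $\calP(8, 2^\beta)_{\epsilon_1}$ and $\calP(2^\beta, 8)_{\epsilon_2}$ of Table~\ref{tab:atomic-polyhedra}. To finish via Corollary~\ref{cor:atomic-ch-ch} it remains to show that $\langle \s_1 \rangle$ and $\langle \s_3 \rangle$ are core-free. But the representation of Lemma~\ref{lem:perm-rep} is precisely the action of $\G_2$ on the cosets of $\langle \s_1 \rangle$, and since this action is faithful its kernel---the core of $\langle \s_1 \rangle$---is trivial; core-freeness of $\langle \s_3 \rangle$ follows from the dual representation. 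This establishes atomicity for all four sign choices and every $\beta \geq 5$.
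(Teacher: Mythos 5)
Your proposal is correct and takes essentially the same route as the paper: the paper's proof sets $D = 2^{\beta-3}$, defines $\ol{b} = -b + b(b-1)D\epsilon_1$, writes down explicit permutations of $\Z_{2^{\beta}} \times \Z_8$ with exactly the parity splits and $\lfloor c/2 \rfloor$-type corrections you describe, lists the key intermediate congruences, and then (likewise leaving the final relation-checking to the reader) concludes via Lemma~\ref{lem:perm-rep} and Corollary~\ref{cor:atomic-ch-ch}. Your two supplements are sound and make explicit what the paper leaves implicit: the observation that the $\G_1$-style comparison only forces $\s_2^{\epsilon_1 2^{\beta-1}} = \s_2^{\epsilon_2 2^{\beta-1}}$, which holds automatically because $\s_2^{2^{\beta-1}}$ is an involution (so no analogue of the constraint $k_1 = k_2$ arises), and the core-freeness of $\langle \s_1 \rangle$ and $\langle \s_3 \rangle$ required by Corollary~\ref{cor:atomic-ch-ch}, obtained from faithfulness of the coset action established by Lemma~\ref{lem:perm-rep} together with duality, since the dual of $\G_2$ with parameters $(\epsilon_1, \epsilon_2)$ is again a group of $\G_2$ form.
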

	
	\begin{proof}
	Let $D = 2^{\beta-3}$.
	For $b \in \Z_{2^{\beta}}$, we define $\overline{b} = -b + b(b-1)D\epsilon_1$. Then we define permutations
	of $\Z_{2^{\beta}} \times \Z_8$ as follows:
	\begin{align*}
	(b, c) \pi_1 &= \begin{cases}
	(\overline{b} + D \epsilon_2 c, -c), & \textrm{ if $c$ is even}, \\
	(\overline{b} + 2 - D \epsilon_2 (c-1), 2-c), & \textrm{ if $c$ is odd,}
	\end{cases} \\
	(b, c) \pi_2 &= \begin{cases}
	(b + 1 + D \epsilon_2 c, c), & \textrm{ if $c$ is even}, \\
	(b - 1 - D \epsilon_2 (c-1), c-2), & \textrm{ if $c$ is odd,}
	\end{cases} \\
	(b, c) \pi_3 &= (b, c+1).
	\end{align*}
	The following intermediate calculations can be used to verify that there is a well-defined epimorphism from $\G_2$ to $\langle \pi_1, \pi_2, \pi_3 \rangle$ sending each $\s_i$ to $\pi_i$.
	\begin{enumerate}
	\item $4D \equiv 2^{\beta-1}$ and $8D \equiv 0$ (mod $2^{\beta}$)
	\item If $\beta = 5$, then $D^2 \equiv 2^{\beta-1}$ (mod $2^{\beta}$),
	and if $\beta \geq 6$ then $D^2 \equiv 0$ (mod $2^{\beta}$).
	\item $\ol{b} D = -bD$
	\item $\ol{b+2tD} = \ol{b} - 2tD$ for all $t$
	\item $\ol{\ol{b}} = b(1+2D \epsilon_1)$
	\item $(b, c) \pi_1 \pi_2 = (\ol{b}+1, -c)$
	\item $(b, c) \pi_1^2 = \begin{cases}
	(b(1+2D \epsilon_1) -2D \epsilon_2 c, c) & \textrm{ if $c$ is even}, \\
	(b(1-2D \epsilon_1) + 2D \epsilon_1 + 2D \epsilon_2(c-1), c) & \textrm{ if $c$ is odd}.
	\end{cases}$
	\item $(b, c) \pi_2^8 = \begin{cases}
	(b+8, c), & \textrm{ if $c$ is even}, \\
	(b-8, c), & \textrm{ if $c$ is odd}.
	\end{cases}$
	\end{enumerate}
	We omit the details of showing that $\langle \pi_1, \pi_2, \pi_3 \rangle$ satisfies the defining relations
	of $\G_2$. Lemma~\ref{lem:perm-rep} and Corollary \ref{cor:atomic-ch-ch}
	then finish the proof.
	\end{proof}

	\begin{theorem} \label{t:2b-2bp1-2b}
	The group $\G_3$ is the automorphism group of an atomic chiral $4$-polytope of type $\{2^{\beta-1}, 2^{\beta}, 2^{\beta-1}\}$ 
	for all four choices of $(\epsilon_1, \epsilon_2)$ and for every $\beta \geq 5$.
	\end{theorem}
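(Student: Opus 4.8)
The plan is to follow the template established in the proofs of Theorems~\ref{t:2m-ma-2m} and~\ref{t:8-2b-8}: exhibit an explicit permutation group $\langle \pi_1, \pi_2, \pi_3 \rangle$ acting on $\Z_{2^\beta} \times \Z_{2^{\beta-1}}$, where the pair $(b,c)$ encodes the coset $\langle \s_1 \rangle \s_2^b \s_3^c$, verify that $\s_i \mapsto \pi_i$ extends to an epimorphism out of $\G_3$, and then invoke Lemma~\ref{lem:perm-rep}. That lemma will force $|\G_3| = 2^{\beta-1} \cdot 2^\beta \cdot 2^{\beta-1}$ and give the intersection condition, so $\G_3$ is the rotation group of a tight chiral $4$-polytope of type $\{2^{\beta-1}, 2^\beta, 2^{\beta-1}\}$ with facets $\calP(2^{\beta-1}, 2^\beta)_{\epsilon_1}$ and vertex-figures $\calP(2^\beta, 2^{\beta-1})_{\epsilon_2}$. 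Atomicity will then be immediate from Corollary~\ref{cor:atomic-ch-ch}, since those facets and vertex-figures are atomic chiral polyhedra from Table~\ref{tab:atomic-polyhedra} and Proposition~\ref{prop:ch-ch-props} already guarantees that $\langle \s_1 \rangle$ and $\langle \s_3 \rangle$ are core-free.

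First I would set $D = 2^{\beta-3}$, so that $2D = 2^{\beta-2}$ is exactly the correction exponent appearing throughout the relations of $\G_3$, and define a ``bar'' operation $\overline{b}$ on $\Z_{2^\beta}$ that records the effect of conjugating $\s_2^b$ by $\s_1$ inside a facet of type $\{2^{\beta-1}, 2^\beta\}$. I would then take $(b,c)\pi_3 = (b, c+1)$, let $\pi_2$ act as a shift on the first coordinate whose exact form depends on the parity of $c$ (mirroring the even/odd split used before), and build $\pi_1$ from $\overline{b}$ together with $\epsilon_1$- and $\epsilon_2$-dependent terms. With these in hand I would record the short list of auxiliary identities ($\overline{b}\,D$, $\overline{b+tD}$, $\overline{\overline{b}}$, the formulas for $\pi_1\pi_2$, $\pi_1^2$, and $\pi_2^{2^{\beta-1}}$) that drive the verification, exactly as in the earlier two theorems.

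From those identities I would verify in turn that $\pi_1,\pi_2,\pi_3$ have orders precisely $2^{\beta-1}$, $2^\beta$, $2^{\beta-1}$ (in particular checking a specific point, say $(1,0)$, whose $\pi_1$-stabilizer is exactly $\langle\pi_1^{2^{\beta-1}}\rangle$, as required by hypothesis (b) of Lemma~\ref{lem:perm-rep}); that $(\pi_1\pi_2)^2 = (\pi_2\pi_3)^2 = (\pi_1\pi_2\pi_3)^2 = \id$; and finally that the four twisting relations defining $\G_3$ hold. The remaining three hypotheses of Lemma~\ref{lem:perm-rep} are read off directly from the definitions of the $\pi_i$. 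It is worth noting that the type $\{2^{\beta-1}, 2^\beta, 2^{\beta-1}\}$ is self-dual, with facet and vertex-figure interchanged by the duality $\s_i \mapsto \s_{n-i}^{-1}$ and $\epsilon_1 \leftrightarrow \epsilon_2$ (using Lemma~\ref{l:rels1s3}(a)); one can try to exploit this to deduce the two vertex-figure relations from the two facet relations, although, as in the self-dual cases $\G_1$ and $\G_2$, the permutation representation treats $\pi_1$ and $\pi_3$ asymmetrically and the vertex-figure relations may still need a direct check.

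The main obstacle is the bookkeeping forced by the facet type $\{2^{\beta-1}, 2^\beta\}$, which is the one entry of Table~\ref{tab:atomic-polyhedra} whose presentation does not reduce to a single clean commutation relation: the relation $\s_2^{-1}\s_1 = \s_1^{-1+2^{\beta-2}}\s_2^{-3+\epsilon_1 2^{\beta-2}}$ genuinely mixes the exponents of $\s_1$ and $\s_2$, so $\overline{b}$ cannot be a simple inversion-plus-quadratic-twist and must also absorb the central correction $\s_1^{2^{\beta-2}}$ (harmless in the coset $\langle\s_1\rangle$) and the order-$4$ element $\s_2^{2^{\beta-2}}$. I expect the delicate points to be (i) choosing $\overline{b}$ so that $\pi_1$ is consistently defined across the even- and odd-$c$ cases and has order exactly $2^{\beta-1}$ rather than $2^\beta$, and (ii) controlling the interplay between arithmetic modulo $2^\beta$ in the first coordinate and modulo $2^{\beta-1}$ in the second; as in Theorem~\ref{t:8-2b-8}, the identity $D^2 \equiv 2^{\beta-1}$ (mod $2^\beta$) when $\beta = 5$ versus $D^2 \equiv 0$ when $\beta \geq 6$ will again require isolating the base case $\beta = 5$ from the generic one.
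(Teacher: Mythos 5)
Your overall strategy is exactly the paper's: set $D = 2^{\beta-3}$, define a ``bar'' operation on $\Z_{2^{\beta}}$, build permutations $\pi_1, \pi_2, \pi_3$ of $\Z_{2^{\beta}} \times \Z_{2^{\beta-1}}$ with $(b,c)\pi_3 = (b,c+1)$ and parity-split formulas for $\pi_1, \pi_2$, verify the defining relations of $\G_3$, and finish with Lemma~\ref{lem:perm-rep} and Corollary~\ref{cor:atomic-ch-ch}. Your anticipated difficulties are also the right ones: in the paper's formulas the bar map is indeed not an inversion-plus-quadratic-twist but a linear map split on the parity of $b$ (namely $\overline{b} = b(1+D\epsilon_1)$ for $b$ even and $(b-1)(1-D\epsilon_1)-1$ for $b$ odd), the second coordinates of $\pi_1$ and $\pi_2$ are genuinely multiplied by $D \pm 1$ rather than merely negated or shifted, and the congruence $D^2 \equiv 2^{\beta-1} \pmod{2^{\beta}}$ for $\beta = 5$ versus $D^2 \equiv 0$ for $\beta \geq 6$ enters the verification exactly as in Theorem~\ref{t:8-2b-8}.

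There is, however, one genuine logical gap: your justification of atomicity is circular. You invoke Proposition~\ref{prop:ch-ch-props} to conclude that $\langle \s_1 \rangle$ and $\langle \s_3 \rangle$ are core-free, but that proposition takes atomicity as its \emph{hypothesis}; it says atomic polytopes have core-free $\langle \s_1 \rangle$ and $\langle \s_3 \rangle$, whereas Corollary~\ref{cor:atomic-ch-ch} needs core-freeness as an \emph{input} in order to conclude atomicity. As written, you assume what you are trying to prove. The gap is fixable with material already in the paper: any normal subgroup of $\G_3$ contained in $\langle \s_1 \rangle$ is in particular normal in the facet subgroup $\langle \s_1, \s_2 \rangle$, which (once Lemma~\ref{lem:perm-rep} is applied) is the group of the atomic polyhedron $\calP(2^{\beta-1}, 2^{\beta})_{\epsilon_1}$; in that group $\langle \s_2 \rangle$ is not core-free, since it contains the nontrivial normal chirality group $\langle \s_2^{2^{\beta-1}} \rangle$ of Table~\ref{tab:atomic-polyhedra}, so by \cite[Proposition 4.5]{tight-chiral-polyhedra} (quoted in the proof of Lemma~\ref{l:CoreIsNots2}) $\langle \s_1 \rangle$ is core-free there, hence in $\G_3$. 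The same argument inside the vertex-figure subgroup $\langle \s_2, \s_3 \rangle \cong \G(\calP(2^{\beta}, 2^{\beta-1})_{\epsilon_2})$ handles $\langle \s_3 \rangle$. (The paper's own proof leaves this verification implicit when citing Corollary~\ref{cor:atomic-ch-ch}, but it never makes the circular claim; your stated justification would not survive refereeing, so replace it with an argument along these lines.)
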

	
	\begin{proof}
	Let $D = 2^{\beta-3}$.
	For $b \in \Z_{2^{\beta}}$, we define 
	\[ \overline{b} = \begin{cases}
	b(1+ D \epsilon_1) & \textrm{ if $b$ is even}, \\
	(b-1)(1-D\epsilon_1) - 1 & \textrm{ if $b$ is odd.}
	\end{cases} \]
	Then we define permutations
	of $\Z_{2^{\beta}} \times \Z_{2^{\beta-1}}$ as follows:
	\begin{align*}
	(b, c) \pi_1 &= \begin{cases}
	(\overline{b} + 2c + D \epsilon_2 c, c(D+1)) & \textrm{ if $c$ is even,} \\
	(\overline{b} + 2c - D \epsilon_2 (c-1), c(D+1)-D) & \textrm{ if $c$ is odd,} \\
	\end{cases} \\
	(b, c) \pi_2 &= \begin{cases}
	(b + 1 - 2c + D \epsilon_2 c, c(D-1)) & \textrm{ if $c$ is even,} \\
	(b + 1 - 2c - D \epsilon_2 (c-1), c(D-1)-D) & \textrm{ if $c$ is odd,} \\
	\end{cases} \\
  	(b, c) \pi_3 &= (b, c+1).
	\end{align*}

	The following intermediate calculations can be used to verify that there is a well-defined epimorphism from $\G_3$ to $\langle \pi_1, \pi_2, \pi_3 \rangle$ sending each $\s_i$ to $\pi_i$.
	\begin{enumerate}
	\item $4D \equiv 2^{\beta-1}$ and $8D \equiv 0$ (mod $2^{\beta}$)
	\item If $\beta = 5$, then $D^2 \equiv 2^{\beta-1}$ (mod $2^{\beta}$),
	and if $\beta \geq 6$ then $D^2 \equiv 0$ (mod $2^{\beta}$).
	\item $(b, c) \pi_1 \pi_2 = (\ol{b}+1, -c)$
	\item $\ol{\ol{b}+1} = b - 1$
	\item $(b, c) \pi_2^4 = \begin{cases}
	(b+4, c) & \textrm{ if $c$ is even } \\
	(b+4 + 4D, c) & \textrm{ if $c$ is odd}.
	\end{cases}$
	\item $(b, c) \pi_2^8 = (b+8, c)$.
	\item $(b, c) \pi_1^2 = \begin{cases}
	(b(1+2D \epsilon_1) + 2c(2+D \epsilon_1), c) & \textrm{ if $b$ is even}, \\
	(b(1-2D \epsilon_1) + 2c(2-D \epsilon_1) + 4D - 4, c) & \textrm{ if $b$ is odd.} \\	
	\end{cases}$
	\item $(b, c) \pi_1^4 = \begin{cases}
	(b + c(4D+8), c) & \textrm{ if $b$ is even}, \\
	(b + (c-1)(4D+8), c) & \textrm{ if $b$ is odd.}
	\end{cases}$
	\item $(b, c) \pi_1^{2^{\beta-2}} = (b + 4D(b+c), c) = 
	\begin{cases}
	(b, c) & \textrm{ if $b$ and $c$ have the same parity}, \\
	(b + 4D, c) & \textrm{ if $b$ and $c$ have opposite parity.}
	\end{cases}$
	\end{enumerate}

	Here we give more details on how to verify that $\langle \pi_1, \pi_2, \pi_3 \rangle$ satisfies the extra relations from Table~\ref{tab:atomic-cc}.
	To verify the relation $\pi_2^{-1} \pi_1 = \pi_1^{-1+2^{\beta-2}} \pi_2^{-3 + \epsilon_1 2^{\beta-2}}$,
	we rewrite it:
	\begin{align*}
	\pi_2^{-1} \pi_1 &= \pi_1^{-1+2^{\beta-2}} \pi_2^{-3 + \epsilon_1 2^{\beta-2}} & & \text{Multiply by $\pi_1^{-1}$ on the left} \\
	\pi_1^{-1} \pi_2^{-1} \pi_1 &= \pi_1^{-2+2^{\beta-2}} \pi_2^{-3 + \epsilon_1 2^{\beta-2}} & & \text{$\pi_1^{-1} \pi_2^{-1} = \pi_2 \pi_1$} \\
	\pi_2 \pi_1^2 &= \pi_1^{-2+2^{\beta-2}} \pi_2^{-3 + \epsilon_1 2^{\beta-2}} & & \text{Multiply by $\pi_1^2$ on the left and $\pi_2^4$ on the right} \\
	\pi_1^2 \pi_2 \pi_1^2 \pi_2^4&= \pi_1^{2^{\beta-2}} \pi_2^{1 + \epsilon_1 2^{\beta-2}} & & 
	\end{align*}
	Then we can show that both sides send $(b,c)$ to
	\[ \begin{cases}
	(b + 4Db + 2D\epsilon_1 + 1 - 2c + D\epsilon_2 c, c(D-1)) & \textrm{ if $c$ is even,} \\
	(b + 4Db - 2D\epsilon_1 + 1 - 2c - D\epsilon_2 (c-1), c(D-1) - D) & \textrm{ if $c$ is odd.}
	\end{cases} \]	
	After showing that that relation holds, we can use it to rewrite the second relation into a form that is easier to verify:
	\begin{align*}
	\pi_2 \pi_1^{-1} &= \pi_1^{1+2^{\beta-2}} \pi_2^{3+\epsilon_1 2^{\beta-2}} & & \text{Multiply by $\pi_1^{-1}$ on the left} \\
	\pi_1^{-1} \pi_2 \pi_1^{-1} &= \pi_1^{2^{\beta-2}} \pi_2^{3+\epsilon_1 2^{\beta-2}} & & \text{Rewrite using first relation} \\
	\pi_2^{3-\epsilon_1 2^{\beta-2}} \pi_1^{-2^{\beta-2}} &= \pi_1^{2^{\beta-2}} \pi_2^{3+\epsilon_1 2^{\beta-2}} & & \text{Multiply by $\pi_2$ on the left and right} \\
	\pi_2^{4-\epsilon_1 2^{\beta-2}} \pi_1^{-2^{\beta-2}} \pi_2 &= \pi_2 \pi_1^{2^{\beta-2}} \pi_2^{4+\epsilon_1 2^{\beta-2}} 
	\end{align*}
	Then we can show that both sides send $(b,c)$ to
	\[ \begin{cases}
	(b + 4Db - 2D\epsilon_1 + 5 - 2c + D\epsilon_2 c, c(D-1)) & \textrm{ if $c$ is even,} \\
	(b + 4Db - 2D\epsilon_1 + 5 - 2c - D\epsilon_2 (c-1), c(D-1) - D) & \textrm{ if $c$ is odd.}
	\end{cases} \]	
	To verify the third relation, we rewrite it as $\pi_2 \pi_3^{-1} \pi_2 = \pi_2^{4+2 \epsilon_2 D} \pi_3^{1+2D}$.
	Then we can show that both sides send $(b,c)$ to
	\[ \begin{cases}
	(b+4+2 \epsilon_2 D, c+1+2D) & \textrm{ if $c$ is even,} \\
	(b+4-2 \epsilon_2 D, c+1+2D) & \textrm{ if $c$ is odd.}
	\end{cases} \]
	To verify the fourth relation, we multiply both sides by $\pi_2^4$ on the left and $\pi_2$ on the right to obtain
	$\pi_2^4 \pi_3 = \pi_2^{1+2\epsilon_2 D} \pi_3^{-1+2D} \pi_2$.
	Then we can show that both sides send $(b,c)$ to
	\[ \begin{cases}
	(b+4, c+1) & \textrm{ if $c$ is even,} \\
	(b+4+4D, c+1) & \textrm{ if $c$ is odd.}
	\end{cases} \]
	Lemma~\ref{lem:perm-rep} and Corollary \ref{cor:atomic-ch-ch}
	then finish the proof.	
	\end{proof}

	\begin{theorem} \label{t:8-2bp1-2b}
	The group $\G_4$ is the automorphism group of an atomic chiral $4$-polytope of type $\{8, 2^{\beta}, 2^{\beta-1}\}$ 
	for all four choices of $(\epsilon_1, \epsilon_2)$ and for every $\beta \geq 5$.
	\end{theorem}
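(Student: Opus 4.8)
The plan is to follow the template of Theorems~\ref{t:8-2b-8} and~\ref{t:2b-2bp1-2b}: exhibit an explicit permutation representation of $\G_4$ on $\Z_{2^\beta} \times \Z_{2^{\beta-1}}$ and then invoke Lemma~\ref{lem:perm-rep} together with Corollary~\ref{cor:atomic-ch-ch}. The key observation is that $\G_4$ is a genuine hybrid of the two previous groups: its facet relation $\s_2 \s_1^2 = \s_1^2 \s_2^{1+\epsilon_1 2^{\beta-2}}$ is exactly that of the $\{8, 2^\beta\}$ facet of $\G_2$, while its two vertex-figure relations are identical to those of the $\{2^\beta, 2^{\beta-1}\}$ vertex-figure of $\G_3$. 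Accordingly, the $b$-coordinate (tracking $\s_2$) should be governed by the facet, so I would borrow the polynomial $\overline{b} = -b + b(b-1)D\epsilon_1$ from the proof of Theorem~\ref{t:8-2b-8}; whereas the $c$-coordinate (tracking $\s_3$) and its coupling into $b$ should be governed by the vertex-figure, so I would borrow the multipliers $c(D\pm1)$ and the $\pm 2c$ cross-terms from the proof of Theorem~\ref{t:2b-2bp1-2b}.

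Concretely, with $D = 2^{\beta-3}$ and $\overline{b} = -b + b(b-1)D\epsilon_1$, I would define permutations of $\Z_{2^\beta}\times\Z_{2^{\beta-1}}$ along the following lines,
\[ (b,c)\pi_1 = \begin{cases} (\overline{b} + 2c + D\epsilon_2 c,\; c(D+1)) & c \text{ even}, \\ (\overline{b} + 2c - D\epsilon_2(c-1),\; c(D+1)-D) & c \text{ odd}, \end{cases} \]
\[ (b,c)\pi_2 = \begin{cases} (b + 1 - 2c + D\epsilon_2 c,\; c(D-1)) & c \text{ even}, \\ (b + 1 - 2c - D\epsilon_2(c-1),\; c(D-1)-D) & c \text{ odd}, \end{cases} \]
and $(b,c)\pi_3 = (b, c+1)$. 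I would first record the same arithmetic facts used in the earlier proofs, namely $4D \equiv 2^{\beta-1}$ and $8D \equiv 0 \pmod{2^\beta}$, the value of $D^2 \bmod 2^\beta$ split into the cases $\beta = 5$ (where $D^2 \equiv 2^{\beta-1}$) and $\beta \geq 6$ (where $D^2 \equiv 0$), together with $\overline{b}D = -bD$, $\overline{b+2tD} = \overline{b} - 2tD$, and $\overline{\overline{b}} = b(1+2D\epsilon_1)$. These are precisely the identities that make the facet relation collapse, and one checks (as a preliminary) that the cross-terms conspire so that the clean intermediate formula $(b,c)\pi_1\pi_2 = (\overline{b}+1, -c)$ survives the hybridization, with $\overline{\overline{b}+1} = b-1$ as before.

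With these in hand I would verify that $\s_i \mapsto \pi_i$ respects every defining relation of $\G_4$ from Table~\ref{tab:atomic-cc}. The facet relation is checked essentially as in Theorem~\ref{t:8-2b-8}, since it leaves the $c$-coordinate untouched; the two vertex-figure relations are checked as in Theorem~\ref{t:2b-2bp1-2b}, first rewriting each into a more symmetric form by multiplying through by suitable powers of $\pi_2$ and then confirming that both sides send $(b,c)$ to the same pair. The Coxeter-type relations $(\pi_1\pi_2)^2 = (\pi_2\pi_3)^2 = (\pi_1\pi_2\pi_3)^2 = \id$ follow from the intermediate formula for $\pi_1\pi_2$ and the identity $\overline{\overline{b}+1} = b-1$. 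Finally I would confirm the three hypotheses of Lemma~\ref{lem:perm-rep}: $\pi_1$ fixes $(0,0)$; $\pi_2$ and $\pi_3$ act as the prescribed increments on the two coordinates (immediate from the $c$-even formula at $c=0$ and from the definition of $\pi_3$); and $\pi_1$ has order exactly $8 = p$, which I would establish by computing $\pi_1^2$, $\pi_1^4$, $\pi_1^8$, noting that the first coordinate behaves as in $\G_2$ (order $8$) while the second-coordinate multiplier $D+1$ has multiplicative order $4$ modulo $2^{\beta-1}$, so the orders combine to $8$. Lemma~\ref{lem:perm-rep} then gives $\G_4 \cong \langle \pi_1,\pi_2,\pi_3\rangle$ as the rotation group of a tight chiral $4$-polytope of type $\{8,2^\beta,2^{\beta-1}\}$, and Corollary~\ref{cor:atomic-ch-ch} upgrades this to atomicity, since the facet and vertex-figure are the atomic polyhedra of Table~\ref{tab:atomic-polyhedra} and $\langle \s_1 \rangle$, $\langle \s_3 \rangle$ are core-free.

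The main obstacle I anticipate lies in the relation-checking of the third step, precisely because $\G_4$ fuses two constructions tuned to different sections. One must ensure that the $b$-coordinate formulas calibrated to the $\{8,2^\beta\}$ facet remain compatible with the $c$-coordinate formulas calibrated to the $\{2^\beta, 2^{\beta-1}\}$ vertex-figure, so that the facet relation does not perturb the vertex-figure relations and conversely. The delicate point is that the cross-terms $2c$, $D\epsilon_2 c$, and the multipliers $c(D\pm1)$ must cancel correctly; verifying this rests on the bookkeeping of $4D$ and $D^2$ modulo $2^\beta$ and modulo $2^{\beta-1}$, with the case $\beta = 5$ requiring separate attention because there $D^2 \equiv 2^{\beta-1}$ rather than $0$.
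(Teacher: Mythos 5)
Your proposal is correct and is essentially the paper's own proof: the paper uses exactly the same hybrid permutation representation (the $\pi_i$ of Theorem~\ref{t:2b-2bp1-2b} with $\overline{b} = -b + b(b-1)D\epsilon_1$ taken from Theorem~\ref{t:8-2b-8}), observes as you do that only the relations involving $\s_1$ need verification since $\pi_2, \pi_3$ and the $\s_2,\s_3$-relations are unchanged from $\G_3$, records the same intermediate identities (including $(b,c)\pi_1\pi_2 = (\ol{b}+1,-c)$), and concludes with Lemma~\ref{lem:perm-rep} and Corollary~\ref{cor:atomic-ch-ch}. The one inaccuracy is your heuristic for the order of $\pi_1$: in fact $\pi_1^2$ already fixes the second coordinate (the multiplicative order of $D+1$ modulo $2^{\beta-1}$ plays no role, because the parity-dependent formulas make the second coordinate return to $c$ after two steps), so the order $8$ is detected entirely on the first coordinate, e.g.\ $(1,0)\pi_1^{2t} = (1+2tD\epsilon_1,0) \neq (1,0)$ for $t \in \{1,2\}$.
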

	
	\begin{proof}
	We use the same permutation representation as
	Theorem~\ref{t:2b-2bp1-2b}, except that we now define $\overline{b} = -b + b(b-1) D \epsilon_1$ as in Theorem~\ref{t:8-2b-8}.
	Note that since the relations of $\G_4$ that involve only $\s_2$ and $\s_3$ are the same as the relations in $\G_3$,
	and the permutation representation for those two elements is the same, the only relations that need to be verified
	are those that include $\s_1$. Here are some intermediate calculations:
	\begin{enumerate}
	\item $(b, c) \pi_1^2 = \begin{cases}
	(b(1+2D \epsilon_1), c) & \textrm{ if $c$ is even}, \\
	(b(1-2D \epsilon_1) + 2D\epsilon_1, c) & \textrm{ if $c$ is odd.} \\	
	\end{cases}$
	\item $(b, c) \pi_1^4 = \begin{cases}
	(b(1+4D \epsilon_1), c) & \textrm{ if $c$ is even}, \\
	(b(1-4D \epsilon_1) - 4D, c) & \textrm{ if $c$ is odd.} \\	
	\end{cases}$
	\item $(b, c) \pi_1 \pi_2 = (\ol{b}+1, -c)$. (Note that since this calculation and the definition of $\ol{b}$ is the same as in
	Theorem~\ref{t:8-2b-8}, it follows at once that $\pi_1 \pi_2$ and $\pi_1 \pi_2 \pi_3$ have order 2.)
	\item $(b, c) \pi_2^8 = (b+8, c)$. (This follows from the same calculation in Theorem~\ref{t:2b-2bp1-2b}.)
	\end{enumerate}
	Lemma~\ref{lem:perm-rep} and Corollary \ref{cor:atomic-ch-ch}
	then finish the proof.	
	\end{proof}
	
	Table~\ref{tab:atomic-4} includes information on all of the atomic chiral 4-polytopes with chiral facets and vertex-figures.

\section{Atomic chiral $4$-polytopes with regular facets and chiral vertex-figures}\label{s:ChiralRegular}

Now we switch our attention to atomic chiral $4$-polytopes with regular facets and chiral vertex-figures. The goal is to show that the vertex-figures are atomic chiral polyhedra, and then use the classifications in Section \ref{s:polyhedra} to find all atomic chiral $4$-polytopes with regular facets.

\subsection{The structure of atomic chiral $4$-polytopes with regular facets and chiral vertex-figures}

As in the previous section, we start by studying normal subgroups of the rotation group of atomic chiral $4$-polytopes, in this case with regular facets.

\begin{lemma}\label{l:MidEven}
Let $\calP$ be an atomic chiral $4$-polytope with regular facets, chiral vertex-figures and type $\{p,q,r\}$. If $\Gamma(\calP)=\langle \s_1,\s_2, \s_3 \rangle$ then
\begin{itemize}
 \item[(a)] $\langle \s_1 \rangle$ is core-free,
 \item[(b)] $q$ is even,
 \item[(c)] $p<q$.
\end{itemize}
\end{lemma}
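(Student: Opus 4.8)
The plan is to prove (a) first and then extract (b) and (c) simultaneously from a single application of the classification of tight orientably regular polyhedra with no multiple edges (Theorem~\ref{t:TightRegularHedra}). For part (a) I would argue by contradiction, modelled on Proposition~\ref{prop:ch-ch-props}(a) but with the degenerate case adapted to the present hypotheses. Suppose $\langle \s_1 \rangle$ is not core-free, so its core $N = \langle \s_1^a \rangle$ is a nontrivial normal subgroup of $\G(\calP)$. If $N = \langle \s_1 \rangle$, then $\langle \s_1 \rangle \triangleleft \langle \s_1, \s_2 \rangle$, and Lemma~\ref{l:NormalThen2} applied to the (tight, orientably regular) facet forces $q = 2$; but then the vertex-figure has type $\{2, r\}$, whose dual $\{r, 2\}$ is regular by Lemma~\ref{l:SchType2}, so the vertex-figure would be regular, contradicting the hypothesis of chiral vertex-figures. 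If instead $N \subsetneq \langle \s_1 \rangle$, then $N$ contains no generator, so Proposition~\ref{prop:vfig-quo} makes $\calP/N$ a tight rotary polytope; since $N \cap \langle \s_2, \s_3 \rangle$ is trivial by the intersection condition, the vertex-figure is preserved and stays chiral, so $\calP/N$ is a proper chiral quotient of $\calP$, contradicting atomicity.

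Before attacking (b) and (c) I would upgrade part (a) to the statement that $\langle \s_1 \rangle$ is core-free inside the facet group $\langle \s_1, \s_2 \rangle$. Let $C$ be the core of $\langle \s_1 \rangle$ in $\langle \s_1, \s_2 \rangle$. Since $C \le \langle \s_1 \rangle$ it is normalized by $\s_1$, and it is normal in $\langle \s_1, \s_2 \rangle$ by construction, so it is normalized by $\s_2$; Lemma~\ref{l:rels1s3}(b) then shows it is also normalized by $\s_3$, whence $C \triangleleft \G(\calP)$. Part (a) forces $C = \id$.

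For (b) and (c), the facet is a tight orientably regular polyhedron of type $\{p, q\}$, so its dual is a tight orientably regular polyhedron of type $\{q, p\}$ whose second standard generator is $\s_1^{-1}$. By \cite[Proposition 4.6]{tight3}, the previous paragraph says precisely that this dual facet has no multiple edges, so its type $\{q, p\}$ must appear in Theorem~\ref{t:TightRegularHedra}. I would first exclude type~(a) of that theorem, i.e.\ $p = 2$: if $p = 2$, the dual polytope $\calP^\delta$ has type $\{r, q, 2\}$ with last entry $2$, so Lemma~\ref{l:NoChiral2} makes $\calP^\delta$, and hence $\calP$, non-chiral, a contradiction. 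Thus $\{q, p\}$ is of type~(b) or~(c). In case~(b), $\{q, p\} = \{2t, t\}$ with $t$ odd, giving $q = 2t$ even and $p = t < q$; in case~(c), $q$ carries the factor $2^{\alpha_1}$ with $\alpha_1 > 0$ and $p$ is a proper even divisor of $q$, again giving $q$ even and $p < q$. This establishes (b) and (c) at once.

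The contradiction arguments themselves are routine; the two points that require care are the degenerate sub-case $N = \langle \s_1 \rangle$ in part (a), which -- unlike the chiral-facets setting of Proposition~\ref{prop:ch-ch-props} -- cannot be closed by noting the facets become regular and must instead be pushed onto the vertex-figure, and the bookkeeping of the duality: it is core-freeness of $\langle \s_1 \rangle$, not of $\langle \s_2 \rangle$, that corresponds to the \emph{dual} facet having no multiple edges, so Theorem~\ref{t:TightRegularHedra} must be read off for the type $\{q, p\}$ rather than $\{p, q\}$.
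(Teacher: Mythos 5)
Your proof is correct, and for parts (a) and (b) it follows the same strategy as the paper: a nontrivial core of $\langle \s_1 \rangle$ is quotiented out via Proposition~\ref{prop:vfig-quo}, the surviving chiral vertex-figures contradict atomicity, and then core-freeness is fed through \cite[Proposition 4.6]{tight3} into Theorem~\ref{t:TightRegularHedra}, with the one odd-$q$ possibility (facets of type $\{2,q\}$) killed by the dual form of Lemma~\ref{l:NoChiral2}. There are three real differences worth recording. First, for part (c) the paper argues differently: it invokes Proposition~\ref{prop:NonTrivialCore}, which says that $p \ge q$ would force $\langle \s_1 \rangle$ to have a nontrivial core, contradicting part (a); you instead read $p < q$ off the same classification used for (b), since types (b) and (c) of Theorem~\ref{t:TightRegularHedra} both have second entry strictly smaller than the first. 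Both are valid; the paper's route does not depend on the fine structure of the classification, while yours gets (b) and (c) in a single stroke. Second, you explicitly handle the degenerate case $N = \langle \s_1 \rangle$ (via Lemma~\ref{l:NormalThen2}, then Lemma~\ref{l:SchType2} applied to the dual of the vertex-figure), which the paper's one-line proof of (a) silently skips: Proposition~\ref{prop:vfig-quo} requires that $N$ contain no generator, so this case genuinely needs separate treatment, and your observation that the contradiction must be pushed onto the vertex-figure -- since the facets are already regular, unlike in Proposition~\ref{prop:ch-ch-props} -- is exactly the right fix. Third, your explicit upgrade of core-freeness from $\G(\calP)$ to the facet subgroup $\langle \s_1, \s_2 \rangle$ via Lemma~\ref{l:rels1s3}(b) is a step the paper leaves implicit but which is needed before \cite[Proposition 4.6]{tight3} and Theorem~\ref{t:TightRegularHedra} can legitimately be applied to the facet. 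In short, your write-up is, if anything, more complete than the printed proof.
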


\begin{proof}
If $\langle \s_1^k \rangle \triangleleft \Gamma(\calP)$ then by Proposition \ref{prop:vfig-quo}, $\calP/\langle \s_1^k \rangle$ is a tight polytope with vertex-figures isomorphic to those of $\calP$. The chirality of the vertex-figures of $\calP$ contradicts atomicity, proving part (a).

To prove part (b), assume to the contrary that $q$ is odd. Since $\langle \s_1 \rangle$ is core-free, the type of the facets of $\calP$ must be the dual of one of the types listed in Theorem \ref{t:TightRegularHedra}. The only possibility for $q$ being odd is if the facets of $\calP$ have type $\{2,q\}$. This contradicts Lemma \ref{l:NoChiral2}.

Part (c) follows from part (1) and Proposition \ref{prop:NonTrivialCore}.
\end{proof}

\begin{lemma}\label{l:Nos3Quo}
Let $\calP$ be an atomic chiral $4$-polytope with regular facets and chiral vertex-figures. If $\calP$ has type $\{p,q,r\}$ then the vertex-figures of $\calP$ do not cover a chiral polyhedron with type $\{q,r'\}$ for $r'<r$.
\end{lemma}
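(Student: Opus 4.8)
The claim is that if $\calP$ is an atomic chiral $4$-polytope with regular facets, chiral vertex-figures, and type $\{p,q,r\}$, then the vertex-figure $\calQ$ (of type $\{q,r\}$) does not cover a chiral polyhedron of type $\{q,r'\}$ with $r'<r$. The plan is to argue by contradiction: assume such a covering exists, exhibit a corresponding normal subgroup of $\Gamma(\calP)$, form the quotient, and show it is a \emph{chiral} tight $4$-polytope properly covered by $\calP$, contradicting atomicity of $\calP$.

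**Building the candidate normal subgroup.**
If the vertex-figure $\calQ$ covers a chiral polyhedron of type $\{q,r'\}$ with $r'<r$, then by Lemma~\ref{l:subgroups} applied inside $\langle \s_2,\s_3\rangle = \Gamma^+(\calQ)$ there is a normal subgroup of $\langle \s_2,\s_3\rangle$ of the form $\langle \s_2^{q''}\rangle\langle \s_3^{r'}\rangle$ with the quotient of type $\{q,r'\}$ (since only the last entry shrinks, one expects $q''=q$, i.e. the relevant subgroup is essentially $\langle \s_3^{r'}\rangle$ together with whatever portion of $\langle \s_2\rangle$ is forced). First I would pin down exactly which subgroup $N$ of $\langle \s_2,\s_3\rangle$ realizes this quotient, keeping $r'<r$ so that $\s_3\notin N$. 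The key structural step is then to promote $N$ to a normal subgroup $K$ of the whole group $\Gamma(\calP) = \langle \s_1,\s_2,\s_3\rangle$. For this I would check invariance of $K$ (or a suitable enlargement of it) under conjugation by $\s_1$, using Lemma~\ref{l:rels1s3}(a) to control how $\s_1$ conjugates powers of $\s_3$ — this is the same computational device used in the proof of Theorem~\ref{t:ch-ch-atomics} to show a product subgroup is normal. Since the facets of $\calP$ are regular, I expect the interaction between $\s_1$ and the $\langle \s_2,\s_3\rangle$-part to be more rigid, which should make the normality verification tractable.

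**Passing to the quotient and deriving the contradiction.**
Once $K\triangleleft\Gamma(\calP)$ is established with $\s_1,\s_2,\s_3\notin K$, Lemma~\ref{l:s1NotNormal} together with Proposition~\ref{prop:vfig-quo} shows that $\calP/K$ is a tight orientable rotary $4$-polytope. Because the $\langle \s_2,\s_3\rangle$-part of $K$ is chosen so that the vertex-figure of $\calP/K$ is exactly the chiral polyhedron of type $\{q,r'\}$, the quotient $\calP/K$ inherits chiral vertex-figures and is therefore chiral (a tight rotary polytope with a chiral section is chiral). But $r'<r$ forces $\calP/K$ to be a \emph{proper} quotient of $\calP$, contradicting the atomicity of $\calP$.

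**Main obstacle.**
The delicate point is the normality of $K$ under conjugation by $\s_1$: unlike the chiral-facet case of Theorem~\ref{t:ch-ch-atomics}, here the first two generators generate a regular facet, so the commutation relations between $\s_1$ and $\s_2$ are governed by Theorem~\ref{t:TightRegularHedra} rather than by Table~\ref{tab:atomic-polyhedra}, and I must verify that conjugating $\s_3^{r'}$ (and any forced powers of $\s_2$) by $\s_1$ lands back in $K$. Controlling $\s_3 \s_1^k \s_3^{-1} = \s_2^{-1}\s_1^{-k}\s_2$ from Lemma~\ref{l:rels1s3}(a), and its consequences for how $\s_1$ moves the $\langle \s_3\rangle$-component, is where the real work lies; everything after that is a routine invocation of the quotient machinery already developed in Section~\ref{s:ChiralChiral}.
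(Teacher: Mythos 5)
Your proposal is correct and follows essentially the same route as the paper: the kernel of the covering is exactly $\langle\s_3^{r'}\rangle$ (by Lemma~\ref{l:subgroups}, since the quotient has type $\{q,r'\}$ so the $\s_2$-part of the kernel is trivial), it is promoted to a normal subgroup of all of $\Gamma(\calP)$ via Lemma~\ref{l:rels1s3}, and Proposition~\ref{prop:vfig-quo} together with the chirality of the quotient's vertex-figures contradicts atomicity. The step you flag as the ``main obstacle'' is in fact immediate and needs no input from Theorem~\ref{t:TightRegularHedra}: the dual of Lemma~\ref{l:rels1s3}(b) states that a subgroup of $\langle\s_3\rangle$ is normalized by $\s_1$ if and only if it is normalized by $\s_2$, and normalization by $\s_2$ is given since $\langle\s_3^{r'}\rangle$ is normal in $\langle\s_2,\s_3\rangle$.
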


\begin{proof}
Assume to the contrary that the vertex-figures of $\calP$ cover a chiral polyhedron $\calQ$ with type $\{q,r'\}$ with $r'<r$. Then 
$\langle \s_3^{r'} \rangle \triangleleft \langle \s_2,\s_3 \rangle$, and $\Gamma(\calQ)=\langle \s_2,\s_3 \rangle / \langle \s_3^{r'}\rangle$. In particular, $\langle \s_3^{r'} \rangle$ is normalized by conjugation by $\s_2$. The dual version of Lemma \ref{l:rels1s3} implies that it is also normalized by conjugation by $\s_1$ and hence $\langle \s_3^{r'} \rangle \triangleleft \Gamma(\calP)$.

By Proposition \ref{prop:vfig-quo}, $\calP/\langle \s_3^{r'} \rangle$ is a $4$-polytope. Furthermore, its vertex-figures are isomorphic to $\calQ$, which is chiral. This contradicts the atomicity of $\calP$. 
\end{proof}

\begin{lemma}\label{l:Nos3Quo2}
Let $\calP$ be an atomic chiral $4$-polytope with type $\{p,q,r\}$, regular facets and chiral vertex-figures. Then the vertex-figures of $\calP$ do not cover a chiral polyhedron with type $\{q',r\}$ with either $q'$ an even divisor of $q$ or $q' < q/2$.
\end{lemma}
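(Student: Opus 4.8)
The plan is to argue by contradiction, promoting the relevant normal subgroup of the vertex-figure group to a normal subgroup of $\G(\calP)$ and so contradicting atomicity. Suppose the vertex-figure covers a chiral polyhedron $\calQ$ of type $\{q',r\}$ with $q'$ a proper even divisor of $q$, or with $q'<q/2$. The vertex-figure group is $\langle\s_2,\s_3\rangle$, and since the cover keeps the order of $\s_3$ equal to $r$, Lemma~\ref{l:subgroups} shows the covering is realized by a normal subgroup of the form $\langle\s_2^{q'}\rangle\triangleleft\langle\s_2,\s_3\rangle$, with $\G(\calQ)=\langle\s_2,\s_3\rangle/\langle\s_2^{q'}\rangle$. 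The goal is to find a suitable even exponent $q''<q$ with $\langle\s_2^{q''}\rangle\triangleleft\G(\calP)$, so that Proposition~\ref{prop:vfig-quo} produces a proper tight quotient of $\calP$ whose vertex-figures still cover $\calQ$; such a quotient will be chiral, contradicting atomicity. The subtlety, compared with Lemma~\ref{l:Nos3Quo}, is that $\s_2$ is the \emph{middle} generator, so Lemma~\ref{l:rels1s3} does not transfer conjugation-invariance for free; invariance under conjugation by $\s_1$ must be controlled by hand, and this is where regularity of the facets enters.

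The key input is that the facets are regular. The base facet is a tight regular polyhedron of type $\{p,q\}$ with rotation group $\langle\s_1,\s_2\rangle$, and by Lemma~\ref{l:MidEven}(a) the subgroup $\langle\s_1\rangle$ is core-free. The dual of Lemma~\ref{l:sigma2IsNormal} (which applies to a tight regular polyhedron whose \emph{first} generator is core-free) then yields $\langle\s_2^2\rangle\triangleleft\langle\s_1,\s_2\rangle$, so that $\s_1\s_2^2\s_1^{-1}=\s_2^{2s_0}$ for some integer $s_0$. Consequently, for every $j$,
\[ \s_1\s_2^{2j}\s_1^{-1}=(\s_1\s_2^2\s_1^{-1})^{j}=\s_2^{2s_0 j}=(\s_2^{2j})^{s_0}, \]
so that \emph{every} subgroup generated by an even power of $\s_2$ is invariant under conjugation by $\s_1$.

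Now set $q''=q'$ if $q'$ is even and $q''=2q'$ if $q'$ is odd. In either case $q''$ is even, and the hypotheses guarantee $q''<q$: a proper even divisor satisfies $q''=q'\le q/2<q$, while $q'<q/2$ gives $q''=2q'<q$. Moreover $\langle\s_2^{q''}\rangle\triangleleft\langle\s_2,\s_3\rangle$: this is immediate when $q''=q'$, and when $q''=2q'$ it follows by squaring $\s_3\s_2^{q'}\s_3^{-1}\in\langle\s_2^{q'}\rangle$. Since $q''$ is even, the displayed computation shows $\langle\s_2^{q''}\rangle$ is invariant under $\s_1$, and it is trivially invariant under $\s_2$; hence $\langle\s_2^{q''}\rangle\triangleleft\G(\calP)$. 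Because $q$ is even (Lemma~\ref{l:MidEven}(b)) and $q''$ is even with $0<q''<q$, the subgroup $\langle\s_2^{q''}\rangle$ is nontrivial and contains none of $\s_1,\s_2,\s_3$, so Proposition~\ref{prop:vfig-quo} shows $\calP/\langle\s_2^{q''}\rangle$ is a proper tight rotary polytope. Its vertex-figures are $\langle\s_2,\s_3\rangle/\langle\s_2^{q''}\rangle$, which cover $\calQ$ (as $\langle\s_2^{q''}\rangle\le\langle\s_2^{q'}\rangle$) and are therefore chiral by Proposition~\ref{prop:RegularsNotCoverChirals}; hence $\calP/\langle\s_2^{q''}\rangle$ is chiral, contradicting the atomicity of $\calP$.

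The main obstacle is precisely this control of conjugation by $\s_1$ on powers of the middle generator $\s_2$, which is only available for even powers and so forces the passage from $q'$ to $q''$. This also accounts for the exact shape of the hypotheses: the single forbidden proper divisor is $q'=q/2$ with $q/2$ odd, for which $q''=2q'=q$ and $\langle\s_2^{q''}\rangle$ collapses to the trivial group, so the construction yields no proper quotient.
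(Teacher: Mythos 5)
Your proof is correct and takes essentially the same route as the paper's: both arguments use the dual of Lemma~\ref{l:sigma2IsNormal} to ensure that subgroups generated by even powers of $\s_2$ are invariant under conjugation by $\s_1$, then pass to the quotient by $\langle \s_2^{q'} \rangle$ (or $\langle \s_2^{2q'} \rangle$ when $q'$ is odd) and invoke Proposition~\ref{prop:vfig-quo} together with Proposition~\ref{prop:RegularsNotCoverChirals} to contradict atomicity. The only difference is cosmetic: you merge the paper's two cases into one via the exponent $q''$ and cite Lemma~\ref{l:subgroups} explicitly for the form of the covering kernel.
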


\begin{proof}
Let $\Gamma(\calP) = \langle \s_1, \s_2, \s_3 \rangle$.

Assume first that the vertex-figures of $\calP$ cover a chiral polyhedron $\calQ$ with type $\{q',r\}$ with $q'$ even. Then $\langle \sigma_2^{q'} \rangle \triangleleft \langle \s_2,\s_3 \rangle$.
By the dual version of Lemma \ref{l:sigma2IsNormal}, $\langle \s_2^2 \rangle \triangleleft \langle \s_1,\s_2 \rangle$. Since $\langle \s_2^{q'} \rangle \le \langle \s_2^2 \rangle$ and the latter is cyclic, we have that $\langle \s_2^{q'} \rangle \triangleleft \Gamma(\calP)$. Then Proposition \ref{prop:vfig-quo} shows that $\calP/\langle \s_2^{q'} \rangle$ is a $4$-polytope whose vertex-figures are isomorphic to $\calQ$, contradicting atomicity of $\calP$.

Now, if $q'$ is odd and $q' < q/2$ then $\langle \sigma_2^{2q'} \rangle$ is invariant under conjugation by all generators $\s_i$ and hence it is a proper normal subgroup of $\Gamma(\calP)$. It follows that $\calP/\langle \sigma_2^{2q'} \rangle$ is a proper quotient of $\calP$ whose vertex-figures cover $\calQ$. Proposition \ref{prop:RegularsNotCoverChirals} implies that $\calP/\langle \sigma_2^{2q'} \rangle$ is a chiral quotient of $\calP$, again contradicting atomicity of $\calP$.
\end{proof}

	We are now ready to prove the main necessary condition for a tight chiral $4$-polytope with regular facets and chiral vertex-figures to be atomic.

	\begin{theorem} \label{t:reg-ch-atomics}
	If $\calP$ is an atomic chiral $4$-polytope with regular facets and chiral vertex-figures then the vertex-figures are atomic chiral polyhedra.
	\end{theorem}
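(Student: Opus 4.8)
The plan is to work directly from the definition of atomicity: the vertex-figure $\calQ_2$, a tight chiral polyhedron of type $\{q,r\}$, is atomic precisely when it covers no chiral polyhedron of type $\{q',r\}$ or $\{q,r'\}$ with $q'\mid q$, $r'\mid r$ proper divisors. So I would assume it does cover such a polyhedron and derive a contradiction. Lemma~\ref{l:Nos3Quo} immediately rules out type $\{q,r'\}$. For type $\{q',r\}$, Lemma~\ref{l:MidEven}(b) gives that $q$ is even, and Lemma~\ref{l:Nos3Quo2} rules out every proper divisor $q'$ that is even or satisfies $q'<q/2$. Since the only divisor of $q$ lying in $[q/2,q)$ is $q/2$ itself, the entire theorem reduces to excluding the single remaining case: that $\calQ_2$ covers a chiral polyhedron $\calQ$ of type $\{q/2,r\}$ with $q/2$ odd. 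In this case the covering kernel is $\langle \s_2^{q/2}\rangle$, a subgroup of order $2$ that is central in $\langle \s_2,\s_3\rangle$.

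The overall strategy for this remaining case is to manufacture a nontrivial proper normal subgroup $N\triangleleft\Gamma(\calP)$ that contains no generator $\s_i$ and does not contain $X(\calP)$. Then Proposition~\ref{prop:vfig-quo} makes $\calP/N$ a tight polytope, it is a proper quotient whose vertex-figure is still chiral, and Proposition~\ref{prop:RegularsNotCoverChirals} forces $\calP/N$ to be chiral, contradicting atomicity of $\calP$. The regularity of the facet enters in exactly one (crucial) way: since $\langle \s_1\rangle$ is core-free (Lemma~\ref{l:MidEven}(a)) and the facet is tight regular, the dual of Lemma~\ref{l:sigma2IsNormal} gives $\langle \s_2^2\rangle\triangleleft\langle \s_1,\s_2\rangle$. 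As $\langle \s_2^2\rangle$ is cyclic, \emph{every} one of its subgroups is characteristic in it and hence normal in the facet. Thus any subgroup of $\langle \s_2^2\rangle$ that I can show is normal in the vertex-figure $\langle \s_2,\s_3\rangle$ will automatically be normal in all of $\Gamma(\calP)$, producing the desired $N$.

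To locate such a subgroup I would exploit the structure of $\calQ$. Since $q/2$ is odd, every atomic chiral polyhedron covered by $\calQ$ has odd first entry, so by the classification it has type $\{m^{\alpha},2m\}$ with $m$ an odd prime and $\alpha\ge 2$; its chirality group is $\langle(\text{first generator})^{m^{\alpha-1}}\rangle$, of order $m$ and lying in the first-generator direction. Tracing this through the covers $\calQ_2\to\calQ\to\{m^{\alpha},2m\}$ (using the analogue of Lemma~\ref{l:ChirGroupS2}, namely that chirality inherited in the first-generator direction keeps $X(\calQ_2)\le\langle\s_2\rangle$), the core of $\langle \s_2\rangle$ in $\langle \s_2,\s_3\rangle$ should strictly contain the order-$2$ group $\langle\s_2^{q/2}\rangle$; its odd part is then a nontrivial subgroup of $\langle \s_2^2\rangle$ normal in $\langle \s_2,\s_3\rangle$. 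Taking $N$ to be (the smallest, order-$m$) such subgroup, the previous paragraph makes $N\triangleleft\Gamma(\calP)$, and since $N\le\langle\s_2^2\rangle$ no generator lies in $N$.

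I expect the genuinely delicate step — the main obstacle — to be the verification that the resulting quotient $\calP/N$ is still \emph{chiral}, equivalently that $N$ does not already exhaust the chirality group $X(\calQ_2)$ (so that the vertex-figure $\langle\s_2,\s_3\rangle/N$ remains chiral). This amounts to a careful bookkeeping of $X(\calQ_2)$ inside the cyclic group $\langle \s_2\rangle\cong\Z_q=\Z_2\times\Z_{q/2}$: one must rule out the borderline possibility that $X(\calQ_2)$ is exactly the order-$m$ group $N$, by showing that $X(\calQ_2)$ strictly contains $N$ or has even order. The easy special case is when $\s_2^{q/2}$ happens to be normal in the facet (e.g.\ when the facet has type $\{2,q\}$, where $\s_1\s_2\s_1^{-1}=\s_2^{-1}$), for there one may simply take $N=\langle\s_2^{q/2}\rangle$; the work lies in the complementary case, where $\s_2^{q/2}$ is not central in the facet and the chirality-group analysis above is unavoidable.
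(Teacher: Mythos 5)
Your opening reduction is exactly the paper's: Lemmas~\ref{l:Nos3Quo}, \ref{l:MidEven} and \ref{l:Nos3Quo2} leave only the case where the vertex-figure $\calQ_2$ covers a chiral $\calQ$ of type $\{q/2,r\}$ with $q/2$ odd, with central kernel $\langle\s_2^{q/2}\rangle$ of order $2$; and your normality mechanism (dual of Lemma~\ref{l:sigma2IsNormal} gives $\langle\s_2^2\rangle\triangleleft\langle\s_1,\s_2\rangle$, so any subgroup of $\langle\s_2^2\rangle$ normal in $\langle\s_2,\s_3\rangle$ is normal in all of $\G(\calP)$) is also the paper's. But the core of your argument has a genuine gap. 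Your claim that the core of $\langle\s_2\rangle$ in $\langle\s_2,\s_3\rangle$ strictly contains $\langle\s_2^{q/2}\rangle$ is not justified by what you cite: Lemma~\ref{l:ChirGroupS2} transfers the containment $X\le\langle\s_2\rangle$ from a polytope \emph{down} to its quotients, never from a quotient up to a cover, so knowing that some atomic polyhedron $\calQ'$ covered by $\calQ$ has chirality group along its first generator tells you nothing about where $X(\calQ_2)$ sits. Worse, without more information $\calQ'$ only has type $\{m^{\alpha},2m\}$ with $m^{\alpha}$ dividing $q/2$ and $2m$ dividing $r$, both possibly properly; the normal subgroup of $\langle\s_2,\s_3\rangle$ you can pull back from $\calQ'$ then has the form $\langle\s_2^{a}\rangle\langle\s_3^{b}\rangle$ with $b<r$ (Lemma~\ref{l:subgroups}), and it does \emph{not} lie inside $\langle\s_2\rangle$. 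What the paper proves first --- and what your proposal omits entirely --- is that $\calQ_2/\langle\s_2^{q/2}\rangle$ is itself \emph{atomic}; only this pins down $q/2=m^{\beta}$ and $r=2m$ exactly (and, via Theorem~\ref{t:TightRegularHedra}, $p=m^{\beta}$), after which the known covering of the atomic $\{m^{\beta},2m\}$ polyhedron by a tight regular $\{m,2m\}$ polyhedron yields $\langle\s_2^m\rangle\triangleleft\langle\s_2,\s_3\rangle$ and hence $\langle\s_2^{2m}\rangle\triangleleft\G(\calP)$.

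The second gap is the one you flag yourself, and it is not a routine bookkeeping step: it is where the paper's proof does something structurally different from your plan. Having a nontrivial $N\le\langle\s_2^2\rangle$ normal in $\G(\calP)$ contradicts atomicity only if $\calP/N$ is \emph{chiral}, and there is no tool in the paper to show this. Indeed, since $N$ has odd order it is not contained in $\langle\s_2^{q/2}\rangle$, so the vertex-figure $\calQ_2/N$ does not cover the chiral $\calQ$, and their common quotients are regular; moreover the only result locating chirality groups of atomic $4$-polytopes with regular facets (Proposition~\ref{p:xp-rc}, which puts $X(\calP)$ in $\langle\s_3\rangle$, not $\langle\s_2\rangle$) is proved \emph{after} and using this theorem. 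The paper avoids the issue altogether: instead of exhibiting a chiral proper quotient, it derives an algebraic collapse. Working in $\G^+(\calP/\langle\s_2^{2m}\rangle)$, it shows $\s_3^2$ commutes with the image of $\s_2$, then the relation (\ref{eq:magicalRelation}) forces conjugation by $\s_1$ to invert $\s_3^2$, and oddness of $p=m^{\beta}$ forces $\s_3^4=\id$, contradicting that $\s_3$ has order $2m\ge 6$. That relation-collapse argument is the real content of the proof in the hard case, and your proposal has no substitute for it.
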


\begin{proof}
Let $\Gamma(\calP)=\langle \s_1,\s_2,\s_3 \rangle$ and assume that the facets and vertex-figures of $\calP$ are isomorphic to $\calQ_1$ and $\calQ_2$, respectively. We shall abuse notation and write $\Gamma^+(\calQ_1) = \langle \s_1,\s_2 \rangle$ and $\Gamma(\calQ_2) = \langle \s_2,\s_3 \rangle$.

Assume to the contrary that $\calQ_2$ is not atomic. Lemmas \ref{l:Nos3Quo} and \ref{l:Nos3Quo2} imply that $\calQ_2$ covers no chiral polyhedron with type $\{q,r'\}$ with $r'<r$, and that the only chiral polyhedron covered by $\calQ_2$ with type $\{q',r\}$ for $q'<q$ is such that $q'=q/2$. Furthermore, $q/2$ must be odd.

First, we show that $\calQ_2 / \langle \s_2^{q/2} \rangle$ is atomic. Note that since $\s_2^{q/2}$ has order $2$ and $\langle \s_2^{q/2} \rangle \triangleleft \Gamma(\calQ_2)$, $\s_2^{q/2}$ is central in $\Gamma(\calQ_2)$. Let $\Gamma(\calQ_2 / \langle \s_2^{q/2} \rangle) = \langle \hat{\s}_2,\hat{\s}_3 \rangle$. By Lemma \ref{l:Nos3Quo2}, $\calQ_2 / \langle \s_2^{q/2} \rangle$ does not cover a chiral polyhedron with type $\{q'',r\}$ for $q''<q/2$.
On the other hand, if $\calQ_2 / \langle \s_2^{q/2} \rangle$ covers a chiral polyhedron with type $\{q/2,r'\}$ for $r'<r$ then $r'$ must be even and $\hat{\s}_2^{-1} \hat{\s}_3^{r'} \hat{\s}_2 = \hat{\s}_3^{ar'}$ for some integer $a$. Lifting this relation to $\Gamma(\calQ_2)$ we have that $\s_2^{-1} \s_3^{r'} \s_2 = \s_2^{\epsilon q/2}\s_3^{ar'}$ for some $\epsilon \in \{0,1\}$; however, by Lemma \ref{l:Nos3Quo} $\langle \s_3^{r'} \rangle$ is not normal in $\Gamma(\calQ_2)$ and hence $\epsilon=1$. Then, conjugation by $\s_2$ interchanges the subgroups $\langle \s_3^{r'} \rangle$ and $\langle \s_2^{q/2} \s_3^{r'} \rangle$, implying that $\s_2^{-\ell} \s_3^{r'} \s_2^\ell \in \langle \s_3^{r'} \rangle$ if and only if $\ell$ is even. It follows that
$\s_3^{r'} = \s_2^{q/2} \s_3^{r'} \s_2^{q/2} \notin \langle \s_3^{r'} \rangle$, a contradiction. Therefore $\calQ_2 / \langle \s_2^{q/2} \rangle$ is atomic.

Since $\calQ_2 / \langle \s_2^{q/2} \rangle$ is atomic and has type $\{q/2,r\}$ with $q/2$ odd, we have that $q/2 = m^\beta$ and $r=2m$ for some odd prime $m$ and positive integer $\beta$. In particular $q=2m^\beta$ and, by Theorem \ref{t:TightRegularHedra}, $p$ must be the odd prime power $m^\beta$. Furthermore, by \cite[Prop. 3.2 and Thm. 3.6]{tight-chiral-polyhedra}, the atomic chiral polyhedron of type $\{m^{\beta}, 2m\}$ covers a tight regular polyhedron of type $\{m, 2m\}$, and so $\langle \s_2^m \rangle$ is normal in $\langle \s_2, \s_3 \rangle / \langle \s_2^{q/2} \rangle$ and indeed in $\langle \s_2, \s_3 \rangle$ itself. Then, since the dual version of Lemma \ref{l:sigma2IsNormal} tells us that $\langle \s_2^2 \rangle$ is normal in $\langle \s_1, \s_2 \rangle$, it follows that $\langle \s_2^{2m} \rangle$ is normal in $\G(\calP)$.

Abusing notation let $\Gamma^+(\calQ_2/\langle \s_2^{m} \rangle)=\langle \s_2',\s_3 \rangle$. Since $m$ is odd and $\calQ/\langle \s_2^{m} \rangle$ is regular, Proposition \ref{prop:normal-props} and the dual version of Lemma \ref{l:sigma2IsNormal} imply that $\s_3^2$ is central in $\Gamma^+(\calQ_2/\langle \s_2^{m} \rangle)$. If $\Gamma^+(\calP/\langle \s_2^{2m} \rangle) = \langle \s_1,\s_2'',\s_3 \rangle$ then
		\[ \s_2'' \s_3^2 (\s_2'')^{-1}
		= \s_3^2 (\s_2'')^{\varepsilon m} \]
for some $\varepsilon \in \{0,1\}$.
		Now, $(\s_2'')^{m}$ generates a normal subgroup of order 2, and is thus central. Then
		\[ \id = \s_2'' \s_3^{2m} (\s_2'')^{-1} = (\s_3^2 (\s_2'')^{\varepsilon m})^{m} = \s_3^{2m} (\s_2'')^{\varepsilon m^2}
		= (\s_2'')^{\varepsilon m^2}. \]
		Since $\s_2''$ has order $2m$ and $m^2$ is odd, it follows that $\varepsilon = 0$, so in fact, $\s_3^2$ commutes with $\s_2''$. Then, by (\ref{eq:magicalRelation}) we have that
		\[ \s_1^{-1} \s_3^2 \s_1 = ((\s_2'')^2 \s_3)^2 = \s_2'' \s_3^{-2} (\s_2'')^{-1} = \s_3^{-2}, \]
		and so conjugation by $\s_1$ inverts $\s_3^2$. Since $p=m^\beta$ is odd, this implies that $\s_3^2 = \s_3^{-2}$, and so $2m$ (the order of $\s_3$) divides $4$, which is impossible.
\end{proof}

	\begin{corollary}
	\label{cor:atomic-reg-ch}
	A tight chiral $4$-polytope $\calP$ of type $\{p, q, r\}$ with regular facets and vertex-figures is atomic if and only if
	\begin{enumerate}
	\item The vertex-figures are atomic,
	\item $q$ is even, and 
	\item $\langle \s_1 \rangle$ is core-free in $\G(\calP)$.
	\end{enumerate}
	\end{corollary}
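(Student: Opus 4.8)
The plan is to follow the template of Corollary~\ref{cor:atomic-ch-ch}: derive necessity from the structural results already established, and prove sufficiency by showing that no proper chiral quotient can exist. For necessity I would simply assemble the three conditions from earlier work. Theorem~\ref{t:reg-ch-atomics} gives that the vertex-figures of an atomic such polytope are atomic chiral polyhedra, which is condition (a); Lemma~\ref{l:MidEven} supplies both that $q$ is even (its part (b)) and that $\langle \s_1 \rangle$ is core-free (its part (a)), which are conditions (b) and (c). So all of the real content lies in the converse.

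For sufficiency, assume $\calP$ satisfies (a)--(c) and suppose toward a contradiction that $\calP$ properly covers some chiral polytope $\calQ$. First I would note that $\calQ$ is again tight by Proposition~\ref{prop:tight-gp-quo}, so Proposition~\ref{prop:facvfChiral}(a) forces the facets or the vertex-figures of $\calQ$ to be chiral. The key step is to rule out the facets: the facets of $\calP$ are tight \emph{regular} polyhedra that cover the facets of $\calQ$, and by Proposition~\ref{prop:RegularsNotCoverChirals} a tight regular polytope never covers a chiral one, so the facets of $\calQ$ must remain regular. Hence the chirality of $\calQ$ is carried entirely by its vertex-figures.

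Next I would invoke the rigidity of atomic chiral polyhedra. By condition (a) the vertex-figures of $\calP$ are atomic chiral polyhedra, and they cover the chiral vertex-figures of $\calQ$; by Corollary~\ref{c:atomic-no-quo} an atomic chiral polyhedron does not properly cover any chiral polyhedron, so the vertex-figures of $\calQ$ are isomorphic to those of $\calP$ and have type $\{q, r\}$. Therefore $\calQ$ has type $\{p', q, r\}$ for some divisor $p'$ of $p$, with $p' < p$ because the quotient is proper. Lemma~\ref{l:subgroups} then identifies the kernel of the map $\G(\calP) \to \G(\calQ)$ as $\langle \s_1^{p'} \rangle$, a non-trivial normal subgroup of $\G(\calP)$ contained in $\langle \s_1 \rangle$; this contradicts condition (c) and completes the argument.

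I expect the main obstacle to be the second paragraph, namely pinning the chirality of $\calQ$ onto its vertex-figures rather than its facets; once Proposition~\ref{prop:RegularsNotCoverChirals} forces the facets to stay regular, the remaining steps are routine bookkeeping built on Corollary~\ref{c:atomic-no-quo} and Lemma~\ref{l:subgroups}. I would also remark that the sufficiency argument above uses only conditions (a) and (c); condition (b) enters only through the necessity direction, where Lemma~\ref{l:MidEven}(b) guarantees it for atomic polytopes.
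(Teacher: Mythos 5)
Your proof is correct and follows essentially the same route as the paper: necessity from Theorem~\ref{t:reg-ch-atomics} and Lemma~\ref{l:MidEven}, and sufficiency by supposing a proper chiral quotient $\calQ$, forcing its facets to remain regular via Proposition~\ref{prop:RegularsNotCoverChirals}, using atomicity of the vertex-figures to conclude $\calQ$ has type $\{p', q, r\}$ with $p' < p$, and contradicting the core-freeness of $\langle \s_1 \rangle$. The only cosmetic difference is the last step, where you invoke Lemma~\ref{l:subgroups} to identify the kernel of $\G(\calP) \to \G(\calQ)$ as $\langle \s_1^{p'} \rangle$, whereas the paper reaches the same conclusion by counting orders; both are valid.
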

	
	\begin{proof}
	Theorem \ref{t:reg-ch-atomics} and Lemma \ref{l:MidEven} prove that the conditions are necessary. To prove that they suffice,
	suppose that $\calP$ satisfies the three conditions, and suppose that $\calP$ properly covers a chiral $4$-polytope $\calQ$.
	Then the facets of $\calQ$ are covered by the regular facets of $\calP$, and by Proposition \ref{prop:RegularsNotCoverChirals},
	the facets of $\calQ$ are regular. Then by Proposition \ref{prop:facvfChiral}, the vertex-figures of $\calQ$ are chiral.
	These vertex-figures are covered by the vertex-figures of $\calP$, which are atomic, and so $\calQ$ has the same vertex-figures
	as $\calP$. In particular, $\calQ$ has type $\{p', q, r\}$ for some $p'$ dividing $p$. By tightness,
	$|\G(\calQ)| = p'qr$ and $|\G(\calP)| = pqr$, and since $\calQ$ is a proper quotient of $\calP$, we have $p' < p$.
	Now, the kernel of the natural epimorphism from $\G(\calP)$ to $\G(\calQ)$
	includes $\s_1^{p'}$. On the other hand, $|\langle \s_1^{p'} \rangle| = p/p'$ so that $|\G(\calP)| = |\G(\calQ)| \cdot
	|\langle \s_1^{p'} \rangle|$. It follows that $\s_1^{p'}$ generates a nontrivial normal subgroup of $\G(\calP)$,
	contradicting that $\langle \s_1 \rangle$ is core-free. So $\calP$ must be atomic.
	\end{proof}

\subsection{Classification of atomic chiral $4$-polytopes with regular facets and chiral vertex-figures}

	If $\calP$ is an atomic chiral $4$-polytope with regular facets and chiral vertex-figures, then Lemma~\ref{l:MidEven}
	implies that the facets must be the dual of one of the polyhedra in Theorem~\ref{t:TightRegularHedra}, and
	Theorem~\ref{t:reg-ch-atomics} implies that the vertex-figures must be one of the polyhedra in Theorem~\ref{t:atomic-polyhedra}
	or its dual. The dual of Lemma~\ref{l:NoChiral2} implies that the facets cannot be type $\{2, q\}$. Then, after some manipulation of the relations in
	\cite[Section 4]{tight3} we have the following lemma:
	
	\begin{lemma} \label{l:tight-reg-facets}
	The facets of an atomic chiral $4$-polytopes with regular facets must be one of the following:
	\begin{enumerate}
	\item Type $\{m, 2m\}$ for an odd prime $m$, with rotation group $[m, 2m]^+ / (\s_2^2 \s_1 = \s_1 \s_2^2)$.
	\item Type $\{4, 8\}$, with rotation group $[4, 8]^+ / (\s_2^2 \s_1 = \s_1 \s_2^2)$.
	\item Type $\{4, 2^{\beta}\}$ for some $\beta \geq 5$, with rotation group $[4, 2^{\beta}]^+ / (\s_2^{-1} \s_1 = \s_1^{-1} \s_2^{1+2^{\beta-1}})$.
	\item Type $\{2^{\beta-1}, 2^{\beta}\}$ for some $\beta \geq 5$, with rotation group $[2^{\beta-1}, 2^{\beta}]^+ / (\s_2 \s_1^{-1} = \s_1 \s_2^{3-\epsilon 2^{\beta-1}})$, with $\epsilon \in \{0, 1\}$.
	\end{enumerate}
	\end{lemma}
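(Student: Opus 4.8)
The plan is to pin down the facet—a tight orientably regular polyhedron of type $\{p,q\}$—from the constraints already in hand. By Lemma~\ref{l:MidEven} the facet has $\langle \s_1 \rangle$ core-free, $q$ even and $p<q$, and by Theorem~\ref{t:reg-ch-atomics} the vertex-figure is an atomic chiral polyhedron of type $\{q,r\}$. First I would transport the core-freeness of $\langle \s_1 \rangle$ down to the facet (a tight orientably regular polyhedron, being a section of a tight polytope), using Lemma~\ref{l:rels1s3}(b) to see that a normal subgroup of $\langle \s_1,\s_2\rangle$ inside $\langle \s_1\rangle$ would already be normal in $\G(\calP)$. Since for a polyhedron the core-freeness of $\langle \s_1\rangle$ is dual to the core-freeness of $\langle \s_2\rangle$, i.e.\ to having no multiple edges, the dual of the facet is one of the polyhedra in Theorem~\ref{t:TightRegularHedra}. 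I would then run through its three families, dualize, and discard everything incompatible with $p<q$, $q$ even, and (via the dual of Lemma~\ref{l:NoChiral2}) with type $\{2,q\}$: this kills family (a) outright and leaves the duals of $\{2Q,Q\}$ (with $Q$ odd) and of the $2$-power family (c).

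The next step is to feed in the vertex-figure. Because it is atomic chiral of type $\{q,r\}$, the number $q$ must occur as the face-size (first entry) of some polyhedron in Table~\ref{tab:atomic-polyhedra}; reading that table, the even admissible $q$ are exactly $2m$ for an odd prime $m$, together with the $2$-powers $8$, $2^{\beta}$ and $2^{\beta-1}$ ($\beta\ge 5$). In the regime $q=2Q$ with $Q$ odd, matching $q=2m$ forces $Q=m$ to be an odd prime, and dualizing the $\{2m,m\}$ polyhedron of Theorem~\ref{t:TightRegularHedra}(b) and simplifying yields item (1), namely $\{m,2m\}$ with $\s_2^2\s_1=\s_1\s_2^2$. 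In the regime coming from family (c), a value of $P=q$ carrying an odd prime factor would have to equal $2m$, whose only proper even divisor is $2$, forcing the excluded type $\{2,q\}$; hence $P=q$ is a pure power of $2$, and the divisor condition in (c) leaves $p\in\{4,\,2^{\alpha_1-1}\}$ once $p=2$ is discarded.

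It then remains to decide which $2$-powers actually occur and to record the presentations. I would split off $q=8$, whose only atomic chiral vertex-figure is $\calP(8,2^{\beta})$, to obtain item (2), the untwisted polyhedron $\{4,8\}$ with $\s_2^2\s_1=\s_1\s_2^2$; and for the larger $2$-powers, matching $q=2^{\beta}$ against the available face-sizes and dualizing the presentations of \cite[Section 4]{tight3} should produce items (3) and (4), i.e.\ $\{4,2^{\beta}\}$ and $\{2^{\beta-1},2^{\beta}\}$. The computational heart here is converting the regular-polyhedron presentations into the stated normal forms $\s_2^{-1}\s_1=\s_1^{-1}\s_2^{1+2^{\beta-1}}$ and $\s_2\s_1^{-1}=\s_1\s_2^{3-\epsilon 2^{\beta-1}}$, carefully tracking how conjugation by $\s_1$ twists $\s_2$.

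The step I expect to be the main obstacle is precisely the $2$-adic bookkeeping that fixes the ranges, in particular excluding the intermediate value $q=16$ (and, generally, every $2$-power realizable only through the family $\calP(2^{\beta-1},2^{\beta})$). The subtlety is that such a $q$ yields facets $\{4,16\}$ and $\{8,16\}$ that are themselves legitimate tight regular polyhedra with $\langle \s_1\rangle$ core-free, so their exclusion cannot be read off from the facet alone; it must come from the incompatibility, inside the amalgamated group and through Lemma~\ref{l:rels1s3}, between the facet relations and the twisted relations of a $\calP(2^{\beta-1},2^{\beta})$ vertex-figure. Making that incompatibility explicit is what justifies the apparently asymmetric constraints—$\{4,8\}$ appearing separately as item (2), while items (3) and (4) require $\beta\ge 5$—and is where the relation manipulation imported from \cite{tight3} will do the real work.
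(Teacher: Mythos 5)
Your skeleton coincides with the paper's own (very terse) proof: Lemma~\ref{l:MidEven} --- with the core-freeness of $\langle \s_1 \rangle$ transported into the facet group via Lemma~\ref{l:rels1s3}(b) --- forces the facet to be the dual of a polyhedron in Theorem~\ref{t:TightRegularHedra}; Theorem~\ref{t:reg-ch-atomics} forces the vertex-figure to be an atomic chiral polyhedron; the dual of Lemma~\ref{l:NoChiral2} excludes type $\{2,q\}$; and intersecting the admissible values of $q$ with the first entries of Table~\ref{tab:atomic-polyhedra} produces the case list, the presentations being obtained by manipulating the relations of \cite[Section 4]{tight3}. Up to that point you have reconstructed exactly what the paper does.

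The problem is the step you yourself single out as the crux: excluding $q=16$, i.e.\ the candidate facets $\{4,16\}$ and $\{8,16\}$. First, you never carry it out --- you only predict that an incompatibility between the facet relations and the vertex-figure relations ``must'' arise in the amalgamated group, so the proposal is incomplete at its declared heart. Second, and more seriously, no such incompatibility exists for $\{4,16\}$: Theorem~\ref{t:4-2b-2b} at $\beta=5$ exhibits an atomic chiral $4$-polytope of type $\{4,16,32\}$ (the group $\Lambda_3$ of Table~\ref{tab:atomic-rc}), whose regular facets are tight of type $\{4,16\}$ and satisfy $\s_2^{-1}\s_1=\s_1^{-1}\s_2^{9}$ --- precisely the presentation pattern of the lemma's third item evaluated at parameter $4$. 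So the exclusion you are attempting to prove is false, and what you have actually run into is an off-by-one inconsistency inside the paper: for the lemma to be compatible with Table~\ref{tab:atomic-rc} and Theorems~\ref{t:4-2b-2b} and~\ref{t:no-L8}, its last two items must allow $\beta \geq 4$ rather than $\beta \geq 5$. (The facet $\{8,16\}$ is indeed never realized, but only because $\Lambda_8$ with $\beta=5$ is ruled out by the later Theorem~\ref{t:no-L8}; that is a substantial argument about the amalgamated group which is not, and need not be, part of this lemma, since the lemma only lists candidates.) The correct resolution of your ``main obstacle'' is therefore not a cleverer relation computation but the recognition that nothing is excluded at $q=16$: the case analysis you already performed, with the $2$-power ranges starting at $16$ instead of $32$, constitutes the entire proof.
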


	\begin{table}[htbp]
	\centering
	\begin{tabular}{|l|l|l|l|} \hline
	Group name & Parent group & Extra relations & Notes \\ \hline
	$\Lambda_1$ & $[m, 2m, m^{\alpha}]^+$, & $	\s_2^2 \s_1 = \s_1 \s_2^2$ & $m$ odd prime, $\alpha \geq 2$, \\
	& & $\s_3 \s_2^2 = \s_2^2 \s_3^{1+km^{\alpha-1}}$ & $1 \leq k \leq m-1$ \\ \hline

	$\Lambda_2$ & $[4, 8, 2^{\beta}]^+$ & $\s_2^2 \s_1 = \s_1 \s_2^2$ & $\beta \geq 5, \epsilon = \pm 1$ \\
	& & $\s_3 \s_2^2 = \s_2^2 \s_3^{1+\epsilon 2^{\beta-2}}$ & \\ \hline

	$\Lambda_3$ & $[4, 2^{\beta-1}, 2^{\beta}]^+$ & $\s_2^{-1} \s_1 = \s_1^{-1} \s_2^{1+2^{\beta-2}}$ & $\beta \geq 5, \epsilon = \pm 1$ \\
	& & $\s_3^{-1} \s_2 = \s_2^{-1+2^{\beta-2}} \s_3^{-3+ \epsilon 2^{\beta-2}}$ & \\
	& & $\s_3 \s_2^{-1} = \s_2^{1+2^{\beta-2}} \s_3^{3+ \epsilon 2^{\beta-2}}$ & \\ \hline

	$\Lambda_4$ & $[4, 2^{\beta}, 8]^+$ & $\s_2^{-1} \s_1 = \s_1^{-1} \s_2^{1+2^{\beta-1}}$ & $\beta \geq 5, \epsilon = \pm 1$ \\
	& & $\s_2 \s_3^2 = \s_3^2 \s_2^{1+\epsilon 2^{\beta-2}}$ & \\ \hline

	$\Lambda_5$ & $ [4, 2^{\beta}, 2^{\beta-1}]^+$ & $\s_2^{-1} \s_1 = \s_1^{-1} \s_2^{1+2^{\beta-1}}$ & $\beta \geq 5, \epsilon = \pm 1$ \\
	& & $\s_3^{-1} \s_2 = \s_2^{3+\epsilon 2^{\beta-2}} \s_3^{1-2^{\beta-2}}$ & \\
	& & $\s_3 \s_2^{-1} = \s_2^{-3+\epsilon 2^{\beta-2}} \s_3^{-1+2^{\beta-2}}$ &  \\ \hline

	$\Lambda_6$ & $ [2^{\beta-1}, 2^{\beta}, 8]^+$ & $\s_2 \s_1^{-1} = \s_1 \s_2^{3-\epsilon_1 2^{\beta-1}}$ & $\beta \geq 5$, \\
	& & $\s_2 \s_3^2 = \s_3^2 \s_2^{1+\epsilon_2 2^{\beta-2}}$ & $\epsilon_1 \in \{0,1\}, \epsilon_2 = \pm 1$ \\ \hline

	$\Lambda_7$ & $[2^{\beta-1}, 2^{\beta}, 2^{\beta-1}]^+$ & $\s_2 \s_1^{-1} = \s_1 \s_2^{3-\epsilon_1 2^{\beta-1}}$ & $\beta \geq 5$, \\
	& & $\s_3^{-1} \s_2 = \s_2^{3+\epsilon_2 2^{\beta-2}} \s_3^{1-2^{\beta-2}}$ & $\epsilon_1 \in \{0,1\}, \epsilon_2 = \pm 1$ \\
	& & $\s_3 \s_2^{-1} = \s_2^{-3+\epsilon_2 2^{\beta-2}} \s_3^{-1+2^{\beta-2}}$ & \\ \hline

	$\Lambda_8$ & $[2^{\beta-2}, 2^{\beta-1}, 2^{\beta}]^+$ & $\s_2 \s_1^{-1} = \s_1 \s_2^{3-\epsilon_1 2^{\beta-2}}$ & $\beta \geq 5$, \\
	& & $\s_3^{-1} \s_2 = \s_2^{-1+2^{\beta-2}} \s_3^{-3+ \epsilon_2 2^{\beta-2}}$ & $\epsilon_1 \in \{0,1\}, \epsilon_2 = \pm 1$ \\
	& & $\s_3 \s_2^{-1} = \s_2^{1+2^{\beta-2}} \s_3^{3+ \epsilon_2 2^{\beta-2}}$ & \\ \hline
	\end{tabular}
	\caption{The possible groups of atomic chiral $4$-polytopes with regular facets and chiral vertex-figures}
	\label{tab:atomic-rc}
	\end{table}

	Now there are eight possibilities for the automorphism group of an atomic chiral $4$-polytope with regular facets and chiral
	vertex-figures; see Table~\ref{tab:atomic-rc}.
	
	We will show that the first three groups do correspond to atomic chiral $4$-polytopes, whereas the remaining groups
	do not.
	
	\begin{theorem} \label{t:m-2m-ma}
	The group $\Lambda_1$ is the automorphism group of an atomic chiral polytope of type $\{m, 2m, m^{\alpha}\}$,
	for each $k$ satisfying $1 \leq k \leq m-1$.
	\end{theorem}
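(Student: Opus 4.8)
The plan is to follow the template of Theorems~\ref{t:8-2b-8} and \ref{t:2b-2bp1-2b}: I would exhibit an explicit permutation representation of $\Lambda_1$ on $\Z_{2m}\times\Z_{m^\alpha}$ and then invoke Lemma~\ref{lem:perm-rep}. Here the facet is the tight \emph{regular} polyhedron of type $\{m,2m\}$ from Lemma~\ref{l:tight-reg-facets}(a), and the vertex-figure is the atomic chiral polyhedron $\calP(2m,m^\alpha)_k$ of Table~\ref{tab:atomic-polyhedra}; indeed, after relabelling the generators of the vertex-figure its defining relation becomes $\s_3\s_2^2=\s_2^2\s_3^{1+km^{\alpha-1}}$, which is exactly the second extra relation of $\Lambda_1$. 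Following the standard strategy described before Lemma~\ref{lem:perm-rep}, I would encode the coset $\langle\s_1\rangle\s_2^b\s_3^c$ as the pair $(b,c)\in\Z_{2m}\times\Z_{m^\alpha}$, so that condition (d) forces $\pi_3(b,c)=(b,c+1)$ and condition (c) constrains $\pi_2$ on the slice $c=0$.

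Concretely, I would set $D=km^{\alpha-1}$, take $\pi_3(b,c)=(b,c+1)$, let $\pi_2$ shift the first coordinate with a parity-of-$c$ twist by $D$ (as in the earlier theorems), and define $\pi_1$ as a permutation fixing $(0,0)$. Because the facet is regular rather than chiral, the $\pi_1$-part should be simpler than in the chiral-facet cases---$\s_1$ has prime order $m$ and commutes with $\s_2^2$. The work then divides into two parts. First, I verify that $\s_i\mapsto\pi_i$ respects the defining relations of $\Lambda_1$, namely the Coxeter-type relations $(\pi_1\pi_2)^2=(\pi_2\pi_3)^2=(\pi_1\pi_2\pi_3)^2=\id$ together with the two extra relations $\pi_2^2\pi_1=\pi_1\pi_2^2$ and $\pi_3\pi_2^2=\pi_2^2\pi_3^{1+D}$; this makes $G=\langle\pi_1,\pi_2,\pi_3\rangle$ a quotient of $\Lambda_1$. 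Second, I check the hypotheses (a)--(d) of Lemma~\ref{lem:perm-rep}: that $\pi_1$ fixes $(0,0)$ and has order exactly $m$ at some point, that $\pi_2$ acts as $(b,0)\mapsto(b+1,0)$ on the slice $c=0$, and that $\pi_3(b,c)=(b,c+1)$. Each of these is a bookkeeping computation relying on $m^\alpha\mid mD$.

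Once $\s_i\mapsto\pi_i$ is shown to be an epimorphism satisfying (a)--(d), Lemma~\ref{lem:perm-rep} yields $\Lambda_1\cong G$ and that $\Lambda_1$ is the rotation group of a tight chiral $4$-polytope of type $\{m,2m,m^\alpha\}$ with vertex-figure $\calP(2m,m^\alpha)_k$. Atomicity then follows from Corollary~\ref{cor:atomic-reg-ch}: the vertex-figures are atomic by construction, $q=2m$ is even, and $\langle\s_1\rangle$ is core-free. For the last point, $\langle\s_1\rangle$ has prime order $m$, so it is core-free unless it is itself normal in $\G(\calP)$; but a normal $\langle\s_1\rangle$ would also be normal in the facet group $\langle\s_1,\s_2\rangle$, and Lemma~\ref{l:NormalThen2} would then force the facet to have type $\{m,2\}$, contradicting its type $\{m,2m\}$.

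I expect the main obstacle to be choosing the twists in $\pi_1$ and $\pi_2$ so that all three order-two relations \emph{and} the mixed relation $\pi_3\pi_2^2=\pi_2^2\pi_3^{1+D}$ hold simultaneously. The regularity of the facet removes the chirality twist on the $\{m,2m\}$ side, but the coupling of the two coordinates through $(\pi_1\pi_2\pi_3)^2=\id$ still requires the parity-dependent $D$-corrections to cancel exactly, just as in the intermediate arithmetic identities of Theorems~\ref{t:8-2b-8} and \ref{t:2b-2bp1-2b}.
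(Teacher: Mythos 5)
Your overall architecture coincides with the paper's: amalgamate the tight regular $\{m,2m\}$ facet of Lemma~\ref{l:tight-reg-facets}(a) with the atomic vertex-figure $\calP(2m,m^\alpha)_k$, exhibit a permutation representation on $\Z_{2m}\times\Z_{m^\alpha}$, and conclude with Lemma~\ref{lem:perm-rep} and Corollary~\ref{cor:atomic-reg-ch}. Your explicit core-freeness argument (prime order of $\s_1$ plus Lemma~\ref{l:NormalThen2}) is a nice touch that the paper leaves implicit in its appeal to Corollary~\ref{cor:atomic-reg-ch}.

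The genuine gap is that the explicit permutations --- which are the entire mathematical content of this theorem --- are never constructed, and the one concrete guess you make about their shape cannot work. You propose that $\pi_2$ ``shift the first coordinate with a parity-of-$c$ twist by $D$, as in the earlier theorems.'' But in Theorems~\ref{t:2m-ma-2m} and~\ref{t:8-2b-8} the prime-power modulus is $q$, the \emph{first} factor of $\Z_q\times\Z_r$, whereas here it is $r=m^\alpha$, the \emph{second} factor: the twist $D=km^{\alpha-1}$ is only meaningful in the second coordinate (reduced mod $2m$ it degenerates to $0$ or $m$). Structurally, the relation $\s_3\s_2^2=\s_2^2\s_3^{1+D}$ forces $\pi_2^{-2}\pi_3\pi_2^2=\pi_3^{1+D}$; if $\pi_2$ fixed the second coordinate, the left side would send $c$ to $c+1$ while the right side sends $c$ to $c+1+D$, a contradiction since $D\not\equiv 0 \pmod{m^\alpha}$. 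The paper's actual permutations act on both coordinates, with a correction quadratic in $c$ rather than a parity case-split: $(b,c)\pi_2=(b+1-2c,\,-c+\frac{c(c-1)}{2}D)$, and $(b,c)\pi_1=(b+2c,\,c+\frac{c(c-1)}{2}D)$ or $(b+2c-2,\,c+\frac{c(c-1)}{2}D)$ according to the parity of $b$ (also note your list of relations to check omits the generator orders $\pi_1^m=\pi_2^{2m}=\pi_3^{m^\alpha}=\id$, needed for the epimorphism from $\Lambda_1$ to be well defined). Finding the coupled $-2c$ and $\frac{c(c-1)}{2}D$ terms so that $(\pi_1\pi_2)^2=(\pi_2\pi_3)^2=(\pi_1\pi_2\pi_3)^2=\id$ and both extra relations hold simultaneously is precisely the work of the proof, and your proposal defers it while pointing in a direction that fails.
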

	
	\begin{proof}
	Let $D = k m^{\alpha-1}$. 
	Then we define permutations
	of $\Z_{2m} \times \Z_{m^{\alpha}}$ as follows:
	\begin{align*}
	(b, c) \pi_1 &= \begin{cases}
		(b + 2c, c + \frac{c(c-1)}{2} D) & \text{ if $b$ is even}, \\
		(b + 2c - 2, c + \frac{c(c-1)}{2} D) & \text{ if $b$ is odd}, \\
		\end{cases} \\
	(b, c) \pi_2 &= (b + 1 - 2c, -c + \frac{c(c-1)}{2} D), \\
	(b, c) \pi_3 &= (b, c+1).
	\end{align*}
	
	Here are a few intermediate calculations.
	\begin{enumerate}
	\item For all $n$, $(b,c) \pi_1^n = \begin{cases}
		(b + 2nc, c + n\frac{c(c-1)}{2} D) & \text{ if $b$ is even}, \\
		(b + 2nc - 2n, c + n\frac{c(c-1)}{2} D) & \text{ if $b$ is odd}. \\
	\end{cases}$
	\item $(b,c) \pi_2^2 = (b+2, c(1+D))$
	\item $(b,c) \pi_1 \pi_2 = \begin{cases}
	(b+1, -c) & \text{ if $b$ is even }, \\
	(b-1, -c) & \text{ if $b$ is odd. }
	\end{cases}$.
	\end{enumerate}
	From these, it is routine to show that there is a well-defined epimorphism from $\Lambda_1$ to $\langle \pi_1, \pi_2, \pi_3 \rangle$ sending each $\s_i$ to $\pi_i$. Lemma~\ref{lem:perm-rep} and Corollary \ref{cor:atomic-reg-ch}
	then finish the proof.	

	\end{proof}
	
	\begin{theorem} \label{t:4-8-2b}
	The group $\Lambda_2$ is the automorphism group of an atomic chiral polytope of type $\{4, 8, 2^{\beta}\}$.
	\end{theorem}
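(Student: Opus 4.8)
The plan is to follow the same template used for $\Lambda_1$ in Theorem~\ref{t:m-2m-ma}: exhibit an explicit permutation representation of the abstract group $\Lambda_2$, use Lemma~\ref{lem:perm-rep} to confirm that $\Lambda_2$ is the rotation group of a tight chiral polytope of type $\{4,8,2^\beta\}$, and then invoke Corollary~\ref{cor:atomic-reg-ch} for atomicity. The point is that $\Lambda_2$ is the ``power-of-two'' analogue of $\Lambda_1$: its facet is the regular polyhedron $\{4,8\}$ (playing the role of $\{m,2m\}$) and its vertex-figure is the atomic chiral polyhedron $\calP(8,2^\beta)_\epsilon$ (playing the role of $\calP(2m,m^\alpha)_k$), since relabelling $\s_1\mapsto\s_2,\ \s_2\mapsto\s_3$ turns the table relation $\s_2\s_1^2=\s_1^2\s_2^{1+\epsilon 2^{\beta-2}}$ into the $\Lambda_2$ vertex-figure relation. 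Accordingly I would set $D=\epsilon 2^{\beta-2}$ and define permutations of $\Z_8\times\Z_{2^\beta}$ by the same formulas as in Theorem~\ref{t:m-2m-ma}: $(b,c)\pi_3=(b,c+1)$, $(b,c)\pi_2=(b+1-2c,\,-c+\frac{c(c-1)}{2}D)$, and $\pi_1$ sending $(b,c)$ to $(b+2c,\,c+\frac{c(c-1)}{2}D)$ when $b$ is even and to $(b+2c-2,\,c+\frac{c(c-1)}{2}D)$ when $b$ is odd.

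The key step is to verify that $\pi_1,\pi_2,\pi_3$ satisfy the defining relations of $\Lambda_2$, so that $\s_i\mapsto\pi_i$ extends to an epimorphism. Hypotheses (a)--(d) of Lemma~\ref{lem:perm-rep} are immediate from the formulas: $\pi_1$ fixes $(0,0)$, the point $(1,0)$ is first fixed by $\pi_1^4$, and $\pi_2,\pi_3$ act as required on the appropriate coordinates. The substantive relations are the orders $\pi_1^4=\pi_2^8=\pi_3^{2^\beta}=\id$; the relations $(\pi_1\pi_2)^2=(\pi_2\pi_3)^2=(\pi_1\pi_2\pi_3)^2=\id$, all of which reduce, as for $\Lambda_1$, to the single computation $(b,c)\pi_1\pi_2=(b\pm1,-c)$ (sign by the parity of $b$), whence applying $\pi_1\pi_2$ again flips that parity and undoes the shift; the facet relation $\pi_2^2\pi_1=\pi_1\pi_2^2$; and the vertex-figure relation $\pi_3\pi_2^2=\pi_2^2\pi_3^{1+D}$. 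For the last two the useful intermediate identity is $(b,c)\pi_2^2=(b+2,\,c(1+D))$, from which the vertex relation is essentially immediate (both sides send $(b,c)$ to $(b+2,(c+1)(1+D))$). Once these hold, Lemma~\ref{lem:perm-rep} yields $|\Lambda_2|=4\cdot8\cdot2^\beta$, the intersection condition, and that the resulting polytope is tight and chiral with vertex-figures isomorphic to $\calP(8,2^\beta)_\epsilon$.

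For atomicity I would check the three conditions of Corollary~\ref{cor:atomic-reg-ch}. Condition (1) holds because the vertex-figure $\calP(8,2^\beta)_\epsilon$ is atomic by Theorem~\ref{t:atomic-polyhedra}, and condition (2) holds since $q=8$ is even. For condition (3), that $\langle\s_1\rangle$ is core-free, note that $p=4$, so the only nontrivial subgroups of $\langle\s_1\rangle$ are $\langle\s_1\rangle$ and $\langle\s_1^2\rangle$, and normality of either in $\G(\calP)$ would in particular make it normal in the tight regular facet group $\langle\s_1,\s_2\rangle$. Normality of $\langle\s_1\rangle$ would force the facet to have type $\{p,2\}$ by Lemma~\ref{l:NormalThen2}, contradicting type $\{4,8\}$; and using $\s_2\s_1\s_2^{-1}=\s_1^{-1}\s_2^{-2}$ (from $(\s_1\s_2)^2=\id$) together with the facet relation $\s_1^{-1}\s_2^2=\s_2^2\s_1^{-1}$ one computes $\s_2\s_1^2\s_2^{-1}=\s_1^{-2}\s_2^{-4}$, which lies in $\langle\s_1\rangle$ only if $\s_2^4=\id$, impossible since $\s_2$ has order $8$. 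Hence $\langle\s_1^2\rangle$ is not normal either, so $\langle\s_1\rangle$ is core-free and $\calP$ is atomic.

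The main obstacle is the relation-checking in the second step, and specifically the $2$-adic bookkeeping it requires. In the $\Lambda_1$ case the vanishing of cross terms rested on $m$ being an odd prime (coprimality of $2$ with $m^\alpha$); here every modulus is a power of $2$, so instead one must track divisibility by powers of $2$ directly---using $4D\equiv 0$ and $2D\equiv 2^{\beta-1}\pmod{2^\beta}$, $D^2\equiv 0\pmod{2^\beta}$ for all $\beta\ge 5$, and $c(c-1)D\equiv 0\pmod 8$ and $2cD\equiv0\pmod 8$---all while respecting the even/odd case splits built into $\pi_1$ and $\pi_2$. These estimates are routine once isolated, but they are where an error would most easily creep in, and they are the reason the verification cannot simply be quoted verbatim from Theorem~\ref{t:m-2m-ma}.
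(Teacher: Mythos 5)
Your proposal is correct and takes essentially the same approach as the paper: an explicit permutation representation on $\Z_8 \times \Z_{2^{\beta}}$, verified against the defining relations of $\Lambda_2$ and then fed into Lemma~\ref{lem:perm-rep} and Corollary~\ref{cor:atomic-reg-ch}. In fact, your formulas (with $D = \epsilon 2^{\beta-2}$ and the quadratic correction $\frac{c(c-1)}{2}D$ carried over from Theorem~\ref{t:m-2m-ma}) define exactly the same permutations as the paper's parity-split formulas with $D = 2^{\beta-3}$, since $c(c-2)2^{\beta-3} \equiv 0 \pmod{2^{\beta}}$ for $c$ even and $(c^2-1)2^{\beta-3} \equiv 0 \pmod{2^{\beta}}$ for $c$ odd, and your explicit check that $\langle \s_1 \rangle$ is core-free (via Lemma~\ref{l:NormalThen2} and the computation $\s_2\s_1^2\s_2^{-1}=\s_1^{-2}\s_2^{-4}$) spells out a step the paper leaves implicit when citing Corollary~\ref{cor:atomic-reg-ch}.
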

	
	\begin{proof}
	Let $D = 2^{\beta-3}$. 
	For $b \in \Z_{8}$, we define 
	\[ \overline{b} = \begin{cases}
	b & \textrm{ if $b$ is even}, \\
	b-2 & \textrm{ if $b$ is odd.}
	\end{cases} \]
	Then we define permutations
	of $\Z_{8} \times \Z_{2^{\beta}}$ as follows:
	\begin{align*}
	(b, c) \pi_1 &= \begin{cases}
	(\overline{b} - 2c, c(1+D\epsilon)) & \textrm{ if $c$ is even,} \\
	(\overline{b} - 2c + 4, c(1 - D \epsilon) + D\epsilon) & \textrm{ if $c$ is odd,} \\
	\end{cases} \\
	(b, c) \pi_2 &= \begin{cases}
	(b + 1 - 2c, c(-1 + D \epsilon)) & \textrm{ if $c$ is even,} \\
	(b + 1 - 2c, c(-1 - D \epsilon) + D\epsilon) & \textrm{ if $c$ is odd,} \\
	\end{cases} \\
	(b, c) \pi_3 &= (b, c+1).
	\end{align*}
	We note that
	\[ (b,c) \pi_2^2 = \begin{cases}
	(b+2, c(1-2D\epsilon) & \text{ if $c$ is even}, \\
	(b+2, c(1+2D\epsilon) & \text{ if $c$ is odd}.
	\end{cases} \]
	Then it is routine to show that there is a well-defined epimorphism from $\Lambda_2$ to $\langle \pi_1, \pi_2, \pi_3 \rangle$ sending each $\s_i$ to $\pi_i$. Lemma~\ref{lem:perm-rep} and Corollary \ref{cor:atomic-reg-ch}
	then finish the proof.	

	\end{proof}

	\begin{theorem} \label{t:4-2b-2b}
	The group $\Lambda_3$ is the automorphism group of an atomic chiral polytope of type $\{4, 2^{\beta-1}, 2^{\beta}\}$.
	\end{theorem}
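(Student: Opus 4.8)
The plan is to follow the template established in Theorems~\ref{t:m-2m-ma} and \ref{t:4-8-2b}: exhibit an explicit permutation group on $\Z_{2^{\beta-1}} \times \Z_{2^{\beta}}$ onto which $\Lambda_3$ maps, where the pair $(b,c)$ encodes the coset $\langle \s_1 \rangle \s_2^b \s_3^c$, and then invoke Lemma~\ref{lem:perm-rep} and Corollary~\ref{cor:atomic-reg-ch}. Setting $D = 2^{\beta-3}$, the three extra relations of $\Lambda_3$ read $\s_2^{-1}\s_1 = \s_1^{-1}\s_2^{1+2D}$ (the regular facet of type $\{4, 2^{\beta-1}\}$) together with $\s_3^{-1}\s_2 = \s_2^{-1+2D}\s_3^{-3+\epsilon 2D}$ and $\s_3\s_2^{-1} = \s_2^{1+2D}\s_3^{3+\epsilon 2D}$ (the chiral vertex-figure $\calP(2^{\beta-1}, 2^{\beta})_{\epsilon}$ of Table~\ref{tab:atomic-polyhedra}). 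The permutation $\pi_3$ is forced to be $(b,c)\pi_3 = (b, c+1)$, and $\pi_2$ is obtained exactly as in the computation realizing $\calP(2^{\beta-1},2^{\beta})_{\epsilon}$ inside Theorem~\ref{t:2b-2bp1-2b}, written as a case-split on the parity of $c$ with a $D\epsilon$-twist in each coordinate, arranged so that $(b,0)\pi_2 = (b+1,0)$.

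The only genuinely new ingredient is $\pi_1$, which must realize the regular facet. A direct coset computation using $(\s_1\s_2)^2 = \id$ and the facet relation gives the $c=0$ action $(b,0)\pi_1 = (\ol b, 0)$, where
\[ \ol b = \begin{cases} -b(1+D) & \text{if $b$ is even}, \\ -b - (b-1)D & \text{if $b$ is odd.} \end{cases} \]
For general $c$ one moves $\s_1$ leftward through $\s_3^c$ by iterating the relation $\s_3\s_1 = \s_1\s_2^2\s_3$ from (\ref{eq:magicalRelation}) and then through $\s_2^b$, absorbing the $\s_2$-powers that the vertex-figure relations produce. This yields a formula for $\pi_1$ that is again a case-split on the parity of $c$, built from $\ol b$ plus linear corrections of the form $2c + D\epsilon c$ in the first coordinate and a $c(D\pm 1)$-type term in the second, paralleling the $\pi_1$ of Theorem~\ref{t:2b-2bp1-2b} but with the quadratic-in-$b$ twist replaced by the linear $\ol b$ above.

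With the permutations in hand I would record the same short list of intermediate identities as in the neighboring proofs: $4D \equiv 2^{\beta-1}$ and $8D \equiv 0 \pmod{2^{\beta}}$, $D^2 \equiv 0 \pmod{2^{\beta-1}}$, the value of $\ol{\ol b}$, the products $\pi_1\pi_2$ and $\pi_1^2$, and $\pi_2^{2^{\beta-1}} = \pi_3^{2^{\beta}} = \id$. From $D^2 \equiv 0$ and $4D \equiv 0 \pmod{2^{\beta-1}}$ one checks $\pi_1^4 = \id$ while no proper power of $\pi_1$ fixes $(1,0)$, so $\pi_1$ has order exactly $4$; this supplies hypotheses (a) and (b) of Lemma~\ref{lem:perm-rep}, and (c), (d) are immediate from the definitions of $\pi_2$ and $\pi_3$. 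Verifying $(\pi_1\pi_2)^2 = (\pi_1\pi_2\pi_3)^2 = (\pi_2\pi_3)^2 = \id$ and the three extra relations of $\Lambda_3$ is then routine but lengthy. Lemma~\ref{lem:perm-rep} gives $\Lambda_3 \cong \langle \pi_1, \pi_2, \pi_3\rangle$ and that it is the rotation group of a tight chiral $4$-polytope of type $\{4, 2^{\beta-1}, 2^{\beta}\}$; Corollary~\ref{cor:atomic-reg-ch} then yields atomicity, since the vertex-figure is the atomic polyhedron $\calP(2^{\beta-1},2^{\beta})_{\epsilon}$, the middle entry $q = 2^{\beta-1}$ is even, and $\langle \s_1 \rangle$ is core-free (visible from the action of $\pi_1^2$).

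The hard part will be pinning down the exact $c$-dependence of $\pi_1$ so that the regular-facet relation and the two chiral vertex-figure relations hold simultaneously: these two layers are coupled through $\s_3\s_1 = \s_1\s_2^2\s_3$, and getting the $D\epsilon$-twist terms and the parity cases mutually consistent—in particular keeping $\pi_1$ of order exactly $4$ rather than $2$—is where the bookkeeping is most delicate. Once the correct $\pi_1$ is guessed, the verifications are mechanical and mirror Theorems~\ref{t:2b-2bp1-2b} and \ref{t:8-2bp1-2b}.
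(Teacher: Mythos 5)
Your plan coincides with the paper's own strategy: encode the cosets of $\langle \s_1 \rangle$ as pairs in $\Z_{2^{\beta-1}} \times \Z_{2^{\beta}}$, exhibit permutations $\pi_1, \pi_2, \pi_3$ satisfying the relations of $\Lambda_3$, and close with Lemma~\ref{lem:perm-rep} and Corollary~\ref{cor:atomic-reg-ch}. Much of what you do specify is right: your $c = 0$ slice of $\pi_1$ agrees exactly with the paper's $\ol{b}$ (the paper takes $\ol{b} = b(-1-D)$ for $b$ even and $b(-1-D)+D$ for $b$ odd, which is your formula), your $\pi_3$ is the paper's, and your atomicity argument is essentially sound --- atomic vertex-figure $\calP(2^{\beta-1},2^{\beta})_{\epsilon}$, even $q$, and core-freeness of $\langle \s_1 \rangle$, the last following because $\pi_1^2$ visibly fails to commute with $\pi_2$ (so $\langle \s_1^2 \rangle$ is not normal) together with Lemma~\ref{l:NormalThen2} (so $\langle \s_1 \rangle$ is not normal, as $q \neq 2$).

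The gap is that the entire technical content of the theorem is exactly what you defer: the formulas for $\pi_1$ and $\pi_2$ at general $(b,c)$, and the verification that they satisfy the relations of $\Lambda_3$. This cannot be treated as bookkeeping to be filled in later, because whether such a representation of full order $4 \cdot 2^{\beta-1} \cdot 2^{\beta}$ exists is precisely the issue at stake --- the groups $\Lambda_4, \dots, \Lambda_8$ in the same table arise from equally legitimate amalgamations and all collapse (Theorems~\ref{t:no-L4-L5}--\ref{t:no-L8}). Moreover, your predicted shape for the permutations, transplanted from Theorem~\ref{t:2b-2bp1-2b}, is not what works: in the paper's proof the first coordinate of $(b,c)\pi_1$ is $\ol{b} - cD$ for $c$ even (no $2c$ term and no $\epsilon$-dependence at all in that coordinate), with second coordinate $c(-1+D\epsilon)$; and $\pi_2$ cannot be taken ``exactly'' from Theorem~\ref{t:2b-2bp1-2b}, since there the pairs live in $\Z_{2^{\beta}} \times \Z_{2^{\beta-1}}$ and $\langle \s_2, \s_3 \rangle$ realizes the dual polyhedron $\calP(2^{\beta}, 2^{\beta-1})_{\epsilon_2}$ rather than $\calP(2^{\beta-1}, 2^{\beta})_{\epsilon}$. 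So the ``delicate bookkeeping'' you flag is where your proposal stops and where the paper's proof actually lives; note also that the paper first rewrites the three extra relations (e.g.\ the facet relation as $\s_2 \s_1^2 = \s_1^2 \s_2^{1+2D}$, and the vertex-figure relations multiplied through by $\s_2$) to make that verification tractable.
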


	\begin{proof}
	Let $D = 2^{\beta-3}$. 
	For $b \in \Z_{2^{\beta-1}}$, we define 
	\[ \overline{b} = \begin{cases}
	b(-1-D) & \textrm{ if $b$ is even}, \\
	b(-1-D)+D & \textrm{ if $b$ is odd.}
	\end{cases} \]
	Then we define permutations
	of $\Z_{2^{\beta-1}} \times \Z_{2^{\beta}}$ as follows:
	\begin{align*}
	(b, c) \pi_1 &= \begin{cases}
	(\overline{b} - cD, c(-1+D\epsilon)) & \textrm{ if $c$ is even,} \\
	(\overline{b} - (c-1)D + 2, (1-c)(1+D\epsilon) + 1) & \textrm{ if $c$ is odd,} \\
	\end{cases} \\
	(b, c) \pi_2 &= \begin{cases}
	(b + 1 + cD, c(1+D\epsilon)) & \textrm{ if $c$ is even,} \\
	(b - 1 + (c-1)D, (c-1)(1-D\epsilon)-1 & \textrm{ if $c$ is odd,} \\
	\end{cases} \\
	(b, c) \pi_3 &= (b, c+1).
	\end{align*}
	Here are some intermediate calculations:	
	\begin{enumerate}
	\item If $a$ is even, then $\ol{a+b} = \ol{a} + \ol{b}$.
	\item $\overline{\overline{b}} \equiv b(1+2D)$ (mod $2^{\beta-1}$).
	\item $(b, c) \pi_1^2 = \begin{cases}
	(b(1+2D), c(1-2D\epsilon)) & \textrm{ if $c$ is even,} \\
	(b(1+2D)-2D, (c-1)(1+2D\epsilon)+1) & \textrm{ if $c$ is odd,} \\
	\end{cases}$
	\item $(b, c) \pi_2^8 = \begin{cases}
	(b+8, c), & \textrm{ if $c$ is even}, \\
	(b-8, c-16), & \textrm{ if $c$ is odd.}
	\end{cases}$
	\item $(b,c) \pi_1 \pi_2 = (\ol{b}+1, -c)$
	\end{enumerate}
	Let us rewrite the first extra relation of $\Lambda_3$ as $\s_2 \s_1^2 = \s_1^2 \s_2^{1+2D}$ (see the proof of
	Theorem \ref{t:atomic-polyhedra}, noting that $\s_1^{-1} = \s_1^3$). Similarly, we rewrite the second extra
	relation by multiplying both sides on the left by $\s_2$, and the third relation by multiplying both sides on
	the right by $\s_2$. 
	Then one can check that there is a well-defined epimorphism from $\Lambda_3$ to $\langle \pi_1, \pi_2, \pi_3 \rangle$ 
	sending each $\s_i$ to $\pi_i$. Lemma~\ref{lem:perm-rep} and Corollary \ref{cor:atomic-reg-ch}
	then finish the proof.	
	\end{proof}
	
	In order to rule out the remaining cases, we will use the following lemma.
	
	\begin{lemma} \label{l:forced-rel}
	Suppose that $\Lambda$ is a quotient of $[p, q, r]^+$ satisfying
	\begin{align*}
	\s_2^2 \s_1 &= \s_1 \s_2^{2t}, \\
	\s_3^{-1} \s_2 &= \s_2^a \s_3^c, \\
	\s_3 \s_2^{-1} &= \s_2^b \s_3^{-c},
	\end{align*}
	and suppose that $b$ is odd. Then
	\[ \s_2^{2t+2} \s_3 = \s_3 \s_2^{-t(b+1)-1+a}. \]
	\end{lemma}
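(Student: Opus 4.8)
The plan is to prove the identity by a direct calculation inside the quotient $\Lambda$, combining the three hypothesized relations with the universal rotary relations~(\ref{eq:1stRelation}) and the ``magical relation'' $\s_3\s_1 = \s_1\s_2^2\s_3$ of~(\ref{eq:magicalRelation}) (see Lemma~\ref{l:rels1s3}). Both sides of the conclusion lie in $\langle\s_2,\s_3\rangle$, yet the exponent $2t$ is governed by the facet relation $\s_2^2\s_1 = \s_1\s_2^{2t}$, so the argument must move back and forth between the vertex generators $\s_2,\s_3$ and the facet generator $\s_1$.

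First I would simplify the product $\s_2^2\s_3$ using the vertex relations alone. From $(\s_2\s_3)^2 = \id$ we have $\s_2\s_3 = \s_3^{-1}\s_2^{-1}$ and $\s_2\s_3\s_2 = \s_3^{-1}$, while inverting $\s_3\s_2^{-1} = \s_2^b\s_3^{-c}$ gives $\s_2\s_3^{-1} = \s_3^c\s_2^{-b}$. These combine to the clean relation
\[ \s_2^2\s_3 = \s_2\s_3^{-1}\s_2^{-1} = \s_3^c\s_2^{-b-1}. \]
On the facet side, the magical relation gives $\s_1^{-1}\s_3\s_1 = \s_2^2\s_3$, so together with the displayed identity I obtain the bridge $\s_1^{-1}\s_3\s_1 = \s_3^c\s_2^{-b-1}$; iterating the facet relation also yields $\s_1^{-1}\s_2^{2t+2}\s_1=\cdots$, whence $\s_2^{2t+2}\s_3 = \s_1^{-2}\s_3\s_1^2$. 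The hypothesis that $b$ is odd now enters: since conjugation by $\s_1$ multiplies the exponent of every \emph{even} power of $\s_2$ by $t$, and $b+1$ is even, one has $\s_1^{-1}\s_2^{-(b+1)}\s_1 = \s_2^{-t(b+1)}$. Conjugating the bridge relation by $\s_1$ (using that conjugation distributes over the product $\s_3^c\s_2^{-b-1}$) then produces
\[ \s_2^{2t+2}\s_3 = (\s_2^2\s_3)^c\,\s_2^{-t(b+1)}, \]
in which the desired coefficient $-t(b+1)$ has already appeared. An alternative, purely vertex-theoretic route—substituting $\s_3^c = \s_2^{-a}\s_3^{-1}\s_2$ from $\s_3^{-1}\s_2 = \s_2^a\s_3^c$ and then $\s_3^{-1}=\s_2\s_3\s_2$—instead yields the milestone $\s_2^{2t+2}\s_3 = \s_2^{2t-a+1}\,\s_3\,\s_2^{1-b}$, isolating a single $\s_3$ with a power of $\s_2$ on each side.

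The main obstacle is the final collapse: reducing $(\s_2^2\s_3)^c$ to $\s_3\s_2^{a-1}$ (equivalently, transporting the leading power of $\s_2$ in the milestone across the surviving $\s_3$ so that nothing is left on its left), which is exactly what contributes the summand $+a$ and completes the exponent to $-t(b+1)-1+a$. The difficulty is intrinsic, because the conjugate $\s_3^{-1}\s_2^2\s_3 = \s_3^{c-1}\s_2^{-b-1}$ is \emph{not} a power of $\s_2$; hence neither the vertex relations nor the facet relation collapses the mixed term on its own, and only their interaction—mediated by the magical relation and constrained by the parity of $b$ (and the oddness of $a$ for the relevant vertex figures of Table~\ref{tab:atomic-polyhedra})—forces the cross-terms in $\s_3$ to cancel. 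I expect the delicate part to be the exponent bookkeeping modulo the orders of $\s_2$ and $\s_3$: ensuring that every power of $\s_2$ routed through a conjugation by $\s_1$ is even, so that the facet relation applies and delivers precisely the coefficient $-t(b+1)$, and that the contribution of $a$ enters with the correct sign.
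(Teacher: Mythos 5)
Your calculations are correct as far as they go, and they follow essentially the same route as the paper's proof: conjugate by $\s_1$, use the relation $\s_3 \s_1 = \s_1 \s_2^2 \s_3$, and exploit the parity of $b$ so that the facet relation applies to $\s_2^{-(b+1)}$, landing on the correct identity $\s_2^{2t+2}\s_3 = (\s_2^2\s_3)^c\,\s_2^{-t(b+1)}$. But the proof is not finished: you stop there and declare the remaining reduction $(\s_2^2\s_3)^c = \s_3\s_2^{a-1}$ to be an ``intrinsic'' obstacle, requiring some unspecified further interaction of the relations and even the oddness of $a$. That is a genuine gap, and the diagnosis of the difficulty is wrong: the collapse follows in one line from facts you have already written down. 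Your first display gives $\s_2^2\s_3 = \s_2\s_3^{-1}\s_2^{-1}$, i.e.\ $\s_2^2\s_3$ is a conjugate of $\s_3^{-1}$ by $\s_2$, so its $c$-th power telescopes:
\[ (\s_2^2\s_3)^c = (\s_2\s_3^{-1}\s_2^{-1})^c = \s_2\s_3^{-c}\s_2^{-1}. \]
Meanwhile the second hypothesis $\s_3^{-1}\s_2 = \s_2^a\s_3^c$ rearranges to $\s_3^{-c} = \s_2^{-1}\s_3\s_2^{a}$, that is, $\s_2\s_3^{-c} = \s_3\s_2^{a}$. Combining, $(\s_2^2\s_3)^c = \s_3\s_2^{a-1}$, and substituting into your milestone gives $\s_2^{2t+2}\s_3 = \s_3\s_2^{a-1}\s_2^{-t(b+1)} = \s_3\s_2^{-t(b+1)-1+a}$, as desired.

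Note that no hypothesis on $a$ is used anywhere (the lemma assumes none), and no additional interplay between the facet relation and the vertex relations is needed beyond what you already invoked. This final step is exactly how the paper concludes: it writes $\s_1^{-1}\s_3^c\s_1 = \s_2\s_3^{-c}\s_2^{-1}$ (the same telescoped conjugation, since $\s_1^{-1}\s_3\s_1 = \s_2^2\s_3 = \s_2\s_3^{-1}\s_2^{-1}$) and then applies the second relation in the form $\s_2\s_3^{-c} = \s_3\s_2^a$. Your alternative ``purely vertex-theoretic'' milestone $\s_2^{2t+2}\s_3 = \s_2^{2t-a+1}\s_3\s_2^{1-b}$ is also correct but is a dead end for the reason you suspect, so it should be discarded rather than presented as a parallel route.
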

	
	\begin{proof}
	First, we note that
	\[ \s_1^{-1} \s_2^2 \underline{\s_3 \s_1} = \s_1^{-1} \underline{\s_2^2 \s_1} \s_2^2 \s_3 = \s_2^{2t+2} \s_3. \]
	On the other hand,
	\begin{align*}
	\s_1^{-1} \underline{\s_2^2 \s_3} \s_1 &= \s_1^{-1} \underline{\s_2 \s_3^{-1}} \s_2^{-1} \s_1 \\
	&= \s_1^{-1} \s_3^c \underline{\s_2^{-b-1} \s_1} \\
	&= \underline{\s_1^{-1} \s_3^c \s_1} \s_2^{-t(b+1)} \\
	&= \underline{\s_2 \s_3^{-c}} \s_2^{-t(b+1)-1} \\
	&= \s_3 \s_2^{-t(b+1)-1+a}.
	\end{align*}
	\end{proof}
	
	\begin{theorem} \label{t:no-L4-L5}
	The groups $\Lambda_4$ and $\Lambda_5$ are not the automorphism groups of tight chiral $4$-polytopes.
	\end{theorem}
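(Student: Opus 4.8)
The plan is to handle both groups with the single tool provided by Lemma~\ref{l:forced-rel}, exploiting that $\Lambda_4$ and $\Lambda_5$ share the same regular facet of type $\{4, 2^\beta\}$, whose relation is $\s_2^{-1}\s_1 = \s_1^{-1}\s_2^{1+2^{\beta-1}}$. First I would put this facet relation into the form $\s_2^2\s_1 = \s_1\s_2^{2t}$ required by the lemma. Since $\s_1$ has order $4$ and $(\s_1\s_2)^2 = \id$ gives $\s_1\s_2\s_1 = \s_2^{-1}$ and $\s_2\s_1 = \s_1^{-1}\s_2^{-1}$, a two-line computation yields $\s_1^{-1}\s_2^2\s_1 = \s_2^{-2-2^{\beta-1}}$, so that $\s_2^2\s_1 = \s_1\s_2^{2t}$ with $t = -1-2^{\beta-2}$. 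In both cases the strategy is then to feed the hypotheses of Lemma~\ref{l:forced-rel} and show that the forced relation collapses to $\s_2^{2^{\beta-1}} = \id$, which is impossible because $\s_2$ has order $q = 2^\beta$.

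For $\Lambda_5$ the two vertex-figure relations $\s_3^{-1}\s_2 = \s_2^{3+\epsilon 2^{\beta-2}}\s_3^{1-2^{\beta-2}}$ and $\s_3\s_2^{-1} = \s_2^{-3+\epsilon 2^{\beta-2}}\s_3^{-1+2^{\beta-2}}$ are already in the shape demanded by the lemma, with $a = 3+\epsilon 2^{\beta-2}$, $b = -3+\epsilon 2^{\beta-2}$ and $c = 1-2^{\beta-2}$; note that $b$ is odd since $\beta \geq 5$. Plugging $t$, $a$, $b$ into Lemma~\ref{l:forced-rel} and reducing modulo $2^\beta$ (using $2^{2\beta-4} \equiv 0$), I expect $2t+2 = -2^{\beta-1}$ on the left and the right-hand exponent $-t(b+1)-1+a = (\epsilon-1)2^{\beta-1} \equiv 0$, so the conclusion reads $\s_2^{-2^{\beta-1}}\s_3 = \s_3$, giving the contradiction.

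The group $\Lambda_4$ is the one that needs extra work, because its vertex-figure relation $\s_2\s_3^2 = \s_3^2\s_2^{1+\epsilon 2^{\beta-2}}$ is written on $\s_3^2$ rather than on a single $\s_3$, so it is not directly in the form of Lemma~\ref{l:forced-rel}. The key step here is to rewrite it. Using only $(\s_2\s_3)^2 = \id$ one obtains the universal identity $\s_3^{-1}\s_2\s_3 = \s_3^{-2}\s_2^{-1}$, and then the $\s_3^2$-relation (which says conjugation by $\s_3^2$ sends $\s_2$ to $\s_2^{w}$ with $w = 1+\epsilon 2^{\beta-2}$) lets me push the factor $\s_3^{-2}$ to the right. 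This produces $\s_3^{-1}\s_2 = \s_2^{-w}\s_3^{-3}$ and, symmetrically, $\s_3\s_2^{-1} = \s_2^{w^{-1}}\s_3^{3}$, i.e. the required form with $a = -w$, $b = w^{-1}$ and $c = -3$; here $b$ is again odd. Applying Lemma~\ref{l:forced-rel} and simplifying with $w^{-1} \equiv 1-\epsilon 2^{\beta-2}$ should once more give right-hand exponent $-t(b+1)-1+a = (1-\epsilon)2^{\beta-1} \equiv 0$ and hence the same collapse $\s_2^{2^{\beta-1}} = \id$.

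The main obstacle is the $\Lambda_4$ rewriting: extracting the single-$\s_3$ relations in the exact normal form $\s_2^a\s_3^c$ and $\s_2^b\s_3^{-c}$ from a relation stated on $\s_3^2$, and verifying that the resulting $b$ is odd so the lemma applies. After that, the only remaining care is the modular bookkeeping of the exponents — in particular confirming that $-t(b+1)-1+a$ vanishes modulo $2^\beta$ for both signs of $\epsilon$, which relies on $2^{2\beta-4} \equiv 0 \pmod{2^\beta}$ for $\beta \geq 5$.
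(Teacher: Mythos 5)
Your proposal is correct and follows essentially the same route as the paper: derive $\s_2^2\s_1 = \s_1\s_2^{2t}$ from the shared facet relation, feed the vertex-figure relations into Lemma~\ref{l:forced-rel}, and observe that the forced relation collapses to $\s_2^{2^{\beta-1}} = \id$, contradicting the required order $2^{\beta}$ of $\s_2$. Your explicit rewriting of the $\Lambda_4$ relation $\s_2\s_3^2 = \s_3^2\s_2^{w}$ into the single-$\s_3$ form $\s_3^{-1}\s_2 = \s_2^{-w}\s_3^{-3}$, $\s_3\s_2^{-1} = \s_2^{w^{-1}}\s_3^{3}$ is a correct fill-in of a step the paper leaves implicit (it relies there on the equivalence of presentations established in the proof of Theorem~\ref{t:atomic-polyhedra}), and your value $t = -1-2^{\beta-2}$ agrees with the paper's $t = -1+2^{\beta-2}$ modulo $2^{\beta-1}$, which is all that matters in the final exponent.
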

	
	\begin{proof}
	In $\Lambda_4$ and $\Lambda_5$, the relation $\s_2^{-1} \s_1 = \s_1^{-1} \s_2^{1+2^{\beta-1}}$ is equivalent to $\s_2 \s_1^{-1} = \s_1 \s_2^{-1+2^{\beta-1}}$ (see \cite[Proposition 3.1]{tight3}), and this implies that
	$\s_2^2 \s_1 = \s_2 \s_1^{-1} \s_2^{-1} = \s_1 \s_2^{2(-1+2^{\beta-2})}$. Then Lemma~\ref{l:forced-rel} proves that both groups
	satisfy $\s_2^{2^{\beta-1}} \s_3 = \s_3$, and so $\s_2$ does not have order $2^{\beta}$ as required.
	\end{proof}
	
	\begin{theorem} \label{t:no-L6-L7}
	The groups $\Lambda_6$ and $\Lambda_7$ are not the automorphism groups of tight chiral $4$-polytopes.
	\end{theorem}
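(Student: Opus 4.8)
The plan is to run the same kind of argument used for Theorem~\ref{t:no-L4-L5}, namely to extract from the defining relations a consequence that forces a generator of $\G$ to have smaller order than its type entry. The target is worth isolating first: since every word in $\s_1,\s_2,\s_3$ can be sorted into the normal form $\s_1^i\s_2^j\s_3^k$ using $\s_2\s_1=\s_1^{-1}\s_2^{-1}$, $\s_3\s_2=\s_2^{-1}\s_3^{-1}$, and the magical relation \eqref{eq:magicalRelation} $\s_3\s_1=\s_1\s_2^2\s_3$, we always have $\G=\langle\s_1\rangle\langle\s_2\rangle\langle\s_3\rangle$ and hence $|\G|\le pqr$. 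If equality held, the normal form would be unique, which is exactly the intersection condition \eqref{eq:eqIntCond}; by Proposition~\ref{prop:tight-group-factoring} this would make $\G$ the rotation group of a tight chiral $4$-polytope (chiral because its vertex-figure is chiral), contradicting the theorem. So the whole content of the proof is to show that $\G$ collapses, i.e. $|\G|<pqr$, and the natural candidate collapse is $\s_2^{2^{\beta-1}}=\id$, which would make $\s_2$ fail to have order $2^{\beta}$.

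First I would put the relations into workable shape. Using $\s_2^2\s_1=\s_2\cdot\s_1^{-1}\s_2^{-1}$ and the facet relation $\s_2\s_1^{-1}=\s_1\s_2^{3-\epsilon_1 2^{\beta-1}}$ exactly as in the proof of Theorem~\ref{t:atomic-polyhedra}, one gets $\s_2^2\s_1=\s_1\s_2^{2t}$ with $t=1-\epsilon_1 2^{\beta-2}$. For $\Lambda_7$ the vertex-figure relations are already of the form $\s_3^{-1}\s_2=\s_2^a\s_3^c$, $\s_3\s_2^{-1}=\s_2^b\s_3^{-c}$ required by Lemma~\ref{l:forced-rel}, with $a=3+\epsilon_2 2^{\beta-2}$, $b=-3+\epsilon_2 2^{\beta-2}$ (odd), $c=1-2^{\beta-2}$; for $\Lambda_6$ the relation $\s_2\s_3^2=\s_3^2\s_2^{1+\epsilon_2 2^{\beta-2}}$ must first be converted into this shape by combining it with $(\s_2\s_3)^2=\id$. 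The hard part, and the reason these two cases need their own theorem, is that a single application of Lemma~\ref{l:forced-rel} degenerates here. For the facets of type $\{4,2^\beta\}$ one has $t\equiv-1\pmod{2^{\beta-2}}$, and the lemma yields $\s_2^{2^{\beta-1}}\s_3=\s_3$, i.e.\ the collapse. But for the facets of type $\{2^{\beta-1},2^\beta\}$ we instead have $t\equiv1\pmod{2^{\beta-2}}$, and a short computation (reducing $2^{2\beta-4}\equiv0\pmod{2^{\beta}}$) shows that \emph{both} exponents in the conclusion $\s_2^{2t+2}\s_3=\s_3\s_2^{\,a-t(b+1)-1}$ equal $4-\epsilon_1 2^{\beta-1}$, so the forced relation is vacuous. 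Thus the first-order step gives nothing, and the coincidence of exponents must be broken.

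To break it I would push to second order, feeding the magical relation back in: conjugating $\s_2^2\s_3$ (or $\s_2^2\s_3^2$) by $\s_1$ a second time and simplifying with $\s_1^{-1}\s_2^2\s_1=\s_2^{2t}$ and $\s_1^{-1}\s_3\s_1=\s_2^2\s_3$ brings in the genuinely chiral correction terms $\s_2^{\pm 2^{\beta-2}}$ that cancelled at first order; I expect these to survive and force $\s_2^{2^{\beta-1}}=\id$. For $\Lambda_6$ the same collapse should instead be read off from the trailing $\s_3$-power that appears when $\s_2^2$ is conjugated by $\s_3$ (reflecting the smaller vertex-figure $\{2^\beta,8\}$). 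The main obstacle is purely the bookkeeping: because $\Lambda_6$ and $\Lambda_7$ have different vertex-figures, the second-order terms propagate differently, and one must track the reductions modulo $2^{\beta}$ very carefully to verify that the cancellation really is only first-order. Once any such relation $\s_2^{2^{\beta-1}}=\id$ is in hand, $\s_2$ has order dividing $2^{\beta-1}$, so $|\G|<pqr$ and $\G$ cannot be the automorphism group of a tight chiral $4$-polytope of type $\{2^{\beta-1},2^{\beta},r\}$.
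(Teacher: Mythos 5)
Your high-level target --- force a collapse $\s_2^{2^{\beta-1}}=\id$, so that $\s_2$ cannot have order $2^{\beta}$, in the style of Theorem~\ref{t:no-L4-L5} --- is exactly the paper's target, but your proposal goes wrong at the decisive diagnostic step, in two ways. First, you treat $\Lambda_6$ and $\Lambda_7$ together, but the conclusion of Lemma~\ref{l:forced-rel} depends on the vertex-figure data $(a,b,c)$, which differ between the two groups. For $\Lambda_6$ (where $a=-1+\epsilon_2 2^{\beta-2}$, $b=1+\epsilon_2 2^{\beta-2}$, $c=-3$) the lemma yields $\s_2^{4+\epsilon_1 2^{\beta-1}}\s_3=\s_3\s_2^{-4-\epsilon_1 2^{\beta-1}}$: the two exponents are \emph{negatives} of one another, not equal, so your ``both exponents coincide'' claim is false there; conjugation by $\s_3$ inverts $\s_2^{4+\epsilon_1 2^{\beta-1}}$, which is substantial information. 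Second, for $\Lambda_7$, where the exponents do coincide, the relation $\s_2^{4+\epsilon_1 2^{\beta-1}}\s_3=\s_3\s_2^{4+\epsilon_1 2^{\beta-1}}$ is a genuine commutation constraint, not a tautology: the two sides are different words, and this commutation actually \emph{fails} in an atomic chiral polyhedron of type $\{2^{\beta},2^{\beta-1}\}$ (one can check in the permutation representation in the proof of Theorem~\ref{t:2b-2bp1-2b}, whose vertex-figure has this type, that $\pi_3$ does not commute with $\pi_2^4$). So the ``first-order'' application of the lemma, far from degenerating, produces exactly the constraints the proof needs; your plan discards them and replaces them with a hypothetical ``second-order'' computation.

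The paper's proof also uses an ingredient your proposal omits entirely: applying Proposition~\ref{prop:NonTrivialCore} to the facet group $\langle\s_1,\s_2\rangle$ and the vertex-figure group $\langle\s_2,\s_3\rangle$, each contains a nontrivial normal subgroup of the form $\langle\s_2^k\rangle$; since $\langle\s_2\rangle$ is a cyclic $2$-group, $\langle\s_2^{2^{\beta-1}}\rangle$ is normal in the whole group, hence central. This centrality is what strips the $\epsilon_1 2^{\beta-1}$ from the forced relations, giving that conjugation by $\s_3$ inverts $\s_2^4$ in $\Lambda_6$ and centralizes $\s_2^4$ in $\Lambda_7$. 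The collapse then follows by playing this against a second conjugation relation computed directly from the vertex-figure relations alone, namely $\s_3\s_2^4=\s_2^{-4+\epsilon_2 2^{\beta-1}}\s_3$ in $\Lambda_6$ and $\s_3\s_2^{-4-\epsilon_2 2^{\beta-2}}=\s_2^{-4+\epsilon_2 2^{\beta-2}}\s_3$ in $\Lambda_7$; comparing gives $\s_2^{2^{\beta-1}}=\id$ in both cases. Your proposal never carries out any such derivation --- the heart of the argument is deferred to bookkeeping you ``expect'' to work --- so even setting aside the misdiagnosis it is not a proof. (A smaller slip: $\G=\langle\s_1\rangle\langle\s_2\rangle\langle\s_3\rangle$ does not follow from the three standard relations you list --- otherwise $[p,q,r]^+$ itself would be finite of order at most $pqr$; the sorting requires the facet and vertex-figure relations, which is why tightness of those two subgroups matters.)
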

	
	\begin{proof}
	If either group is the automorphism group of a tight chiral $4$-polytope, then Proposition~\ref{prop:NonTrivialCore} implies that 
	$\langle \s_1, \s_2 \rangle$ and $\langle \s_2, \s_3 \rangle$ both have a normal subgroup of the form $\langle \s_2^k \rangle$. 
	It follows that $\langle \s_2^{2^{\beta-1}} \rangle$ is normal in $\langle \s_1, \s_2, \s_3 \rangle$, which means
	that $\s_2^{2^{\beta-1}}$ is central.

	Now, the relation $\s_2 \s_1^{-1} = \s_1 \s_2^{3-\epsilon_1 2^{\beta-1}}$ implies that $\s_2^2 \s_1 = 
	\s_1 \s_2^{2(1+\epsilon_1 2^{\beta-2})}$. Then Lemma~\ref{l:forced-rel} implies that in $\Lambda_6$, 
	$\s_2^{4+\epsilon_1 2^{\beta-1}} \s_3 = \s_3 \s_2^{-4-\epsilon_1 2^{\beta-1}}$.
	So conjugation by $\s_3$ inverts $\s_2^{4+\epsilon_1 2^{\beta-1}}$, and since $\s_2^{2^{\beta-1}}$ is central, this implies 
	that conjugation by $\s_3$ inverts $\s_2^4$. 
	Now, $\s_3^2 \s_2 = \s_3 \s_2^{-1} \s_3^{-1} = \s_2^{1+\epsilon_2 2^{\beta-2}} \s_3^2$, and it follows that
	$\s_3^4 \s_2 = \s_2^{1+\epsilon_2 2^{\beta-1}} \s_3^4$. Then $\s_3^4 \s_2^2 = \s_2^{2(1+\epsilon_2 2^{\beta-1})} \s_3^4 = \s_2^2 \s_3^4$,
	and since $\s_3$ has order 8, this implies that $\s_2^2 = \s_3^4 \s_2^2 \s_3^4$. So:
	\begin{align*}
	\s_3 \s_2^4 &= \s_2^{-1} \underline{\s_3^{-1} \s_2} \s_2^2 \\
	&= \s_2^{-2+\epsilon_2 2^{\beta-2}} \underline{\s_3^{-3} \s_2^2} \\
	&= \s_2^{-2+\epsilon_2 2^{\beta-2}} \underline{\s_3 \s_2^2} \s_3^4 \\
	&= \s_2^{-2+\epsilon_2 2^{\beta-2}} \s_2^{-2+\epsilon_2 2^{\beta-2}} \s_3^{-3} \s_3^4 \\
	&= \s_2^{-4+\epsilon_2 2^{\beta-1}} \s_3.
	\end{align*}
	Since we also have that conjugation by $\s_3$ inverts $\s_2^4$, this implies that $\s_2^{2^{\beta-1}} = \id$, and so $\s_2$ does not
	have the desired order.

	In $\Lambda_7$, Lemma~\ref{l:forced-rel} implies that $\s_2^{4+\epsilon_1 2^{\beta-1}} \s_3 = \s_3 \s_2^{4+\epsilon_1 2^{\beta-1}}$.
	Since $\s_2^{2^{\beta-1}}$ is central, this implies that $\s_3$ commutes with $\s_2^4$. However,
	\begin{align*}
	\s_3 \s_2^{-4-\epsilon_2 2^{\beta-2}} &= \underline{\s_3 \s_2^{-1}} \s_2^{-3-\epsilon_2 2^{\beta-2}} \\
	&= \s_2^{-3+\epsilon_2 2^{\beta-2}} \underline{\s_3^{-1+2^{\beta-2}} \s_2^{-3-\epsilon_2 2^{\beta-2}}} \\
	&= \s_2^{-4+\epsilon_2 2^{\beta-2}} \s_3.
	\end{align*}
	Then $\s_2^{-2^{\beta-2}} = \s_2^{2^{\beta-2}}$, which implies that $\s_2^{2^{\beta-1}} = \id$, and again $\s_2$ does
	not have the desired order.
	\end{proof}

	\begin{theorem} \label{t:no-L8}
	The group $\Lambda_8$ is not the automorphism group of tight chiral $4$-polytope.
	\end{theorem}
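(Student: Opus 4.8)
The plan is to assume, for contradiction, that $\Lambda_8$ is the automorphism group of a tight chiral $4$-polytope of type $\{2^{\beta-2}, 2^{\beta-1}, 2^{\beta}\}$, so that $\s_1, \s_2, \s_3$ have orders $2^{\beta-2}, 2^{\beta-1}, 2^{\beta}$ respectively, and then to force $\s_2$ to have order at most $2^{\beta-2}$, contradicting the second entry of the type. This mirrors Theorems~\ref{t:no-L4-L5} and \ref{t:no-L6-L7}. First I would put the facet relation into the shape demanded by Lemma~\ref{l:forced-rel}: using $(\s_1 \s_2)^2 = \id$ in the form $\s_2 \s_1 = \s_1^{-1} \s_2^{-1}$, the relation $\s_2 \s_1^{-1} = \s_1 \s_2^{3 - \epsilon_1 2^{\beta-2}}$ gives $\s_2^2 \s_1 = \s_2 (\s_2 \s_1) = \s_2 \s_1^{-1} \s_2^{-1} = \s_1 \s_2^{2 - \epsilon_1 2^{\beta-2}}$, which is exactly the hypothesis $\s_2^2 \s_1 = \s_1 \s_2^{2t}$ with $t = 1 - \epsilon_1 2^{\beta-3}$ (an integer since $\beta \ge 5$). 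The $\s_2,\s_3$-relations R2 and R3 of $\Lambda_8$ are those of the vertex-figure $\calP(2^{\beta-1}, 2^{\beta})_{\epsilon_2}$, and there $b = 1 + 2^{\beta-2}$ is odd, so the hypotheses on $b$ are met.

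Next I would run the computation underlying Lemma~\ref{l:forced-rel}, evaluating $\s_1^{-1} \s_2^2 \s_3 \s_1$ in two ways. Using the magical relation (\ref{eq:magicalRelation}), i.e. $\s_3 \s_1 = \s_1 \s_2^2 \s_3$, together with the rewritten facet relation yields $\s_1^{-1} \s_2^2 \s_3 \s_1 = \s_2^{2t+2} \s_3$; expanding the other way uses $(\s_2 \s_3)^2 = \id$ to write $\s_2^2 \s_3 = \s_2 \s_3^{-1} \s_2^{-1}$ and then R2, R3. The key obstacle is that Lemma~\ref{l:forced-rel} requires the two vertex-figure relations to carry matching exponents $\s_3^{c}$ and $\s_3^{-c}$, whereas the tabulated exponents of $\calP(2^{\beta-1},2^{\beta})_{\epsilon_2}$ are $-3 + \epsilon_2 2^{\beta-2}$ and $3 + \epsilon_2 2^{\beta-2}$, differing from the required shape by $\epsilon_2 2^{\beta-1}$, that is, by the element $z := \s_3^{2^{\beta-1}}$. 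This $z$ generates the chirality group $X(\calP(2^{\beta-1},2^{\beta})_{\epsilon_2})$, which is a normal subgroup of order $2$ and is therefore central in the vertex-figure group $\langle \s_2, \s_3 \rangle$; this is all I need, since every step here stays inside $\langle \s_2, \s_3 \rangle$. Carrying this central correction through the final step $\s_2 \s_3^{-c} = \s_3 \s_2^{a}$ (now producing an extra factor of $z$) gives a forced relation $\s_3^{-1} \s_2^{W} \s_3 = z\, \s_2^{-W}$ with $W = 4 - \epsilon_1 2^{\beta-2}$. Since $W$ has $2$-adic valuation $2$, this says that conjugation by $\s_3$ inverts $\s_2^4$ up to the central involution $z$.

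Finally I would extract the contradiction by comparing this forced relation with the genuine action of $\s_3$ on $\s_2$ coming from R2, namely $\s_3^{-1}\s_2\s_3 = \s_2^{-1+2^{\beta-2}}\s_3^{-2+\epsilon_2 2^{\beta-2}}$, iterated to describe $\s_3^{-1}\s_2^4\s_3$. Applying the forced relation twice and using that $z$ is a central involution commuting with $\s_3$ shows that $\s_3^2$ commutes with $\s_2^4$; matching the $\s_3$-content of the two descriptions of $\s_3^{-1}\s_2^4\s_3$ (everything must reduce into $\langle \s_2\rangle\langle z\rangle$) then pins down the surviving powers of $\s_2$. Tracking the $2$-adic valuations carefully, these should collapse to $\s_2^{2^{\beta-2}} = \id$, the desired contradiction. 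I expect the bookkeeping to be the main difficulty: one must keep the central term $z$ and the high-valuation tails (multiples of $2^{\beta-2}$ and $2^{\beta-1}$) under control so that the contradiction lands exactly at order $2^{\beta-2}$ and not something weaker. As a sanity check, note that $\Lambda_3$ has the \emph{same} relations R2, R3 yet does define a polytope (Theorem~\ref{t:4-2b-2b}); the only difference is the facet relation, so a correct argument must genuinely exploit that in $\Lambda_8$ the generator $\s_1$ has the large order $2^{\beta-2}$ rather than the order $4$ it has in $\Lambda_3$.
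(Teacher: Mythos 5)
Your middle step is essentially correct, and your diagnosis of why Lemma~\ref{l:forced-rel} cannot be quoted verbatim for $\Lambda_8$ is a genuine observation: rerunning its proof with the vertex-figure relation written as $\s_3\s_2^{-1} = \s_2^{1+2^{\beta-2}}\s_3^{-c}z$, where $c = -3+\epsilon_2 2^{\beta-2}$ and $z = \s_3^{2^{\beta-1}}$, does yield $\s_3^{-1}\s_2^{W}\s_3 = z\,\s_2^{-W}$ with $W = 4-\epsilon_1 2^{\beta-2}$, \emph{provided} $z$ can be treated as central. But your justification of that proviso is wrong as stated: the computation in Lemma~\ref{l:forced-rel} conjugates by $\s_1$, so centrality of $z$ in $\langle\s_2,\s_3\rangle$ is not enough --- you also need $\s_1^{-1}z\s_1 = z$. (This is true and easy, since $\s_1^{-1}\s_3^{-1}\s_1 = \s_2\s_3\s_2^{-1}$ gives $\s_1^{-1}z\s_1 = \s_2 z \s_2^{-1} = z$, but it is a step outside $\langle\s_2,\s_3\rangle$ that your argument never supplies.)

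The fatal gap is the endgame: no contradiction is ever derived, and everything you actually establish is mutually consistent. The forced relation pins down $\s_3^{-1}\s_2^4\s_3 = z\s_2^{-4}$ (since $\langle\s_2^W\rangle = \langle\s_2^4\rangle$ with an odd relating exponent), and from it that $\s_3^2$ commutes with $\s_2^4$; neither fact conflicts with the generator orders or the intersection condition. A contradiction appears only upon comparison with the \emph{true} value of $\s_3^{-1}\s_2^4\s_3$ in $\calP(2^{\beta-1},2^{\beta})_{\epsilon_2}$, and your proposed route to that value --- iterating R2 and ``tracking $2$-adic valuations'' --- is exactly the computation that does not tame easily: the paper notes in the proof of Theorem~\ref{t:atomic-polyhedra} that this presentation, unlike the others, cannot be simplified, and each iteration of R2 produces mixed words with uncontrolled $\s_3$-content. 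Moreover the outcome you predict is wrong: one can check (for instance via the permutation representation of $\Lambda_3$ in Theorem~\ref{t:4-2b-2b}, whose vertex-figure group is this same polyhedron) that $\s_3^{-1}\s_2^4\s_3$ differs from $z\s_2^{-4}$ by a factor $\s_3^{-8}$, so the collapse actually forced is $\s_3^{8}\in\langle \s_3^{2^{\beta-1}}\rangle$ --- a degeneration of $\s_3$, not your $\s_2^{2^{\beta-2}}=\id$ --- and reaching it needs precisely the external input your sketch lacks. The paper avoids all of this: it conjugates $\s_3\s_2^2$ (rather than $\s_2^2\s_3$) by $\s_1$, a variant that only ever invokes relation R2 in its exact tabulated form, so no exponent mismatch and no $z$ arise; the resulting clean relation $\s_2^{-4+2^{\beta-2}+\epsilon_1 2^{\beta-2}}\s_3 = \s_3\s_2^{4-2^{\beta-2}-\epsilon_1 2^{\beta-2}}$ shows at once that $\s_3$ normalizes a nontrivial subgroup of $\langle\s_2\rangle$, contradicting the core-freeness of $\langle\s_2\rangle$ in $\langle\s_2,\s_3\rangle$ that the atomic vertex-figure forces, with no second evaluation of the conjugation action needed.
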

	
	\begin{proof}
	If $\Lambda_8$ were the automorphism group of a tight chiral $4$-polytope, then $\langle \s_2 \rangle$ would be core-free in 
	$\langle \s_2, \s_3 \rangle$ (see Proposition \ref{prop:NonTrivialCore} and \cite[Proposition 4.5]{tight-chiral-polyhedra}). We will show that in fact, $\s_3$ normalizes a nontrivial subgroup
	of $\langle \s_2 \rangle$. 

	From the relation $\s_2 \s_1^{-1} = \s_1 \s_2^{3-\epsilon_1 2^{\beta-2}}$, it follows that
	$\s_2^2 \s_1 = \s_1 \s_2^{2-\epsilon_1 2^{\beta-2}}$, and thus for each $k$, $\s_2^{2k} \s_1 =
	\s_1 \s_2^{(1-\epsilon_1 2^{\beta-3})2k}$. 
	Then
	\begin{align*}
	\s_1^{-1} \s_3 \underline{\s_2^2 \s_1} &= \underline{\s_1^{-1} \s_3 \s_1} \s_2^{2 - \epsilon_1 2^{\beta-2}} \\
	&= \s_2^2 \s_3 \s_2^{2 - \epsilon_1 2^{\beta-2}}.
	\end{align*}
	On the other hand,
	\begin{align*}
	\s_1^{-1} \underline{\s_3 \s_2^2} \s_1 &= \s_1^{-1} \s_2^{-2+2^{\beta-2}} \underline{\s_3^{-3+\epsilon_2 2^{\beta-2}} \s_1} \\
	&= \underline{\s_1^{-1} \s_2^{-2+2^{\beta-2}} \s_1} \s_2 \s_3^{3-\epsilon_2 2^{\beta-2}} \s_2^{-1} \\
	&= \s_2^{-1+2^{\beta-2}+\epsilon_1 2^{\beta-2}} \s_3^{3-\epsilon_2 2^{\beta-2}} \s_2^{-1} \\
	&= \s_2^{-1+2^{\beta-2}+\epsilon_1 2^{\beta-2}} \underline{\s_3^{3- \epsilon_2 2^{\beta-2}} \s_2^{1-2^{\beta-2}}} \s_2^{-2+2^{\beta-2}} \\
	&= \s_2^{-1+2^{\beta-2}+\epsilon_1 2^{\beta-2}} \s_2^{-1} \s_3 \s_2^{-2+2^{\beta-2}}.
	\end{align*}
	Putting these together, we find that $\s_2^{-4+2^{\beta-2} + \epsilon_1 2^{\beta-2}} \s_3 = \s_3 \s_2^{4 - 2^{\beta-2} - 
	\epsilon_1 2^{\beta-2}}$, and so $\s_3$ normalizes a nontrivial subgroup of $\langle \s_2 \rangle$.
	\end{proof}
	
	Table~\ref{tab:atomic-4} summarizes the atomic chiral $4$-polytopes with chiral vertex figures. The duals of the first three rows yield atomic chiral $4$-polytopes with regular vertex-figures, and the last two rows correspond to a dual pair of chiral $4$-polytopes. In total, there are 11 families of atomic chiral $4$-polytopes. Thus we have shown:
	
	\begin{theorem}
	Every tight chiral $4$-polytope covers one of the polytopes in Table~\ref{tab:atomic-4} or its dual.
	\end{theorem}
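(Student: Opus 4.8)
The plan is to reduce the statement to a classification of \emph{atomic} chiral $4$-polytopes and then assemble the results of Sections~\ref{s:ChiralChiral} and~\ref{s:ChiralRegular}. First I would recall that every tight chiral polytope covers an atomic chiral polytope, and that covering maps preserve rank; hence any tight chiral $4$-polytope $\calP$ covers some atomic chiral $4$-polytope $\calP_0$. It therefore suffices to prove that every atomic chiral $4$-polytope appears in Table~\ref{tab:atomic-4} or is the dual of such a polytope.

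By Proposition~\ref{prop:facvfChiral}(a), an atomic chiral $4$-polytope has chiral facets, chiral vertex-figures, or both, which splits the argument into three cases. When both the facets and the vertex-figures are chiral, Theorem~\ref{t:ch-ch-atomics} together with Proposition~\ref{prop:ch-ch-props} forces the automorphism group into the finite list of candidate groups $\G_1,\dots,\G_4$ of Table~\ref{tab:atomic-cc}; Theorems~\ref{t:2m-ma-2m}, \ref{t:8-2b-8}, \ref{t:2b-2bp1-2b} and~\ref{t:8-2bp1-2b} then confirm exactly which of these are realized (in particular the constraint $k_1=k_2$ for $\G_1$), and each realized group yields a polytope listed in Table~\ref{tab:atomic-4}. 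When the facets are regular and the vertex-figures are chiral, Lemma~\ref{l:MidEven}, Lemma~\ref{l:tight-reg-facets} and Theorem~\ref{t:reg-ch-atomics} restrict the group to the candidates $\Lambda_1,\dots,\Lambda_8$ of Table~\ref{tab:atomic-rc}; Theorems~\ref{t:m-2m-ma}, \ref{t:4-8-2b} and~\ref{t:4-2b-2b} realize $\Lambda_1,\Lambda_2,\Lambda_3$, while Theorems~\ref{t:no-L4-L5}, \ref{t:no-L6-L7} and~\ref{t:no-L8} eliminate $\Lambda_4,\dots,\Lambda_8$, so once more the polytope lies in Table~\ref{tab:atomic-4}.

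The remaining case, chiral facets and regular vertex-figures, I would dispatch by duality: the dual of a tight chiral polytope is tight and chiral, and since duality reverses covering relations it carries atomic polytopes to atomic polytopes. Thus an atomic chiral $4$-polytope with chiral facets and regular vertex-figures is exactly the dual of one with regular facets and chiral vertex-figures, hence the dual of one of $\Lambda_1,\Lambda_2,\Lambda_3$, which is precisely what the phrase ``or its dual'' in the statement permits. Combining the three cases shows that $\calP_0$, and therefore $\calP$, covers a polytope of Table~\ref{tab:atomic-4} or its dual.

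The argument is essentially bookkeeping across these cases; the genuine content sits in the earlier sections, so the only real obstacle is completeness. The delicate point is that Tables~\ref{tab:atomic-cc} and~\ref{tab:atomic-rc} are genuinely exhaustive lists of candidate groups, and this is exactly where Theorems~\ref{t:ch-ch-atomics} and~\ref{t:reg-ch-atomics}---which pin the facets and vertex-figures down as \emph{atomic} polyhedra---do the decisive work, reducing everything to the classification of atomic chiral polyhedra in Theorem~\ref{t:atomic-polyhedra}. One should also check that the self-dual entries and the dual pair among the chiral-facet families are counted correctly, so that the rows of Table~\ref{tab:atomic-4} together with the duals of its first three rows account for all eleven families without omission or duplication.
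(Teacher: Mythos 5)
Your proposal is correct and takes essentially the same approach as the paper, whose proof of this theorem is precisely the assembly you describe: reduction to an atomic chiral $4$-polytope, the case split coming from Proposition~\ref{prop:facvfChiral}(a), the classification results of Sections~\ref{s:ChiralChiral} and~\ref{s:ChiralRegular} (including the elimination of $\Lambda_4,\dots,\Lambda_8$ and the constraint $k_1=k_2$ for $\G_1$), and duality for the chiral-facet/regular-vertex-figure case. One minor correction to your wording: duality \emph{preserves} rather than reverses covering relations --- if $\calP$ covers $\calQ$ then $\calP^\delta$ covers $\calQ^\delta$ --- and it is exactly this preservation (together with duality preserving tightness and chirality) that makes duality carry atomic polytopes to atomic polytopes, as your argument requires.
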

	
	\begin{proposition} \label{p:xp-rc}
	If $\calP$ is an atomic chiral $4$-polytope with regular facets, then $X(\calP)$ is contained in $\langle \s_3 \rangle$.
	\end{proposition}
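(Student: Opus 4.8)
The plan is to show that the chirality group of the vertex-figure already lies in $\langle \s_3 \rangle$ and that quotienting $\calP$ by it yields a regular polytope; the universal property of $X(\calP)$ then forces $X(\calP)\le\langle\s_3\rangle$. Since $\calP$ is chiral with regular facets, Proposition~\ref{prop:facvfChiral}(a) makes its vertex-figures $\calQ$ chiral, and Theorem~\ref{t:reg-ch-atomics} makes them atomic chiral polyhedra, of type $\{q,r\}$ with $\Gamma(\calQ)=\langle\s_2,\s_3\rangle$. Reading off the classification (Theorems~\ref{t:m-2m-ma}, \ref{t:4-8-2b}, and~\ref{t:4-2b-2b}), every atomic chiral $4$-polytope with regular facets has type $\{p,q,r\}$ with $q<r$; hence, by Theorem~\ref{t:atomic-polyhedra} together with Corollary~\ref{c:qIsPrimePower}, the chirality group of $\calQ$ is a power of its second standard generator, namely $X(\calQ)=\langle\s_3^{r'}\rangle\le\langle\s_3\rangle$, a subgroup of prime order, normal in $\langle\s_2,\s_3\rangle$ and containing no $\s_i$.

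Next I would upgrade $X(\calQ)$ to a normal subgroup of all of $\Gamma(\calP)$. Being normal in $\langle\s_2,\s_3\rangle$, it is invariant under conjugation by $\s_2$; since it lies in $\langle\s_3\rangle$, the dual form of Lemma~\ref{l:rels1s3}(b) then yields invariance under conjugation by $\s_1$ as well, so $X(\calQ)\triangleleft\Gamma(\calP)$. Because $X(\calQ)$ contains no generator, Proposition~\ref{prop:vfig-quo} shows that $\calP/X(\calQ)$ is a tight orientable rotary $4$-polytope. Its facets are unchanged, since $X(\calQ)\cap\langle\s_1,\s_2\rangle=\{\id\}$ by the intersection condition (Lemma~\ref{l:IntAi}), so they remain regular; its vertex-figures are $\calQ/X(\calQ)$, which is regular by the defining property of the chirality group. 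A tight orientable rotary $4$-polytope with regular facets and regular vertex-figures cannot be chiral, by the contrapositive of Proposition~\ref{prop:facvfChiral}(a), so $\calP/X(\calQ)$ is orientably regular. The universal property of $X(\calP)$ --- that any normal subgroup with regular quotient contains $X(\calP)$ --- then gives $X(\calP)\le X(\calQ)\le\langle\s_3\rangle$.

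The substantive input is the very first step: that $X(\calQ)$ sits in $\langle\s_3\rangle$ rather than $\langle\s_2\rangle$, which depends on knowing $q<r$ and hence on the full classification of the preceding section (equivalently, on the exclusion of the families $\Lambda_4,\dots,\Lambda_8$, each of which has $q>r$). Once $q<r$ is granted, the only points requiring care are the normality of $X(\calQ)$ in $\Gamma(\calP)$ and the polytopality of $\calP/X(\calQ)$, and both follow directly from the results already established.
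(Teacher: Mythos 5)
Your proof is correct and takes essentially the same route as the paper's: both identify the chirality group of the vertex-figures as a prime-order subgroup of $\langle \s_3 \rangle$ normal in $\langle \s_2, \s_3 \rangle$ via the classification, promote it to a normal subgroup of $\G(\calP)$ by the dual of Lemma~\ref{l:rels1s3}, apply Proposition~\ref{prop:vfig-quo} to get a polytopal quotient, conclude that this quotient is regular, and finish with the universal property of $X(\calP)$. The only differences are cosmetic: the paper deduces regularity of the quotient directly from atomicity rather than from the contrapositive of Proposition~\ref{prop:facvfChiral}(a), and your closing parenthetical misstates $\Lambda_8$ (type $\{2^{\beta-2}, 2^{\beta-1}, 2^{\beta}\}$, so $q < r$ there) as having $q > r$ --- a harmless slip, since $\Lambda_8$ is excluded anyway and its vertex-figures would in any case have chirality group inside $\langle \s_3 \rangle$.
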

	
	\begin{proof}
	An atomic chiral $4$-polytope $\calP$ with regular facets has automorphism group $\Lambda_1$, $\Lambda_2$, or $\Lambda_3$. In each case,
	the chirality group of the vertex-figures is a cyclic group of prime order of the form $\langle \s_3^c \rangle$ that is normal in 
	$\langle \s_2, \s_3 \rangle$ (see Table \ref{tab:atomic-polyhedra}), and thus in $\G(\calP)$ (by the dual of Lemma 
	\ref{l:rels1s3}). The quotient of $\G(\calP)$ by this normal subgroup is a polytope, by Proposition \ref{prop:vfig-quo},
	and thus it is regular (by atomicity). Therefore, the chirality group of the vertex-figures contains the chirality group
	of $\calP$, and since the former has prime order and the latter is nontrivial, it follows that the two coincide,
	proving the claim.
	\end{proof}

	\begin{landscape}
	\begin{table}[htbp]
	\centering
	\begin{tabular}{l l l l l l l} \hline
	$\{p, q, r\}$ & Facets & $\s_2^{-1} \s_1$ & $\s_2 \s_1^{-1}$ & $\s_3^{-1} \s_2$ & $\s_3 \s_2^{-1}$ & Notes \\ \hline \hline
	$\{m, 2m, m^{\alpha}\}$ & Regular & $\s_1^{-1} \s_2^{-3}$ & $\s_1 \s_2^3$ & $\s_2^3 \s_3^{1 + km^{\alpha-1}}$
	& $\s_2^{-3} \s_3^{-1 + km^{\alpha-1}}$ & $m$ odd prime, $\alpha \geq 2$, \\ &&&&&&$1 \leq k \leq m-1$ \\ \hline

	$\{4, 8, 2^{\beta}\}$ & Regular & $\s_1^{-1} \s_2^{-3}$ & $\s_1 \s_2^3$ & $\s_2^3 \s_3^{1+ \epsilon 2^{\beta-2}}$
	& $\s_2^{-3} \s_3^{-1+ \epsilon 2^{\beta-2}}$ & $\beta \geq 5$, $\epsilon = \pm 1$ \\ \hline
	
	$\{4, 2^{\beta-1}, 2^{\beta}\}$ & Regular & $\s_1^{-1} \s_2^{1+2^{\beta-2}}$ & $\s_1 \s_2^{-1+2^{\beta-2}}$ & 
	$\s_2^{-1+2^{\beta-2}} \s_3^{-3+ \epsilon 2^{\beta-2}}$ & $\s_2^{1+2^{\beta-2}} \s_3^{3+ \epsilon 2^{\beta-2}}$ &
	$\beta \geq 5$, $\epsilon = \pm 1$ \\ \hline

	$\{2m, m^{\alpha}, 2m\}$ & Chiral & $\s_1^3 \s_2^{1+km^{\alpha-1}}$ & $\s_1^{-3} \s_2^{-1+km^{\alpha-1}}$ &
	$\s_2^{-1 + km^{\alpha-1}} \s_3^{-3}$ & $\s_2^{1 + km^{\alpha-1}} \s_3^3$ &
	$m$ odd prime, $\alpha \geq 2$, \\ &&&&&&$1 \leq k \leq m-1$ \\ \hline
	
	$\{8, 2^{\beta}, 8\}$ & Chiral & $\s_1^3 \s_2^{1+ \epsilon_1 2^{\beta-2}}$ & $\s_1^{-3} \s_2^{-1 + \epsilon_1 2^{\beta-2}}$ &
	$\s_2^{-1 + \epsilon_2 2^{\beta-2}} \s_3^{-3}$ & $\s_2^{1 + \epsilon_2 2^{\beta-2}} \s_3^3$ & $\beta \geq 5$, $\epsilon_1,
	\epsilon_2 = \pm 1$ \\ \hline
	
	$\{2^{\beta-1}, 2^{\beta}, 2^{\beta-1}\}$ & Chiral & $\s_1^{-1+2^{\beta-2}} \s_2^{-3+\epsilon_1 2^{\beta-2}}$
	& $\s_1^{1+2^{\beta-2}} \s_2^{3+\epsilon_1 2^{\beta-2}}$ & $\s_2^{3 + \epsilon_2 2^{\beta-2}} \s_3^{1+2^{\beta-2}}$
	& $\s_2^{-3 + \epsilon_2 2^{\beta-2}} \s_3^{-1+2^{\beta-2}}$ & $\beta \geq 5$, $\epsilon_1,
	\epsilon_2 = \pm 1$ \\ \hline
	
	$\{8, 2^{\beta}, 2^{\beta-1}\}$ & Chiral & $\s_1^3 \s_2^{1+ \epsilon_1 2^{\beta-2}}$ & $\s_1^{-3} \s_2^{-1 + \epsilon_1 2^{\beta-2}}$
	& $\s_2^{3 + \epsilon_2 2^{\beta-2}} \s_3^{1+2^{\beta-2}}$ & $\s_2^{-3 + \epsilon_2 2^{\beta-2}} \s_3^{-1+2^{\beta-2}}$
	& $\beta \geq 5$, $\epsilon_1, \epsilon_2 = \pm 1$ \\ \hline
	
	$\{2^{\beta-1}, 2^{\beta}, 8\}$ & Chiral & $\s_1^{-1+2^{\beta-2}} \s_2^{-3+\epsilon_1 2^{\beta-2}}$
	& $\s_1^{1+2^{\beta-2}} \s_2^{3+\epsilon_1 2^{\beta-2}}$ & $\s_2^{-1 + \epsilon_2 2^{\beta-2}} \s_3^{-3}$ & 
	$\s_2^{1 + \epsilon_2 2^{\beta-2}} \s_3^3$ & $\beta \geq 5$, $\epsilon_1, \epsilon_2 = \pm 1$ \\ \hline
	
	\end{tabular}
	\caption{The atomic chiral $4$-polytopes with chiral vertex-figures}
	\label{tab:atomic-4}
	\end{table}
	\end{landscape}
	
\section{Tight chiral $5$-polytopes} 
\label{s:5polytopes}

Recall that a tight chiral $5$-polytope must have chiral facets and chiral vertex-figures (see Proposition \ref{prop:facvfChiral} (c)). In this section we prove Theorem \ref{t:NoTight5Poly}, that is, that no such polytope exists. 

We say that a chiral $5$-polytope $\calP$ with $\Gamma(\calP)= \langle \s_1,\s_2,\s_3,\s_4 \rangle$ is {\em atomic} if it does not properly 
cover any tight chiral polytope. Clearly, every tight chiral $5$-polytope covers an atomic chiral $5$-polytope.

We start by giving properties that atomic chiral $5$-polytopes must satisfy, should they exist.

\begin{lemma}\label{l:2faces}
Let $\calP$ be a tight chiral $5$-polytope with type $\{p,q,r,s\}$ where $q \ge r$. Then the kernel of the action of $\Gamma(\calP)$ on the chains containing a $3$-face and a facet is non-trivial.
\end{lemma}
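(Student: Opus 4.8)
The plan is to identify the chains in question with right cosets of $\langle \s_1,\s_2\rangle$ and then exhibit an explicit nontrivial element of the kernel coming from the medial section.

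First I would set up the coset description. Write $\G(\calP)=\langle \s_1,\s_2,\s_3,\s_4\rangle$ and take as base chain the incident pair $(F_3,F_4)$ given by the base $3$-face inside the base facet. Since $\s_1$ and $\s_2$ each fix both $F_3$ and $F_4$, the stabilizer of $(F_3,F_4)$ contains $\langle \s_1,\s_2\rangle$; and because the facet $F_4$ is a tight $4$-polytope of type $\{p,q,r\}$ whose base facet has stabilizer $\langle \s_1,\s_2\rangle$ inside $\G^+(F_4)=\langle \s_1,\s_2,\s_3\rangle$, one gets $\Stab(F_3)\cap\Stab(F_4)=\langle \s_1,\s_2\rangle$, of order $pq$. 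A tight $n$-polytope has $p_{n-1}$ facets (its base facet has stabilizer $\langle \s_1,\dots,\s_{n-2}\rangle$), so $\calP$ has $s$ facets and each facet has $r$ three-faces; hence there are $rs$ such chains, matching $[\G(\calP):\langle \s_1,\s_2\rangle]$. Therefore $\G(\calP)$ acts transitively and the chains are identified with the right cosets of $\langle \s_1,\s_2\rangle$ under right multiplication. By tightness (Proposition~\ref{prop:tight-group-factoring}) every coset has a representative $\s_3^c\s_4^d$, so the kernel is $\Core_{\G(\calP)}\langle \s_1,\s_2\rangle$, and an element $\varphi$ lies in it exactly when $\s_3^c\s_4^d\,\varphi\,\s_4^{-d}\s_3^{-c}\in\langle \s_1,\s_2\rangle$ for all $c,d$.

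Next I would produce the candidate element. The medial section $\calM=[F_0,F_4]$ is a tight chiral polyhedron (Proposition~\ref{prop:facvfChiral}(b)) of type $\{q,r\}$ with rotation group $\langle \s_2,\s_3\rangle$, and $q\ge r$. Applying Proposition~\ref{prop:NonTrivialCore} to $\calM$ produces an integer $k$ for which $\langle \s_2^k\rangle$ is a nontrivial normal subgroup of $\langle \s_2,\s_3\rangle$; in particular $\s_3$ normalizes $\langle \s_2^k\rangle$. To upgrade this to normalization by $\s_4$, I would apply Lemma~\ref{l:rels1s3}(b) inside the vertex-figure $[F_0,F_5]$, a rotary $4$-polytope whose rotation group is $\langle \s_2,\s_3,\s_4\rangle$: taking $K=\langle \s_2^k\rangle\le\langle \s_2\rangle$, the index-shifted form of the lemma says $\s_3^{-1}K\s_3=K$ implies $\s_4^{-1}K\s_4=K$. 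Then for any $c,d$ we have $\s_4^d\s_2^k\s_4^{-d}\in\langle \s_2^k\rangle$ and hence $\s_3^c(\s_4^d\s_2^k\s_4^{-d})\s_3^{-c}\in\langle \s_2^k\rangle\subseteq\langle \s_1,\s_2\rangle$, so $\s_2^k$ fixes every chain. Since $\langle \s_2^k\rangle\ne 1$, this element is nontrivial and the kernel is nontrivial.

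The main obstacle, around which the whole argument is organized, is the globalization step. A priori the element $\s_2^k$ is only controlled through the medial section and the vertex-figure, which involve only $\s_2,\s_3,\s_4$, whereas normalization of $\langle \s_2^k\rangle$ by $\s_1$ is unavailable and is in fact false in general; a naive attempt to force $\s_2^k$ into a subgroup normal in all of $\G(\calP)$ therefore fails. The observation that rescues the argument is that every chain-coset has a representative that is a word in $\s_3$ and $\s_4$ alone, so only $\s_3$- and $\s_4$-normalization of $\langle \s_2^k\rangle$ is needed, and these are exactly what Proposition~\ref{prop:NonTrivialCore} (on $\calM$) and the index-shifted Lemma~\ref{l:rels1s3}(b) (on the vertex-figure) supply. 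A secondary point requiring care is the counting that establishes transitivity and thereby legitimizes the coset identification and the computation of the base-chain stabilizer.
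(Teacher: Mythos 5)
Your proposal is correct and follows essentially the same route as the paper: identify the chains containing a $3$-face and a facet with the right cosets of $\langle \s_1, \s_2 \rangle$, and exhibit a nontrivial subgroup $\langle \s_2^k \rangle$ normalized by $\s_3$ and $\s_4$, so that $\s_2^k$ fixes every coset $\langle \s_1, \s_2 \rangle \s_3^c \s_4^d$. The only cosmetic difference is that the paper gets this normality in one step by applying Proposition~\ref{prop:NonTrivialCore} to the vertex-figure (rotation group $\langle \s_2, \s_3, \s_4 \rangle$, using $q \ge r$), whereas you apply it to the medial section and then upgrade to $\s_4$-normalization via the index-shifted Lemma~\ref{l:rels1s3}(b)---which is precisely the step the proof of Proposition~\ref{prop:NonTrivialCore} performs internally.
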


\begin{proof}
The stabilizer of the chain containing the base $3$-face and the base facet is $\Delta = \langle \sigma_1, \sigma_2 \rangle$. The remaining chains can be associated to right cosets of $\Delta$. Proposition~\ref{prop:NonTrivialCore} implies that there is a nontrivial subgroup $\langle \s_2^k \rangle$ that is normal in $\langle \s_2, \s_3, \s_4 \rangle$. Then it follows that for all $a$ and $b$ we have $(\langle \s_1, \s_2 \rangle \s_3^a \s_4^b) \s_2^k = \langle \s_1, \s_2 \rangle \s_3^a \s_4^b$, and so $\s_2^k$ fixes all chains containing a $3$-face and a facet.
\end{proof}

\begin{lemma}\label{l:5kernels}
Let $\calP$ be an atomic chiral $5$-polytope with $\Gamma(\calP) = \langle \sigma_1, \s_2, \s_3, \sigma_4\rangle$ and type $\{p, q,r, s\}$. If $q \ge r$ then
\begin{enumerate}
 \item $X(\calP)$ is $\langle \sigma_2^{q'} \rangle$ for some $q'$ satisfying that $q/q'$ is prime,
 \item The chirality groups of the base facet and the base vertex-figure are also $\langle \sigma_2^{q'} \rangle$.
\end{enumerate} 
\end{lemma}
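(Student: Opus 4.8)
The plan is to follow the template of Proposition~\ref{prop:chirGroups}: exhibit two normal subgroups of $\G(\calP)$ whose quotients are regular and whose intersection is forced into $\langle \s_2 \rangle$, so that the universal property of the chirality group pins $X(\calP)$ down. The new feature in rank $5$ is that the naive analogue fails, since the vertex stabilizer $\langle \s_2, \s_3, \s_4 \rangle$ meets the facet stabilizer $\langle \s_1, \s_2, \s_3 \rangle$ in $\langle \s_2, \s_3 \rangle$, which is too large. This is precisely what Lemma~\ref{l:2faces} is for. I would take $K_1$ to be the kernel of the action of $\G(\calP)$ on the chains consisting of a $3$-face and an incident facet (stabilizer $\langle \s_1, \s_2 \rangle$), and $H$ to be the kernel of the action on the vertex set (contained in $\langle \s_2, \s_3, \s_4 \rangle$). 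Both are normal, and by Lemma~\ref{l:IntAi},
\[ H \cap K_1 \le \langle \s_2, \s_3, \s_4 \rangle \cap \langle \s_1, \s_2 \rangle = \langle \s_2 \rangle. \]

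Next I would verify that $H$ and $K_1$ are nontrivial with regular quotients. Nontriviality of $K_1$ is exactly Lemma~\ref{l:2faces}, and this is where the hypothesis $q \ge r$ is consumed (through Proposition~\ref{prop:NonTrivialCore} applied to the vertex-figure). For $H$ I would first show $\langle \s_1 \rangle$ is core-free in $\G(\calP)$: if $\langle \s_1^k \rangle \neq 1$ were normal, then since $\langle \s_1 \rangle \cap \langle \s_2, \s_3, \s_4 \rangle = 1$ by Lemma~\ref{l:IntAi}, Proposition~\ref{prop:vfig-quo} would make $\calP / \langle \s_1^k \rangle$ a tight polytope with the same chiral vertex-figures, contradicting atomicity; the case $\s_1 \in \langle \s_1^k \rangle$ is excluded because it forces $q = 2$, whence the vertex-figures would be non-chiral by the dual of Lemma~\ref{l:NoChiral2}. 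With $\langle \s_1 \rangle$ core-free, Proposition~\ref{prop:NonTrivialCore} rules out $p \ge q$, so $p < q$, and Corollary~\ref{cor:NTkernel} gives $H \neq 1$. To see the quotients are polytopes I would put $K_1$ and $H$ into the product form of Lemma~\ref{l:subgroups}, check that no generator $\s_i$ lies in either subgroup (arguing as in Lemma~\ref{l:s1NotNormal}, using core-freeness of $\langle \s_1 \rangle$ and chirality of the facets and vertex-figures), and apply Proposition~\ref{prop:vfig-quo}; being proper tight rotary quotients of the atomic polytope $\calP$, they are then regular.

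With $\calP/H$ and $\calP/K_1$ regular, the universal property of the chirality group gives $X(\calP) \le H$ and $X(\calP) \le K_1$, hence $X(\calP) \le H \cap K_1 \le \langle \s_2 \rangle$; alternatively one may note via Lemma~\ref{l:mixquotient} that $\calP/(H \cap K_1) \cong (\calP/H) \lozenge (\calP/K_1)$ is regular. Since $\calP$ is chiral, $X(\calP) \neq 1$, so $X(\calP) = \langle \s_2^{q'} \rangle$ with $q' \mid q$ and $q' < q$. The primality of $q/q'$ follows exactly as in Proposition~\ref{prop:chirGroups}: were it composite, an intermediate $\langle \s_2^{q''} \rangle$ would be normal (characteristic in the cyclic $\langle \s_2^{q'} \rangle$) with regular quotient by atomicity, forcing $X(\calP) = \langle \s_2^{q''} \rangle$ with $q/q''$ prime. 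This gives part (a). For part (b), Lemma~\ref{l:chirGroupFacet} yields $X(\calQ_1) \le X(\calP)$ for the base facet $\calQ_1$, and its dual yields $X(\calQ_2) \le X(\calP)$ for the base vertex-figure $\calQ_2$; as $\calQ_1$ and $\calQ_2$ are chiral their chirality groups are nontrivial, and since $X(\calP)$ has prime order, both coincide with $\langle \s_2^{q'} \rangle$.

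The step I expect to be the main obstacle is the polytopality of $\calP/H$: Proposition~\ref{prop:chirGroups} could invoke Lemma~\ref{lem:vertex-kernel} for rank $4$, but here I would need its rank-$5$ analogue, namely that the vertex kernel has the form $\langle \s_2^{a} \rangle \langle \s_3^{b} \rangle \langle \s_4^{c} \rangle$ and omits every generator. Establishing $\s_2, \s_3, \s_4 \notin H$ is the delicate part, and I would adapt the reasoning of Lemma~\ref{l:s1NotNormal} together with Lemma~\ref{l:CoreIsNots2} to exclude these cases. The secondary subtlety is the derivation of $p < q$, which depends on first having the core-freeness of $\langle \s_1 \rangle$ in hand.
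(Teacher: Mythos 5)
In outline you have reconstructed the paper's own proof: the same two kernels ($H$ acting on the vertices, $K_1$ acting on the chains formed by a $3$-face and a facet), the same containment $H \cap K_1 \le \langle \s_2 \rangle$, regularity of the quotients via atomicity, and the same endgame for the primality of $q/q'$ and for part (b) via Lemma~\ref{l:chirGroupFacet} and its dual. Your preliminary derivation of $p<q$ (core-freeness of $\langle \s_1 \rangle$ from atomicity, then Proposition~\ref{prop:NonTrivialCore}) is sound, and it supplies the hypothesis $p \le q$ that the paper's bare citation of Corollary~\ref{cor:NTkernel} actually requires.

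However, the step you flag as ``the delicate part'' is not delicate; it is impossible. In every rank-$5$ rotation group $\s_1$ and $\s_4$ commute --- the paper itself invokes exactly this commutativity in the proof of Proposition~\ref{prop:NonTrivialCore} --- and by tightness the vertex set of $\calP$ consists of the $p$ distinct cosets $\langle \s_2,\s_3,\s_4 \rangle \s_1^{j}$. Hence $\langle \s_2,\s_3,\s_4 \rangle \s_1^{j}\s_4 = \langle \s_2,\s_3,\s_4 \rangle \s_4 \s_1^{j} = \langle \s_2,\s_3,\s_4 \rangle \s_1^{j}$ for every $j$, so $\s_4$ lies in the vertex kernel $H$. (In particular $H \neq 1$ holds trivially, so the nontriviality of $H$ was never where the content lay.) Consequently $\s_4 \notin H$ cannot be established by any adaptation of Lemma~\ref{l:s1NotNormal} or Lemma~\ref{l:CoreIsNots2}; the quotient $\calP/H$ identifies all facets with one another and is not a $5$-polytope; Proposition~\ref{prop:vfig-quo} does not apply to $H$; and the chain ``$\calP/H$ is a proper tight polytopal quotient, hence regular by atomicity, hence $X(\calP) \le H$'' collapses. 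The fallback through Lemma~\ref{l:mixquotient} needs the same regularity of $\calP/H$, so it does not help, and your first suggestion --- reading off the product form of $H$ from Lemma~\ref{l:subgroups} --- is circular in any case, since that lemma presupposes that the quotient is a polytope. To be fair, the paper's own proof, which says only that ``the rest of the proof is as in Proposition~\ref{prop:chirGroups}'', glosses over precisely this point: the rank-$4$ argument of Lemma~\ref{lem:vertex-kernel} transplants to the chain kernel $K_1$ but not to the vertex kernel in rank $5$, exactly because $\s_1\s_4 = \s_4\s_1$. So you located the genuine obstacle, but the repair you propose cannot succeed; one would need instead a normal subgroup avoiding $\s_4$ (for instance the kernel of the action on vertex--facet chains, whose stabilizer is $\langle \s_2,\s_3 \rangle$), or an argument that $\calP/H$ is a regular structure in the weaker, non-polytopal sense appearing in the definition of the chirality group.
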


\begin{proof}
Let $H$ and $K$ be the kernels of the actions of $\Gamma(\calP)$ on the vertices and on the chains consisting of a $3$-face and a $4$-face, respectively. By Corollary \ref{cor:NTkernel} and Lemma \ref{l:2faces}, $H$ and $K$ are non-trivial. Therefore $H \cap K$ is a normal subgroup of $\Gamma(\calP)$ that by the intersection condition is contained in $\langle \sigma_2 \rangle$. The rest of the proof is as in Proposition \ref{prop:chirGroups}.
\end{proof}

Now we can prove Theorem \ref{t:NoTight5Poly}.

\begin{proof}[Proof of Theorem \ref{t:NoTight5Poly}]
It suffices to show that there are no atomic chiral $5$ polytopes. Suppose to the contrary that $\calP$ is an atomic chiral
$5$-polytope. Up to duality, we may assume that $q \geq r$. Let $\calK$ be the base facet. It must be a tight chiral $4$-polytope,
by Proposition \ref{prop:facvfChiral}(b), and since the facets of the facets of a chiral polytope are always regular, $\calK$ has regular
facets. Now, Lemma~\ref{l:5kernels} says that $\calK$ has chirality group contained in $\langle \s_2 \rangle$. Let $\calK'$
be an atomic chiral $4$-polytope that is covered by $\calK$, with $\G(\calK') = \langle \s_1', \s_2', \s_3' \rangle$. 
Then Lemma \ref{l:ChirGroupS2} says that $X(\calK')$ is contained in $\langle \s_2' \rangle$, which contradicts
Proposition~\ref{p:xp-rc}.
\end{proof}
	
\section{Concluding remarks}

The study of tight chiral polytopes was originated in the search for chiral polytopes with a small number of flags. In ranks $3$ and $4$ the atomic chiral polytopes are now classified; this constitutes the first step for a full classification of tight chiral $3$- and $4$-polytopes. However, the techniques used to classify tight regular polyhedra fail in the chiral setting, and the full classification seems to require several more steps.

The non-existence of tight chiral $n$-polytopes for $n \ge 5$ strengthens the general belief that for each $n \ge 5$ the chiral $n$-polytopes with the fewest flags have considerably more flags than the regular $n$-polytopes with the fewest flags. See also \cite[Theorem 5.5]{non-flat}.

\bibliographystyle{amsplain}
\bibliography{gabe}

\end{document}